\documentclass[oneside,english]{amsart}
\usepackage[a4paper, left=2.7cm, right=2.7cm, top=3.5cm, bottom=5cm]{geometry}
\usepackage[dvipsnames]{xcolor}
\usepackage{setspace} 
\usepackage{amsmath,amsfonts,amssymb,wasysym} 
\usepackage{caption}
\usepackage{todonotes}
\usepackage{stmaryrd}
\usepackage[]{epsf,epsfig,latexsym}
\usepackage{enumerate}
\usepackage{mathrsfs}
\usepackage[inline]{enumitem}
\usepackage{comment}
\usepackage{mathtools}
\usepackage{braket}		
\usepackage[normalem]{ulem}		
\usepackage{nicefrac}
\usepackage{cancel}
\usepackage{xcolor}
\usepackage[pdfencoding=auto, psdextra, bookmarksdepth=subsection,hyperindex=true,pdfborder={0 0 0}]{hyperref}
\usepackage{doi}
\usepackage{cleveref}
\crefname{subsection}{subsection}{subsections}
\usepackage{tikz}
\usetikzlibrary{shapes,arrows,automata,matrix,fit,patterns,external}
\usepackage{pgfplots} 
\usepackage{tikz-cd}
\usepackage{fdsymbol}
\usepackage{thmtools}
\graphicspath{{./img_pdf/}}
\theoremstyle{plain}
\newtheorem{theorem}{Theorem}[section]
\newtheorem{lemma}[theorem]{Lemma}
\newtheorem{corollary}[theorem]{Corollary}
\newtheorem{proposition}[theorem]{Proposition}

\newtheorem{claim}[theorem]{Claim}
\newtheorem{remark}[theorem]{Remark}
\newtheorem{example}[theorem]{Example}

\theoremstyle{definition}
\newtheorem{definition}[theorem]{Definition}
\theoremstyle{plain}
\newtheorem{thmx}{Theorem}

\newcommand{\vertexlabels}[4]{
    \draw (0,0) -- (0.5,0) node [anchor=west] {\textbf{$#1$}};
    \draw (0,0) -- (0,0.5) node [anchor=south] {\textbf{$#2$}};
    \draw (0,0) -- (-0.5,0) node [anchor=east] {\textbf{$#3$}};
    \draw (0,0) -- (0,-0.5) node [anchor=north] {\textbf{$#4$}};

    \filldraw (0,0) circle (1.5pt);
}
\newcommand{\vertexlabeldegreethree}[3]{
    \draw (0,0) -- (0.5,0) node [anchor=west] {\textbf{$#1$}};
    \draw (0,0) -- (0,-0.5) node [anchor=north] {\textbf{$#2$}};
    \draw (0,0) -- (-0.5,0) node [anchor=east] {\textbf{$#3$}};

    \filldraw (0,0) circle (1.5pt);
}

\newcommand{\upborder}[1]{
    \draw (0,0) -- (0.5,0) node [anchor=west] {\textbf{$#1$}};
    \draw [->] (0,0) -- (0,0.5);
    \draw [-<] (0,0) -- (0,-0.5);

    \filldraw (0,0) circle (1.5pt);
}

\newcommand{\centerborder}[1]{
    \draw (0,0) -- (0.5,0) node [anchor=west] {\textbf{$#1$}};
    \draw [->] (0,0) -- (0,0.5);
    \draw [->] (0,0) -- (0,-0.5);

    \filldraw (0,0) circle (1.5pt);
}

\newcommand{\downborder}[1]{
    \draw (0,0) -- (0.5,0) node [anchor=west] {\textbf{$#1$}};
    \draw [-<] (0,0) -- (0,0.5);
    \draw [->] (0,0) -- (0,-0.5);

    \filldraw (0,0) circle (1.5pt);
}

\newcommand{\hypersquare}[3]{
\pgfmathsetmacro{\k}{1/pow(#1,#2-1)}
\pgfmathsetmacro{\sy}{0}
\foreach \i in {1,...,#2}{
\pgfmathsetmacro{\j}{pow(#1,\i-1)}
        \begin{scope}[shift={(0,(\sy*\k) cm}]
        \draw[step={\j*\k},gray, thick] (0,0) grid ({pow(#1,#2-1)*#3*\k},\j*\k);
        \end{scope}
        \pgfmathsetmacro{\sy}{\sy + \j}
        } 
}

\newcommand{\ZZ}{\mathbb{Z}}			
\newcommand{\NN}{\mathbb{N}}			
\newcommand{\RR}{\mathbb{R}}			
\newcommand{\QQ}{\mathbb{Q}}			
\newcommand{\CC}{\mathbb{C}}            
\newcommand{\HH}{\mathbb{H}^2}            
\DeclareMathOperator{\IM}{Im}           
\DeclareMathOperator{\PSO}{PSO}
\DeclareMathOperator{\PSL}{PSL}
\DeclarePairedDelimiter\norm{\lVert}{\rVert}	
\makeatletter
\newcommand{\supp}{
    \operatorname{\mathrm{supp}}%
}

\newcommand{\define}[1]{\textbf{#1}}

\title{%
    Medvedev degrees of SFTs on cocompact Fuchsian groups
}

\author{Sebasti\'an Barbieri, Nicanor Carrasco-Vargas, and Paul Toussaint}

\newcommand{\Addresses}{{
        \bigskip
        
        \hskip-\parindent   S.~Barbieri, \textsc{Departamento de Matem\'{a}ticas, Universidad de Chile, Santiago, Chile.}\par\nopagebreak
        \textit{E-mail address}: \texttt{sbarbieri@uchile.cl}
        
        \medskip
        
        \hskip-\parindent   N.~Carrasco-Vargas, \textsc{Faculty of Mathematics and Computer Science, Jagiellonian University, Krakow, Poland.}\par\nopagebreak
        \textit{E-mail address}: \texttt{nicanor.vargas@uj.edu.pl}
        
        \medskip
        
        \hskip-\parindent   P.~Toussaint, \textsc{Université Claude Bernard Lyon 1, Institut Camille Jordan, Lyon, France.}\par\nopagebreak
        \textit{E-mail address}: \texttt{toussaint@math.univ-lyon1.fr}
}}
\date{}

\pgfplotsset{compat=1.18}
\begin{document}
    
\begin{abstract}

We prove that for any cocompact Fuchsian group the class of Medvedev degrees attained by subshifts of finite type is the class of $\Pi_1^0$ degrees. This result relies on the construction of a rigid hierarchical structure in a graph model of the hyperbolic plane, and on the fact that every such group admits a representation in $\operatorname{PSL}_2(\RR)$ such that the matrices in the image have coefficients that are computable real numbers.

\medskip
        
        \noindent
        \emph{Keywords}: Fuchsian groups, subshifts of finite type, Medvedev degrees, symbolic dynamics.
        
        \noindent
        \emph{MSC2020}: \textit{Primary:} 37B10. 
        \textit{Secondary:} 37B02, 
                03D78, 
                20F10. 
\end{abstract}

\maketitle


\section{Introduction}

Given two subsets $X$ and $Y$ of the Cantor space $\{0,1\}^{\NN}$, we say that $X$ is Medvedev reducible to $Y$ if there exists a partial computable function that maps elements of $Y$ into elements of $X$. The equivalence classes induced by this relation are called \define{Medvedev degrees}, and can be thought of as a measure of how algorithmically complex it is to compute one of the elements of a set.

In the context of symbolic dynamics, given a finite set $A$ with $|A|\geq 2$ and an infinite and finitely generated group $G$ with decidable word problem, one can identify in a computably meaningful way the space $A^G$ with $\{0,1\}^{\NN}$. Hence we can define the Medvedev degree of a subshift $X\subset A^G$. It turns out that the Medvedev degree is an invariant of topological conjugacy that behaves in some ways similarly to topological entropy (see~\cite{barbieri2024medvedev}). For instance, it does not increase under topological factor maps, even in nonamenable groups. It is an open problem to determine which groups admit subshifts of finite type (SFTs) with nonzero Medvedev degree, and more generally, to understand how the class of Medvedev degrees realized by SFTs changes as a function of the underlying group. 

Given a finitely generated group $G$ with decidable word problem, a natural restriction on the Medvedev degrees of both $G$-SFTs and, more generally, effective $G$-subshifts (those that can be defined by a recursively enumerable set of forbidden patterns) is that they must be $\Pi_1^0$ degrees, that is, equivalence classes that contain a representative which is an effectively closed set ($\Pi_1^0$ set). For $G=\ZZ$, Miller~\cite{Miller_2012_twonotesonSubshifts} showed that effective $\ZZ$-subshifts in fact do attain all $\Pi_1^0$ Medvedev degrees. It is also worth noting that $\ZZ$-SFTs always contain finite orbits and thus can only attain the minimal degree which consists of all sets that contain a computable element. This is in stark contrast to the higher-dimensional case: a result due to Simpson~\cite{Simpson_2014_Medvedev} shows that for $d\geq 2$, $\ZZ^d$-SFTs also attain every possible $\Pi_1^0$ Medvedev degree.

For arbitrary groups, the first two authors~\cite{barbieri2024medvedev} recently extended the classification of Simpson to several classes of groups with decidable word problem, such as all virtually polycyclic groups which are not virtually cyclic, direct products of infinite finitely generated groups, and finitely generated branch groups. In fact, it is worth noting that we are not aware of any recursively presented group $G$ in which the class of Medvedev degrees attained by $G$-SFTs is neither reduced to the trivial degree nor or the whole class of $\Pi_1^0$ degrees. See~\cite[Conjecture 6.2]{barbieri2024medvedev}.

An interesting class of groups for which the classification was still unknown is that of the hyperbolic surface groups, that is, the fundamental groups of closed orientable surfaces of genus at least $2$. In~\cite{barbieri2024medvedev} it was shown that some SFTs on those groups can attain non-zero Medvedev degrees, but the class of said degrees was not characterized. Our main result is that this class is indeed the class of $\Pi_1^0$ degrees for a large class of groups which are quasi-isometric to the hyperbolic plane.

\begin{thmx}\label{mainthm:degrees}
    For every cocompact Fuchsian group $G$, the class of $G$-SFTs attains all $\Pi_1^0$ Medvedev degrees.
\end{thmx}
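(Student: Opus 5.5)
The plan follows the standard route used by Simpson for $\ZZ^d$ and by the first two authors for other groups. We embed into a $G$-SFT a rigid, computably decodable structure that simulates a $\ZZ^2$-like computation or an effective $\ZZ$-subshift. We then apply Miller's theorem that effective $\ZZ$-subshifts realize every $\Pi_1^0$ degree.

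Fix a $\Pi_1^0$ class $P\subset\{0,1\}^{\NN}$. Let $Y\subset \{0,1\}^{\ZZ}$ be an effective subshift with the same Medvedev degree as $P$, which exists by Miller~\cite{Miller_2012_twonotesonSubshifts}. It then suffices to build a $G$-SFT $X$ with two properties. First, every $x\in X$ uniformly computes a point of $Y$. Second, every point of $Y$ (or of $P$) uniformly computes a point of $X$. Medvedev degree is invariant under commensurability-type operations, so we may work with a convenient graph model of the hyperbolic plane: a tiling by regular polygons that is quasi-isometric to $G$ and on which $G$ acts cocompactly. SFT rules on the tiling then transfer to $G$ via a finite fundamental domain. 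Here we use that $G$ admits a representation in $\PSL_2(\RR)$ with computable coefficients. This makes the action on the model, and hence the translation between configurations on the model and on $G$, computable. It also keeps decidability of the word problem in the computations.

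The core step is to construct a rigid hierarchical structure in the hyperbolic plane. This will be a Goodman-Strauss / Kari-style SFT whose every configuration draws nested horocyclic strips, or ``hypersquares'', of exponentially growing size. Each level contains a region on which a $\ZZ^2$-like (actually $\ZZ\times\NN$ or Baumslag--Solitar-like) grid is encoded, with arbitrarily long horizontal rows. On these regions we superimpose a Turing-machine/Wang-tile simulation in the manner of Simpson's and Hochman's constructions. This forces a layer of symbols along a distinguished row to belong to the language of $Y$, with consistency enforced between levels so that the rows together define a single point of $Y$. Going from $X$ to $Y$, given $x\in X$ the algorithm locates the hierarchy near the identity, which it can do because the structure is rigid and recognizable by a computable local search, and then reads off a configuration of $Y$. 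Going from $Y$ to $X$, given $y\in Y$ we compute a canonical configuration of the hierarchy. This uses the computable representation to place the tiles, and then fills the computation layers, which is possible because they are deterministic once the input $y$ is fixed.

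The main obstacle is rigidity. We must show that every configuration of the SFT really carries the hierarchy, with no degenerate ``infinite-level'' configurations that evade the computation. Such configurations could carry a computable point and collapse the degree. We must also show that the hierarchy can be decoded computably from an arbitrary configuration. I would first try a horocyclic strip tiling, rows of tiles each split into two above, and prove by a counting and expansion argument that any degenerate configuration without arbitrarily large finite levels violates a local rule. If that fails, I would handle degenerate configurations separately. Either they contain a full infinite grid, which still forces $Y$, or we use Miller's trick of choosing $Y$ with no computable points in any fault-line limit.
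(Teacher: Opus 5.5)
There is a genuine gap in how you get from the hyperbolic plane to $G$. The hierarchy you propose consists of horocyclic rows refining by a fixed factor, as in the binary tiling or the graphs $H_k$, and it does not live on any tiling preserved by $G$. Such horocyclic structures admit no cocompact group of symmetries; the binary tiling is the standard weakly aperiodic example, and passing to a commensurable group does not help. So ``SFT rules on the tiling then transfer to $G$ via a finite fundamental domain'' does not apply. The other reading of your plan is to force such a hierarchy by local rules on a $G$-invariant regular tiling. That is a much harder, Margenstern-type construction, and you give no mechanism for it.

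The paper bridges this with a quasi-isometry invariance result that applies to structures which are not groups. It encodes the graphs $H_k$ as the blueprint $\mathcal H_k$ and shows it is finitely presented, strongly connected and has decidable word problem (\Cref{lem:WP_decidable_Hk}). It then builds one computable map $f\colon S_k^*\to\HH$ inducing quasi-isometries with uniform density and multiplicity constants for every model (\Cref{lem:com_qi_explicit_horrendous}). Finally it applies \Cref{teo:BarBit2026} to get $\mathfrak{M}_{\texttt{SFT}}(G)=\mathfrak{M}_{\texttt{SFT}}(\mathcal H_k)$ (\Cref{mainthm:to_blueprint}). This is the only place \Cref{mainthm:computable_rep} enters: it makes the orbit map $G\to\HH$ computable (\Cref{cor:FuchsianQItoH}). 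The paper proves it via Weil's openness theorem, whereas you simply assume it. It plays no role in producing the computable base configuration, which is a purely combinatorial fact about the hierarchical SFT (\Cref{thm:X_1-sofic}).

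The second missing idea is synchronization. The paper calls this the main difficulty: all computation regions must read the same input, and the Euclidean diagonal-signal trick does not carry over easily. Your phrase ``consistency enforced between levels so that the rows together define a single point of $Y$'' is exactly this step. Routing through Miller's $\ZZ$-subshift makes it harder, since you would have to force one bi-infinite sequence onto horocyclic rows whose vertex densities change by a factor $3$ from one height to the next, and no local rule evidently does that.

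The paper works with $P$ directly and stores one bit per level:
\begin{itemize}
\item $4^n$-borders are colored red, so borders of different levels never cross.
\item Every row and column carries a bit, and red corners force the horizontal and vertical bits to agree, so all $4^n$-borders carry a common bit $y(n)$ (\Cref{prop:border-bit-constancy}).
\item The bits on the rows just above the free rows in the upper half of a $4^n$-border spell a word $w_n$ with $w_{n+1}=w_nw_n'$ (\Cref{lem:input-recursion}).
\item The machine is therefore chosen for $f(P)$, which is Medvedev equivalent to $P$ (\Cref{prop:f-P-and-P}).
\end{itemize}

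Once this is in place, your concern about degenerate configurations disappears. Configurations with fault lines do exist (by compactness) and need not be excluded. For the direction $\mathfrak{X}\to P$, it suffices that every configuration contains finite $4^n$-borders for all $n$ (\Cref{thm:hierarchical-structure-X_0}). For $P\to\mathfrak{X}$, one starts from a computable configuration in which every vertex lies inside some finite border, obtained with direction sequence $(\nearrow\swarrow\nwarrow\searrow)^\infty$.
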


The proof of~\Cref{mainthm:degrees} relies on three preliminary results. In~\cite[Corollary 4.24]{barbieri2024medvedev} it was shown that if two finitely presented groups are quasi-isometric through a computable map, then they have the same space of Medvedev degrees of SFTs; moreover, if two groups admit computable quasi-isometries to a common computable metric space, such as the upper half-plane model $\HH = \{z \in \CC: \operatorname{Im}(z)>0\}$, then they are computably quasi-isometric~\cite[Lemma 4.26]{barbieri2024medvedev}. The first preliminary result that we need is the statement that all cocompact Fuchsian groups are computably quasi-isometric. By the (computable) Schwartz–Milnor lemma, it suffices to verify the following property. 
\begin{thmx}\label{mainthm:computable_rep}
    Every cocompact Fuchsian group admits a representation in $\PSL_2(\RR)$ with matrices whose coefficients are computable real numbers.
\end{thmx}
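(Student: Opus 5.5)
The plan is to use the classification of cocompact Fuchsian groups by signature and to build an explicit fundamental polygon whose side-pairings have computable entries. A cocompact Fuchsian group $G$ has a presentation with generators $a_1,b_1,\dots,a_g,b_g,x_1,\dots,x_r$ and relations $x_j^{m_j}=1$ and $x_1\cdots x_r\prod_i[a_i,b_i]=1$. Its signature $(g;m_1,\dots,m_r)$ satisfies
\[
2g-2+\sum_j\left(1-\tfrac{1}{m_j}\right)>0 .
\]
Conversely, Poincaré's polygon theorem says that any compact convex hyperbolic polygon with a side-pairing satisfying the cycle conditions generates a discrete cocompact group with that presentation. Any two faithful discrete representations of the same abstract group with cocompact image are conjugate in $\PSL_2(\RR)$ up to deformation in Teichmüller space. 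So it suffices to find, for each admissible signature, \emph{one} such polygon whose side-pairing transformations are matrices with computable entries. The abstract group is isomorphic to the image, so this gives a faithful representation with computable coefficients.

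For the construction I would use the standard $(4g+2r)$-gon. Take a polygon symmetric about the origin in the disc model, with vertices at computable angles. For each cone point of order $m_j$, use a vertex with interior angle $2\pi/m_j$ whose two adjacent sides are paired by an elliptic rotation about that vertex. The $4g$ sides for the handles are paired in the pattern $a_ib_ia_i^{-1}b_i^{-1}$, and all remaining vertices form a single cycle whose angle sum must equal $2\pi$. The simplest version is equilateral with a single free parameter, the radius $R$ of the circumscribed circle (or a side length $\ell$). The angle sum at the big vertex cycle is then a continuous, strictly monotone function $S(R)$ built from hyperbolic trigonometric functions of computable angles. It tends to the Euclidean value as $R\to 0$ and to $0$ as $R\to\infty$. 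The positivity condition above is exactly what guarantees that $2\pi$ lies strictly in this range. A computable strictly monotone continuous function hitting a computable value has a computable root, found by bisection using effective strict monotonicity. So $R$ is a computable real.

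Once $R$ is computable, the vertex positions $\tanh(R/2)e^{i\theta_k}$ are computable points. Each side-pairing is the unique orientation-preserving isometry sending one oriented geodesic segment to another of equal length. Its matrix in $\mathrm{PSU}(1,1)$, conjugated to $\PSL_2(\RR)$ by the Cayley transform, has entries given by field operations and square roots of computable numbers. Since computable reals form a real closed field closed under $\exp$, $\cos$ and similar functions, all entries are computable.

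I expect the main obstacle to be making the polygon genuinely satisfy Poincaré's theorem for all signatures, including small cases such as $g=0$ with three cone points, or $g=1$ with one cone point. There the symmetric equilateral model may fail to be convex, or the cone vertices may need unequal sides. In those cases I would instead use triangle-group gluing: take the $(p,q,r)$-triangle with computable angles, whose vertices are given by the explicit hyperbolic law of cosines, and build the orbifold by doubling a polygon assembled from such pieces, again fixing one parameter by bisection. Alternatively, one can avoid the geometry entirely. The set of faithful discrete cocompact representations is a nonempty open subset of the real algebraic set cut out by the relations, modulo conjugation. Nonempty real semialgebraic sets defined over $\QQ$ contain real algebraic points, which are computable. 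This gives real algebraic entries directly, provided one checks that the defining conditions are semialgebraic over $\QQ$; the elliptic relations $x_j^{m_j}=1$ are handled by fixing traces $2\cos(\pi/m_j)$, which are algebraic. I would present this algebraic argument as the main proof, with the polygon construction as the intuition.
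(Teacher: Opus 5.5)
Your main (algebraic) argument has the right ingredients, but as written it stalls exactly at the step you leave as a proviso. You want to apply ``a nonempty semialgebraic set defined over $\QQ$ has a real algebraic point'' to the set $\mathcal{R}_0$ of faithful representations with discrete cocompact image. That requires $\mathcal{R}_0$ itself to be semialgebraic over $\QQ$, and nothing in your sketch provides this: discreteness, faithfulness and cocompactness are not first-order conditions on matrix entries. It can be shown, for instance because $\mathcal{R}_0$ is also closed by Chuckrow-type results and hence a union of connected components of a real algebraic set, but that is extra machinery you did not supply. Passing to the quotient by conjugation only obscures matters, since that quotient no longer lives in matrix space.

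The polygon sketch is not a fallback either. With vertices at fixed angles on a circle of radius $R$, the cone angles also move with $R$. So a single parameter cannot hold them at $2\pi/m_j$ while tuning the remaining cycle. Making that route rigorous needs the signature classification, Poincar\'e's theorem and genuine case work.

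The missing observation is that openness alone suffices. Lift the generators to $A^0=(A^0_1,\dots,A^0_n)\in\operatorname{SL}_2(\RR)^n$. For each relator $r_j$ of a finite presentation of $\Gamma$ you have $r_j(A^0)=\varepsilon_j I_2$ with $\varepsilon_j\in\{\pm1\}$. Let $V\subset\RR^{4n}$ be cut out by $\det A_i=1$ and $r_j(A)=\varepsilon_j I_2$. These are polynomial equations with integer coefficients, and each point of $V$ defines a homomorphism $\Gamma\to\PSL_2(\RR)$. By Weil's theorem there is an open box $B$ with rational corners around $A^0$ such that every point of $V\cap B$ defines a representation in $\mathcal{R}_0$. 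Since $V\cap B$ is a nonempty semialgebraic set defined over $\QQ$, Tarski--Seidenberg transfer gives a point of it with real algebraic, hence computable, coordinates.

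With this repair your route is correct, and it even yields algebraic coefficients, i.e.\ the result of Waterman that the paper cites. The paper also rests on Weil's openness, but it does not use real-closed-field transfer. Instead it specializes a transcendence basis $\alpha_1,\dots,\alpha_k$ of the coefficient field to nearby computable, algebraically independent numbers $q_ie^{\sqrt{p_i}}$, using Lindemann--Weierstrass and the implicit function theorem. A field isomorphism then carries the relations over. This avoids model theory but only produces computable, not algebraic, entries.
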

This statement follows from a known theorem \cite{waterman_fuchsian_1985}, which asserts that the same coefficients can be taken algebraic. For the sake of completeness, we present a distinct argument. Our proof for \Cref{mainthm:computable_rep} is new, and we believe that the argument can be adapted to more general classes of groups. In~\Cref{sec:fuchsian} we will provide a brief introduction to Fuchsian groups and prove~\Cref{mainthm:computable_rep}. The main technical tool is a famous theorem of Weil on the topology of the space of representations of discrete groups by connected Lie groups. 

Given a cocompact Fuchsian group $G$ and a representation as in~\Cref{mainthm:computable_rep}, a computable quasi-isometry to the upper half-plane model is given by the orbit map given by $g \mapsto g\cdot i \in \HH$ induced by the matrices in the representation, where the action is the one inherited from $\PSL_2(\RR)$ by Möbius transformations. Therefore,~\Cref{mainthm:computable_rep} and the results cited above reduce the proof of~\Cref{mainthm:degrees} to computing the Medvedev degrees of SFTs on a single cocompact Fuchsian group.

Nonetheless, our strategy is not to construct all $\Pi_1^0$ degrees on a cocompact Fuchsian group. Instead, we do it on an algebraic structure which codifies a family of graphs that are quasi-isometric to the hyperbolic plane, but where the local geometry is more convenient to build hierarchical structures. Given an integer $k \geq 2$, we consider the graph $H_k = (V_k,E_k)$ whose vertex set is $V_k=\ZZ^2$ and whose edges are \[ E_k = \bigl\{ \{(n,m), (n+1,m)\} : n,m \in \ZZ \bigl\} \cup \bigl\{ \{(n,m), (kn ,m-1)\} : n,m \in \ZZ \bigr\}. \]
 \begin{figure}[ht!]
    \centering
    \begin{tikzpicture}[scale = 3.5]
   \draw[thick, gray] (0,1.04) -- (0,1+1/3);
   \draw[thick, gray] (3,1.04) -- (3,1+1/3);
   \draw[thick, gray] (-0.04,1) -- (-0.5,1);
   \draw[thick, gray] (-0.03,4/9) -- (-0.25,4/9);
   \draw[thick, gray] (-0.02,1/9) -- (-0.1,1/9);
   \draw[thick, gray] (0,0) -- (-0.030,0);
   \draw[thick, gray] (3,1) -- (3.5,1);
   \draw[thick, gray] (3,4/9) -- (3.25,4/9);
   \draw[thick, gray] (3,1/9) -- (3.1,1/9);
   \draw[thick, gray] (3,0) -- (3.03,0);
    \foreach \i in {0,...,2}{
        \draw[thick] (\i,1) -- (\i+0.96,1);
    }
    \foreach \i in {0,...,8}{
        \draw[thick] (\i/3,4/9) -- (\i/3+1/3-0.03,4/9);
    }
    \foreach \i in {0,...,26}{
        \draw[thick] (\i/9,1/9) -- (\i/9+1/9-0.01,1/9);
    }
    \foreach \i in {0,...,3}{
        \draw[fill = black] (\i,1) circle (0.04);
        \draw[thick] (\i,1) -- (\i,4/9+0.03);
    }
    \foreach \i in {0,...,9}{
        \draw[fill = black] (\i/3,4/9) circle (0.025);
        \draw[thick] (\i/3,4/9) -- (\i/3,1/9+0.02);
    }
    \foreach \i in {0,...,27}{
        \draw[fill = black] (\i/9,1/9) circle (0.0125);
         \draw[thick] (\i/9,1/9) -- (\i/9,0.01);
    }
    
    \draw[thick] (0,0) -- (3,0);
    \foreach \i in {0,...,81}{
        \draw[fill = black] (\i/27,0) circle (0.01);
        \draw[thick, gray] (\i/27,0) -- (\i/27,-1/18); 
    }
    \end{tikzpicture}
    \caption{A finite portion of the graph $H_3$.}
    \label{fig:grafoH3}
\end{figure}
A portion of $H_3$ is shown in~\Cref{fig:grafoH3}. We shall consider a structure $\mathcal{H}_k$ which codifies a compact family of rooted graphs that locally look like $H_k$, equipped with a partial action of a free monoid which acts changing the root by moving along labeled edges. The advantage of these structures, which we call \define{blueprints}, is that they can be codified by a subshift of finite type on any finitely presented group which is quasi-isometric to any of their graphs. In particular, using~\cite[Theorem 5.15]{BarBit2026}, we are able to show the following.

\begin{thmx}\label{mainthm:to_blueprint}
    Let $G$ be a finitely presented group which is computably quasi-isometric to $\HH$, and let $k\geq 2$. The classes of $G$-SFTs and $\mathcal H_k$-SFTs attain the same class of Medvedev degrees. 
\end{thmx}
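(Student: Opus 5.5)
Our plan is to deduce this from \cite[Theorem 5.15]{BarBit2026}. Roughly, that result says the following. Let $\mathcal B$ be a blueprint, a compact, effectively described family of labeled rooted graphs with a partial free-monoid action. Let $G$ be a finitely presented group that is computably quasi-isometric to the graphs of $\mathcal B$. Then $G$-SFTs and $\mathcal B$-SFTs simulate each other: each $\mathcal B$-SFT $Y$ yields a $G$-SFT $X$ with $X\equiv_M Y$, and conversely. Theorem~\ref{mainthm:to_blueprint} then reduces to checking its hypotheses for $\mathcal B=\mathcal H_k$ and for $G$ finitely presented and computably quasi-isometric to $\HH$. We will follow three steps.

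\textbf{Step 1: $\mathcal H_k$ is an effective blueprint.} Describe $\mathcal H_k$ as the closed set of configurations over a finite alphabet of edge labels. The labels are: horizontal right/left, ``down'' (from $(n,m)$ to $(kn,m-1)$), and the $k$ ``up'' edges, distinguished by the residue of $n$ modulo $k$. These configurations satisfy finitely many local relations. Horizontal moves commute with up/down in the form $\mathrm{down}\circ\mathrm{right}=\mathrm{right}^k\circ\mathrm{down}$. Up and down are mutually inverse on their domains. The residue labels are consistent, meaning each vertex receives the next residue label mod $k$ after one step right. We will check that every rooted graph in this family is locally isomorphic to $H_k$. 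The relations are finite and decidable, so the blueprint is effective and its SFTs have $\Pi^0_1$ Medvedev degrees.

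\textbf{Step 2: a computable quasi-isometry $H_k\to\HH$.} Send $(n,m)\mapsto n k^{m}+ i\,k^{m}$, or the analogous point in the horocyclic model. Horizontal edges have hyperbolic length bounded by a constant. Vertical edges have length $\log k$. A standard comparison with the Baumslag--Solitar-like horobrick tessellation shows the map is a quasi-isometry with coarsely dense image. This map is clearly computable. By composing with a quasi-inverse of the computable quasi-isometry $G\to\HH$, we obtain a computable quasi-isometry between $G$ and $H_k$. The composition is computable because $\HH$ is a computable metric space; we use \cite[Lemma 4.26]{barbieri2024medvedev} in spirit. We must also make sure the quasi-isometry is uniform over all rooted graphs in $\mathcal H_k$, not just $H_k$ itself. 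This should follow because every graph in the family is locally $H_k$. We expect each such graph to be isomorphic either to $H_k$ or to a limit of $H_k$ with a ``branch at infinity'', and these limits are still uniformly quasi-isometric to $\HH$.

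\textbf{Step 3: apply \cite[Theorem 5.15]{BarBit2026} in both directions.} Given a $\mathcal H_k$-SFT $Y$, encode a pattern of $\mathcal H_k$ into $G$ through the quasi-isometry. To do this, we use an SFT on $G$ whose configurations draw a copy of a graph in $\mathcal H_k$ on $G$, with finitely many tiles per group element. Finite presentability of $G$ is used here, to check locally that the drawn structure closes up correctly. We then superimpose $Y$. The resulting $G$-SFT is Medvedev equivalent to $Y$. Computability of the quasi-isometry gives that each configuration computes an element of $Y$, and conversely. The reverse direction is symmetric: a $G$-SFT is encoded in a $\mathcal H_k$-SFT by drawing a Cayley graph of $G$ on the graphs of $\mathcal H_k$.

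We expect the main obstacle to be Step 2's uniformity. The hypotheses of \cite[Theorem 5.15]{BarBit2026} require that \emph{every} graph in the blueprint be computably quasi-isometric to $G$ with uniform constants. The graphs in $\mathcal H_k$ include degenerate limits. To handle this, we will first try to show directly that every graph in $\mathcal H_k$ is a quotient-free covering of the same tessellation of $\HH$ by horobricks. If that fails, we will shrink $\mathcal H_k$ by adding local rules until only such graphs remain.
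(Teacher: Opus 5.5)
\textbf{Gap 1: the uniform quasi-isometry is never constructed.} Your overall route is the paper's: apply the transference theorem of \cite{BarBit2026} and build the computable quasi-isometries by passing through $\HH$. The gap is in the step you yourself flag as the main obstacle, which you leave unresolved. For every model $\varphi\in\mathcal M(\mathcal H_k)$, the transference theorem needs a quasi-isometry $\mathcal G(\mathcal H_k,\varphi)\to\HH$ (hence to $G$). It must be produced by a single algorithm from $\varphi$, with a uniform bound on the number of preimages of each vertex.

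Your map $(n,m)\mapsto nk^m+ik^m$ is written in the coordinates of the single graph $H_k$, and the model graphs are not all isomorphic to $H_k$. A model is coded by the states of the successive ancestors of the root, i.e.\ by a $k$-adic digit sequence. Only when these digits are eventually $0$ or eventually $k-1$ does the graph have the ``fault line'' of $H_k$, namely two vertices with no common ancestor; otherwise any two vertices have a common ancestor. Local isomorphism to $H_k$ says nothing about large-scale geometry, so it cannot by itself give a quasi-isometry to $\HH$.

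Your fallback of shrinking $\mathcal H_k$ by local rules is not available. The theorem is about $\mathcal H_k$ itself. In any case, the models whose graph is isomorphic to $H_k$ do not form a closed set, so no local rules can isolate them.

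\textbf{The missing idea: define the map on words, not on one graph.} Let $h(w)$ be the number of $\overline b$'s minus the number of $b_j$'s in $w$. Set
\begin{itemize}
\item $f(\varepsilon)=i$;
\item $f(ws)=f(w)\pm k^{h(w)}$ for $s=a_j$ (resp.\ $\overline a_j$);
\item $f(ws)=f(w)+i\bigl(k^{h(ws)}-k^{h(w)}\bigr)$ for $s\in\{\overline b,b_0,\dots,b_{k-1}\}$.
\end{itemize}
One checks directly that $f(wu)=f(wv)$ for every $(u,v)\in R_k$. So $f$ descends to every model graph simultaneously, and it is computable without consulting $\varphi$ at all. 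Its explicit values on normal forms give uniform separation $\min\bigl(2\ln\tfrac{1+\sqrt5}{2},\ln k\bigr)$, uniform density and uniform multiplicity. Combine this with a shortlex search (your ``quasi-inverse'') and with constant maps between model spaces: to the unique group model in one direction, and to $\varphi_\circ$ in the other. This yields computable quasi-isometric images in both directions.

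\textbf{Gap 2: two hypotheses of the cited theorem are skipped.} You do not check strong connectivity (which is trivial) or decidable word problem for both blueprints. For $\mathcal H_k$, decidability does not follow from the relations being finite. It needs an argument, for instance reduction to normal forms $\overline b^n$, $r(i,t)\overline b^n$, $b_ib_0^n\ell(0,t)$, etc., using that $h$ is invariant under the relations. For $G$ it follows from hyperbolicity.
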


A brief introduction to Medvedev degrees, subshifts, and blueprints shall be given in~\Cref{sec:QI}. In that section we shall also introduce the blueprints $\mathcal{H}_k$ and show that they satisfy the hypotheses needed to apply~\cite[Theorem 5.15]{BarBit2026}. 

With~\Cref{mainthm:to_blueprint} we have reduced our goal to the construction of SFTs with arbitrary $\Pi_1^0$ Medvedev degree on $\mathcal{H}_k$ for some $k \geq 2$. We work with $k=3$ for convenience, and prove that $\mathcal H_3$-SFTs indeed attain all $\Pi_1^0$ Medvedev degrees.%
\begin{thmx}\label{mainthm:grafo_medved}
    The space of Medvedev degrees of $\mathcal{H}_3$-SFTs is the set of all $\Pi_1^0$ Medvedev degrees.
\end{thmx}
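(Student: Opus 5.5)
Two inclusions are needed. For the upper bound, every $\mathcal H_3$-SFT is an effectively closed subset of a computably presented space, since the blueprint has a computable structure and finitely many forbidden patterns. Hence its Medvedev degree is $\Pi_1^0$; this is the routine half, and I would dispatch it along the lines of the analogous statement for groups with decidable word problem.

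For the lower bound, fix a nonempty $\Pi_1^0$ set $P\subset\{0,1\}^{\NN}$. The goal is an $\mathcal H_3$-SFT $X_P$ with $X_P$ Medvedev equivalent to $P$. I would follow the Simpson/Hanf--Myers scheme: build an SFT $X_P$ whose configurations consist of three layers.
\begin{enumerate}
\item A rigid hierarchical layer, which forces computation zones of unbounded size.
\item A layer carrying an arbitrary sequence $x\in\{0,1\}^{\NN}$, written on a set of cells that every zone can read.
\item A Turing machine layer that, in each zone, runs the machine enumerating the complement cylinders of $P$ on the prefix of $x$ for as long as the zone allows, and forbids halting with rejection.
\end{enumerate}
Then $x$ must lie in $P$, and conversely every $x\in P$ extends to a configuration. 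The two reductions follow. From a configuration one computes $x$, giving $P\le X_P$. From $x\in P$ one computes a configuration, giving $X_P\le P$. The second reduction requires the hierarchical and computation layers to be computable uniformly from $x$, modulo the choice of a computable "base" configuration of the structure.

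The geometry of $H_3$ is what makes this work. The horizontal lines are copies of $\ZZ$, and each vertex $(n,m)$ has a vertical edge to $(3n,m-1)$. Going down one level therefore dilates horizontally by $3$, so the graph behaves like a tree of $\ZZ$-rows, as in Figure~\ref{fig:grafoH3}. I would use this self-similarity directly. A finite-type rule makes the root-level blocks of width $3^j$ at level $m-j$ sit under a single vertex at level $m$. Squares of the form $[3^j n,3^j(n+1)]\times[m-j,m]$ then serve as the zones, with a space-time diagram of height $j$ and growing width. The sequence $x$ would be propagated vertically, each symbol copied along vertical edges, so that the symbol $x_i$ is available in every zone. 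Alternatively, $x$ could be encoded on a single horizontal line and transported downward by a local rule. The computations would be run with the signal-based Robinson-style machinery, with one computation per zone, and each zone reading the first $j$ bits of $x$.

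The main obstacle is rigidity in the blueprint setting. Configurations of $\mathcal H_3$ are rooted graphs that only locally look like $H_3$, and the partial monoid action only moves the root along labeled edges. Two things must therefore be shown. First, local rules must force, in every configuration, arbitrarily large zones that all see the same $x$. Second, the hierarchy must leave no degenerate configurations, for instance ones where the zone structure has only finitely many levels, that escape the computation constraint. I would try first a counter layer. Each horizontal row carries a $\ZZ/3$ phase marking the position of a vertex among its $3$ vertical children. Each vertex carries a level counter mod a large constant, used to detect consistent nesting, and consistency is enforced along vertical edges. I would then prove that every configuration contains nested squares of every height. Configurations with a "limit" line at infinite depth would be handled by showing that they still contain zones of unbounded height above it. Verifying that $x$ is globally consistent, meaning unique across all zones, is the remaining point. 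It should follow from the fact that the graph is connected through vertical and horizontal copying rules.
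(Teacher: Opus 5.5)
Your architecture matches the paper's: a hierarchical layer, one shared bit sequence, a Turing machine run in every zone, and both reductions through a computable base configuration. The $\Pi_1^0$ upper bound is also fine. The gap is that the two points you defer as ``the main obstacle'' make up essentially the whole proof, and the mechanisms you suggest for them do not work.

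\textbf{The hierarchy.} The cones $[3^jn,3^j(n+1)]\times[m-j,m]$ already exist for all $n,m,j$ in every model graph, so no rule is needed to create them. What local rules must do is \emph{select} zones of unbounded height, present in every configuration, so that each vertex knows from bounded information where its zone starts and where the input sits.
\begin{itemize}
\item Your $\ZZ/3$ phase is already the state $\varphi(w)$ of the model, so it adds nothing.
\item A level counter modulo a fixed constant, incremented along vertical edges, only records depth modulo that constant. It yields structure of bounded height, so it cannot force nested zones of every height or exclude the degenerate configurations you mention.
\end{itemize}
Forcing this needs genuinely self-similar matching rules in which $2^{n+1}$-blocks are forced out of $2^n$-blocks for every $n$. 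The paper builds a hyperbolic Robinson tiling $\mathfrak X_0$ with extra decorations:
\begin{itemize}
\item alignment marks, which prevent fault lines;
\item dot marks, which make the lower boundary of each block uniformly oriented. These are needed because the bottom side of a $2^n$-block is $3^{2^n}$ times longer than its top, so Robinson's shared-boundary argument fails;
\item white arrows filling the degree-$3$ vertices.
\end{itemize}
It then proves by induction that every configuration is a well-aligned arrangement of decorated $2^n$-blocks for all $n$ (Theorem~\ref{thm:hierarchical-structure-X_0}). It also builds, by prescribing a periodic sequence of directions, a computable configuration in which every vertex lies inside some $4^n$-border. Your reduction $X_P\preceq_{\mathfrak m}P$ silently assumes such a configuration exists.

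\textbf{The shared input.} Copying along vertical edges alone enforces nothing on the new vertical rays that start at degree-$3$ vertices. Copying along all edges makes the copied symbol constant on the connected graph. A distinguished horizontal line cannot be forced by an SFT either: moving the root upward produces limit configurations without it. So with a finite alphabet you need a structure that assigns positions to the bits and lets every zone read them in order; your ``each zone reads the first $j$ bits'' has no implementation. In the paper this is again supplied by the hierarchy:
\begin{itemize}
\item Alternating colours single out the $4^n$-borders, which are pairwise disjoint across scales.
\item Every row and column carries a bit, and corners of these borders identify their horizontal and vertical bits.
\item All $4^n$-borders carry one common bit $y(n)$, because lower corners of a $4^n$-border lie vertically above upper corners of $4^n$-borders in the row beneath (Proposition~\ref{borders-are-nicely-aligned}).
\item Every $y$ occurs because rows and columns through corners of $4^n$-borders miss the corners of smaller ones.
\item Obstruction signals make free rows and columns locally recognizable.
\end{itemize}
Finally, a $4^n$-zone does not read $y(1)\cdots y(n)$ but a word $w_n$ with $w_{n+1}=w_nw_n'$. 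The machine is therefore chosen for $f(P)$, which is Medvedev equivalent to $P$ since $f$ is a computable injection with a computable left inverse.
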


Putting~\Cref{mainthm:grafo_medved} along with~\Cref{mainthm:to_blueprint} and (the consequence of)~\Cref{mainthm:computable_rep}, we obtain the proof of~\Cref{mainthm:degrees}.

The proof of~\Cref{mainthm:grafo_medved} is the most technical part of this article, and spans~\Cref{sec:sofic,sec:medvedev,sec:appendix}. Its general lines are very similar to those in Simpson's proof about Medvedev degrees of $\ZZ^d$-SFTs~\cite{Simpson_2014_Medvedev}. That is, we construct an SFT on $\mathcal{H}_3$ which contains a hierarchical collection of grids, which we use to implement space-time diagrams of a Turing machine. Using that structure, we show that we can prescribe the Medvedev degree of the $\mathcal H_3$-SFT by synchronizing information along all grids and by encoding the Turing machine suitably. A nontrivial difficulty, which leads to some technical considerations, is ensuring that the Medvedev degree of the SFT only depends on this machine, and is not affected by any of the other elements of the construction.

The basic ingredient one needs for this argument is an adaptation of Robinson's tiling of the plane~\cite{robinson_undecidability_1971} to the hyperbolic setting. Such an adaptation was first obtained by Margenstern in~\cite{Margenstern_2008_domino_hyp} (implemented as heptagonal Wang tiles of $\HH$), and later simplified by Goodman-Strauss in~\cite{goodman-strauss_hierarchical_2010} (implemented as Wang tiles on the faces of $H_5$, and more generally $H_k$ with $k\equiv 1\mod 4$). For the sake of completeness, and in order to verify some technical requirements related to Medvedev degrees and to the blueprint formalism, we give a complete presentation of a $\mathcal H_3$-SFT which can be considered as a hyperbolic version of Robinson's tiling (in the sense of a $\ZZ^2$-SFT). 

The construction of the aforementioned $\mathcal H_3$-SFT is long and tedious, and we defer it to~\Cref{sec:appendix}. In order to show~\Cref{mainthm:grafo_medved}, we instead work with one of its topological factors, which only carries the essential hierarchical structure. This subshift, denoted $\mathfrak X_1$, is defined and studied in~\Cref{sec:sofic}.~\Cref{mainthm:grafo_medved} is then proved in~\Cref{sec:medvedev}, under the assumption that $\mathfrak{X}_1$ is a factor of an SFT with zero Medvedev degree and satisfying a further technical condition. This assumption is finally proved in~\Cref{sec:appendix}.




\subsection*{Acknowledgments} We would like to thank M. Sablik for explaining to us the method of putting bits on the borders of Robinson's SFT, which we use to prove \Cref{mainthm:grafo_medved}. S. Barbieri was supported by ANID FONDECYT regular 1240085, AMSUD240026 and ECOS230003. N. Carrasco-Vargas was supported by a grant from the Priority Research Area SciMat under the Strategic Programme Excellence Initiative at Jagiellonian University, partially supported by the Simons Foundation grant (award no. SFI-MPS-T-Institutes-00010825), and from State Treasury funds as part of a task commissioned by the Minister of Science and Higher Education under the project “Organization of the Simons Semesters at the Banach Center - New Energies in 2026-2028” (agreement no. MNiSW/2025/DAP/491). P. Toussaint was supported by ANID-Subdirecci\'on de Capital Humano/Mag\'ister Nacional/2025-22251030 and ECOS230003.    
\subsection*{AI statement}
Kimi AI and ChatGPT were used to search for typos and small mistakes. The writing of this text and all of the proofs herein are human-generated.
\section{Computable representations of Fuchsian groups}\label{sec:fuchsian}

In this section we show~\Cref{mainthm:computable_rep}, that is, that every cocompact finitely generated Fuchsian group admits a representation in $\operatorname{PSL}_2(\RR)$ whose coefficients are computable. We shall assume the reader has some familiarity with basic notions of computability and provide a brief introduction to Fuchsian groups. We refer the reader interested in a more extensive review of Fuchsian groups to~\cite{katok1992fuchsian},~\cite[Chapter 1]{shimura1971introduction} and~\cite[Chapter 9]{EinsiedlerWard_ergodictheory_book}.

\subsection{Fuchsian groups}

We denote by $\operatorname{SL}_2(\RR)$ the special linear group of $2 \times 2$ matrices with determinant $1$, and endow it with its usual topology. Let $\PSL_2(\RR) = \operatorname{SL}_2(\RR) / \{\pm I_2\}$, where $I_2$ is the $2 \times 2$ identity matrix. The quotient topology is well defined on $\PSL_2(\RR)$ and makes it a connected locally compact topological group. Moreover, there is a natural smooth structure on $\PSL_2(\RR)$ that makes it a Lie group. See \cite[Chapters 7 and 21]{lee2003introduction} for an introduction to Lie groups.

Let $\HH = \{ z \in \CC : \IM (z) > 0 \}$ be the upper half-plane model of the hyperbolic plane. The group $\PSL_2(\RR)$ acts from the left on $\HH$ by Möbius transformations, namely,
\begin{equation*}
    \text{if } g = \pm\begin{pmatrix}
    a & b \\
    c & d
\end{pmatrix} \in \PSL_2(\RR) \text{ then } g\cdot z = \frac{az+b}{cz+d} \text{ for all }z\in\HH.
\end{equation*}

Given $\Gamma \leqslant \PSL_2(\RR)$, we denote by $\Gamma \backslash \HH = \{\Gamma x : x \in \HH \}$ the space of orbits under this action with the quotient topology. $\Gamma$ is said to be \textit{cocompact} if $\Gamma \backslash \HH$ is compact.

\begin{definition}
    A \define{Fuchsian group} is a subgroup $\Gamma \leqslant \PSL_2(\RR)$ such that the induced topology on $\Gamma$ is the discrete topology.
\end{definition}

Each subgroup $\Gamma\leqslant \PSL_2(\RR)$ also has a natural left action on $\PSL_2(\RR)$ by left multiplication whose orbit space we denote by $\Gamma \backslash \PSL_2(\RR)$. The following proposition is well-known.

\begin{proposition}
    \label{prop:cocompact}
    A Fuchsian group $\Gamma$ is cocompact if and only if $\Gamma \backslash \PSL_2(\RR)$ is compact.
\end{proposition}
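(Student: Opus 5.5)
The plan is to identify $\Gamma\backslash \HH$ with a quotient of $\Gamma\backslash\PSL_2(\RR)$ and transfer compactness in both directions. Consider the orbit map $\pi\colon \PSL_2(\RR)\to\HH$, $\pi(g)=g\cdot i$. It is continuous and surjective, since $\PSL_2(\RR)$ acts transitively on $\HH$. Its fibers are the left cosets $g K$ of the stabilizer $K=\operatorname{Stab}(i)=\PSO(2)$, and $K$ is compact because it is the image of the circle $\operatorname{SO}(2)$. The map $\pi$ is $\Gamma$-equivariant for left multiplication: $\pi(\gamma g)=\gamma\cdot\pi(g)$. Hence it descends to a continuous surjection
\[
\bar\pi\colon \Gamma\backslash\PSL_2(\RR)\to\Gamma\backslash\HH,\qquad \Gamma g\mapsto \Gamma(g\cdot i).
\]
Continuity holds because both quotient maps are continuous and the quotient topology has the universal property.

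\emph{Forward direction.} Assume $\Gamma\backslash\PSL_2(\RR)$ is compact. Then $\Gamma\backslash\HH=\bar\pi(\Gamma\backslash\PSL_2(\RR))$ is compact, being a continuous image of a compact space.

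\emph{Converse.} Assume $\Gamma\backslash\HH$ is compact, and first find a compact set $C\subset\HH$ whose image under the quotient map $q\colon\HH\to\Gamma\backslash\HH$ is everything. To do this, cover $\HH$ by the open hyperbolic balls $B(x,1)$. Since $q$ is an open map (the image of an open set $U$ has saturation $\bigcup_\gamma \gamma U$, which is open), the sets $q(B(x,1))$ form an open cover of $\Gamma\backslash\HH$. Extract a finite subcover $q(B(x_1,1)),\dots,q(B(x_n,1))$ and let $C=\bigcup_j \overline{B(x_j,1)}$, which is compact because closed hyperbolic balls are compact. Next show that $\tilde C=\pi^{-1}(C)$ is compact in $\PSL_2(\RR)$. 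The cleanest route uses the Iwasawa/$KAN$ decomposition $\PSL_2(\RR)\cong \HH\times K$ as topological spaces, via $g=n_x a_y k$ with $g\cdot i=x+iy$. Under this homeomorphism $\pi$ is the projection to the first factor, so $\pi^{-1}(C)\cong C\times K$, which is compact. Finally, for every $g$ there is $\gamma$ with $\gamma\cdot(g\cdot i)\in C$, so $\gamma g\in\tilde C$. The quotient map $\PSL_2(\RR)\to\Gamma\backslash\PSL_2(\RR)$ therefore maps $\tilde C$ onto the whole space, and the quotient is compact as a continuous image of a compact set.

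The main point requiring care is the properness of $\pi$, that is, that preimages of compact sets are compact. I would handle it by the explicit $KAN$ homeomorphism, where $n_x$ and $a_y$ depend continuously on $x+iy$. Alternatively, one can argue with a sequence $g_m$ with $g_m\cdot i\to z$: write $g_m=h_m k_m$ with $h_m=n_{x_m}a_{y_m}$ converging, then extract a convergent subsequence from $k_m$ in the compact group $K$. Discreteness of $\Gamma$ is not actually needed for this equivalence.
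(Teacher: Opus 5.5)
Your proof is correct and takes essentially the same route as the paper. Both arguments use the $\Gamma$-equivariant orbit map $g\mapsto g\cdot i$ from $\PSL_2(\RR)$ onto $\HH$, whose fibers are cosets of the compact stabilizer $K=\PSO_2(\RR)$, and transfer compactness along it. The only difference is that the paper cites Shimura for the homeomorphism $\PSL_2(\RR)/\PSO_2(\RR)\cong\HH$ and for the transfer of compactness between $\Gamma\backslash\PSL_2(\RR)$ and $\Gamma\backslash(\PSL_2(\RR)/K)$, whereas you prove both directly, using the $KAN$ homeomorphism $\PSL_2(\RR)\cong\HH\times K$ and a compact set $C\subset\HH$ that surjects onto $\Gamma\backslash\HH$.
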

\begin{proof}
    Let $K = \{g \in \PSL_2(\RR) : g\cdot i = i \}$ be the stabilizer subgroup of $i$. A straightforward computation shows that $K$ is the projective special orthogonal group, that is, $K = \PSO_2(\RR) = \operatorname{SO}_2(\RR) / \{ \pm I_2 \}$, where
    \begin{equation*}
        \operatorname{SO}_2(\RR) = \left\{ \begin{pmatrix}
            \cos \theta & -\sin \theta \\
            \sin \theta & \cos \theta
        \end{pmatrix} : \theta \in \RR \right\}.
    \end{equation*}
    It follows that we have a natural bijection $\varphi \colon \PSL_2(\RR)/\PSO_2(\RR) \to \HH$ given by $\varphi(g\PSO_2(\RR)) = g\cdot i$. Moreover, this bijection is a homeomorphism (see~\cite[Theorem 1.1]{shimura1971introduction}). Put $S = \PSL_2(\RR)/\PSO_2(\RR)$. As before, $\Gamma$ acts on $S$ by left multiplication and we can form the double quotient \[\Gamma \backslash S = \Gamma \backslash \left( \PSL_2(\RR) / \PSO_2(\RR)\right).\] The homeomorphism $\varphi$ preserves the respective actions of $\Gamma$, so $\Gamma \backslash S$ is homeomorphic to $\Gamma \backslash \HH$. Since $\PSO_2(\RR)$ is compact, it follows that $\Gamma \backslash \PSL_2(\RR)$ is compact if and only if $\Gamma \backslash S$ is compact (see \cite[Proposition 1.9]{shimura1971introduction}) and thus the result follows.
\end{proof}

\subsection{Computable representations}

\begin{definition}
    A real number $x$ is \define{computable} if there exists a computable map $f\colon \NN \to \QQ$ with the property that for each $n \in \NN$ we have $|f(n)-x|\leq 2^{-n}$.
\end{definition} 
In less technical terms, a real number $x$ is computable if and only if there is an algorithm that on input $n$ computes a rational number which approximates $x$ with an error of at most $2^{-n}$.

Next we will show that every cocompact Fuchsian group $\Gamma$ can be represented by matrices whose coefficients are computable real numbers. We shall need the following result of A. Weil.

\begin{theorem}[Weil~\cite{weil_discrete_1960}]\label{thm:Weil}
    Let $G$ be a connected Lie group and $\Gamma$ a discrete group. Denote
    \begin{equation*}
        \mathcal{R} = \{\varphi\colon \Gamma \to G \mid \varphi \text{ is a group homomorphism} \}
    \end{equation*}
    and give it the subspace topology inherited from the product topology on $G^\Gamma$. Let $\mathcal{R}_0$ be the subset of all injective group homomorphisms $\varphi\colon \Gamma \to G$ such that $\varphi(\Gamma)$ is discrete in $G$ with compact quotient space $\varphi(\Gamma)\backslash G$. Then $\mathcal{R}_0$ is an open subset of $\mathcal{R}$.
\end{theorem}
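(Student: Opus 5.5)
The plan is to use the Ehresmann--Thurston deformation principle applied to the compact manifold $M=\varphi_0(\Gamma)\backslash G$, where $\varphi_0\in\mathcal R_0$ is fixed. The goal is to show that every $\varphi\in\mathcal R$ sufficiently close to $\varphi_0$ is again injective, with discrete and cocompact image.

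\emph{Step 1: reduce to finitely many generators.} Since $\varphi_0(\Gamma)$ acts freely, properly and cocompactly on $G$ by left multiplication, the Schwarz--Milnor lemma shows that $\Gamma$ is finitely generated, say by a finite set $S$. Hence the topology of $\mathcal R$ coincides with the one induced from the embedding $\mathcal R\hookrightarrow G^S$, $\varphi\mapsto(\varphi(s))_{s\in S}$. So "close to $\varphi_0$" only means that the finitely many matrices $\varphi(s)$ are close to $\varphi_0(s)$.

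\emph{Step 2: pass to the universal cover.} Let $p\colon\widetilde G\to G$ be the universal cover and $\widetilde\Gamma=p^{-1}(\varphi_0(\Gamma))$. Then $\widetilde\Gamma$ is discrete and cocompact in $\widetilde G$, and $\widetilde\Gamma=\pi_1(M)$; it is a central extension of $\Gamma$ by $\pi_1(G)$. A representation $\varphi$ near $\varphi_0$ induces a homomorphism $\widetilde\varphi\colon\widetilde\Gamma\to G$, namely $\widetilde\varphi=\varphi\circ\varphi_0^{-1}\circ p$. This $\widetilde\varphi$ is close to the holonomy $\widetilde\varphi_0=p|_{\widetilde\Gamma}$ of the tautological $(G,G)$-structure on $M$, where $G$ acts on itself by left translations and the developing map is $p$.

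\emph{Step 3: Ehresmann--Thurston and completeness.} Cover $M$ by finitely many small charts. A holonomy close to $\widetilde\varphi_0$ can be realized by perturbing the transition maps, because only finitely many chart overlaps are involved. This gives a $(G,G)$-structure on $M$ with holonomy $\widetilde\varphi$ and an equivariant local diffeomorphism $D\colon\widetilde G\to G$. Fix a left-invariant Riemannian metric on $G$; it is $G$-invariant and complete. Pulling it back by $D$ gives a metric on $M$, and this metric is complete because $M$ is compact. A local isometry from a complete manifold is a covering, so $D$ is a covering map.

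\emph{Step 4: read off discreteness, cocompactness and injectivity.} Equivariance $D(\gamma x)=\widetilde\varphi(\gamma)D(x)$ shows that $\widetilde\varphi(\widetilde\Gamma)=\varphi(\Gamma)$ acts properly discontinuously on $G$. Indeed, $\widetilde\Gamma$ acts properly on $\widetilde G$, and $D$ is a covering onto $G$. Hence $\varphi(\Gamma)$ is discrete. Its quotient is the image of the compact manifold $M$ under the map induced by $D$, hence compact. For injectivity, observe the following. If $\varphi(\gamma)=1$ for some $\gamma\neq 1$, then a lift of $\gamma$ acts on $\widetilde G$ as a deck transformation of $D$. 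By continuity in the deformation, that deck transformation must be the element of $\pi_1(G)=\ker p$ it was for $\varphi_0$, which forces $\gamma=1$.

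The main obstacle is precisely this bookkeeping between $\Gamma$, $\widetilde\Gamma$ and the deck group of $D$. One has to check that the deck group of $D$ is exactly $\ker p\cap\widetilde\Gamma$. I would try to prove this by a continuity argument: the deck group is a discrete subgroup of the center of $\widetilde G$ that varies continuously with $\varphi$, hence is locally constant. If that fails, the fallback is Weil's original approach: a direct estimate showing that a fixed compact fundamental domain still tiles $G$ under a perturbed generating set.
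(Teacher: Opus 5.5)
The paper gives no proof of this theorem; it quotes it from Weil, so your proposal has to stand on its own. The architecture is right. It is the argument now usually packaged as the Ehresmann--Thurston principle, of which Weil's own proof is a precursor. Steps 1--3 are fine, and so is cocompactness in Step 4.

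There is, however, a genuine gap exactly where you flag it, and Step 4 silently depends on it. Knowing only that $D$ is a covering, you know that left translations by elements of $\ker p\subseteq\ker\widetilde\varphi$ are deck transformations of $D$, but not that they exhaust the deck group. Concretely, write $D=p\circ\Psi$ with $\Psi$ a diffeomorphism of $\widetilde G$. The relation $\Psi\circ L_\gamma=L_{\widehat\varphi(\gamma)}\circ\Psi$ then defines an injective lift $\widehat\varphi\colon\widetilde\Gamma\to\widetilde G$ of $\widetilde\varphi$, and one computes $\ker\varphi\cong(\widehat\varphi(\widetilde\Gamma)\cap\ker p)/\widehat\varphi(\ker p)$. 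Since $\widehat\varphi|_{\ker p}$ is merely an injective endomorphism of $\ker p\cong\pi_1(G)$ (compare multiplication by $2$ on $\ZZ$), nothing you wrote makes this quotient trivial.

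Your injectivity sentence does not address this. For $\varphi_0$, a lift of $\gamma\neq 1$ is not a deck transformation at all, so there is no ``element it was for $\varphi_0$''. Nor is the deck group ``a subgroup of the center of $\widetilde G$ that varies with $\varphi$'': after conjugating by $\Psi$ it is always $\ker p$, and what varies is $\widehat\varphi|_{\ker p}$. Discreteness of $\varphi(\Gamma)$ does in fact survive, because $\widehat\varphi(\ker p)$ has finite index in the finitely generated abelian group $\ker p$. But your justification (``$\widetilde\Gamma$ acts properly and $D$ is a covering'') does not show even that.

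The missing ingredient is the \emph{continuous-dependence} form of Ehresmann--Thurston. For holonomies near $\widetilde\varphi_0$, the developing map can be chosen uniformly close to $p$ on any prescribed compact set. The bare existence of some structure with holonomy $\widetilde\varphi$ gives no control over which covering $D$ is, and your chart-perturbation sketch does not provide this either, so cite the theorem in that form.

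With it, the cleanest way to finish is as follows.
\begin{enumerate}
\item Because $\ker p\subseteq\ker\widetilde\varphi$, the map $D$ descends to a covering map $\bar D\colon G\to G$ with $\bar D(\varphi_0(\gamma)g)=\varphi(\gamma)\bar D(g)$.
\item Fix finitely many loops generating $\pi_1(G)$. If $\bar D$ is close enough to $\mathrm{id}_G$ on a compact set containing them, then $\bar D_*$ is onto $\pi_1(G)$. Hence $\bar D$ has one sheet and is a diffeomorphism.
\item Then $L_{\varphi(\gamma)}=\bar D\circ L_{\varphi_0(\gamma)}\circ\bar D^{-1}$. So the $\Gamma$-action on $G$ through $\varphi$ is conjugate to the one through $\varphi_0$.
\item Freeness, proper discontinuity and cocompactness therefore transfer at once. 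This gives injectivity of $\varphi$, discreteness of $\varphi(\Gamma)$, and compactness of $\varphi(\Gamma)\backslash G$.
\end{enumerate}
In your own notation: normalizing $\Psi$ near the identity, $\widehat\varphi(k)$ depends continuously on $\varphi$ and lies in the discrete set $\ker p$. Hence $\widehat\varphi(k)=k$ on a finite generating set of $\ker p$ for $\varphi$ near $\varphi_0$. With this step supplied, the fallback to a fundamental-domain estimate is unnecessary.
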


It is well known (for instance, by the Schwarz-Milnor lemma) that all cocompact Fuchsian groups are finitely generated (in fact, finitely presented, as they are hyperbolic). Applying~\Cref{thm:Weil} to our context, we have the following corollary.

\begin{corollary}
    \label{coro:weil}
    Let $\Gamma$ be a cocompact Fuchsian group generated by $\gamma_1,\dots,\gamma_n$. There exists $\delta > 0$ such that, if $\varphi\colon \Gamma \to \PSL_2(\RR)$ is a group homomorphism with $\norm{\gamma_i - \varphi(\gamma_i)} < \delta$ for every $i=1,\dots,n$, then $\varphi(\Gamma)$ is a cocompact Fuchsian group isomorphic to $\Gamma$.
\end{corollary}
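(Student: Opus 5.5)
The plan is to apply \Cref{thm:Weil} with $G=\PSL_2(\RR)$, which is a connected Lie group, and with $\Gamma$ regarded as an abstract discrete group. The homomorphism to start from is the inclusion $\iota\colon\Gamma\hookrightarrow\PSL_2(\RR)$. It is injective, its image $\Gamma$ is discrete, and by \Cref{prop:cocompact} the quotient $\Gamma\backslash\PSL_2(\RR)$ is compact. Hence $\iota\in\mathcal R_0$, and since $\mathcal R_0$ is open in $\mathcal R$, some basic open neighbourhood $U$ of $\iota$ in the product topology is contained in $\mathcal R_0$.

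The next step is to shrink $U$ to a neighbourhood that only involves the generators. A basic open set in $G^\Gamma$ constrains only finitely many coordinates, so we may take
\[
U=\{\varphi\in\mathcal R : \varphi(g_j)\in V_j \text{ for } j=1,\dots,m\},
\]
where $g_1,\dots,g_m\in\Gamma$ and each $V_j$ is an open neighbourhood of $g_j$ in $\PSL_2(\RR)$. Write each $g_j$ as a fixed word $w_j(\gamma_1,\dots,\gamma_n)$ in the generators and their inverses. For a homomorphism $\varphi$ we have $\varphi(g_j)=w_j(\varphi(\gamma_1),\dots,\varphi(\gamma_n))$, and word evaluation $w_j\colon\PSL_2(\RR)^n\to\PSL_2(\RR)$ is continuous because multiplication and inversion are continuous. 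Therefore there is a neighbourhood $W$ of $(\gamma_1,\dots,\gamma_n)$ in $\PSL_2(\RR)^n$ with $w_j(W)\subseteq V_j$ for every $j$. Consequently any homomorphism $\varphi$ with $(\varphi(\gamma_i))_i\in W$ lies in $U\subseteq\mathcal R_0$.

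It remains to express $W$ through the norm. Here $\norm{\gamma_i-\varphi(\gamma_i)}$ should be read on representatives in $\operatorname{SL}_2(\RR)$: fix a lift $\tilde\gamma_i$ of each generator, and interpret the condition as requiring that some lift of $\varphi(\gamma_i)$ lies within $\delta$ of $\tilde\gamma_i$. Small matrix-norm balls around $\tilde\gamma_i$ project to a neighbourhood basis of $\gamma_i$ in the quotient topology, so we can choose $\delta$ small enough that the product of these balls lies in $W$.

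The conclusion then follows from $\varphi\in\mathcal R_0$. The map $\varphi$ is injective, so $\varphi(\Gamma)\cong\Gamma$. Its image is discrete, so $\varphi(\Gamma)$ is Fuchsian, and $\varphi(\Gamma)\backslash\PSL_2(\RR)$ is compact, so by \Cref{prop:cocompact} the group $\varphi(\Gamma)$ is cocompact. The only delicate point is the reduction from the finitely many coordinates $g_j$ to the generators together with the interpretation of the norm on $\PSL_2(\RR)$; both are routine continuity arguments.
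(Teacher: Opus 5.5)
Your proposal is correct and follows the same route as the paper: apply \Cref{thm:Weil} with $G=\PSL_2(\RR)$, use \Cref{prop:cocompact} to see that the inclusion lies in $\mathcal{R}_0$ and that elements of $\mathcal{R}_0$ are injective with cocompact Fuchsian image, and then shrink an open neighbourhood of the inclusion to a $\delta$-neighbourhood of the generators. The paper simply asserts that $\mathcal{R}$ can be identified with a subspace of $\PSL_2(\RR)^n$, whereas you justify the inclusion of neighbourhoods you actually need (finitely many constrained coordinates, continuity of word maps, and reading the norm on lifts to $\operatorname{SL}_2(\RR)$), which adds welcome rigor.
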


\begin{proof}
    Let $G = \PSL_2(\RR)$ in the previous theorem. By proposition \ref{prop:cocompact}, the set $\mathcal{R}_0$ coincides with the subset of all $\varphi\colon \Gamma \to \PSL_2(\RR)$ such that $\varphi$ is an injective group homomorphism and $\varphi(\Gamma)$ is a cocompact Fuchsian group. Since $\Gamma$ is generated by $\gamma_1,\dots,\gamma_n\in G$, we may identify $\mathcal{R}$ with a subspace of $(\PSL_2(\RR))^n$. Therefore, we may choose $\delta>0$ such that the $\delta$-neighborhood of $(\gamma_1,\dots,\gamma_n)$ is contained in $\mathcal{R}_0$.
\end{proof}

Now we are ready to prove~\Cref{mainthm:computable_rep}. Namely, we prove that every cocompact Fuchsian group admits a representation with computable real numbers.


\begin{proof}[Proof of~\Cref{mainthm:computable_rep}]
    Let $\Gamma$ be a cocompact Fuchsian group generated by $\gamma_1, \dots, \gamma_n\in \PSL_2(\RR)$. By adjoining the finite number of coefficients of each $\gamma_i$ to $\QQ$, we may regard $\Gamma$ as a subgroup of $\PSL_2(K)$, where $K = \QQ(\alpha_1,\dots,\alpha_k,a_1,\dots,a_l)$, $\alpha_1, \dots, \alpha_k$ are algebraically independent over $\QQ$ and $a_1,\dots,a_l$ are algebraic over $L = \QQ(\alpha_1,\dots,\alpha_k)$. Then, $L \subset K$ is a finitely generated algebraic field extension, so it is a finite field extension. By the primitive element theorem (see for example~\cite{dummit2004abstract}), there exists a single element $a \in K$ such that $K = L(a)$. In other words, $K = \QQ(\alpha_1,\dots,\alpha_k,a)$, where $a$ is algebraic over $L$.

    Let $p \in L[x]$ be the minimal polynomial of $a$ over $L$. $L$ is isomorphic to $\QQ(x_1,\dots,x_k)$, the field of rational functions on $k$ variables, so we may regard $p$ as a rational function $p(x_1,\dots,x_k,x)$ with rational coefficients. Since $L$ is of characteristic 0 and $p$ is irreducible in $L[x]$, $\frac{\partial p}{\partial x}(\alpha_1,\dots,\alpha_k,a) \neq 0$. Hence, by the implicit function theorem, there is an open set $U \subseteq \RR^k$ which contains the point $(\alpha_1,\dots,\alpha_k)$, and a continuous function $f \colon U \to \RR$ such that $f(\alpha_1,\dots,\alpha_k)=a$ and $p(x_1,\dots,x_k, f(x_1,\dots,x_k)) = 0$ for every $(x_1,\dots,x_k) \in U$. Now, if $\vec{\beta} = (\beta_1,\dots,\beta_k)$ is an algebraically independent $k$-tuple which is in $U$, then we have an isomorphism of fields $K \cong K' : = \QQ(\beta_1,\dots,\beta_k, f(\vec{\beta}))$. To see this, start with the isomorphism $h \colon L \to L' := \QQ(\beta_1,\dots,\beta_k)$ which sends $\alpha_i$ to $\beta_i$ for each $i=1,\dots,k$ and denote $b = f(\vec{\beta})$. Then, $h$ naturally extends to an isomorphism $L[x] \to L'[x]$, and thus we have an induced isomorphism of the quotients $L[x]/\langle p \rangle \cong L'[x]/\langle h(p) \rangle$. From elementary field theory, we know that $L(a) \cong L[x]/\langle p \rangle$ and $L'(b) \cong L'[x]/\langle q \rangle$, where $q \in L'[x]$ is the minimal polynomial of $b$, but observe that by construction of $b$, $h(p)$ is the minimal polynomial of $b$ and thus, $L(a) \cong L'(b)$. In this way, for any algebraically independent $k$-tuple $\vec{\beta} = (\beta_1,\dots,\beta_k)$ sufficiently close to $(\alpha_1,\dots,\alpha_k)$, we have an isomorphism of fields $\varphi \colon \QQ(\alpha_1,\dots,\alpha_k, a) \to \QQ(\beta_1,\dots,\beta_k,f(\vec{\beta}))$ where $f(\vec{\beta})$ is algebraic over $\QQ(\beta_1,\dots,\beta_k)$, which naturally induces an isomorphism of groups $\psi \colon \Gamma \to \Gamma' \subset \PSL_2(\QQ(\beta_1,\dots,\beta_k,f(\vec{\beta})))$.

    In order to assure that $\Gamma'$ is a cocompact Fuchsian group we use corollary \ref{coro:weil}. Let $c \in K$. As $\varphi(c)$ is a linear combination over $\QQ(\beta_1,\dots,\beta_k)$ of powers of $b$, it follows that the value $\varphi(c)$ changes continuously with respect to $(\beta_1,\dots,\beta_k)$, which means that by taking $(\beta_1,\dots,\beta_k)$ sufficiently close to $(\alpha_1,\dots,\alpha_k)$ we can make sure that $c$ and $\varphi(c)$ are arbitrarily close. Now, by doing this with every coefficient of each $\gamma_i$, we can make $\gamma_i$ and $\psi(\gamma_i)$ arbitrarily close for all $i=1,\dots,n$. Therefore, by corollary \ref{coro:weil}, $\Gamma'$ is a cocompact Fuchsian group isomorphic to $\Gamma$ for $(\beta_1,\dots,\beta_k)$ sufficiently close to $(\alpha_1,\dots,\alpha_k)$.

    Finally, we just need to make $\beta_1,\dots,\beta_k$ computable numbers. Let $p_i$ be the $i$-th prime number. By the Lindemann-Weierstrass theorem (see~\cite[Theorem 1.4]{Bakerbook_transcendental_1990}), $(e^{\sqrt{p_1}}, \dots, e^{\sqrt{p_k}})$ are algebraically independent. 
    Then, for each $i = 1,\dots,k$, take $\beta_i = q_i e^{\sqrt{p_i}}$ with $q_i\in \QQ\setminus \{0\}$ a non-zero rational number such that $q_ie^{\sqrt{p_i}}$ is sufficiently close to $\alpha_i$. These numbers are all computable. Furthermore, as $f(\vec{\beta})$ is a real root of a rational equation with computable coefficients, it is computable as well. We conclude that with this choice $K'$ is a subfield of the field of computable real numbers and thus that the coefficients of each element of $\Gamma'$ are computable numbers.
\end{proof}

\subsection{Computable quasi-isometries}

The next goal is to use~\Cref{mainthm:computable_rep} to show that every cocompact Fuchsian group admits a computable quasi-isometry into the hyperbolic plane. We briefly recall the notion of quasi-isometry and the Schwarz-Milnor lemma.

\begin{definition}
    Let $(X,d_X)$ and $(Y,d_Y)$ be metric spaces. A map $f\colon X\to Y$ is called a quasi-isometry if there exist positive real constants $A$, $B$ and $C$ such that
 \begin{itemize}
\item $f$ is a \define{quasi-isometric embedding}: for every $x,y\in X$ one has
\[\frac{1}{A}d_X(x,y)-B\leq d_Y(f(x),f(y))\leq Ad_X(x,y)+B.\]
\item $f(X)$ is \define{relatively dense} in $Y$: for every $z\in Y$ there exists $x\in X$ such that 
\[d_{Y}(z,f(x))\leq C.\]
\end{itemize}
We say that $(X,d_X)$ and $(Y,d_Y)$ are \define{quasi-isometric} if such a map exists.
\end{definition}

Quasi-isometries induce an equivalence relation on the class of metric spaces that captures their large-scale geometry. In the case of a finitely generated group $G$, every finite symmetric set of generators $S$ induces a metric space through the Cayley graph $\operatorname{Cay}(G,S)$, that is, the undirected graph whose vertices are $G$ and the edges are of the form $(g,gs)$ for $g\in G$ and $s\in S$. It is well-known that all Cayley graphs of a given finitely generated group are quasi-isometric through the identity map, hence we may speak plainly about quasi-isometries from a group without making explicit reference to a set of generators.


\begin{lemma}[Schwarz-Milnor, see Proposition I.8.19 of~\cite{BridsonHaefiger_book}]
 Let $G$ be a group acting on a proper length metric space $(X,d)$ by isometries such that the action is properly discontinuous and cocompact. Then $G$ is finitely generated and for every $x \in X$ the orbit map $f\colon G \to X$ given by $f(g)= g\cdot x$ is a quasi-isometry. 
\end{lemma}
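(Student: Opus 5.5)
The plan is to follow the standard proof in~\cite{BridsonHaefiger_book}, in two stages: first extract a finite generating set, then compare word length with distance in $X$. Fix $x\in X$.

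\textbf{Step 1: a compact fundamental set and finite generation.} By cocompactness there is a compact $K\subseteq X$ with $G\cdot K=X$. Since $X$ is proper, I may replace $K$ by a closed ball $B=\overline{B}(x,R)$ that contains $K$; this ball is compact and still satisfies $G\cdot B=X$. Let $U=B(x,R+1)$ be the open ball. Proper discontinuity together with compactness of $\overline{U}$ shows that the set
\[S=\{g\in G: gU\cap U\neq\emptyset\}\]
is finite. Put $r=\inf\{d(U,gU): g\notin S\}$. I will argue that $r>0$: the set of $g$ with $d(U,gU)<1$ is also finite by proper discontinuity applied to a larger ball, and each $g\notin S$ gives a positive distance $d(U,gU)$.

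\textbf{Step 2: $S$ generates $G$.} Take $g\in G$ and join $x$ to $gx$ by a path $c$ of length at most $d(x,gx)+1$, which exists because $X$ is a length space. Subdivide $c$ into points $x=x_0,\dots,x_m=gx$ with consecutive distances less than $r$, where $m\le (d(x,gx)+1)/r+1$. Each $x_j$ lies in some $h_jB$, and I may take $h_0=1$ and $h_m=g$. Consecutive translates $h_jU$ and $h_{j+1}U$ lie within distance less than $r$ of each other, so $h_j^{-1}h_{j+1}\in S$. Then $g=\prod_j (h_j^{-1}h_{j+1})$, which shows that $S$ generates $G$. This also gives the word-length bound $|g|_S\le m\le \frac1r d(x,gx)+\frac1r+1$.

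\textbf{Step 3: the quasi-isometry.} For the upper bound, set $\lambda=\max_{s\in S}d(x,sx)$. The action is by isometries, so the triangle inequality gives $d(gx,hx)\le \lambda\,|g^{-1}h|_S$. The lower bound is the word-length bound from Step 2, applied to $g^{-1}h$ and using $d(gx,hx)=d(x,g^{-1}hx)$. Relative density with constant $C=R$ follows from $G\cdot B=X$. Changing the basepoint $x$ moves the orbit map by a bounded amount, so the conclusion holds for every $x$.

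\textbf{Main obstacle.} The delicate step is making $r>0$ rigorous, which is where properness of $X$ and proper discontinuity are used. If that argument proves cumbersome, I will instead choose the subdivision spacing less than $1$ and apply local finiteness directly to an enlarged ball.
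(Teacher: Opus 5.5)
The paper gives no proof of this lemma and only cites Bridson--Haefliger. Your outline is the standard argument, and Steps 2 and 3 are fine once a positive spacing constant is available. The gap is the claim $r>0$. You justify it by saying that each $g\notin S$ gives a positive distance $d(U,gU)$, but two disjoint open sets can be at distance zero.

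Concretely, let $\ZZ$ act on $\RR$ by translations, and take $x=0$, $K=[0,1]$ and $R=1$. Then $B=[-1,1]$, $U=(-2,2)$ and $S=\{-3,\dots,3\}$. For $g=4$ we get $gU=(2,6)$, which is disjoint from $U$, yet $d(U,gU)=0$. So your $r$ can be $0$, and the subdivision in Step 2 cannot be carried out. Your other observation, that only finitely many $g$ satisfy $d(U,gU)<1$, is correct. However, it only reduces the infimum to a minimum over finitely many values, and some of those values may be $0$.

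The repair is your fallback, and it needs no enlarged ball. The margin of $1$ between $B=\overline{B}(x,R)$ and $U=B(x,R+1)$ already does the job of $r$. Subdivide the path so that $d(x_j,x_{j+1})<1$. Since $x_j\in h_jB$, this gives $d(x_{j+1},h_jx)<R+1$. Hence $x_{j+1}\in h_jU\cap h_{j+1}B\subseteq h_jU\cap h_{j+1}U$, so $h_j^{-1}h_{j+1}\in S$.

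Equivalently, define $r$ with $B$ in place of $U$. For $g\notin S$ one has $d(B,gB)\geq 1$, since a pair of points at distance less than $1$ would put a point of $gB\subseteq gU$ inside $U$.

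Now use $m=\lfloor L\rfloor+1$ pieces for a path of length $L\le d(x,gx)+1$. This gives $|g|_S\le d(x,gx)+2$, and Step 3 then goes through unchanged. Using two radii like this is the standard way to avoid the problem you ran into.
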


The canonical action of a cocompact Fuchsian group on the hyperbolic plane $\HH$ satisfies the hypotheses of the Schwarz-Milnor lemma; thus, as we mentioned before, these groups are finitely generated and quasi-isometric to $\HH$. Next we will show that this quasi-isometry can be made computable using~\Cref{mainthm:computable_rep}.

In order to speak of computable quasi-isometries we first need to fix computable structures. Whenever $G$ is an infinite and finitely generated group with decidable word problem, there always exists a canonical computable structure on $G$, that is, a bijection $\nu \colon \NN \to G$ which makes the group operations $(n,m) \mapsto \nu^{-1}(\nu(n)\nu(m))$ and $n \mapsto \nu^{-1}(\nu(n)^{-1})$ computable. Furthermore, it is unique up to computable isomorphism (see for instance the preliminaries of~\cite{BarCarRoj_2025}). This means that we can treat elements of $G$ as integers, and thus we may speak without ambiguity of computability on $G$. Since cocompact Fuchsian groups are word-hyperbolic, they are finitely generated and they have decidable word problem (see~\cite{BridsonHaefiger_book}), and hence they admit a computable structure.

For the hyperbolic plane $\HH$, we regard it as a computable metric space, where the metric is the natural geodesic distance $d_{\HH}$ and the countable dense subset is $S := \{a+bi \in \HH : a,b \in \QQ \}$, the set of rational points.

\begin{definition}\label{def:computable_QI}
    Let $G$ be a finitely generated group with decidable word problem. A map $f\colon G \to \HH$ is computable if there exists a computable map $h\colon \NN \times G \to S$ such that for each $n \in \NN$ and $g \in G$ we have \[d_{\HH}\bigl(h(n,g), f(g)\bigr) \leq 2^{-n}.\]

    If additionally $f$ is a quasi-isometry, we say that $f$ is a computable quasi-isometry.
\end{definition}

\begin{corollary}\label{cor:FuchsianQItoH}
    Every cocompact Fuchsian group admits a computable quasi-isometry to $\HH$.
\end{corollary}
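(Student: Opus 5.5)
Let $G$ be a cocompact Fuchsian group. By \Cref{mainthm:computable_rep} there is an injective homomorphism $\psi\colon G\to\PSL_2(\RR)$ whose image $\Gamma'=\psi(G)$ is a cocompact Fuchsian group isomorphic to $G$, and whose generators $\psi(\gamma_1),\dots,\psi(\gamma_n)$ have computable coefficients. We take the orbit map $f(g)=\psi(g)\cdot i$ and show two things: that it is a quasi-isometry, and that it is computable in the sense of \Cref{def:computable_QI}.

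\emph{Quasi-isometry.} Since $\Gamma'$ is Fuchsian and cocompact, its action on $\HH$ by Möbius transformations is isometric, properly discontinuous and cocompact. Here $\HH$ is a proper geodesic space, and discreteness of $\Gamma'$ together with the compactness of the point stabilizer $\PSO_2(\RR)$ gives proper discontinuity. The Schwarz--Milnor lemma then shows that $g\mapsto \psi(g)\cdot i$ is a quasi-isometry from $\Gamma'$, hence from $G$ via $\psi$. This holds for any word metric, since all Cayley graphs are quasi-isometric through the identity.

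\emph{Computability.} Fix the computable structure on $G$, which exists because $G$ is hyperbolic and so has decidable word problem. We may assume, up to computable isomorphism, that given $g\in G$ we can compute a word $w=s_1\cdots s_m$ over the generators $\gamma_j^{\pm1}$ representing $g$. One way is to enumerate words and check equality with $g$ using the word problem. Computable reals form a field, and each generator matrix in $\operatorname{SL}_2(\RR)$ has computable entries; choose a lift, whose inverse is the adjugate and is also computable. Hence the product matrix $\begin{pmatrix}a&b\\c&d\end{pmatrix}=\psi(s_1)\cdots\psi(s_m)$ has entries computable uniformly in $w$. The point
\[\psi(g)\cdot i=\frac{ai+b}{ci+d}=\frac{(ac+bd)+i}{c^2+d^2}\]
is a rational expression in $a,b,c,d$ with nonvanishing denominator $c^2+d^2\ge$ something positive. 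So its real and imaginary parts $x,y$ are computable uniformly in $g$, and $y>0$. The sign ambiguity $\pm$ does not affect the point.

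The main obstacle is converting Euclidean approximations into hyperbolic ones, since \Cref{def:computable_QI} uses $d_{\HH}$. We would use
\[\cosh d_{\HH}(z,w)=1+\frac{|z-w|^2}{2\,\IM z\,\IM w}.\]
Given $n$, first compute a rational lower bound $y_0>0$ for $y$, obtained by approximating $y$ to precision $y/2$ and searching until the approximation exceeds twice the error. Then approximate $x,y$ to rational $q=x'+iy'$ with $|q-(x+iy)|<\varepsilon$, where $\varepsilon<y_0/2$ is chosen so that $\operatorname{arccosh}\bigl(1+\varepsilon^2/(y_0^2)\bigr)\le 2^{-n}$. This guarantees $q\in S$ and $d_{\HH}(q,f(g))\le 2^{-n}$. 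The choice of $\varepsilon$ is computable from $y_0$ and $n$, which gives the required map $h$.
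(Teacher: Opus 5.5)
Your proof is correct and follows the same route as the paper: take the representation from \Cref{mainthm:computable_rep}, apply the Schwarz--Milnor lemma to the orbit map of $i$, and use that Möbius transformations with computable coefficients are computable. You also make explicit several details the paper leaves implicit: computing a word for $g$, the closed formula for $\psi(g)\cdot i$, and turning Euclidean approximations into $d_{\HH}$-approximations via the $\cosh$ distance formula.
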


\begin{proof}
    By~\Cref{mainthm:computable_rep}, without loss of generality we may write $\Gamma = \langle \gamma_1,\dots,\gamma_n\rangle$ where each $\gamma_i\in \PSL_2(\RR)$ has computable real numbers as coefficients. The action of $\Gamma$ on $\HH$ satisfies the hypothesis of the Schwarz-Milnor lemma and thus the orbit map based at any $z \in \HH$ induces a quasi-isometry $f\colon \Gamma \to \HH$. 
    
    Since each $\gamma_i$ is computable, and Möbius transformations with computable coefficients are themselves computable maps, it follows that a computable quasi-isometry is given by the orbit map of any computable point of $\HH$.
\end{proof}

\section{Medvedev degrees and quasi-isometries}\label{sec:QI}

We shall give a brief introduction to Medvedev degrees. An interested reader can find a more extensive and detailed introduction in~\cite{barbieri2024medvedev}.

Given a set $X\subset \NN^{\NN}$, a map $f\colon X \to \NN^{\NN}$ is called \define{computable} if there exists an oracle Turing machine which on oracle $x \in X$ and input $n\in \NN$ computes $f(x)(n)$. Intuitively, this means that there is an algorithm which, given access to the values $x(k)$ for any $k \in \NN$, can compute any coordinate of the image $f(x)$.

Given two subsets $X,Y$ of $\NN^{\NN}$, we say that $X$ is Medvedev reducible to $Y$, and write $X \preceq_{\mathfrak{m}} Y$, if there is a computable map $f\colon Y \to \NN^{\NN}$ that satisfies $f(Y)\subset X$. If both $X$ and $Y$ are Medvedev reducible to each other, we say they are Medvedev equivalent.  

\begin{definition}
    A \define{Medvedev degree} is an equivalence class of Medvedev equivalent subsets of $\NN^{\NN}$. For $X\subset \NN^{\NN}$ we denote its degree by $m(X)$.
\end{definition}

The collection $\mathfrak{M}$ of Medvedev degrees is a distributive lattice with the order $\preceq_{\mathfrak{m}}$. The minimum of this lattice is denoted by $0_{\mathfrak{M}}$, and consists of all sets that contain at least one computable element.

A set $X\subset \NN^{\NN}$ is \define{effectively closed} or $\Pi_1^0$ if there exists a recursively enumerable $L\subset \NN^*$ such that $X = \NN^{\NN}\setminus \bigcup_{w \in L}[w]$, where $[w]$ denotes the cylinder set of all infinite sequences that begin with the word $w$. Equivalently, a set $X$ is $\Pi_1^0$ if and only if there is an oracle Turing machine which halts on empty input with oracle $x$ precisely when $x \in \NN^{\NN}\setminus X$. The Medvedev degree of a $\Pi_1^0$ set $X\subset \{0,1\}^{\NN}$ is called a \define{$\Pi_1^0$-degree}.

In what follows, we shall explain how to assign Medvedev degrees to shift spaces on groups and on more general structures.

\subsection{Shift spaces on groups}

Let $A$ be a finite set. Without loss of generality, we may assume $A=\{0,1,\dots, |A|-1\}\subset \NN$. For a group $G$, the full $G$-shift is the space $A^G$ endowed with the right shift action $ A^G \curvearrowleft G$ given by \[(x\cdot g)(h)=x(gh) \mbox{ for all } x\in A^G \mbox{ and } g,h\in G.\]

We refer to elements $a\in A$ as \define{symbols} and to maps $x\in A^G$ as configurations. A \define{shift space} or \define{subshift} is a subset of configurations $X\subset A^G$ which is closed in the prodiscrete topology on $A^G$ and which is invariant under the right shift $G$-action. 

\begin{remark}
    It is more common in the literature to define subshifts using the left shift action given by $(g\cdot x)(h)=x(g^{-1}h)$ for $g,h\in G$ and $x\in A^G$. In this work we rather use the right action in order to have a more natural correspondence with a partial monoid action we shall define in the next subsection. We note that this choice is inconsequential, as there is a natural correspondence between subshifts defined through either of these two actions.
\end{remark}

Given a finitely generated group $G$ with decidable word problem, we can take a bijection $\nu \colon \NN \to G$ as explained above and thus identify the spaces $\NN^G$ with $\NN^{\NN}$ through the computable homeomorphism $\delta \colon \NN^{\NN} \to \NN^G$ given by \[ \delta(x)(g) = x(\nu^{-1}(g)) \mbox{ for all } g \in G.\]
Thus naturally we define the Medvedev degree of $X\subset \NN^G$ as $m(X) = m(\delta^{-1}(X))$ and similarly, we may speak about effectively closed subshifts. We remark that both of these notions do not depend upon the specific choice of $\nu$, see~\cite{barbieri2024medvedev}, and thus we can freely speak about effectively closed subshifts and their Medvedev degrees without specifying $\nu$.

\begin{remark}
    In fact, it is possible to meaningfully define Medvedev degrees of subshifts in arbitrary finitely generated groups by passing to their pullbacks on finite-rank free groups, see~\cite{barbieri2024medvedev}. As we shall only work with groups that have decidable word problem, we will not go into such generality.
\end{remark}

In what follows, we will be interested in a particular class of subshifts which encode sets of configurations that avoid finitely many forbidden patterns. Given a finite subset $F\subset G$, a \define{pattern} is a map $p\colon F \to A$. The \define{cylinder set} associated to a pattern $p$ is given by the set of configurations whose restriction to $F$ coincides with $p$, that is \[ [p] = \{x\in A^G : x|_F = p \}. \]

\begin{definition}
    A \define{$G$-subshift of finite type} ($G$-SFT) is a subset $X\subset A^G$ for which there exists a finite set $\mathcal{F}$ of patterns such that $x \in X$ if and only if for every $g\in G$ and $p \in \mathcal{F}$ we have that $x\cdot g \notin [p]$.
\end{definition}

In simpler words, a $G$-SFT is the set of maps $x\colon G \to A$ for which a finite list of forbidden patterns does not occur up to translation. It can be shown that if $G$ is a finitely generated group with decidable word problem (more generally, a recursively presented group), then every $G$-SFT is effectively closed. In particular, the Medvedev degree of every $G$-SFT is a $\Pi_1^0$-degree, see~\cite[Observation 3.5]{barbieri2024medvedev}. We denote the space of all Medvedev degrees of $G$-SFTs by $\mathfrak{M}_{\texttt{SFT}}(G)$.

\subsection{Blueprints}

Next we will introduce the notion of blueprints given in~\cite{BarBit2026}. Here we will provide the definitions succinctly and refer the reader to the above reference for a more gentle introduction.

\begin{definition}
    A \define{blueprint} is a 5-tuple $\Gamma = (M,S,i,t,R)$ where $M$ is a set of states, $S$ is a set of generators, $R\subset S^*\times S^*$ is a set of relations, and $i\colon S \to M$ and $t\colon S \to \mathcal{P}(M)\setminus \{ \varnothing\}$ are the initial and terminal maps.
\end{definition}

If $M,S$ and $R$ are finite, we say that the blueprint is \define{finitely presented}. We shall use blueprints to describe collections of countable rooted graphs, where the vertices are labeled with a state in $M$, and the directed edges are labeled with a generator $s\in S$ and must begin in the initial state $i(s)$ and end in a terminal state in $t(s)$. Finally, the set of relations $R$ provides a uniform description of the cycles for the whole family of graphs.

Denote the empty word in $S^*$ by $\varepsilon$. A words $u=u_1\dots u_n \in S^*$ is called $\Gamma$-consistent if either $w=\varepsilon$ or we have $i(u_{i+1})\in t(u_i)$ for all $i$. Two $\Gamma$-consistent words $u,v \in S^*$ are called $\Gamma$-similar if there exist $w,z \in S^*$ and $(x,y)\in R$ such that $u=wxz$ and $v = wyz$. We call $\Gamma$-equivalence the equivalence relation in $S^*$ generated by $\Gamma$-similarity.

For a map $f \colon S^*\to M \cup \{\varnothing\}$, its support is $\supp(f) = S^*\setminus f^{-1}(\varnothing)$. Given a blueprint $\Gamma$, we say that a map $\varphi \colon S^* \to M \cup \{\varnothing\}$ is $\Gamma$-consistent if $\varepsilon \in \supp(\varphi)$ and for each $w\in S^*$ and $s \in S$ we have that $\varphi(ws) \in t(s)$ whenever $i(s) = \varphi(w)$, and $\varphi(ws)=\varnothing$ whenever $i(s) \neq \varphi(w)$.

A $\Gamma$-consistent map $\varphi$ is called a $\Gamma$-\define{model} if for every $u,v \in \supp(\varphi)$ which are $\Gamma$-equivalent we have $\varphi(u)=\varphi(v)$. We denote the space of $\Gamma$-models of a blueprint $\Gamma$ by $\mathcal{M}(\Gamma)$. For a $\Gamma$-model $\varphi\in \mathcal{M}(\Gamma)$ and $w\in \supp(\varphi)$, denote by \[\underline{w} = \{u \in \supp(\varphi) : u \mbox{ is $\Gamma$-equivalent to }w\}.\]

The rooted graph $\mathcal{G}(\Gamma,\varphi)$ associated to a $\Gamma$-model $\varphi$ is the one given by the vertex set $V_{\mathcal{G}} = \{ \underline{w} : w \in \supp(\varphi)\} $ with root $\underline{\varepsilon}$ and edges \[ E_{\mathcal{G}} = \{ (\underline{w},\underline{ws}) : w \in \supp(\varphi), s \in S, i(s)=\varphi(w)\}.  \]

\begin{example}\label{ex:group_blueprint}
    Let $G=\langle S \mid R\rangle$ be a finitely presented group (assume that $S$ is symmetric and $R$ contains the trivial relations $ss^{-1}$ for $s\in S$). If we take $M = \{0\}$, $R'= \{(r,\varepsilon) : r \in R\}$ and for each $s\in S$ we set $i(s)=0$ and $t(s)=\{0\}$, then $\Gamma=(M,S,i,t,R')$ is a finitely presented blueprint, the only $\Gamma$-model is the constant map $\varphi \colon S^* \to M$ and its associated graph $\mathcal{G}(\Gamma,\varphi)$ is the right Cayley graph of $G$ with respect to $S$.
\end{example}

For our purposes, besides the fact that we can represent finitely presented groups through finitely presented blueprints, we will only use one specific family of blueprints that encodes shifted versions of the graphs $H_k$ described in the introduction.

\begin{definition}
     For $k \geq 2$, the \define{$k$-ary hyperbolic blueprint} $\mathcal{H}_k = (M_k,S_k,i,t,R_k)$ is given by
     \begin{enumerate}
         \item $M_k = \ZZ/k\ZZ$.
         \item $S_k = \{ a_0,\dots,a_{k-1}, \overline{a}_0,\dots, \overline{a}_{k-1}, b_0,\dots,b_{k-1}, \overline{b}\}$.
         \item For every $j \in \ZZ/k\ZZ$, we set $i(a_j)=i(\overline{a}_j)=i(b_j)=j$. We also set $i(\overline{b})=0$.
         \item For every $j \in \ZZ/k\ZZ$, we set $t(a_j)= \{j+1\}, t(\overline{a}_j)= \{j-1\},    t(b_j)=\{0\}, t(\overline{b})=\{0,\dots,k-1\}$.
         \item $R_k = \bigcup_{j \in \ZZ/k\ZZ}\{ (a_jb_{j+1}, b_ja_0\dots a_{k-1}), (a_j\overline{a}_{j+1}, \varepsilon), (\overline{a}_ja_{j-1}, \varepsilon),(b_j\overline{b},\varepsilon),(\overline{b}b_j,\varepsilon) \}$.
     \end{enumerate}
\end{definition}

In the definition above, the generator $a_j$ represents horizontal movement to the right from state $j$, while $\overline{a}_j$ represents movement to the left. The generator $b_j$ represents vertical movement downwards from state $j$, while $\overline{b}$ represents movement upwards and can only start from state $0$. Notice that the only source of non-determinism lies in the choice of terminal state for edges labeled by $\overline{b}$. The relations $R_k$ describe the basic cycles $(a_jb_{j+1}, b_ja_0\dots a_{k-1})$ for every $j\in \ZZ/k\ZZ$, along with the trivial relations that describe inverses. See~\Cref{fig:coloredH3}.

\begin{figure}[ht!]
    \centering
    \begin{tikzpicture}[scale = 5]
   \draw[thick, Stealth-] (0,1.04) -- (0,1+1/3);
   \draw[black!50, -Stealth] (0.04,1.04) -- (0.04,1+1/3);
   \node at (-0.05, 1+1/6) {$b_0$};
   \node[black!50] at (0.09, 1+1/6) {$\overline{b}$};
   \draw[thick, Stealth-] (-0.04,1) -- (-0.5,1);
   \draw[black!50, -Stealth] (-0.04,0.96) -- (-0.5,0.96);
   \node at (-0.25,1.05) {$a_2$};
   \node[black!50] at (-0.25,0.91) {$\overline{a}_0$};
   
   \draw[thick, arrows = {Stealth[scale=0.8]-}] (-0.03,4/9) -- (-0.25,4/9);
   \draw[black!50, arrows = {-Stealth[scale=0.8]}] (-0.03,4/9-0.03) -- (-0.25,4/9-0.03);
   \node[scale = 0.8] at (-0.125,4/9+0.04) {$a_2$};
   \node[scale = 0.8, black!50] at (-0.125,4/9-0.07) {$\overline{a}_0$};
   \draw[thick, -Stealth] (1,1) -- (1.5,1);
   \draw[black!50, Stealth-] (1.05,0.95) -- (1.5,0.95);
   \node at (1.25,1.05) {$a_1$};
   \node[black!50] at (1.25,0.91) {$\overline{a}_2$};

   \draw[thick, arrows = {-Stealth[scale=0.8]}] (1,4/9) -- (1.25,4/9);
   \draw[black!50, arrows = {Stealth[scale=0.8]-}] (1.04,4/9-0.03) -- (1.25,4/9-0.03);
   \node[scale = 0.8] at (1.125,4/9+0.04) {$a_0$};
   \node[scale = 0.8, black!50] at (1.125,4/9-0.07) {$\overline{a}_1$};
   
    \foreach \i in {0}{
        \draw[thick, -Stealth] (\i,1) -- (\i+0.96,1);
        \node at (\i+0.5,1.05) {$a_0$};
        \draw[black!50, -Stealth] (\i+1,0.96) -- (\i+0.05,0.96);
        \node[black!50] at (\i+0.5,0.91) {$\overline{a}_1$};
    }
    \foreach \i in {0,...,2}{
        \draw[thick, arrows = {-Stealth[scale=0.8]}] (\i/3,4/9) -- (\i/3+1/3-0.03,4/9);
        \draw[black!50, arrows = {Stealth[scale=0.8]-}] (\i/3+0.04,4/9-0.03) -- (\i/3+1/3-0.03,4/9-0.03);
        \node[scale = 0.8] at (\i/3+1/6,4/9+0.04) {$a_{\i}$};
    }
     \node[black!50, scale = 0.8] at (1/6,4/9-0.07) {$\overline{a}_{1}$};
     \node[black!50, scale = 0.8] at (3/6,4/9-0.07) {$\overline{a}_{2}$};
     \node[black!50, scale = 0.8] at (5/6,4/9-0.07) {$\overline{a}_{0}$};
    \foreach \i in {0,1}{
        \draw[thick, arrows = {-Stealth[scale=0.8]}] (\i,1) -- (\i,4/9+0.05);
        \draw[black!50, arrows = {Stealth[scale=0.8]-}] (\i+0.03,1-0.05) -- (\i+0.03,4/9+0.05);
        \node at (\i-0.05,6.5/9) {$b_{\i}$};
        \node[black!50] at (\i+0.08,6.5/9) {$\overline{b}$};
        \draw[thick, fill=black!10] (\i,1) circle (0.05);
        \node at (\i,1) {$\i$};
    }
    \foreach \i in {0,...,3}{
        \draw[thick, arrows = {-Stealth[scale=0.6]}] (\i/3,4/9) -- (\i/3,1/9+0.1);
        \draw[black!50, arrows = {Stealth[scale=0.6]-}] (\i/3+0.03,4/9-0.04) -- (\i/3+0.03,1/9+0.1);
        \draw[thick, fill=black!10] (\i/3,4/9) circle (0.04);
        \node at (\i/3,4/9) {$\i$};
    }
    \node[scale = 0.8] at (-0.04,3/9) {$b_0$};
    \node[scale = 0.8] at (1/3-0.04,3/9) {$b_1$};
    \node[scale = 0.8] at (2/3-0.04,3/9) {$b_2$};
    \node[scale = 0.8] at (1-0.04,3/9) {$b_0$};
    \node[black!50, scale = 0.8] at (0.07,3/9) {$\overline{b}$};
    \node[black!50, scale = 0.8] at (1/3+0.07,3/9) {$\overline{b}$};
    \node[black!50, scale = 0.8] at (2/3+0.07,3/9) {$\overline{b}$};
    \node[black!50, scale = 0.8] at (1+0.07,3/9) {$\overline{b}$};
    
    \draw[thick, fill=black!10] (1,4/9) circle (0.04);
        \node at (1,4/9) {$0$};
    \end{tikzpicture}
    \caption{A portion of a graph associated to the blueprint $\mathcal{H}_3$.}
    \label{fig:coloredH3}
\end{figure}

\begin{remark}\label{rem:H-1}
    The definition of $\mathcal{H}_k$ still makes sense for $k =1$. In this case, there is a single state and thus a single model. The unique graph associated to this model coincides with the canonical Cayley graph of $\ZZ^2$.
\end{remark}

\begin{example}\label{ex:maingraph}Consider the model $\varphi_{\circ}\in \mathcal{M}(\mathcal{H}_k)$ with $\varphi_{\circ}(\varepsilon)=0$ and which satisfies $\varphi_{\circ}(\overline{b}^n)=0$ for every $n \geq 1$. It can be shown using the relations that $\varphi_{\circ}$ is completely determined by this choice and that $\mathcal{G}(\mathcal{H}_k,\varphi_{\circ})$ is isomorphic to the graph $H_k$ rooted at $(0,0)$ after replacing its undirected edges by directed ones in both directions. It is easy to see that $\varphi_{\circ}$ is computable. 
\end{example} 

  Next we define subshifts of finite type in a blueprint $\Gamma = (M,S,i,t,R)$. As before, we provide the definition briefly and refer the interested reader to~\cite{BarBit2026} for further examples. Let $A$ be a finite set. 

  \begin{definition}
      Let $\Gamma$ be a blueprint. A \define{$\Gamma$-subshift} on alphabet $A$ is a set $X=X[\Gamma,\mathcal{F}]$, where $\mathcal{F}\subset \bigcup_{W\Subset S^*}((M\times A)\cup \{\varnothing\})^{W}$ is a set of forbidden patterns, and $X$ consists of all pairs of the form $(\varphi,x) \in (M\cup \{\varnothing\})^{S^*}\times (A\cup \{\varnothing\})^{S^*}$ such that
      \begin{enumerate}
          \item $\varphi$ is a $\Gamma$-model and $\supp(\varphi)=\supp(x)$.
          \item For each pair of $\Gamma$-equivalent $u,v \in \supp(\varphi)$ we have $x(u)=x(v)$.
          \item For every $u \in \supp(\varphi)$, $W\Subset S^*$ and $p \in \mathcal{F}\cap ((M\times A)\cup \{\varnothing\})^{W}$ there exists $w \in W$ such that $(\varphi(uw),x(uw))\neq p(w)$.
      \end{enumerate}
  \end{definition}

  In the above definition, the first two conditions essentially say that $x$ is consistent with the model $\varphi$, so that in the graph induced by $\varphi$, the map $x$ encodes a coloring of the vertices. The last condition just says that these colorings avoid a collection of forbidden patterns. We note that each $\Gamma$-subshift can be endowed with the partial right monoid action $X \curvearrowleft S^*$ given by \[ ((\varphi,x) \cdot w) (u) = (\varphi(wu),x(wu)) \mbox{ for every } w \in \supp(\varphi) \mbox{ and  } u \in S^*.   \]

  We say that a $\Gamma$-subshift $X$ is of \define{finite type} ($\Gamma$-SFT) if there exists a finite $\mathcal{F}\subset \bigcup_{W\Subset S^*}((M\times A)\cup \{\varnothing\})^{W}$ such that $X=X[\Gamma,\mathcal{F}]$. In other words, $X$ is of finite type if it can be defined by a finite collection of forbidden patterns.

  Identifying $S^*$ with $\NN$ through a shortlex order and $(M \times A) \cup \{\varnothing\}$ as a finite subset of $\NN$, we may interpret a $\Gamma$-subshift of finite type as a subset of $\NN^{\NN}$ and thus assign it a Medvedev degree. We denote by $\mathfrak{M}_{\texttt{SFT}}(\Gamma)$ the space of Medvedev degrees of a blueprint $\Gamma$.

  \begin{example}
      (Hard square shift) Take the blueprint $\mathcal{H}_k$ and let $A=\{0,1\}$. Consider $W = \{\varepsilon\} \cup S_k$ and take $\mathcal{F}$ as the set of all maps $p \colon W\to (M_k \times A) \cup \{\varnothing\}$ for which there exists $s \in S_k$ such that $p(\varepsilon)=(m,1)$ and $p(s)=(m',1)$ for some $m,m'\in M_k$. The corresponding $\mathcal{H}_k$-subshift of finite type $X[\mathcal{H}_k,\mathcal{F}]$ corresponds to all possible colorings of the graphs $\mathcal{G}(\mathcal{H}_k,\varphi)$ for $\varphi \in \mathcal{M}(\mathcal{H}_k)$ with the property that adjacent pairs of vertices cannot both have the symbol $1$. This $\mathcal{H}_k$-subshift has zero Medvedev degree, as the element $(\varphi_\circ,x_\circ)\in X[\mathcal{H}_k,\mathcal{F}]$, where $x_\circ(w)=0$ for every $w\in \supp(\varphi_\circ)$, is computable.
  \end{example}

  \begin{example}
      Take a finitely presented group $G= \langle S|R\rangle$ and consider its associated blueprint $\Gamma$ as in~\Cref{ex:group_blueprint}. Consider a $\Gamma$-SFT $X[\Gamma,\mathcal{F}]$ with alphabet $A$ and $(\varphi,x)\in X[\Gamma,\mathcal{F}]$. As there exists a unique $\Gamma$-model, we may identify the pairs $(\varphi,x)$ with their second coordinate $x$. Furthermore, if we take the canonical epimorphism $\psi \colon S^* \to G$ that identifies a word in $S$ with the corresponding element of $G$, then for every $u,v \in S^*$ with $\psi(u)=\psi(v)$, we have $x(u)=x(v)$. It follows that $x$ induces a map $\widehat{x}\colon G \to A$ given by $\widehat{x}(\psi(u))=x(u)$. Similarly, we can identify the forbidden patterns that give rise to non-trivial restrictions with patterns supported in $G$ and show that $\widehat{X} = \{ \widehat{x}\in A^G : (\varphi,x)\in X[\Gamma,\mathcal{F}]\}$ is a $G$-SFT. It follows that $\Gamma$-SFTs are in direct correspondence with $G$-SFTs and, furthermore, have the same Medvedev degrees.
  \end{example}

\subsection{Properties of the hyperbolic blueprints}\label{subsection:graphs-H_k}

Here we will delve a little bit further into properties of the blueprints $\mathcal{H}_k$ with the aim of proving~\Cref{mainthm:to_blueprint}. In what follows, we fix $k \geq 2$.

We say that a blueprint $\Gamma$ is \define{strongly connected} if every graph associated to a $\Gamma$-model is strongly connected, that is, if there is a directed path between every pair of vertices. It is clear from the definition that every generator in $\mathcal{H}_k$ admits both a left and a right inverse in the blueprint relations $R_k$ and thus every graph associated to an $\mathcal{H}_k$-model is strongly connected.

We say that $\Gamma$ has \define{decidable word problem} if there is an algorithm that decides whether two $\mathcal{H}_k$-consistent words on the generators are $\Gamma$-equivalent. In order to show that $\mathcal{H}_k$ has decidable word problem, we will introduce normal forms. For $i \in \ZZ/k\ZZ$ and an integer $t \geq 1$ we define the words \[ r(i,t) = a_ia_{i+1}\dots a_{i+t-1} \quad \mbox{and}\quad \ell(i,t) = \overline{a}_i\overline{a}_{i-1}\dots \overline{a}_{i-t+1}.  \]

We note that every $\mathcal{H}_k$-consistent word in $\{a_0,\dots, a_{k-1},\overline{a}_0,\dots, \overline{a}_{k-1}\}^*$ is $\mathcal{H}_k$-equivalent to either the empty word $\varepsilon$, $r(i,t)$ or $\ell(i,t)$ for some $i \in \ZZ/k\ZZ$ and $t \geq 1$.

\begin{definition}
    We say that an $\mathcal{H}_k$-consistent word $w \in S_k^*$ is in \define{normal form} if it is of one of the forms $\overline{b}^n$, $\ell(i,t)\overline{b}^n$, $r(i,t)\overline{b}^n$, $b_ib_0^n$, $b_ib_0^n\ell(0,t)$ or $b_ib_0^nr(0,t)$ for some $n \geq 0$, $i \in \ZZ/k\ZZ$, and $t \geq 1$.
\end{definition}

\begin{lemma}\label{lem:WP_decidable_Hk}
    There is an algorithm that takes an $\mathcal{H}_k$-consistent word $w\in S_k^*$ and produces an $\mathcal{H}_k$-equivalent normal form. Furthermore, two words are $\mathcal{H}_k$-equivalent if and only if their normal forms coincide.
\end{lemma}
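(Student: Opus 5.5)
The plan has two halves: a rewriting procedure that puts any consistent word into normal form, and an invariant of $\mathcal H_k$-equivalence that separates distinct normal forms.

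\textbf{Existence.} I would first push every occurrence of $\overline b$ to the right and every $b_j$ to the left. Three rules drive this.
\begin{itemize}
\item The relation $a_jb_{j+1}\sim b_ja_0\cdots a_{k-1}$, read left to right, moves a down-letter past a horizontal letter. Composing with the inverse relations gives the analogous rule for $\overline a_j$, namely $\overline a_j b_{j-1}\sim b_j\,\ell(0,k)$ up to cancellation.
\item Reading the same relations right to left lets $\overline b$ absorb a block of $k$ horizontal letters starting at state $0$: $r(0,k)\overline b$ becomes $\overline b\,a_j$ for the appropriate $j$, and similarly for $\ell$.
\item The relations $b_j\overline b\sim\varepsilon$ and $\overline b b_j\sim\varepsilon$ cancel a vertical pair, and $a_j\overline a_{j+1}\sim\varepsilon$, $\overline a_j a_{j-1}\sim\varepsilon$ free-reduce horizontal letters.
\end{itemize}
The algorithm repeatedly applies these moves. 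A suitable complexity (number of vertical letters, then the total horizontal length counted with weights $k^{\text{depth}}$) strictly decreases, so the procedure terminates. It ends in a word of the shape $(\text{down letters})(\text{horizontal})$ or $(\text{horizontal})(\text{up letters})$, with no $\overline b$ adjacent to a $b$. Consistency forces the down letters after the first to be $b_0$, and forces a horizontal block preceding $\overline b^n$ with $n\ge1$ to end in state $0$. This is exactly the list of normal forms.

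\textbf{Uniqueness.} I would interpret every letter as an affine map of $\ZZ[1/k]\times\ZZ$:
\[
a_j\colon (x,m)\mapsto(x+1,m),\quad \overline a_j\colon (x,m)\mapsto(x-1,m),\quad b_j\colon (x,m)\mapsto(kx,m-1),\quad \overline b\colon (x,m)\mapsto(x/k,m+1).
\]
Every relation in $R_k$ becomes an identity of maps. For instance, $a_jb_{j+1}$ and $b_ja_0\cdots a_{k-1}$ both send $x\mapsto kx+k$, and the inverse relations give the identity. Hence the composite map is an invariant of equivalence. The normal forms evaluate to
\[
x\mapsto x/k^n,\qquad x\mapsto (x\mp t)/k^n,\qquad x\mapsto k^{n+1}x,\qquad x\mapsto k^{n+1}x\mp t,
\]
together with the level shift. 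Distinct $(n,t,\text{sign})$ therefore give distinct maps. The remaining data, namely the index $i$ in $r(i,t),\ell(i,t),b_i$, is the start state.

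\textbf{Main obstacle.} The hard step is showing the start state is an equivalence invariant. This is delicate because relations such as $(b_j\overline b,\varepsilon)$ and $(\overline b b_j,\varepsilon)$ pass through $\varepsilon$, and $\overline b$ has a nondeterministic terminal state. Formally, $b_0\sim b_0\overline b b_1\sim b_1$ would hold, identifying words with different start states. I would therefore carry out the argument for equivalence inside the support of a fixed model $\varphi$, which is what the graph $\mathcal G(\mathcal H_k,\varphi)$ uses: a rewrite is only allowed when both words lie in $\supp(\varphi)$. There $\overline b$ has a determined state, so in the example $\varphi(b_0\overline b)=\varphi(\varepsilon)$ and the start state is pinned down. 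I would then combine this with the affine invariant: the invariant map $(x,m)\mapsto\ldots$ applied at the root, with $x$ taken as the root's state, recovers the residue information. In particular, $i$ is determined as the root's state for the forms $r(i,t)\overline b^n$ and $\ell(i,t)\overline b^n$, and by the state after one step for $b_ib_0^n$. If needed, I would adjust the normal-form statement so that $i$ is understood relative to $\varphi(\varepsilon)$.
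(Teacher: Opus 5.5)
Your proposal is correct in substance. Its existence half is the paper's own procedure: cancel mixed pairs $\overline b u b_i$ and $b_i u \overline b$, push the surviving $b_i$ left or $\overline b$ right, then free-reduce. The genuine difference is in uniqueness.

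The paper only points to the height $h(w)=\#\overline b-\sum_i\#b_i$. That separates levels but not, e.g., $r(i,1)$ from $r(i,2)$, or $r(i,t)$ from $\ell(i,t)$, so its ``it is easy to show'' hides the real step. Your affine action on $\ZZ[1/k]\times\ZZ$ separates all the data $(n,t,\pm)$; it is the faithful action of $\langle a,b\mid ab=ba^k\rangle$, which is what $R_k$ becomes once states are forgotten. It carries exactly the information of the map $f\colon S_k^*\to\HH$ that the paper introduces only in the next lemma. If your map for $w$ is $x\mapsto k^{-h(w)}x+c_w$, then $\operatorname{Re} f(w)=c_wk^{h(w)}$ and $\operatorname{Im} f(w)=k^{h(w)}$. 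So your route makes the lemma self-contained.

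Your ``main obstacle'' is, moreover, a real defect of the statement. $\Gamma$-similarity only requires consistency, so $b_0\sim b_0\overline b b_1\sim b_1$. Likewise $a_0\sim a_0b_1\overline b\sim b_0r(0,k)\overline b\sim b_1r(0,k)\overline b\sim a_1b_2\overline b\sim a_1$. Hence ``equivalent iff same normal form'' is literally false, and it has to be read inside $\supp(\varphi)$ for a fixed model, which is all the graphs $\mathcal G(\mathcal H_k,\varphi)$ use.

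Your repair is sound:
\begin{itemize}
\item the model condition (e.g.\ $\varphi(wb_j\overline b)=\varphi(w)$) keeps every rewriting step inside $\supp(\varphi)$;
\item the affine map is invariant even under unrestricted equivalence;
\item the index of the normal form is then simply $\varphi(\varepsilon)$, with no need to evaluate the affine map at the root.
\end{itemize}
This also holds for $b_ib_0^n$: there $i$ is the initial state of $b_i$, not the state after one step, which is always $0$. A common start state alone would not suffice in place of a common model. For instance, $b_0\overline b a_1$ and $a_0$ both start at $0$ and are equivalent, but normalize to $r(1,1)\neq r(0,1)$.

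One step of your existence sketch is wrong as stated: the termination measure does not decrease. The move $a_jb_{j+1}\to b_j\,r(0,k)$ replaces one horizontal letter at height $h$ by $k$ letters at height $h-1$. This leaves the $k^{h}$-weighted horizontal length unchanged and increases the $k^{-h}$-weighted one. The move that actually pushes $\overline b$ to the right is $\overline b a_j\to r(0,k)\overline b$. It is the reverse of the absorption you list (which is only useful for cancelling $b_iu\overline b$), and it does not decrease the measure either. Argue in phases instead. Each mixed-pair cancellation lowers the number of vertical letters by two. Once only one kind of vertical letter remains, move them one at a time (leftmost $b_i$ first, rightmost $\overline b$ first); each crosses only finitely many horizontal letters.
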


\begin{proof}
    Set $S_k'\subset S_k$ as $S_k'= \{a_0,\dots, a_{k-1},\overline{a}_0,\dots, \overline{a}_{k-1}\}$. We present an algorithm which takes an $\mathcal{H}_k$-consistent word and puts it in normal form.

    \begin{itemize}
        \item First, check if the word has any occurrence of the form $\overline{b}ub_i$ or $b_i u \overline{b}$ with $u \in (S_k')^*$ and $i \in \ZZ/k\ZZ$. Using the cyclic relation we replace these words by an equivalent word in $S_k^*$ with one fewer occurrence of each of $\overline{b}$ and $b_i$. Repeat this process until either $\overline{b}$ or $b_i$ is no longer present in the word.
        \item If no occurrence of $b_i$ remains, we move every remaining occurrence of $\overline{b}$ to the rightmost side of the word iterating the following procedure: if a subword of the form $\overline{b}a_i$ occurs, we replace it by $a_0\dots a_{k-1}\overline{b}$. If $\overline{b}\overline{a}_{i}$ occurs, we replace it by $\overline{a}_0\overline{a}_{k-1}\dots \overline{a}_1\overline{b}$. 
        \item If any symbols $b_i$ remain, we move them to the leftmost side of the word through the following procedure: if a subword of the form $a_{i-1}b_i$ occurs, we replace it by $b_{i-1}a_0\dots a_{k-1}$. If $\overline{a}_{i+1}b_i$ occurs, we replace it by $b_{i+1}\overline{a}_0\overline{a}_{k-1}\dots \overline{a}_1$. 
        \item Finally, we cancel out all the occurrences of the trivial relations in the word until none remain (that is, any occurrence of $a_j\overline{a}_{j+1}$ or $\overline{a}_ja_{j-1}$ for some $j\in \ZZ/k\ZZ$). 
    \end{itemize}
    
    It is clear that this yields a normal form and that the procedure is computable.

    As in every step of the algorithm above we used relations in $\mathcal{H}_k$, it is clear that the original word is $\mathcal{H}_k$-equivalent to its normal form. Finally, it is easy to show that two distinct normal forms are not $\mathcal{H}_k$-equivalent (note that every relation preserves the number of occurrences of $\overline{b}$ minus the total number of occurrences of $b_i$, summed over all $i \in \ZZ/k\ZZ$). \end{proof}
    
    \begin{corollary}
        The blueprint $\mathcal{H}_k$ has decidable word problem.
    \end{corollary}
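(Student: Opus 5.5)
The corollary follows directly from \Cref{lem:WP_decidable_Hk}. Given two words $u,v\in S_k^*$, the algorithm proceeds in three steps.

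First, it checks whether both words are $\mathcal{H}_k$-consistent. This is a finite check: for each consecutive pair of letters $u_j u_{j+1}$, test whether $i(u_{j+1})\in t(u_j)$. Since $M_k$ and $S_k$ are finite and the maps $i$ and $t$ are given explicitly by finite tables, this is computable. Inconsistent words do not enter the equivalence question as defined, so the algorithm may reject them.

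Second, for consistent $u$ and $v$, it runs the procedure of \Cref{lem:WP_decidable_Hk} to obtain the normal forms $\mathrm{nf}(u)$ and $\mathrm{nf}(v)$, and then compares them letter by letter. By the second part of the lemma, $u$ and $v$ are $\mathcal{H}_k$-equivalent if and only if $\mathrm{nf}(u)=\mathrm{nf}(v)$. The comparison is a finite string equality, so the procedure always halts with the correct answer.

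Third, if the intended notion of decidable word problem is the stronger one, namely deciding equivalence of the vertices $\underline{u}$ and $\underline{v}$ within a given model $\varphi$, this also reduces to the lemma. Equivalence of $u$ and $v$ is defined purely syntactically from $R_k$, independently of $\varphi$, so the same comparison of normal forms decides it.

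The only point needing care is that the normal-form procedure terminates and outputs an actual normal form. This is asserted in \Cref{lem:WP_decidable_Hk}. If I wanted to double-check it, I would use a termination measure. In the first step, the number of $b$- and $\overline{b}$-letters strictly decreases. In the pushing steps, the number of letters of $S_k'$ lying on the wrong side of a $\overline{b}$ or $b_i$ decreases. Free cancellation strictly shortens the word.
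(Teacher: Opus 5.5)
Your reduction is the paper's own argument: the corollary is recorded there as an immediate consequence of \Cref{lem:WP_decidable_Hk}, with no further proof. The problem is that the clause your second step rests on is false for $\mathcal{H}_k$-equivalence as literally defined, so comparing normal forms letter by letter can return the wrong answer. That clause says two consistent words are equivalent if and only if their normal forms coincide. Here is a counterexample. Since $t(\overline{b})=M_k$, the word $b_1\overline{b}b_0$ is consistent. It is similar to $b_0$ via $(b_1\overline{b},\varepsilon)$, and to $b_1$ via $(\overline{b}b_0,\varepsilon)$. Hence $b_0\sim b_1$, although these are distinct normal forms ($b_ib_0^n$ with $n=0$). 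Likewise $r(i,t)\sim b_0\overline{b}\,r(i,t)\sim b_0\,r(0,kt)\overline{b}$ for every $i$, because $\overline{b}a_i\sim\overline{b}a_ib_{i+1}\overline{b}\sim\overline{b}b_i\,r(0,k)\overline{b}\sim r(0,k)\overline{b}$; so $r(0,t)\sim r(1,t)$. The invariant quoted in the proof of the lemma (number of $\overline{b}$'s minus number of $b_i$'s) detects height but not these leading indices. The syntactic relation forgets the state at the root: $b_j\overline{b}$ returns to the root but lets its state be re-chosen.

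The conclusion survives with a small repair. Use the map $f$ of \Cref{lem:com_qi_explicit_horrendous}. It ignores letter indices, takes values in $\QQ(i)$, is computable exactly, and together with $h$ is preserved by every relation of $R_k$. Since both are updated letter by letter, this propagates through any suffix, so $f$ is an invariant of $\mathcal{H}_k$-equivalence. Its table of values on normal forms shows that two normal forms have the same $f$-value if and only if they agree after erasing the index of the leading letter. The computations above show such normal forms are indeed equivalent. Hence $u\sim v$ if and only if $f(u)=f(v)$, which is decidable; equivalently, compare normal forms modulo the leading index.

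In the setting of your third point, plain comparison does work. For $u,v\in\supp(\varphi)$, every rewriting step of the algorithm stays inside $\supp(\varphi)$, so both normal forms begin at state $\varphi(\varepsilon)$.

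A side remark on your optional termination measure: the total number of misplaced letters need not decrease. The step $\overline{b}\,\overline{b}a_j\mapsto\overline{b}\,r(0,k)\overline{b}$ changes it from $2$ to $k$. What does decrease is the vector of counts read from the rightmost $\overline{b}$ (resp. leftmost $b_i$), in lexicographic order.
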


Next we will show that there is a computable map which takes an $\mathcal{H}_k$-model and produces a quasi-isometry to the complex upper half-plane model of the hyperbolic plane.

\begin{lemma}\label{lem:com_qi_explicit_horrendous}
    For $k \geq 2$ there is a computable map $f\colon S_k^* \to \HH$ and $D,M>0$ such that for every $\varphi\in \mathcal{M}(\mathcal{H}_k)$, the map $\xi_{\varphi }\colon \mathcal{G}(\mathcal{H}_k,\varphi)\to \HH$ given by $\xi_{\varphi}(\underline{v}) = f(v)$ for $v\in \supp(\varphi)$ is a well-defined quasi-isometry such that for every $z\in \HH$:
        \begin{itemize}
            \item There is $u \in \supp(\varphi)$ such that $d_{\HH}(z,\xi_\varphi(\underline{u}))\leq D$.
            \item We have $|\{ \underline{v} : d_{\HH}(z,\xi_{\varphi}(\underline{v})) \leq 1 \}|\leq M$.
        \end{itemize}
\end{lemma}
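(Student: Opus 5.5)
The plan is to realize every graph $\mathcal{G}(\mathcal{H}_k,\varphi)$ inside $\HH$ as the standard ``horocyclic'' (binary-tiling-like) picture. The vertex $(n,m)$ of $H_k$ goes to $k^m(n+i)$, so levels lie on horocycles $\operatorname{Im} z = k^m$ and consecutive vertices on a level are $k^m$ apart in real part. To make this independent of $\varphi$, I will define $f$ on words through the affine group of $\HH$. To each generator assign a map:
\[
g_{a_j}(z)=z+1,\quad g_{\overline{a}_j}(z)=z-1,\quad g_{b_j}(z)=z/k,\quad g_{\overline{b}}(z)=kz .
\]
For $w=w_1\cdots w_n$ put $f(w)=g_{w_1}\circ\cdots\circ g_{w_n}(i)$. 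Since every $g_s$ has rational coefficients, $f(w)$ is a Gaussian rational computed exactly, so $f$ is computable.

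\textbf{Well-definedness.} I will check that each pair in $R_k$ gives equal affine maps. For the inverse relations this is immediate. For the cyclic relation $(a_jb_{j+1},\, b_ja_0\cdots a_{k-1})$, the left side gives $z\mapsto z/k+1$. The right side gives $z\mapsto (z+k)/k = z/k+1$, so they agree. Since $\Gamma$-equivalence is generated by such substitutions inside words, $\underline{v}\mapsto f(v)$ is well defined on every $\mathcal{G}(\mathcal{H}_k,\varphi)$.

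\textbf{Explicit image and injectivity.} Using the normal forms of \Cref{lem:WP_decidable_Hk}, the image of the vertex set is computed explicitly. The word $\overline{b}^n$, possibly followed or preceded by horizontal moves, lands at $k^n(t+i)$ with $t\in\ZZ$. The forms $b_ib_0^n\ell(0,t)$ and $b_ib_0^nr(0,t)$ land at $k^{-n-1}(\pm t+i)$. Distinct normal forms give distinct points, so $\xi_\varphi$ is injective. Its image is contained in $\{k^m(t+i): m,t\in\ZZ\}$, and in every model the image contains, for each $m\geq m_0$, the full row $k^{m}(t+i)$ shifted by the horizontal offset of the ancestor column. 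Each level of every model graph is a bi-infinite horizontal line, since $a_j,\overline{a}_j$ are always applicable, and the level below any vertex exists, since $b_j$ is always applicable. Hence on every level $m$ the image is a full arithmetic progression $x_m+k^m\ZZ$ at height $k^m$.

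\textbf{Quasi-isometry estimates.} The upper bound $d_{\HH}(f(u),f(us))\le c:=\max(d_{\HH}(i,i+1),\log k)$ holds because the $g_s$ are isometries, which gives $d_{\HH}\le c\, d_{\mathcal G}$. For the first bullet, given $z=x+iy$, pick $m$ with $k^m\le y<k^{m+1}$ and a point of the progression at height $k^m$ with real part within $k^m$ of $x$. The hyperbolic distance is then bounded by a constant $D$ depending only on $k$. For the second bullet, a unit ball around $z$ meets only heights in $[y/e,ey]$, hence boundedly many levels. On each such level the points are $k^m\asymp y$ apart while the ball has Euclidean width $\asymp y$, so at most $M=M(k)$ vertices occur.

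The main obstacle is the lower bound $d_{\mathcal G}(\underline u,\underline v)\le A\,d_{\HH}(f(u),f(v))+B$, uniformly in $\varphi$. I would prove it by an explicit path. From two vertices at heights $k^{m}\le k^{m'}$ and real parts $x,x'$, climb from both by $\overline{b}$ to the first level $p\ge m'$ with $k^{p}\gtrsim |x-x'|$. Climbing keeps real parts fixed, and climbing is always possible, since from any vertex one can move horizontally at most $k-1$ steps to reach state $0$. Then join the two vertices horizontally in $O(1)$ steps. This gives a path of length $(p-m)+(p-m')+O(p-m)$. Comparing with the standard formula $d_{\HH}(x+iy,x'+iy')\approx 2\log\bigl(|x-x'|+\max(y,y')\bigr)-\log y-\log y'$ yields the bound with $A,B$ depending only on $k$. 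Together with density (the first bullet), this shows each $\xi_\varphi$ is a quasi-isometry with constants independent of $\varphi$.
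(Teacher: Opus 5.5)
Your proposal is correct and is essentially the paper's proof. Your orbit map $f(w)=g_{w_1}\circ\cdots\circ g_{w_n}(i)$ coincides with the paper's inductively defined $f$: the composed affine map has linear part $k^{h(w)}$, so $f(wa_j)=f(w)+k^{h(w)}$, and similarly for the other generators. You then use the same normal-form computations for density and bounded multiplicity, and you also spell out the lower quasi-isometry bound that the paper calls routine.

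One point needs tightening, which the paper glosses as well. Your multiplicity count (spacing $k^m$) and your horizontal joining of the two climbed vertices both need each level of $\mathcal{G}(\mathcal{H}_k,\varphi)$ to be a \emph{single} horizontal line. That does not follow from $a_j,\overline{a}_j,b_j$ being applicable everywhere. It does follow from the normal forms for levels $m\le 0$, and by induction upward using $\overline{b}a_j\sim a_0\cdots a_{k-1}\overline{b}$. Relatedly, for $m>0$ the image is not contained in $\{k^m(t+i)\}$ (for example when $\varphi(\varepsilon)\neq 0$); it is only contained in a coset $x_m+k^m\ZZ$ at height $k^m$, which is all your argument actually uses.
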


\begin{proof}
Consider first the height map $h\colon S_k^* \to \ZZ$ defined inductively on the length of the word by $h(\varepsilon)=0$ and for $w\in S_k^*$ and $s \in S_k$ by:

\[ h(ws) = \begin{cases}
        h(w)+1 & \mbox{ if } s = \overline{b}\\
         h(w)-1 & \mbox{ if } s \in \{b_0,\dots,b_{k-1}\}\\
         h(w) & \mbox{ if } s \in \{a_0,\dots,a_{k-1},\overline{a}_0,\dots,\overline{a}_{k-1}\}.
    \end{cases}  \]

    Note that $h$ just counts the number of symbols $\overline{b}$ that occur in the word and subtracts the number of $b_i$ that occur for all $i \in \ZZ/k\ZZ$. Note that every relation in $R_k$ preserves this count, and thus if two words $u,v$ are $\mathcal{H}_k$-equivalent, then $h(u)=h(v)$. 

    Next we define $f\colon S_k^* \to \HH$ inductively. Set $f(\varepsilon)=i$. For $w\in S_k^*$ and $s \in S_k$ we set, 
    \[ f(ws) = \begin{cases}
        f(w)+ik^{h(ws)}-ik^{h(w)} & \mbox{ if } s \in \{\overline{b},b_0,\dots,b_{k-1}\}\\
         f(w)+k^{h(w)} & \mbox{ if } s \in \{a_0,\dots,a_{k-1}\}\\
         f(w)-k^{h(w)} & \mbox{ if } s \in \{\overline{a}_0,\dots,\overline{a}_{k-1}\}.
    \end{cases}  \]

    It is evident that $f$ is computable. Also, a straightforward verification shows that $(u,v)\in R_{k}$ satisfies $f(wu)=f(wv)$ for all $w\in S_k^*$ and thus for every $\varphi \in \mathcal{M}(\mathcal{H}_k)$, the induced map $\xi_{\varphi}(\underline{v}) =f(v)$ is well-defined.

    To prove the rest, note that for $n \geq 0$, $j \in \ZZ/k\ZZ$, and $t \geq 1$, the values of $f$ on normal forms are given as follows:

    \begin{align*}
        f( \overline{b}^n) & = ik^n & f(b_j(b_0)^n) &= ik^{-(n+1)}\\
        f(\ell(j,t)\overline{b}^n) &= -t+ik^n &  f(r(j,t)\overline{b}^n) &= t+ik^n\\
        f(b_j(b_0)^n\ell(0,t)) &= -tk^{-(n+1)}+ik^{-(n+1)}& 
        f(b_j(b_0)^nr(0,t)) &= tk^{-(n+1)}+ik^{-(n+1)}
    \end{align*}
    
Let us estimate the least distance in $\HH$ that two distinct normal forms can have. Set $u,v \in \supp(\varphi)$. First suppose that the imaginary parts of $f(u)$ and $f(v)$ differ by some factor $k^n$ with $n \in \ZZ\setminus \{0\}$. As vertical lines are geodesics under $\HH$, we deduce that \[d_{\HH}(f(u),f(v)) =|\ln(\Im(f(u))/\Im(f(v)))| \geq |n|\ln(k) \geq \ln(k).\]

On the other hand, if the imaginary parts of $f(u)$ and $f(v)$ coincide, then their real parts differ by a non-zero multiple of $\Im(f(u))$. Thus we get \[d_{\HH}(f(u),f(v)) \geq 2 \operatorname{arcsinh}\left(\frac{|\Re(f(u))-\Re(f(v))|}{2\Im(f(u))}\right) = 2\operatorname{arcsinh}\left(\frac{1}{2}\right) = 2 \ln \left( \frac{1+\sqrt{5}}{2}\right). \]

We deduce that for any $u,v \in \supp(\varphi)$ such that $\underline{u} \neq \underline{v}$, we have that \[d_{\HH}(\xi_{\varphi}(\underline{u}),\xi_{\varphi}(\underline{v})) \geq C = \min\left(2 \ln \left( \frac{1+\sqrt{5}}{2}\right),\ln(k)\right).\]

Take $M$ as the maximal size of a $\frac{C}{2}$-separated set within balls of radius $2$ (this quantity exists because disks can be assigned an area in $\HH$ which depends solely on the radius), it is clear that with this choice we have that for any $z\in \HH$ then $|\{ \underline{v} : d_{\HH}(z,\xi_{\varphi}(\underline{v})) \leq 1 \}|\leq M$.

Using the previous computations, a routine argument shows that $\xi_{\varphi}$ is a quasi-isometric embedding. One can also check that $D =2\ln \left( \sqrt{k}\left(\frac{1+\sqrt{5}}{2}\right)\right)$ satisfies the density condition. \end{proof}


\begin{definition}\label{def:computable_embedding_QI}
    Let $\Gamma_1,\Gamma_2$ be two finitely presented blueprints with generating sets $S_1,S_2$ respectively. We shall say that $\Gamma_1$ has a \define{computable quasi-isometric image} in $\Gamma_2$ if there is a constant $N \geq 1$ and computable maps $\tau \colon \mathcal{M}(\Gamma_1)\to\mathcal{M}(\Gamma_2)$ and $h\colon \mathcal{M}(\Gamma_1)\times S_1^* \to S_2^*$ such that for every $\varphi \in \mathcal{M}(\Gamma_1)$ and $u,v \in \supp(\varphi)$ we have:
\begin{enumerate}
    \item if $u,v$ are $\Gamma_1$-equivalent, then $h(\varphi,u)$ and $h(\varphi,v)$ are $\Gamma_2$-equivalent words in $\supp(\tau(\varphi))$.
    \item If we let $h_{\varphi}\colon S_1^*\to S_2^*$ be given by $h_{\varphi}(s)=h(\varphi,s)$, then $h_{\varphi}$ induces a quasi-isometry from $\mathcal{G}(\Gamma_1,\varphi)$ to $\mathcal{G}(\Gamma_2,\tau(\varphi))$ in which every vertex has at most $N$ preimages.
\end{enumerate}

\end{definition}

In the definition above, the first condition is expressing that $h_{\varphi}$ induces a well-defined map from $\mathcal{G}(\Gamma_1,\varphi)$ to $\mathcal{G}(\Gamma_2,\tau(\varphi))$, while the second condition says that this map is a quasi-isometry with a universally bounded number of preimages. If each of the blueprints $\Gamma_1$ and $\Gamma_2$ has a computable quasi-isometric image in the other, we say that they are \define{computably quasi-isometric}.

The next technical lemma provides conditions under which a blueprint has a computably quasi-isometric image in another blueprint. 

\begin{lemma}\label{lem:computable_embedding_QI}
    Let $\Gamma_1,\Gamma_2$ be two finitely presented blueprints with decidable word problem and generating sets $S_1,S_2$ respectively. Suppose that we have the following two properties:
    \begin{enumerate}
        \item For $i \in \{1,2\}$ there exists a computable map $f_i\colon S_i^* \to \HH$ and constants $M,D >0$ such that for every $\varphi \in \mathcal{M}(\Gamma_i)$ the map $\xi_{\varphi}\colon \mathcal{G}(\Gamma_i,\varphi) \to \HH$ given by $\xi_{\varphi}(\underline{v})=f_i(v)$ for $v \in \supp(\varphi)$ is a well-defined quasi-isometry such that for every $z\in \HH$:
        \begin{itemize}
            \item There is $u \in \supp(\varphi)$ such that $d_{\HH}(z,\xi_\varphi(\underline{u}))\leq D$.
            \item We have $|\{ \underline{v} : d_{\HH}(z,\xi_{\varphi}(\underline{v})) \leq 1 \}|\leq M$.
        \end{itemize}
        \item There exists a computable map $\tau \colon \mathcal{M}(\Gamma_1)\to \mathcal{M}(\Gamma_2)$.
    \end{enumerate} Then $\Gamma_1$ has a computably quasi-isometric image in $\Gamma_2$.
\end{lemma}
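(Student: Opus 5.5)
We keep the map $\tau\colon\mathcal{M}(\Gamma_1)\to\mathcal{M}(\Gamma_2)$ from hypothesis (2) and build $h$ by a nearest-point search. Given $\varphi\in\mathcal{M}(\Gamma_1)$ and $u\in S_1^*$, the point $z=f_1(u)\in\HH$ is computable. Hypothesis (1) applied to $\tau(\varphi)$ gives some $v\in\supp(\tau(\varphi))$ with $d_{\HH}(z,f_2(v))\leq D$. The algorithm enumerates the words $v\in S_2^*$ in shortlex order. For each one it queries $\tau(\varphi)(v)$ to test whether $v\in\supp(\tau(\varphi))$, which is computable because $\tau$ is computable and $\varphi$ is given as an oracle. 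It then approximates $d_{\HH}(f_1(u),f_2(v))$. Exact comparison of reals is not decidable, so the search looks for the first $v$ in the support for which an approximation certifies $d_{\HH}(f_1(u),f_2(v))<D+1$. Such a $v$ exists, and every candidate with true distance below $D+1$ is eventually certified. We dovetail over words and precision levels and fix a deterministic rule, for instance the first certified pair in the dovetailing order. This defines $h(\varphi,u)$, and $h$ is computable.

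The main obstacle is condition (1) of \Cref{def:computable_embedding_QI}. If $u,u'$ are $\Gamma_1$-equivalent, well-definedness of $\xi_\varphi$ gives $f_1(u)=f_1(u')$. The dovetailed search may still return non-equivalent words, because it depends on the input $u$ only through its approximation scheme. To fix this, we first replace $u$ by a canonical representative of its class. Since $\Gamma_1$ has decidable word problem and $\varphi$ is given, we can search for the shortlex-least word $u_0\in\supp(\varphi)$ that is $\Gamma_1$-equivalent to $u$. This search terminates because $u$ itself is a candidate and there are finitely many shorter words. We then run the procedure on $u_0$. Equivalent inputs now produce literally the same output word, which lies in $\supp(\tau(\varphi))$, so condition (1) holds.

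For condition (2), write $\eta\colon\mathcal{G}(\Gamma_1,\varphi)\to\mathcal{G}(\Gamma_2,\tau(\varphi))$ for the induced map. By construction $d_{\HH}(\xi^{(2)}_{\tau(\varphi)}(\eta(x)),\xi^{(1)}_\varphi(x))<D+1$ for every vertex $x$. Hence $\eta$ is within bounded distance of $(\xi^{(2)})^{-1}\circ\xi^{(1)}$, where $(\xi^{(2)})^{-1}$ denotes a coarse inverse of $\xi^{(2)}_{\tau(\varphi)}$, and so $\eta$ is a quasi-isometry. Composition and coarse inverses of quasi-isometries are quasi-isometries, and the constants of $\xi^{(1)}_\varphi$ and $\xi^{(2)}_{\tau(\varphi)}$ are assumed uniform in the hypothesis. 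Relative density of the image follows because the vertices of $\Gamma_1$ are $D$-dense in $\HH$.

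It remains to bound preimages. Suppose $\eta(x)=y$. Then $\xi^{(1)}_\varphi(x)$ lies in the hyperbolic ball of radius $D+1$ around $\xi^{(2)}(y)$. This ball is covered by a number $K$ of unit balls that depends only on $D$, by homogeneity of $\HH$. Each unit ball contains images of at most $M$ vertices by hypothesis (1). So $y$ has at most $N=KM$ preimages.
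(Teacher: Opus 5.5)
Your proposal is correct and follows essentially the paper's argument: a shortlex nearest-point search in $\supp(\tau(\varphi))$, well-definedness via the decidable word problem (your canonical representative $u_0$ spells out the paper's remark that the approximations $q_n(u)$ can be taken uniform on $\Gamma_1$-equivalence classes), the quasi-isometry property from the uniform closeness of $\xi_{\tau(\varphi)}\circ h_\varphi$ and $\xi_\varphi$, and a covering-by-unit-balls count for the preimage bound. The differences are only technical. You certify a strict inequality $<D+1$ by dovetailing over precisions, while the paper fixes one precision and uses the threshold $3D$. You should also, as the paper does, first replace $D$ by an integer (or rational) upper bound so that your certification test is actually decidable.
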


\begin{proof}
    As $f_1$ is a computable map, for each integer $n \geq 1$ and $u \in S_1^*$ we can compute $q_n(u) \in \QQ(i)\cap \HH$ such that $d_{\HH}(f_1(u),q_n(u))\leq 2^{-n}$. Similarly, as $f_2$ is a computable map, for each integer $n \geq 1$ and $v \in S_2^*$ we can compute $r_n(v) \in \QQ(i)\cap \HH$ such that $d_{\HH}(f_2(v),r_n(v))\leq 2^{-n}$. Furthermore, as $\Gamma_1$ has decidable word problem, we may impose that the computation of $q_n(u)$ is uniform on $\Gamma_1$-equivalence classes.
    
    As the maps $\xi_{\varphi}$ are quasi-isometries, we may fix a positive integer $D >0$ such that for every $z\in \HH$ and $(\varphi_1,\varphi_2)\in \mathcal{M}(\Gamma_1)\times \mathcal{M}(\Gamma_2)$ there are $u\in \supp(\varphi_1)$ and $v \in  \supp(\varphi_2)$ with $\max(d_{\HH}(z,f_1(u)), d_{\HH}(z,f_2(v)))\leq D$. Fix also a positive integer $n$ such that $2^{-n}<D$.

    By definition of $D$, we know that for every $u \in S_1^*$ and $\varphi_1\in \mathcal{M}(\Gamma_1)$ there exists some $v\in \supp(\tau(\varphi_1))$ such that $d_{\HH}(f_1(u),f_2(v))\leq D$, and thus by our choice of $n$, we will also have that $d_{\HH}(q_n(u),r_n(v))<3D$.

    We construct a map $h\colon \mathcal{M}(\Gamma_1)\times S_1^* \to S_2^*$ as follows. For $u \in S_1^*$ and $\varphi_1\in \mathcal{M}(\Gamma_1)$ we set $h(\varphi_1,u)=v$ where $v\in S_2^*$ is the smallest word (in shortlex order) in $\supp(\tau(\varphi_1))$  which satisfies $d_{\HH}(q_n(u),r_n(v))\leq 3D$. As $\tau$ is computable and such a $v$ always exists, it follows that our map $h$ is also computable.

    Let us verify that $\tau,h$ satisfy the remaining requirements of~\Cref{def:computable_embedding_QI}. Fix $\varphi_1 \in \mathcal{M}(\Gamma_1)$, $u,u'\in\supp(\varphi_1)$, and set $\varphi_2 = \tau(\varphi_1)$. 
    
    If $u,u'$ are $\Gamma_1$-equivalent, then we must have $q_n(u)=q_n(u')$. In particular, it follows that $h(\varphi_1,u)=h(\varphi_1,u')$ and thus they must be $\Gamma_2$-equivalent words in $\supp(\varphi_2)$.

    To check that $h_{\varphi_1}$ induces a quasi-isometry from $\mathcal{G}(\Gamma_1,\varphi_1)$ to $\mathcal{G}(\Gamma_2,\varphi_2)$, it suffices to show that the maps $\xi_{\varphi_2}\circ h_{\varphi_1}$ and $\xi_{\varphi_1}$ are uniformly close. On the one hand we have $\xi_{\varphi_1}(\underline{u})=f_1(u)$, and on the other hand we have $\xi_{\varphi_2}(\underline{v}) = f_2(v)$ where $v = h(\varphi_1,u)$. By definition of $h$, we have that $d_{\HH}(q_n(u),r_n(v))\leq 3D$ and thus \[ d_{\HH}(f_1(u),f_2(v)) \leq d_{\HH}(f_1(u),q_n(u)) + d_{\HH}(q_n(u),r_n(v))+d_{\HH}(r_n(v),f_2(v))\leq 5D.  \]
    Hence $d_{\infty}(\xi_{\varphi_2}\circ h_{\varphi_1},\xi_{\varphi_1}) \leq 5D$. 

    Finally, we estimate the number of preimages of $\underline{v}$, where $v=h_{\varphi_1}(u)$. If $u'\in \supp(\varphi_1)$ is such that $h_{\varphi_1}(u')=v$, it follows that $d_{\HH}(f_1(u),f_1(u'))\leq 10D$. Hence $f_1(h_{\varphi_1}^{-1}(v))$ is contained in a disk of radius $10D$ centered in $f_1(u)$. Let $t\geq 0$ be such that for every disk of radius $10D$ there are $z_1,\dots, z_t\in \HH$ such that the collection of disks of radius $1$ centered at $z_i$ cover it. As we assumed that $|\{ \underline{v} : d_{\HH}(z_i,\xi_{\varphi}(\underline{v})) \leq 1 \}|\leq M$, we deduce that the number of preimages of $\underline{v}$ is bounded by $N=tM$.\end{proof}

\subsection{Transference of Medvedev degrees}

In this section we shall show that for any $k \geq 2$, the blueprint $\mathcal{H}_k$ admits the same Medvedev degrees of SFTs as any cocompact Fuchsian group (\Cref{mainthm:to_blueprint}). The main tool is the following result of~\cite{BarBit2026}.

\begin{theorem}[Corollary 5.12 of~\cite{BarBit2026}]\label{teo:BarBit2026}
    Let $\Gamma_1,\Gamma_2$ be two finitely presented, strongly connected blueprints with decidable word problem. If $\Gamma_1$ and $\Gamma_2$ are computably quasi-isometric, then $\mathfrak{M}_{\texttt{SFT}}(\Gamma_1)=\mathfrak{M}_{\texttt{SFT}}(\Gamma_2)$.
\end{theorem}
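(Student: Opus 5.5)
The plan is to \emph{simulate} $\Gamma_2$-SFTs by $\Gamma_1$-SFTs and vice versa. By symmetry it suffices to show $\mathfrak{M}_{\texttt{SFT}}(\Gamma_2)\subseteq\mathfrak{M}_{\texttt{SFT}}(\Gamma_1)$. Fix a $\Gamma_2$-SFT $X=X[\Gamma_2,\mathcal{F}]$ on an alphabet $A$. Let $(\tau,h,N)$ be the data of the computable quasi-isometric image of $\Gamma_1$ in $\Gamma_2$, and $(\tau',h',N')$ those of $\Gamma_2$ in $\Gamma_1$.

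\textbf{Construction of $Y$.} I will build a $\Gamma_1$-SFT $Y$ whose configurations on a graph $\mathcal{G}(\Gamma_1,\varphi_1)$ encode a $\Gamma_2$-graph together with an $X$-coloring of it. Concretely, each vertex $\underline u$ of $\mathcal{G}(\Gamma_1,\varphi_1)$ carries a finite list of at most $N$ ``virtual'' $\Gamma_2$-vertices, imitating the fibre of $h_{\varphi_1}$ over a vertex. Each virtual vertex stores:
\begin{itemize}
\item a state in $M_2$;
\item a symbol in $A$;
\item for each $s\in S_2$ with $i(s)$ equal to the stored state, a pointer to the virtual vertex reached by the edge labelled $s$.
\end{itemize}
A pointer is a pair consisting of a word of $S_1^*$ of length at most $L$ and an index in the list at the target vertex. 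The constant $L$ comes from the quasi-isometry constants: neighbours in $\Gamma_2$ are images of vertices at bounded $\Gamma_1$-distance. The local rules of $Y$ will be the following.
\begin{enumerate}
\item Pointers are $\Gamma_1$-consistent, and an $s$-edge is followed by the inverse structure wherever $\Gamma_2$ has an inverse relation.
\item Every relation $(x,y)\in R_2$ is respected: following the pointers along $x$ and along $y$ from any virtual vertex ends at the same virtual vertex. This is a finite-radius check, since $R_2$ is finite and each pointer has length at most $L$.
\item The decoded states satisfy $t(s)$.
\item The $\mathcal{F}$-patterns of $X$ do not appear along pointer paths, again a finite-radius condition.
\item The root vertex carries at least one virtual vertex, which is declared the $\Gamma_2$-root.
\end{enumerate}

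\textbf{The two reductions.} First, every $(\varphi_1,y)\in Y$ computably yields a point of $X$. Starting from the designated virtual root, I follow pointers to define $\psi\colon S_2^*\to M_2\cup\{\varnothing\}$ and $x\colon S_2^*\to A\cup\{\varnothing\}$. Rules (1)--(3) make $\psi$ $\Gamma_2$-consistent, and rule (2) makes it constant on $\Gamma_2$-equivalence classes, since equivalence is generated by applying single relations inside a word. So $\psi$ is a $\Gamma_2$-model. Rule (4) then gives $(\psi,x)\in X$, and the decoding is clearly computable, so $X\preceq_{\mathfrak m}Y$.

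Conversely, from $(\psi,x)\in X$ I must compute a point of $Y$. I take $\varphi_1=\tau'(\psi)$, which is computable, and use $h'_\psi$ to place each $\Gamma_2$-vertex over a $\Gamma_1$-vertex. The decidable word problem of $\Gamma_1$ lets me group these vertices into fibres, of size at most $N'$. I then write pointers, which are realisable with length at most $L$ once $N$ and $L$ are chosen at least as large as the constants of $h'$. Each choice is made canonically (shortlex least word), so the whole configuration is computable from $(\psi,x)$. This gives $Y\preceq_{\mathfrak m}X$.

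\textbf{Main obstacle.} The step I expect to be delicate is the first reduction. A configuration of $Y$ might a priori encode a ``fake'' structure, for example a disjoint union of $\Gamma_2$-pieces, or a structure in which some virtual vertex has no outgoing $s$-edge. I handle this by decoding only what is reachable from the designated virtual root: for this part, being a model only needs consistency and compatibility with equivalence, both enforced locally. Strong connectedness of $\Gamma_2$ is what guarantees that every required edge is present (each vertex's pointer set is complete by rule (1)), so the reachable part is a full model graph. A second point to check is that $Y$ is really a $\Gamma_1$-SFT, i.e.\ that the rules are invariant under $\Gamma_1$-equivalence. I expect this to be routine, using the decidability of the word problem only for computability.
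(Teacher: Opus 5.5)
Your strategy of decorating each $\Gamma_1$-vertex with a bounded list of virtual $\Gamma_2$-vertices joined by bounded-length pointers, checking the finitely many relations of $\Gamma_2$ and the forbidden patterns of $X$ locally, and then decoding and encoding computably, is the simulation the paper only sketches. The paper imports the statement from \cite{BarBit2026} without proof. As written, however, your construction breaks at rule (5). Forbidden patterns in a $\Gamma_1$-SFT are tested at every $u\in\supp(\varphi_1)$, so there is no distinguished root, and ``the root carries a virtual vertex'' is not an SFT condition. If you drop it, the configuration with every list empty satisfies (1)--(4) vacuously and lies in $Y$. When $\Gamma_1$ has a computable model (as for $\mathcal H_k$ or a group blueprint), this configuration is computable, so $m(Y)=0_{\mathfrak M}$ whatever $X$ is. If you instead impose nonemptiness at every vertex, your encoding $X\to Y$ fails. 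The reason is that $h'_\psi$ is a quasi-isometry with at most $N'$ preimages per vertex, not a surjection, so vertices outside its image receive empty lists. You also cannot pad them with duplicates of nearby virtual vertices, because a duplicate violates rule (2) for any relation returning to its starting vertex (e.g.\ $(ss^{-1},\varepsilon)$ in a group blueprint). What is missing is a relative-density rule: every ball of a fixed radius $C$ contains a vertex with a nonempty list, where $C$ is a density constant for $h'$. The decoder then starts at the shortlex-least nonempty vertex within distance $C$ of the root. This is exactly where quasi-surjectivity of $h'$ enters, and your argument never uses it.

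Two further steps need repair.
\begin{enumerate}
\item In $Y\preceq_{\mathfrak m}X$, the word problems and the bound $N'$ do not let you compute the list at $\underline w$. Searching for $v$ with $h'(\psi,v)\sim w$ never certifies that the list is complete, and it never halts on an empty fibre. You need the quasi-isometry constants: use density to find $v_0$ with $h'(\psi,v_0)$ within distance $C$ of $\underline w$, then the lower quasi-isometry inequality to confine all preimages of $\underline w$ to a ball of radius $A(C+B)$ around $\underline{v_0}$. You can enumerate that ball with the oracle $\psi$ and the word problem of $\Gamma_2$. This, like your pointer length $L$, requires the constants to be uniform over $\mathcal M(\Gamma_2)$; the definition as stated makes only $N$ explicitly uniform.
\item Strong connectedness is not what makes the pointer sets complete, since that is built into your alphabet. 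Its role is to make graph distances finite, so that the bounded distance between $h'(\psi,v)$ and $h'(\psi,vs)$ is realized by a directed word of length at most $L$.
\end{enumerate}
Finally, your claim that rule (2) makes $\psi$ constant on $\Gamma_2$-equivalence classes is too quick. With the definitions as stated, equivalence is generated by similarities among consistent words. A chain between two walks of the decoded structure may pass through consistent words that are not walks there, and rule (2) never inspects those. You should either check against the precise notion of model in \cite{BarBit2026} that this creates no further identifications, or add the corresponding local constraints.
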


The proof of~\Cref{teo:BarBit2026} is very technical and thus here we will just provide a very broad intuition and refer the readers to~\cite{BarBit2026} for the details. Essentially, using the hypothesis that the blueprints are finitely presented, we can construct an SFT in $\Gamma_1$ where every configuration encodes a quasi-isometric image of a graph from $\Gamma_2$. On top of that, we may take any $\Gamma_2$-SFT and simulate it on top of its encoding in the $\Gamma_1$-SFT. The computability hypotheses in~\Cref{teo:BarBit2026} ensure that this construction does not accidentally increase the Medvedev degree through the inherent complexity of the encoding.

For the remainder of the section, fix $k \geq 2$, and let $\Gamma_1=\mathcal{H}_k$. By the results in the previous subsection, we have that $\Gamma_1$ is finitely presented and strongly connected, and has decidable word problem. 

Fix also a finite presentation $\langle S|R\rangle$ of a cocompact Fuchsian group $G$ and let $\Gamma_2$ be the blueprint induced by this presentation (as in~\Cref{ex:group_blueprint}). We have that $\Gamma_2$ is finitely presented, strongly connected and has decidable word problem. Note that it has a unique model given by the constant map $\varphi_c$, and that the associated graph $\mathcal{G}(\Gamma_2,\varphi_c)$ coincides with the Cayley graph of $G$ with respect to $S$. We also have that $\mathfrak{M}_{\texttt{SFT}}(\Gamma_2)=\mathfrak{M}_{\texttt{SFT}}(G)$.

Hence by~\Cref{teo:BarBit2026}, in order to prove~\Cref{mainthm:to_blueprint}, it suffices to show that $\Gamma_1$ and $\Gamma_2$ are computably quasi-isometric.

\begin{lemma}
    $\Gamma_1$ and $\Gamma_2$ are computably quasi-isometric.
\end{lemma}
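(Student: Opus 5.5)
We apply \Cref{lem:computable_embedding_QI} twice, once in each direction. Both blueprints are finitely presented with decidable word problem, so it only remains to check its two hypotheses: a computable map to $\HH$ with uniform constants $D,M$ for each blueprint, and a computable map between the model spaces.

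\emph{Hypothesis (1) for $\Gamma_1=\mathcal{H}_k$.} This is exactly \Cref{lem:com_qi_explicit_horrendous}, which provides the computable $f_1\colon S_k^*\to\HH$ and the constants $D,M$.

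\emph{Hypothesis (1) for $\Gamma_2$.} By \Cref{mainthm:computable_rep}, $G$ can be realized as a cocompact Fuchsian group whose generators $s\in S$ are matrices with computable coefficients. We define $f_2\colon S^*\to\HH$ by $f_2(s_1\cdots s_m)=s_1\cdots s_m\cdot i$. This map is computable, since Möbius maps with computable coefficients are computable. It depends only on the group element, so $\xi_{\varphi_c}$ is well defined on the Cayley graph. It is a quasi-isometry by \Cref{cor:FuchsianQItoH}, that is, by the Schwarz--Milnor lemma. Relative density of the orbit $G\cdot i$ gives the constant $D$. For the bound $M$, we argue as follows.
\begin{itemize}
\item The action is properly discontinuous, so the stabilizer of $i$ is finite and the orbit $G\cdot i$ is uniformly discrete, with separation $\varepsilon>0$.
\item The number of orbit points in a disk of radius $1$ is therefore bounded by a packing argument, using that hyperbolic disk area depends only on the radius.
\item Multiplying by the size of the stabilizer bounds the number of group elements mapped into such a disk.
\end{itemize}
The largest of the constants obtained for the two blueprints serves for both.

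\emph{Hypothesis (2), the maps between model spaces.}
\begin{itemize}
\item Since $\mathcal{M}(\Gamma_2)=\{\varphi_c\}$ and $\varphi_c$ is computable, the map $\tau\colon\mathcal{M}(\mathcal{H}_k)\to\mathcal{M}(\Gamma_2)$ is simply the constant map $\varphi\mapsto\varphi_c$, which is trivially computable.
\item In the other direction, we take $\tau\colon\mathcal{M}(\Gamma_2)\to\mathcal{M}(\mathcal{H}_k)$ to be the constant map onto the computable model $\varphi_\circ$ of \Cref{ex:maingraph}.
\end{itemize}
With both hypotheses verified, \Cref{lem:computable_embedding_QI} gives computable quasi-isometric images in both directions, so $\Gamma_1$ and $\Gamma_2$ are computably quasi-isometric.

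\emph{Main obstacle.} The only delicate point is the bounded-multiplicity constant $M$ for the group, which requires the uniform discreteness of the orbit and the finiteness of the point stabilizer. A cleaner alternative is to replace the base point $i$ by a point $z_0$ with trivial stabilizer. Such points are dense, since the fixed points of the elliptic elements form a countable set. We would choose $z_0$ computable and, moving $z_0$ slightly if needed, not fixed by any non-identity element; then $f_2$ is injective on $G$ and the packing bound applies directly.
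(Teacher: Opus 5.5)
Your proof is correct and follows essentially the same route as the paper. Both verify the hypotheses of \Cref{lem:computable_embedding_QI} in each direction using the constant maps onto $\varphi_c$ and $\varphi_\circ$, \Cref{lem:com_qi_explicit_horrendous} for $\mathcal{H}_k$, the computable orbit map from \Cref{cor:FuchsianQItoH} for $G$, and the maxima of the constants; your packing argument (uniform discreteness of the orbit together with finiteness of the stabilizer of $i$) justifies the existence of $M$, which the paper simply asserts.
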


\begin{proof}
    We verify the hypotheses of~\Cref{lem:computable_embedding_QI}. First we check condition (2) in both directions.

    Take $\tau_1 \colon \mathcal{M}(\Gamma_1)\to\mathcal{M}(\Gamma_2)$ as the constant map $\tau_1(\varphi)=\varphi_c$ for all $\varphi \in  \mathcal{M}(\Gamma_1)$. As $\varphi_c$ is a constant map, this is computable.

    Take $\tau_2 \colon \mathcal{M}(\Gamma_2)\to\mathcal{M}(\Gamma_1)$ as the constant map $\tau_2(\varphi_c) = \varphi_\circ$, where $\varphi_{\circ}$ is the model defined in~\Cref{ex:maingraph}. This is also clearly computable.

    Next we check condition (1). The existence of a map $f_1\colon S_k^*\to \HH$ that satisfies the requirements with constants $M_1,D_1$ is proven in~\Cref{lem:com_qi_explicit_horrendous}. For the second map, it follows from~\Cref{mainthm:computable_rep} (and more precisely, from~\Cref{cor:FuchsianQItoH}) that there exists a computable quasi-isometry $h\colon G \to \HH$. As $G$ has decidable word problem, by composing $h$ with the word evaluation map we obtain $f_2\colon S^*\to \HH$ such that equivalent words have the same image. Hence, the map $\xi_{\varphi_c}$ given by $\xi_{\varphi_c}(\underline{v})=f_2(v)$ is a well-defined quasi-isometry. In this case, we may take any constants $D_2,M_2$ that satisfy the requirements only for the quasi-isometry $\xi_{\varphi_c}$. Taking $D=\max(D_1,D_2)$ and $M = \max(M_1,M_2)$ we get that $f_1,f_2$ and satisfy (1).

    Hence from~\Cref{lem:computable_embedding_QI} we conclude that $\Gamma_1$ and $\Gamma_2$ are computably quasi-isometric.\end{proof}


\section{The sofic \texorpdfstring{$\mathcal{H}_3$}{H3}-subshift \texorpdfstring{$\mathfrak{X}_1$}{X1}}\label{sec:sofic}

In this section we describe $\mathfrak X_1$, an $\mathcal H_3$-subshift that has a hierarchical structure in the same spirit as the classical Robinson tiling. Furthermore, $\mathfrak X_1$ has the property of being sofic, that is, it can be obtained from an $\mathcal H_3$-SFT through a local map, and furthermore, it will have the property that such an $\mathcal H_3$-SFT can be chosen to have zero Medvedev degree. This is the basic framework for~\Cref{mainthm:grafo_medved}, which will be proven in the next section.


In what follows it will be convenient to ease the notation on the blueprint formalism a bit. In particular, for the downward generators $b_0,b_1$ and $b_2$ we will use generically the symbol $b$ and the horizontal generators $a_0,a_1$ and $a_2$ will be represented by the symbol $a$. Also, we will implicitly identify elements in the support of $\varphi$ with their vertices in the graph $\mathcal{G}(\mathcal{H}_3,\varphi)$.

Most of the objects described in this section are generalized versions of those in the classical Robinson tiling~\cite{robinson_undecidability_1971}. While familiarity with the properties of that tiling is not needed, it is encouraged, as it helps to understand why we are defining these structures. 

\subsection{Supports and cells}\label{subsec:supports}
We will first describe a hierarchy of finite subgraphs which occur in any graph associated to $\mathcal{H}_3$. These play the same role as $n\times n$ squares in $\ZZ^2$ for the Robinson tiling.

\begin{definition}\label{def:n-cell}
Let $n\geq 1$. An $n$-\define{square} in a model $\varphi \in \mathcal{M}(\mathcal{H}_3)$ is a subgraph of $\mathcal{G}(\mathcal{H}_3,\varphi)$ induced by $S_{n}\subset \supp(\varphi)$ for which there is $w\in \supp(\varphi)$ such that if we let $t =\varphi(w)$ then \[ S_{n} = \{wa_ta_{t+1}\dots a_{t+j-1} : 0 \leq j \leq n\} \cup \{wb_tb_0^{i-1}a_0a_1\dots a_{j-1} : 1 \leq i \leq n, 0 \leq j \leq 3^i n\}. \]

More intuitively, using the identification described above, this set can be described by \[ \{wb^ia^j : 0\leq i\leq n, 0\leq j\leq n 3^i\}.  \]

For an $n$-square, its \define{boundary} is the set of edges in its border, namely the edges of $\mathcal{G}(\mathcal{H}_3,\varphi)$ that link vertices in
\[ \partial S_n = \{wb^ia^j : i \in \{0,n\}, 0\leq j\leq n 3^i\} \cup \{wb^ia^j : 1 \leq i \leq n-1, j\in\{0, n 3^i\}\}.    \]

Finally, an \define{$n$-cell} is the subgraph of an $n$-square obtained by removing the edges in its boundary and the four vertices at the corners (namely $w$, $wb^n, wa^n$ and $wb^na^{n3^n}$). The boundary of an $n$-cell is defined as the boundary of its $n$-square containing it. These concepts are illustrated in~\Cref{fig:n-square}.
\end{definition}

\begin{figure}
    \centering
    \begin{tikzpicture}[scale=3]
            \hypersquare{3}{3}{3}
  \draw[decorate,decoration={brace,mirror,amplitude=8pt}] 
    (3.15,0) -- (3.15,1) node[midway,right=6pt] {$n$};
  \draw[decorate,decoration={brace,amplitude=8pt}] 
    (0,1.15) -- (3,1.15) node[midway,above=6pt] {$n$};
    \node at (1.5,1.5) {\textrm{Upper boundary}};
  \draw[decorate,decoration={brace,mirror,amplitude=8pt}]
    (0,-0.15) -- (3,-0.15) node[midway,below=6pt] {$n\cdot 3^n$};
    \node at (1.5,-0.5) {\textrm{Lower boundary}};
    \draw[very thick] (0,0) rectangle (3,1);
    \end{tikzpicture}
    \caption{The dimensions of an $n$-square represented in the case $n=3$. The boundary is represented with a thick line.}
    \label{fig:n-square}
\end{figure}
\begin{remark}\label{rem:n-squares-sizes}
An $n$-square sits above $3^n$ other $n$-squares in any model of $\mathcal{H}_3$. This follows by comparing the dimensions of the upper and lower horizontal sides. 
\end{remark}

In what follows we will be mostly interested in $2^n$-squares for $n\geq 1$. A $2^{n+1}$-square can be decomposed into two rows of $2^{n}$-squares: the upper row has two of them, and the lower row has $2\cdot 3^{2^n}$  of them. We call this the \define{natural decomposition} of a $2^{n+1}$-square into $2^n$-squares. This decomposition is not formally a partition because the boundaries of adjacent $2^n$-squares overlap; see~\Cref{fig:overlaps}. For $n\geq 1$, the \define{center} of a $2^n$-square is the vertex given by the coordinate $wb^{2^{n-1}}a^{2^{n-1}3^{2^{n-1}}}$. It corresponds to the vertex which is halfway vertically and then halfway horizontally; see~\Cref{fig:overlaps} where it is marked by a star.

\begin{figure}[ht!]
    \begin{tikzpicture}[scale=3]
\hypersquare{3}{4}{4}    
\draw[black, line width=2pt, dashed] (0,1) -- (2,1) -- (2,1/8-0.015) -- (0,1/8-0.015) -- (0,1);
\node at (2,1/8-0.015) {{\color{red!80} \scalebox{1.5}{$\star$}}};
 \draw[decorate,decoration={brace,amplitude=8pt}] 
    (-0.1,1/8) -- (-0.1,1) node[midway,left=6pt] {two copies};
 \draw[decorate,decoration={brace,amplitude=8pt}] 
    (-0.1,0) -- (-0.1,1/8) node[midway,left=6pt] {$2\cdot 3^{2^n}$ copies};
\end{tikzpicture}\captionof{figure}{A dashed $2^n$-square inside a  $2^{n+1}$-square for $n=1$.}\label{fig:overlaps}
\end{figure}

\subsection{The basic hierarchical subshift and its blocks}\label{subsection:definition-of-X_1}

In what follows, it will be convenient to describe the alphabet $A_1$ of $\mathfrak X_1$ and its configurations pictorially. Recall that for any model $\varphi \in \mathcal{M}(\mathcal{H}_3)$, vertices in the associated graph $\mathcal{G}(\mathcal{H}_3,\varphi)$ that carry state $0$ have four outgoing edges, and vertices that carry states $1$ and $2$ have three outgoing edges. Each symbol in the alphabet can be represented as one of the diagrams shown in~\Cref{fig:alfabetito_1}. The thirteen symbols on the left can go on states marked by $0$, and the three symbols on the right on states marked by $1$ and $2$. 

\begin{figure}[ht!]
    \centering
    \begin{tikzpicture}[scale= 0.6]

    \begin{scope}[ shift = {(15,0)}]

    \begin{scope}[ shift = {(0,3)}]
            \draw[thick] (-1,0) to (1,0);
            \draw[thick] (0,-1) to (0,0);
        \end{scope}
    
        \begin{scope}[ shift = {(0,0)}]
            \filldraw[black!20] (-1,0) -- (1,0) -- (1,0.3) -- (-1,0.3);
        \filldraw[black] (0.2,0.3) -- (-0.2,0.3) -- (0,0);
        \draw[] (-1,0.3) to (1,0.3);
            \draw[thick] (-1,0) to (1,0);
            \draw[thick] (0,-1) to (0,0);
        \end{scope}

        \begin{scope}[ shift = {(0,-3)}, rotate=0]
    \filldraw[black!20] (-1,0) -- (1,0) -- (1,-0.3) -- (-1,-0.3);
        \filldraw[black] (0.2,-0.3) -- (-0.2,-0.3) -- (0,0);
        \draw[] (-1,-0.3) to (1,-0.3);
            \draw[thick] (-1,0) to (1,0);
            \draw[thick] (0,-1) to (0,0);
    \end{scope}
    
    \end{scope}

    \begin{scope}[ shift = {(4.5,3)}]
        \draw[thick] (-1,0) to (1,0);
        \draw[thick] (0,-1) to (0,1);
    \end{scope}

    \begin{scope}[ shift = {(0,0)}]
        \filldraw[black!20] (-1,0) -- (1,0) -- (1,0.3) -- (-1,0.3);
    \filldraw[black] (0.2,0.3) -- (-0.2,0.3) -- (0,0);
    \draw[] (-1,0.3) to (1,0.3);
        \draw[thick] (-1,0) to (1,0);
        \draw[thick] (0,-1) to (0,1);
    \end{scope}

    \begin{scope}[ shift = {(3,0)}, rotate=90]
    \filldraw[black!20] (-1,0) -- (1,0) -- (1,0.3) -- (-1,0.3);
    \filldraw[black] (0.2,0.3) -- (-0.2,0.3) -- (0,0);
        \draw[thick] (-1,0) to (1,0);
        \draw[thick] (0,-1) to (0,1);
        \draw[] (-1,0.3) to (1,0.3);
    \end{scope}

    \begin{scope}[ shift = {(6,0)}, rotate=180]
    \filldraw[black!20] (-1,0) -- (1,0) -- (1,0.3) -- (-1,0.3);
    \filldraw[black] (0.2,0.3) -- (-0.2,0.3) -- (0,0);
        \draw[thick] (-1,0) to (1,0);
        \draw[thick] (0,-1) to (0,1);
        \draw[] (-1,0.3) to (1,0.3);
    \end{scope}

    \begin{scope}[ shift = {(9,0)}, rotate=270]
    \filldraw[black!20] (-1,0) -- (1,0) -- (1,0.3) -- (-1,0.3);
    \filldraw[black] (0.2,0.3) -- (-0.2,0.3) -- (0,0);
        \draw[thick] (-1,0) to (1,0);
        \draw[thick] (0,-1) to (0,1);
        \draw[] (-1,0.3) to (1,0.3);
    \end{scope}

    \begin{scope}[ shift = {(0,-3)}]
    \filldraw[black!20] (-1,0) -- (0,0) -- (0,1) -- (-0.3,1) -- (-0.3,0.3) -- (-1,0.3);
    \filldraw[black] (0,0) -- (-0.3,0) -- (0,0.3);
        \draw[thick] (-1,0) to (1,0);
        \draw[thick] (0,-1) to (0,1);
        \draw[] (-1,0.3) -- (-0.3,0.3) -- (-0.3,1);
    \end{scope}

    \begin{scope}[ shift = {(3,-3)}, rotate=90]
    \filldraw[black!20] (-1,0) -- (0,0) -- (0,1) -- (-0.3,1) -- (-0.3,0.3) -- (-1,0.3);
    \filldraw[black] (0,0) -- (-0.3,0) -- (0,0.3);
        \draw[thick] (-1,0) to (1,0);
        \draw[thick] (0,-1) to (0,1);
        \draw[] (-1,0.3) -- (-0.3,0.3) -- (-0.3,1);
    \end{scope}

    \begin{scope}[ shift = {(6,-3)}, rotate=180]
    \filldraw[black!20] (-1,0) -- (0,0) -- (0,1) -- (-0.3,1) -- (-0.3,0.3) -- (-1,0.3);
    \filldraw[black] (0,0) -- (-0.3,0) -- (0,0.3);
        \draw[thick] (-1,0) to (1,0);
        \draw[thick] (0,-1) to (0,1);
        \draw[] (-1,0.3) -- (-0.3,0.3) -- (-0.3,1);
    \end{scope}

    \begin{scope}[ shift = {(9,-3)}, rotate=270]
    \filldraw[black!20] (-1,0) -- (0,0) -- (0,1) -- (-0.3,1) -- (-0.3,0.3) -- (-1,0.3);
    \filldraw[black] (0,0) -- (-0.3,0) -- (0,0.3);
        \draw[thick] (-1,0) to (1,0);
        \draw[thick] (0,-1) to (0,1);
        \draw[] (-1,0.3) -- (-0.3,0.3) -- (-0.3,1);
    \end{scope}

    \begin{scope}[ shift = {(0,-6)}]
        \filldraw[black!20] (-1,0) -- (1,0) -- (1,0.3) -- (-1,0.3);
    
     \begin{scope}[rotate=90]
        \filldraw[black!20] (-1,0) -- (1,0) -- (1,0.3) -- (-1,0.3);
    \filldraw[black] (0.2,0.3) -- (-0.2,0.3) -- (0,0.1);
    \end{scope}
    \filldraw[black] (0.2,0.3) -- (-0.2,0.3) -- (0,0.1);
    \draw[] (-1,0.3) to (-0.3,0.3) -- (-0.3,1);
    \draw[] (-0.3,-1) -- (-0.3,1);
    \draw[] (-1,0.3) to (1,0.3);
    \draw[thick] (-1,0) to (1,0);
    \draw[thick] (0,-1) to (0,1);
        
    \end{scope}

     \begin{scope}[ shift = {(3,-6)}, rotate = 90]
        \filldraw[black!20] (-1,0) -- (1,0) -- (1,0.3) -- (-1,0.3);
    
     \begin{scope}[rotate=90]
        \filldraw[black!20] (-1,0) -- (1,0) -- (1,0.3) -- (-1,0.3);
    \filldraw[black] (0.2,0.3) -- (-0.2,0.3) -- (0,0.1);
    \end{scope}
    \filldraw[black] (0.2,0.3) -- (-0.2,0.3) -- (0,0.1);
    \draw[] (-1,0.3) to (-0.3,0.3) -- (-0.3,1);
    \draw[] (-0.3,-1) -- (-0.3,1);
    \draw[] (-1,0.3) to (1,0.3);
    \draw[thick] (-1,0) to (1,0);
    \draw[thick] (0,-1) to (0,1);
        
    \end{scope}

    \begin{scope}[ shift = {(6,-6)}, rotate = 180]
        \filldraw[black!20] (-1,0) -- (1,0) -- (1,0.3) -- (-1,0.3);
    
     \begin{scope}[rotate=90]
        \filldraw[black!20] (-1,0) -- (1,0) -- (1,0.3) -- (-1,0.3);
    \filldraw[black] (0.2,0.3) -- (-0.2,0.3) -- (0,0.1);
    \end{scope}
    \filldraw[black] (0.2,0.3) -- (-0.2,0.3) -- (0,0.1);
    \draw[] (-1,0.3) to (-0.3,0.3) -- (-0.3,1);
    \draw[] (-0.3,-1) -- (-0.3,1);
    \draw[] (-1,0.3) to (1,0.3);
    \draw[thick] (-1,0) to (1,0);
    \draw[thick] (0,-1) to (0,1);
        
    \end{scope}

    \begin{scope}[ shift = {(9,-6)}, rotate = -90]
        \filldraw[black!20] (-1,0) -- (1,0) -- (1,0.3) -- (-1,0.3);
    
     \begin{scope}[rotate=90]
        \filldraw[black!20] (-1,0) -- (1,0) -- (1,0.3) -- (-1,0.3);
    \filldraw[black] (0.2,0.3) -- (-0.2,0.3) -- (0,0.1);
    \end{scope}
    \filldraw[black] (0.2,0.3) -- (-0.2,0.3) -- (0,0.1);
    \draw[] (-1,0.3) to (-0.3,0.3) -- (-0.3,1);
    \draw[] (-0.3,-1) -- (-0.3,1);
    \draw[] (-1,0.3) to (1,0.3);
    \draw[thick] (-1,0) to (1,0);
    \draw[thick] (0,-1) to (0,1);
        
    \end{scope}
\end{tikzpicture} 
    \caption{The alphabet $A_1$ of $\mathfrak{X}_1$.}
    \label{fig:alfabetito_1}
\end{figure}

As is usual in these constructions, we will impose the local rule that the decorations of symbols on adjacent vertices must match along their common edge. Hence, we may as well conceive that the alphabet lies on the edges (although formally it is on the vertices). Of special importance will be the 6 symbols in~\Cref{fig:crosses} which are called \define{crosses} and to which we give a special name. These play a role analogous to that of the crosses in the Robinson tiling.

\begin{figure}
    \centering
    
\begin{tikzpicture}[scale = 0.6]

    \begin{scope}[ shift = {(0,0)}]
            \draw[thick] (-1,0) to (1,0);
            \draw[thick] (0,-1) to (0,1);
            \node at (0,-1.5) {$\mathrm{DN}$};
        \end{scope}

        \begin{scope}[ shift = {(3,0)}]
            \filldraw[black!20] (-1,0) -- (1,0) -- (1,0.3) -- (-1,0.3);
        \filldraw[black] (0.2,0.3) -- (-0.2,0.3) -- (0,0);
        \draw[] (-1,0.3) to (1,0.3);
            \draw[thick] (-1,0) to (1,0);
            \draw[thick] (0,-1) to (0,1);
            \node at (0,-1.5) {$\mathrm{DM}$};
        \end{scope}

        \begin{scope}[ shift = {(6,0)}]
        \begin{scope}[ rotate=270]
            \filldraw[black!20] (-1,0) -- (0,0) -- (0,1) -- (-0.3,1) -- (-0.3,0.3) -- (-1,0.3);
    \filldraw[black] (0,0) -- (-0.3,0) -- (0,0.3);
        \draw[thick] (-1,0) to (1,0);
        \draw[thick] (0,-1) to (0,1);
        \draw[] (-1,0.3) -- (-0.3,0.3) -- (-0.3,1);
        
        \end{scope}
        \node at (0,-1.5) {$\mathrm{DL}$};
    \end{scope}

        \begin{scope}[ shift = {(9,0)}]
    \filldraw[black!20] (-1,0) -- (0,0) -- (0,1) -- (-0.3,1) -- (-0.3,0.3) -- (-1,0.3);
    \filldraw[black] (0,0) -- (-0.3,0) -- (0,0.3);
        \draw[thick] (-1,0) to (1,0);
        \draw[thick] (0,-1) to (0,1);
        \draw[] (-1,0.3) -- (-0.3,0.3) -- (-0.3,1);
        \node at (0,-1.5) {$\mathrm{DR}$};
    \end{scope}

    \begin{scope}[ shift = {(15,0)}]
        \begin{scope}[ rotate=90]
            \filldraw[black!20] (-1,0) -- (0,0) -- (0,1) -- (-0.3,1) -- (-0.3,0.3) -- (-1,0.3);
    \filldraw[black] (0,0) -- (-0.3,0) -- (0,0.3);
        \draw[thick] (-1,0) to (1,0);
        \draw[thick] (0,-1) to (0,1);
        \draw[] (-1,0.3) -- (-0.3,0.3) -- (-0.3,1);
        
        \end{scope}
        \node at (0,-1.5) {$\mathrm{UR}$};
    \end{scope}

    \begin{scope}[ shift = {(12,0)}]
        \begin{scope}[ rotate=180]
            \filldraw[black!20] (-1,0) -- (0,0) -- (0,1) -- (-0.3,1) -- (-0.3,0.3) -- (-1,0.3);
    \filldraw[black] (0,0) -- (-0.3,0) -- (0,0.3);
        \draw[thick] (-1,0) to (1,0);
        \draw[thick] (0,-1) to (0,1);
        \draw[] (-1,0.3) -- (-0.3,0.3) -- (-0.3,1);
        
        \end{scope}
        \node at (0,-1.5) {$\mathrm{UL}$};
    \end{scope}

\end{tikzpicture}
    \caption{The $6$ types of crosses. The letters U and D stand for up and down respectively, while L, R, N and M stand for left, right, null and middle respectively.}
    \label{fig:crosses}
\end{figure}

The decorations in symbols of $A_1$ are meant to encode the borders of rectangles, and the black triangles are meant to encode orientation; they will always point towards the outside of a rectangle.

Next we shall describe a collection of patterns on this alphabet whose supports are $2^n$-cells ($n\geq 1$). We shall use those to define $\mathfrak{X}_1$.

\begin{definition}\label{def:2-blocks}
    A $2$-block of type $\tau$ is a pattern whose support is a $2$-cell and which in the center carries a cross of type $\tau$. Furthermore, the decorations emanating from this cross must propagate in a straight line until they reach the boundary. See~\Cref{fig:2-block-example}. 
\end{definition}

\newcommand{\hypersquarebis}[3]{
\pgfmathsetmacro{\k}{1/pow(#1,#2-1)}
\pgfmathsetmacro{\sy}{0}
\foreach \i in {1,...,#2}{
\pgfmathsetmacro{\j}{pow(#1,\i-1)}
        \begin{scope}[shift={(0,(\sy*\k)}]
        \draw[step={\j*\k},black, thick] (0,0) grid ({pow(#1,#2-1)*#3*\k},\j*\k);
        \end{scope}
        \pgfmathsetmacro{\sy}{\sy + \j}
        } 
}
\begin{figure}[ht!]
    \centering
    \begin{tikzpicture}[scale = 3]
    \filldraw[black!20] (1.08,0.335+0.08) -- (2,0.335+0.08) -- (2,0.335) -- (1,0.335) -- (1,1) -- (1.08,1);
    \filldraw[black] (1,0.335) -- (1.08,0.335) -- (1,0.335+0.082);
    \filldraw[black] (1.33,0.335) -- (1.39,0.335+0.08) -- (1.27,0.335+0.08);
    \filldraw[black] (1.66,0.335) -- (1.72,0.335+0.08) -- (1.60,0.335+0.08);
    \filldraw[black] (2,0.335) -- (2,0.335+0.08) -- (1.94,0.335+0.08);
    \filldraw[black] (1,1) -- (1.08,1) -- (1.08,0.94);
    \hypersquarebis{3}{2}{2}
    \draw (1.08,0.335+0.08) -- (2,0.335+0.08);
    \draw (1.08,0.335+0.08) -- (1.08,1);

    \begin{scope}[shift = {(2.5,0)}]
    \filldraw[black!20] (0.92,0.25) -- (0,0.25) -- (0,0.335) -- (1,0.335) -- (1,0) -- (0.92,0);
    \filldraw[black] (1,0.335) -- (0.92,0.335) -- (1,0.25);
    \filldraw[black] (0.66,0.335) -- (0.6,0.25) -- (0.72,0.25);
    \filldraw[black] (0.33,0.335) -- (0.39,0.25) -- (0.27,0.25);
    \filldraw[black] (0,0.335) -- (0,0.25) -- (0.06,0.25);
    \filldraw[black] (1,0) -- (0.92,0) -- (0.92,0.06);
    \hypersquarebis{3}{2}{2}
    \draw (0.92,0.25) -- (0,0.25);
    \draw (0.92,0.25) -- (0.92,0);
\end{scope}
\end{tikzpicture}
    \caption{On the left a $2$-block of type DL and on the right a $2$-block of type UR. }
    \label{fig:2-block-example}
\end{figure}

We shall define $2^n$-blocks for $n \geq 1$ inductively. For illustration purposes we will begin with the case of $4$-blocks. A $4$-block of type $\tau$ is a pattern whose support is a $4$-cell and is obtained through the following four steps.

\newcommand{\DL}{
\filldraw[black!20] (1.08,0.335+0.08) -- (2,0.335+0.08) -- (2,0.335) -- (1,0.335) -- (1,1) -- (1.08,1);
    \filldraw[black] (1,0.335) -- (1.08,0.335) -- (1,0.335+0.082);
    \filldraw[black] (1.33,0.335) -- (1.39,0.335+0.08) -- (1.27,0.335+0.08);
    \filldraw[black] (1.66,0.335) -- (1.72,0.335+0.08) -- (1.60,0.335+0.08);
    \filldraw[black] (2,0.335) -- (2,0.335+0.08) -- (1.94,0.335+0.08);
    \filldraw[black] (1,1) -- (1.08,1) -- (1.08,0.94);
    \hypersquarebis{3}{2}{2}
    \draw (1.08,0.335+0.08) -- (2,0.335+0.08);
    \draw (1.08,0.335+0.08) -- (1.08,1);
}

\newcommand{\DLbis}{
\filldraw[black!20] (1.2,0.335+0.20) -- (2,0.335+0.20) -- (2,0.335) -- (1,0.335) -- (1,1) -- (1.2,1);
    \filldraw[black] (1,0.335) -- (1.2,0.335) -- (1,0.335+0.202);
    \filldraw[black] (1.33,0.335) -- (1.39,0.335+0.20) -- (1.27,0.335+0.20);
    \filldraw[black] (1.66,0.335) -- (1.72,0.335+0.20) -- (1.60,0.335+0.20);
    \filldraw[black] (2,0.335) -- (2,0.335+0.20) -- (1.94,0.335+0.20);
    \filldraw[black] (1,1) -- (1.2,1) -- (1.2,1-0.1);
    \hypersquarebis{3}{2}{2}
    \draw (1.2,0.335+0.20) -- (2,0.335+0.20);
    \draw (1.2,0.335+0.20) -- (1.2,1);
}

\newcommand{\DRbis}{
    \begin{scope}[rotate=90,yscale=-1,rotate=-90]
    \DLbis
    \end{scope}
}

\newcommand{\UR}{
    \filldraw[black!20] (1-0.03,0.335-0.04) -- (0,0.335-0.04) -- (0,0.335) -- (1,0.335) -- (1,0) -- (1-0.03,0);
    \filldraw[black] (1,0.335) -- (1-0.03,0.335) -- (1,0.335-0.04);
    \filldraw[black] (0.66,0.335) -- (0.66-0.03,0.335-0.04) -- (0.66+0.03,0.335-0.04);
    \filldraw[black] (0.33,0.335) -- (0.33-0.03,0.335-0.04) -- (0.33+0.03,0.335-0.04);
    \filldraw[black] (0,0.335) -- (0,0.335-0.04) -- (0+0.03,0.335-0.04);
    \filldraw[black] (1,0) -- (1-0.03,0) -- (1-0.03,0.03);
    \hypersquarebis{3}{2}{2}
    \draw (1-0.03,0.335-0.04) -- (0,0.335-0.04);
    \draw (1-0.03,0.335-0.04) -- (1-0.03,0);
}

\newcommand{\UL}{
    \begin{scope}[rotate=90,yscale=-1,rotate=-90]
    \UR
    \end{scope}
}

\newcommand{\DN}{
    \hypersquarebis{3}{2}{2}
}

\newcommand{\DM}{
    \filldraw[black!20] (0,0.335) rectangle (2,0.335+0.20);
    \filldraw[black] (1.33,0.335) -- (1.39,0.335+0.20) -- (1.27,0.335+0.20);
    \filldraw[black] (1.66,0.335) -- (1.72,0.335+0.20) -- (1.60,0.335+0.20);
    \filldraw[black] (2,0.335) -- (2,0.335+0.20) -- (1.94,0.335+0.20);
    \filldraw[black] (1,0.335) -- (0.94,0.335+0.20) -- (1.06,0.335+0.20);
    \filldraw[black] (0.66,0.335) -- (0.6,0.335+0.20) -- (0.72,0.335+0.20);
    \filldraw[black] (0.33,0.335) -- (0.39,0.335+0.20) -- (0.27,0.335+0.20);
    \filldraw[black] (0,0.335) -- (0,0.335+0.20) -- (0.06,0.335+0.20);
    \hypersquarebis{3}{2}{2}
    \draw (0,0.335+0.20) -- (2,0.335+0.20);
}

\begin{figure}
    \centering
    
\begin{tikzpicture}[scale = 3.8]

\begin{scope}[shift = {(-2,-0.67)}, scale = 2]
    \filldraw[black!20] (1+0.03,0.335+0.03) -- (2,0.335+0.03) -- (2,0.335) -- (1,0.335) -- (1,0.833) -- (1+0.03,0.833);
\end{scope}

    \begin{scope}[shift = {(0,0)}]
        \UL
    \end{scope}
    \begin{scope}[shift = {(0,0)}]
        \UR
    \end{scope}
    \foreach \i in {0,...,3}{
        \begin{scope}[shift = {(2*\i/9-2,-0.11)}, scale = 1/9]
            \DN
        \end{scope}
     }
     \foreach \i in {4}{
        \begin{scope}[shift = {(2*\i/9-2,-0.11)}, scale = 1/9]
            \DLbis
        \end{scope}
     }
     \foreach \i in {5,...,12}{
        \begin{scope}[shift = {(2*\i/9-2,-0.11)}, scale = 1/9]
            \DM
        \end{scope}
     }
     \foreach \i in {14}{
        \begin{scope}[shift = {(2*\i/9-2,-0.11)}, scale = 1/9]
            \DRbis
        \end{scope}
     }
     \foreach \i in {14,...,17}{
        \begin{scope}[shift = {(2*\i/9-2,-0.11)}, scale = 1/9]
            \DN
        \end{scope}
     }
     \begin{scope}[shift = {(-2,-0.67)}, scale = 2]

    \filldraw[black] (1,0.335) -- (1+0.03,0.335) -- (1,0.335+0.03);
    \foreach \i in {1.166, 1.333, 1.5, 1.666, 1.833}{
        \filldraw[black] (\i,0.335) -- (\i+0.015,0.335+0.03) -- (\i-0.015,0.335+0.03);
    }
    \filldraw[black] (2,0.335) -- (2,0.335+0.03) -- (2-0.015,0.335+0.03);
    \filldraw[black] (1,0.5) -- (1+0.03,0.5+0.015) -- (1+0.03,0.5-0.015);
    \filldraw[black] (1,0.833) -- (1+0.03,0.833) -- (1+0.03,0.833-0.015);
    \draw (1+0.03,0.335+0.03) -- (2,0.335+0.03);
    \draw (1+0.03,0.335+0.03) -- (1+0.03,0.833);
\end{scope}
\end{tikzpicture}
    
    \caption{A $4$-block of type DL. The decorations have been rescaled for the sake of visibility.}
    \label{fig:4-block}
\end{figure}

\textbf{Step 1.} We decompose a $4$-cell into $2$-cells according to the natural decomposition. Hence the upper row consists of two $2$-cells, and the bottom row consists of eighteen $2$-cells.

\textbf{Step 2.} In the upper row, we place from left to right a $2$-block of type UL and a $2$-block of type UR.

\textbf{Step 3.} In the bottom row, we place from left to right the following $2$-blocks:
\begin{itemize}
    \item Four $2$-blocks of type DN.
    \item A $2$-block of type DL.
    \item Eight $2$-blocks of type DM.
    \item A $2$-block of type DR.
    \item Four $2$-blocks of type DN.
\end{itemize}

\textbf{Step 4.} In the center of the $4$-cell we place a cross of type $\tau$ and extend the decorations in a straight line until they hit the boundary.

An illustration can be seen in~\Cref{fig:4-block}. As a $4$-block is entirely determined by the cross at its center, there are $6$ types of $4$-blocks, and thus we may think of $4$-blocks as larger copies of $2$-blocks. 

Next we define $2^n$-blocks inductively.
\begin{definition}
    Let $n\geq 1$. A \define{$2^{n+1}$-block of type} $\tau$ is a pattern whose support is a $2^{n+1}$-cell and which is constructed through the following four steps.

\textbf{Step 1.} We decompose the $2^{n+1}$-cell into $2^{n}$-cells according to the natural decomposition. Hence the upper row consists of two $2^{n}$-cells, and the bottom row consists of $2\cdot 3^{2^{n}}$ $2^{n}$-cells.

\textbf{Step 2.} In the upper row, we place from left to right a $2^{n}$-block of type UL and a $2^{n}$-block of type UR.

\textbf{Step 3.} In the bottom row, we place from left to right the following $2^n$-blocks:
\begin{itemize}
    \item $\frac{3^{2^{n}}-1}{2}$ $2^{n}$-blocks of type DN.
    \item A $2^{n}$-block of type DL.
    \item $3^{2^{n}}-1$ $2^{n}$-blocks of type DM.
    \item A $2^{n}$-block of type DR.
    \item $\frac{3^{2^{n}}-1}{2}$ $2^{n}$-blocks of type DN.
\end{itemize}

\textbf{Step 4.} In the center of the $2^{n+1}$-cell we place a cross of type $\tau$ and extend the decorations by straight lines until they hit the boundary.
\end{definition}

The fundamental idea is that within a $2^{n+1}$-block, the $2^n$-blocks in its natural decomposition must arrange themselves forming a ``rectangle'' with their central crosses. Furthermore, the new decoration on the cross at the center must extend towards the boundary as in the Robinson tiling. Notice that with this description the triangle markings in any rectangle always point towards the exterior.

\begin{definition}
    The $\mathcal{H}_3$-subshift $\mathfrak{X}_1$ is the set of all pairs $(\varphi,x)$ where $\varphi \in \mathcal{M}(\mathcal{H}_3)$ and $x$ is a configuration on alphabet $A_1$ such that for every subpattern of $x$, there is $n \geq 1$ for which it occurs as a subpattern of a $2^n$-block.
\end{definition}

The next result states that it is possible to turn $\mathfrak{X}_1$ into an SFT by adding extra decorations. This SFT extension can be guaranteed to simultaneously have zero Medvedev degree and to satisfy a technical condition that we will need in the next section. 
\begin{theorem}\label{thm:X_1-sofic}
The subshift $\mathfrak{X}_1$ is sofic. That is, there exists a $\mathcal{H}_3$-SFT $\mathfrak{X}_0$ with alphabet $A_0$ and a map $\pi\colon A_0 \to A_1$ such that \[ \mathfrak{X}_1 = \{\Phi(\varphi,x) : (\varphi,x)\in \mathfrak{X}_0\},  \]
where $\Phi(\varphi,x)=(\varphi,\pi(x(w))_{w \in \supp(\varphi)})$. 

Furthermore, the SFT $\mathfrak X_0$ can be chosen to have zero Medvedev degree, with this being witnessed by a computable configuration $(\varphi_0,x_0)\in\mathfrak X_0$ whose image $(\varphi_0,x_1)=\Phi(\varphi_0,x_0)$ satisfies the following technical condition: for every $w\in\supp(\varphi_0)$ there is $n \geq 1$ such that $w$ is inside the $4^n$-border determined by a $2^{2n+1}$-block in $x_1$.
\end{theorem}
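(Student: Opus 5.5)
We will construct $\mathfrak X_0$ explicitly, following Robinson's method of local decorations, and then check three properties in turn: soficity, zero Medvedev degree, and the covering condition.

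\textbf{Step 1: the SFT and its local rules.} Enrich each symbol of $A_1$ with finitely many extra layers.
\begin{itemize}
\item A \emph{parity/position layer} records, for every vertex, its position modulo the period inside a $2$-cell: the row index $i\in\{0,1,2\}$ and the horizontal index modulo $2\cdot 3^i$. This forces the $2$-cells to tile the graph in the natural-decomposition pattern.
\item An \emph{arrow/signal layer}, as in Robinson's tiling, carries along each decoration line the information of which side of the rectangle it belongs to (UL, UR, DN, DL, DM, DR). It also carries a "bumper" parity bit that alternates between consecutive levels. This lets crosses at level $n$ know that they are corners of a level-$(n+1)$ rectangle.
\end{itemize}
The rules are all checked on bounded windows in the blueprint $\mathcal H_3$:
\begin{enumerate}
\item matching decorations along edges;
\item every vertex at a $2$-cell center carries a cross;
\item lines emanating from crosses continue straight until they meet a perpendicular line of higher level;
\item the type of a cross is determined by the signals reaching it.
\end{enumerate}
The key geometric input is Remark~\ref{rem:n-squares-sizes} together with the counts $\frac{3^{2^n}-1}{2}$, $1$, $3^{2^n}-1$, $1$, $\frac{3^{2^n}-1}{2}$ in Step~3. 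Horizontally, the bottom row of a $2^{n+1}$-block is exactly $3^{2^n}$ times wider than the top row. So the vertical sides of the level-$(n+1)$ rectangle, which descend from the UL/UR crosses, hit the bottom row precisely at the centers of the DL and DR blocks. This must be verified by a direct coordinate computation using the normal forms of Lemma~\ref{lem:WP_decidable_Hk}.

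\textbf{Step 2: the projection is onto $\mathfrak X_1$.} Let $\pi$ forget the extra layers. The inclusion $\Phi(\mathfrak X_0)\subseteq \mathfrak X_1$ is the hierarchical induction. The $2$-cells are forced by the position layer. Suppose crosses at centers of $2^n$-cells are arranged as $2^n$-blocks. Then the signal rules force every $2^{n+1}$-cell in the natural decomposition to have its $2^n$-blocks of exactly the types prescribed in Steps~2--3. Hence every finite pattern lies in some $2^{m}$-block. The converse inclusion is obtained by taking an element of $\mathfrak X_1$, decorating each $2^m$-block canonically, and applying compactness.

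\textbf{Main obstacle.} The real difficulty is that in $\mathcal H_3$ the vertical direction is not invariant. Many models $\varphi$ exist, because $\overline b$ is nondeterministic. The rules must therefore be uniform across all models, and "straight lines" downward must be the $b$-paths, which branch into three. I will design the vertical decorations so that they only follow the unique $\overline b$ path upward. Downward, I will replace them by the condition that the child vertex in state $0$ inherits them. I expect this to require the most case checking, and it is what the appendix handles.

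\textbf{Step 3: zero degree and the covering condition.} Take $\varphi_0=\varphi_\circ$ from Example~\ref{ex:maingraph}. Build $x_0$ as the limit of nested $2^{2n+1}$-blocks chosen so that the root $\varepsilon$, and by symmetry every vertex $w$, eventually lies strictly inside the central rectangle border of some $2^{2n+1}$-block. Concretely, nest them so that each block sits in an interior position of the next, alternating the choice of type so that the union exhausts the graph. Computability of $x_0$ follows because the blocks are defined by an explicit recursion and $\varphi_\circ$ and the normal forms are computable. Given $w$ in normal form, compute $n$ large enough that $w$ lies in the $n$-th block, then read off the symbol. Since the extra layers are determined canonically inside each block, $x_0$ is computable. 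The covering condition holds by the nesting choice: the $4^n$-border of the central cross of a $2^{2n+1}$-block encloses a region whose size grows with $n$ in both directions, and that region contains the previous block.
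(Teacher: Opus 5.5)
Your Step 2 asserts the whole content of the theorem without proof. It says that once the $2^n$-blocks are in place, ``the signal rules force'' each $2^{n+1}$-cell to contain exactly the prescribed $2^n$-blocks. Rules (1)--(4) give no mechanism for this, and your ``key geometric input'' only checks that the target blocks are consistent, not that local rules force them.

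The real difficulty in $\mathcal H_3$ is the following. The upper-left $2^n$-block $B_l$ sits above $3^{2^n}$ blocks, and only the middle one is reached by the vertical arrow of $B_l$'s central cross. Moreover, most vertices of $B_l$'s lower boundary have degree $3$, so they receive nothing from above. Robinson's argument that a shared boundary strip consists of arms and is uniformly decorated therefore fails. Nothing in your rules tells an off-center lower block whether it lies beneath $B_l$ or $B_r$, and that is exactly what fixes the counts $\frac{3^{2^n}-1}{2},1,3^{2^n}-1,1,\frac{3^{2^n}-1}{2}$.

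The paper needs two new devices here:
\begin{itemize}
\item Dot decorations (as in $\mathcal R_3$) plus a boundary condition at degree-$3$ vertices. These are shown inductively in \Cref{thm:hierarchical-structure-X_0} to make the lower boundary of every block uniformly directed and determined by its central cross.
\item White arrows sent upward by the lower blocks. They meet that boundary in arms tying their gray/black color to the arrow direction, so all blocks under $B_l$ are gray and all under $B_r$ black. Finite automata on rows then pin down the sequence.
\end{itemize}
You also never enforce alignment beyond level $1$. Your position layer fixes only the $2$-cell partition, but at every level there are four compatible well-aligned partitions (\Cref{prop:sequence-of-partitions}). Fault-line misalignments as in Robinson's tiling must be excluded too, which the paper does with its $\mathcal R_1$/$\mathcal R_2$ marks. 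Finally, the obstacle you do name is not one. Every vertex has exactly one $b$-edge, to a state-$0$ vertex, so downward lines never branch; it is the upward direction that is partial.

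Step 3 also needs repair. There is no symmetry carrying the root to an arbitrary $w$, and exhausting the graph is not enough. The $4^n$-border of a $2^{2n+1}$-block contains none of the $2^{2n}$-blocks composing it, so ``that region contains the previous block'' is false as stated. You must choose the nesting so that, for some $n$, $w$ falls in the part of a $2^{2n}$-sub-block lying inside the border, for example the lower inner quadrant of an upper sub-block. The paper arranges this with the periodic direction sequence $(\nearrow\,\swarrow\,\nwarrow\,\searrow)^\infty$.

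For computability, a block's extra decorations are in general not determined by the block itself; in the paper, the upper characteristic and dot decorations depend on the surrounding blocks. $x_0$ is computable for two reasons. First, these decorations become determined once the block sits deep enough inside a larger block (\Cref{prop:boundary-determination}), and this happens at a finite stage because every direction recurs. Second, the successive well-aligned partitions can be chosen computably.
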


As the proof of this result is very technical, we postpone it to~\Cref{sec:appendix}. Intuitively, we just have to add a lot of extra decorations and local rules that enforce the global structure of $\mathfrak{X}_1$, and then define $\pi$ as the map that erases these extra decorations. However, in order to classify the Medvedev degrees of SFTs on $\mathcal{H}_3$, it will be sufficient to use~\Cref{thm:X_1-sofic}.

\subsection{The borders inside \texorpdfstring{$\mathfrak{X}_1$}{X1} }
Here we review in detail the hierarchical structure of $\mathfrak{X}_1$ induced by borders inside blocks. These results will be of importance in the next section.

\begin{definition}
Fix $n\geq 2$ and consider a $2^n$-block. The rectangle formed by the central decorations of the $2^{n-1}$-blocks delimits a region, which we refer to as a $2^{n-1}$-\define{border} (see \Cref{fig:4-block}). 
\end{definition}
 This terminology is motivated by the fact that the height of a $2^{n-1}$-border  is $2^{n-1}$ edges. However, we remark that the upper and lower horizontal sides have different dimensions. More specifically, if we decompose the $2^n$-block into $2^{n-1}$-blocks and then $2^{n-2}$-blocks, one sees that the dimensions of the $2^{n-1}$-border inside the $2^n$-block are as in \Cref{fig:dimensions-of-border}.

\begin{figure}[]
\centering
\begin{tikzpicture}[scale=3.8]
\hypersquare{3}{4}{4}    
\draw[black, line width=1pt] (1,1/3) -- (3,1/3) -- (3,1/27) -- (1,1/27) -- (1,1/3);
 \draw[decorate,decoration={brace,amplitude=8pt}] 
    (1,1/27) -- (1,1/3) node[midway,left=4pt] {$2^{n-1}$};
\draw[decorate,decoration={brace,amplitude=8pt,aspect=0.3}] 
    (1,1/3) -- (3,1/3) node[pos=0.3,above=6pt] {$2^{n-1}3^{2^{n-2}}$};
\draw[decorate,decoration={brace,mirror,amplitude=14pt,aspect=0.3}] 
    (1,1/27) -- (3,1/27) node[pos=0.3,below=14pt] {$2^{n-1}3^{3\cdot 2^{n-2}}$};
\end{tikzpicture}
\caption{A $2^n$-block and the $2^{n-1}$-border inside it (represented with $n=2$).}
\label{fig:dimensions-of-border}
\end{figure}

It is convenient to understand a $2^{n-1}$-border (for $n \geq 3)$ as a pair of rows of $2^{n-2}$-blocks, in the sense of the following proposition.
\begin{proposition}\label{prop:border-decomposition}
    Let $n\geq 3$. A $2^{n-1}$-border is formed by two horizontal rows of $2^{n-2}$-blocks. The number of $2^{n-2}$-blocks in the upper row is $2\cdot 3^{2^{n-2}}$, and the number of $2^{n-2}$-blocks in the lower row is $2\cdot 3^{2^{n-1}}$. 
\end{proposition}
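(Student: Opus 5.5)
The plan is to unfold the definition of a $2^n$-block twice and then locate the rectangle in coordinates. A $2^n$-block has a natural decomposition into $2^{n-1}$-blocks. By Step 2 its upper row consists of one block of type UL followed by one of type UR. By Step 3 its lower row consists of, from left to right,
\[
\tfrac{3^{2^{n-1}}-1}{2}\ \text{DN},\quad 1\ \text{DL},\quad 3^{2^{n-1}}-1\ \text{DM},\quad 1\ \text{DR},\quad \tfrac{3^{2^{n-1}}-1}{2}\ \text{DN},
\]
in total $2\cdot 3^{2^{n-1}}$ blocks. The $2^{n-1}$-border is the rectangle traced by the central crosses of the UL, UR, DL, DM, DR blocks and the straight decorations joining them.

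First I identify its corners. Write $w$ for the top-left corner of the $2^n$-square, and set $h=2^{n-1}$ and $m=2^{n-2}$. The upper-left corner of the border is the center of the UL block, namely $wb^{m}a^{m3^{m}}$. The upper-right corner is the center of the UR block, which is $h3^{m}=2m3^{m}$ further to the right, since the UR block starts $h$ edges to the right at level $0$. The lower corners are the centers of the DL and DR blocks, sitting at height $h+m$ below $w$. Counting horizontal positions via the multiplication by $3^{h}$ between the two rows, one checks that these lower corners lie directly below the upper ones. This consistency check, that the vertical sides are genuinely vertical in $\mathcal H_3$, is the $3^{h}$ scaling reflected in the choice $\frac{3^{h}-1}{2}$. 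The check also recovers the dimensions in \Cref{fig:dimensions-of-border}.

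Next I decompose each relevant $2^{n-1}$-block into its own $2^{n-2}$-blocks, again by the natural decomposition. The region enclosed by the border, from height $m$ to height $h+m$, is then covered by two rows of $2^{n-2}$-squares.

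The upper row lies between heights $m$ and $h$ and is made of the lower halves of the UL and UR blocks. Within each of these, I keep only the portion to the right, respectively to the left, of its center. That portion consists of half of its lower row of $2^{n-2}$-blocks, namely $3^{m}$ blocks, since a lower row has $2\cdot 3^{m}$ blocks and the center sits exactly at the middle seam. Together the two halves give $2\cdot 3^{m}$ blocks, as claimed.

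The lower row lies between heights $h$ and $h+m$ and is made of the upper halves of the blocks in the range DL through DR. Each $2^{n-1}$-block has two $2^{n-2}$-blocks in its upper row. The DL and DR blocks contribute one each, namely the half on the interior side of their central crosses, and each of the $3^{h}-1$ DM blocks contributes two. The total is $2+2(3^{h}-1)=2\cdot 3^{h}=2\cdot 3^{2^{n-1}}$. The requirement $n\geq 3$ guarantees that $m\geq 2$, so these $2^{n-2}$-blocks are genuine blocks with a further natural decomposition, which keeps the bookkeeping uniform.

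The main obstacle I expect is the geometric bookkeeping in $\mathcal H_3$. Horizontal distances scale by $3$ per level, so I must check carefully that the block boundaries of the two adjacent rows align with the border's sides. Concretely, the center of a $2^{n-1}$-block must coincide with a seam between consecutive $2^{n-2}$-blocks of its relevant half-row. I would verify this by writing all positions as $wb^{i}a^{j}$ and comparing each $j$ to a multiple of $m3^{i}$.
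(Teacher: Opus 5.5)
Your proof is correct and rests on the same double decomposition as the paper: a $2^n$-block into $2^{n-1}$-blocks, then into $2^{n-2}$-blocks. Only the counting is organized differently.

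The paper takes the length $2^{n-1}3^{2^{n-2}}$ of the border's upper side as already known. It divides that length by the top width $2^{n-2}$ of a $2^{n-2}$-block to get the upper row. It then gets the lower row from the fact that each $2^{n-2}$-square sits above exactly $3^{2^{n-2}}$ others.

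You instead tally contributions by block type:
the inner halves of the lower rows of UL and UR;
one upper square each from DL and DR;
two upper squares from each of the $3^{2^{n-1}}-1$ DM blocks.

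You also check explicitly that the lower corners lie directly below the upper ones, and that the border's sides run along seams between $2^{n-2}$-squares. The paper takes both facts for granted from its earlier dimension count. Your version is more self-contained; the paper's is shorter.

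One small inaccuracy concerns your justification of $n\geq 3$. The condition is needed so that $2^{n-2}\geq 2$, i.e.\ so that the $2^{n-1}$-blocks decompose into $2^{n-2}$-blocks at all. The $2^{n-2}$-blocks themselves need no further decomposition; for $n=3$ they are $2$-blocks, which have none.
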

\begin{proof}
The first claim follows by decomposing a $2^n$-block into $2^{n-1}$-blocks, and then decomposing those $2^{n-1}$-blocks into $2^{n-2}$-blocks. For the dimensions, we already noted that the upper horizontal side of a $2^{n-1}$-border has $2^{n-1}\cdot 3^{2^{n-2}}$ edges. Furthermore, recall that the upper horizontal side of a $2^{n-2}$-block has $2^{n-2}$ edges. Since $2^{n-2}\cdot 2\cdot 3^{2^{n-2}}=2^{n-1}\cdot 3^{2^{n-2}}$, the claim regarding the upper row of $2^{n-2}$-blocks follows. Next, recall that a $2^{n-2}$-block sits above $3^{2^{n-2}}$ $2^{n-2}$-blocks. Thus the number of $2^{n-2}$-blocks in the lower half of our $2^{n-1}$-border equals $2\cdot 3^{2^{n-2}}\cdot 3^{2^{n-2}}=2\cdot 3^{2^{n-2}+2^{n-2}}=2\cdot 3^{2^{n-1}}$. 
\end{proof}

\begin{proposition}\label{borders-are-nicely-aligned}
Let $n\geq 2$ and consider a $2^{n-1}$-border inside a $2^n$-block. Let $v$ be the vertex in the lower left or lower right corner of the $2^{n-1}$-border. If we follow the vertical edges from $v$ downwards, then we eventually reach an upper corner of another $2^{n-1}$-border. 
\end{proposition}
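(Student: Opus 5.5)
The idea is to follow the vertical geodesic down from the corner and track its horizontal offset with explicit integer coordinates. A congruence modulo $2^n$ will then show that it lands exactly on the central vertical line of a UL or UR $2^{n-1}$-block in the next row of $2^n$-blocks.

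\textbf{Setup.} I will use the coordinates of \Cref{def:n-cell}. Fix the $2^n$-square with top-left vertex $w$. A vertex $wb^ia^j$ has the vertical edge below it going to $wb^{i+1}a^{3j}$, so moving down one level multiplies the horizontal offset by $3$.

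By construction, the lower corners of the $2^{n-1}$-border are the centers of the $2^{n-1}$-blocks of types DL and DR in the bottom row of the natural decomposition of the $2^n$-block. The upper corners of a $2^{n-1}$-border are the centers of the UL and UR $2^{n-1}$-blocks in the upper row of some $2^n$-block. So it suffices to show two things:
\begin{itemize}
\item the vertical line below the center of the DL (resp. DR) $2^{n-1}$-block reaches the lower side of the $2^n$-block at a prescribed offset;
\item that offset is the top of the central vertical line of a UL (resp. UR) $2^{n-1}$-block lying immediately below.
\end{itemize}

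\textbf{Computation.} The DL block has index $m=(3^{2^{n-1}}-1)/2$ (from $0$) in the bottom row. Each bottom-row $2^{n-1}$-block has lower side of length $2^{n-1}3^{2^{n-1}}$. Its central vertical line therefore meets the lower side of the $2^n$-block at offset
\[(m+\tfrac12)\,2^{n-1}3^{2^{n-1}}=2^{n-2}\,3^{2^n},\]
which is a quarter of the lower width $2^n3^{2^n}$. For DR the index is $m+3^{2^{n-1}}$, and the offset is $3\cdot 2^{n-2}3^{2^n}$, three quarters of the width.

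Conversely, in a $2^n$-block with top-left vertex $w'$, the UL $2^{n-1}$-block has top side $[0,2^{n-1}]$. Its center $w'b^{2^{n-2}}a^{2^{n-2}3^{2^{n-2}}}$ is reached by going straight down $2^{n-2}$ levels from top offset $2^{n-2}$. For UR the top offset is $2^{n-1}+2^{n-2}=3\cdot 2^{n-2}$.

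\textbf{Alignment.} I need the row of $2^n$-cells directly below our $2^n$-block to be tiled by $2^n$-blocks whose tops start at multiples of $2^n$ from the bottom-left corner of our block. Since every pattern of $x$ occurs in some $2^N$-block with $N$ large, both our block and the relevant region below it sit inside a common $2^N$-block. Unfolding the natural decompositions level by level, all $2^n$-cells at a given height form a contiguous row of $2^n$-blocks. Their left endpoints differ by multiples of $2^n$, and their upper sides sit on the lower sides of the row above.

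Granting this, the landing offset is reduced modulo $2^n$. Since $3^{2^n}\equiv 1 \pmod 4$ (as $n\geq 2$), we get $2^{n-2}3^{2^n}\equiv 2^{n-2}$ and $3\cdot 2^{n-2}3^{2^n}\equiv 3\cdot 2^{n-2} \pmod{2^n}$. So the vertical line enters a $2^n$-block below at top offset $2^{n-2}$ (resp. $3\cdot 2^{n-2}$). Continuing down $2^{n-2}$ edges, it reaches the center of that block's UL (resp. UR) $2^{n-1}$-block, which is an upper corner of the $2^{n-1}$-border inside it.

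\textbf{Main obstacle.} I expect the arithmetic to be routine. The delicate step is the alignment claim, namely that the row below is a contiguous, correctly phased row of $2^n$-blocks. This is clear within one $2^{n+1}$-block if our block is in its upper row. If our block lies in the bottom row of the $2^{n+1}$-block, one must pass to a $2^{n+2}$-block or higher. There I would argue by induction on the level of the smallest block containing both the $2^n$-block and the cells directly below it, using the widths from \Cref{prop:border-decomposition} and \Cref{rem:n-squares-sizes}.
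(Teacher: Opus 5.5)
Your proof is correct, but it takes a different route from the paper's.

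\textbf{The paper's argument} works one level down. The vertex $v$ is the center of the DL or DR $2^{n-1}$-block $B$. Going straight down from the center of a block, you reach the midpoint of its lower side. Exactly $3^{2^{n-1}}$ $2^{n-1}$-blocks sit below $B$, and this number is odd, so that midpoint is the midpoint of the upper side of the middle one. Continuing down, you reach its center. That block lies in the row of $2^{n-1}$-blocks forming the upper halves of the $2^n$-blocks below, so its cross is UL or UR.

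\textbf{Your argument} instead computes absolute offsets along the lower side of the whole $2^n$-block, namely $2^{n-2}3^{2^n}$ and $3\cdot 2^{n-2}3^{2^n}$. You then reduce these modulo the width $2^n$ of the $2^n$-blocks below, using $3^{2^n}\equiv 1 \pmod 4$.

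\textbf{Comparison.}
\begin{itemize}
\item The paper's parity argument is shorter. It only needs the $2^{n-1}$-blocks under $B$ to tile $B$'s lower side, and it leaves that alignment implicit.
\item Your congruence needs the phase of the row of $2^n$-blocks below. As you note, this follows from the nested natural decompositions inside a common $2^N$-block.
\item In return, your computation says exactly where the line lands. DL corners lead to UL corners in the $2^n$-block of index $(3^{2^n}-1)/4$ below, and DR corners lead to UR corners in the one of index $3(3^{2^n}-1)/4$. The paper's argument does not distinguish these cases.
\end{itemize}

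A minor point: $3^{2^n}\equiv 1 \pmod 4$ holds for every $n\geq 1$. The hypothesis $n\geq 2$ is needed only so that $2^{n-2}$ is an integer.
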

\begin{proof}
Observe that $v$ is the central vertex of one of the $2^{n-1}$-blocks which form our $2^n$-block. Denote by $B$ this $2^{n-1}$-block. If we walk downwards from $v$, then we will reach the central cross of a $2^{n-1}$-block which is directly below $B$, because below $B$ there is an odd number of $2^{n-1}$-blocks (exactly $3^{2^{n-1}}$). The central vertex of this $2^{n-1}$-block necessarily has a cross of type UL or UR, so it is the upper corner of a $2^{n-1}$-border. 
\end{proof}

Let $k \geq 1$ and consider a $4^k$-border $B$. An \define{interior row} is a row of horizontal edges within the rectangle delimited by $B$ that connects its left side with the right side and does not intersect the upper or lower sides. Similarly, an \define{interior column} is a column of vertical edges within the rectangle delimited by $B$ that connects the upper side with the bottom side and does not intersect the left or right sides.

We shall be interested in the interior rows and columns of a $4^k$-border that do not intersect smaller $4^{i}$-borders inside it for $i < k$. We introduce the following terminology. 

\begin{definition}\label{def:free}
Let $k\geq 1$ and consider a $4^{k}$-border. An interior row or column is called \define{free} if it does not intersect a $4^{i}$-border for $1\leq i< k$. Otherwise, it is called \define{obstructed}. 
\end{definition}

In order to count the number of free rows inside a border, it is convenient to first count the analogous number of free rows inside a block. 
\begin{lemma}\label{lem:free-rows-inside-a-block}
    Let $n\geq 1$ and let $B$ be a $2^{2n+1}$-block. There are precisely $2^n$ free rows in $B$ (excluding those on the upper and lower boundaries) that do not intersect any $2^{2k}$-border inside $B$, for $1\leq k\leq n$. The row directly below the upper boundary (resp. above the lower boundary) is among them. Furthermore, no pair of these rows can be consecutive.   
\end{lemma}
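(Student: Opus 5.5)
Since every block is built hierarchically and borders sit at fixed heights inside blocks, the whole question reduces to heights. In a $2^m$-block, the rows are indexed by $i\in\{0,\dots,2^m\}$, measured downwards from the upper boundary. A row meets a $2^{2k}$-border exactly when its height lies between the top and bottom sides of that border. So the plan is to compute the set of heights occupied by $2^{2k}$-borders in a $2^{2n+1}$-block, and show that its complement in $\{1,\dots,2^{2n+1}-1\}$ has exactly $2^n$ elements, none consecutive, containing $1$ and $2^{2n+1}-1$. Horizontal extent plays no role once we check that each border row-band is crossed horizontally by the full row. Here "intersect" should mean that the row passes through the closed border region, including its sides, since the row spans the whole block.

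\textbf{Step 1: a recursion on heights.} Let $O_m\subset\{1,\dots,2^m-1\}$ be the heights of obstructed rows in a $2^m$-block, where obstruction is by $2^{2k}$-borders for $2\le 2k\le m-1$.

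A $2^{m}$-block decomposes naturally into an upper row of $2^{m-1}$-blocks (heights $0$ to $2^{m-1}$) and a lower row (heights $2^{m-1}$ to $2^m$). Borders inside it are of two kinds. Some lie inside one of these sub-blocks, and by translation those occupy $O_{m-1}$ and $2^{m-1}+O_{m-1}$. The other is the new $2^{m-1}$-border formed by the sub-block centers. It occupies heights $[2^{m-2},\,2^{m-1}+2^{m-2}]$, and it counts only if $m-1$ is even.

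I need to check that every row of the $2^m$-block at a height in $O_{m-1}$ actually meets a border. It does, because each of the two rows of sub-blocks spans the full width, and each sub-block contains its own borders at those heights. Any row crosses every sub-block in its row band, so obstruction is a property of height alone.

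\textbf{Step 2: solve the recursion.} There are two cases.

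If $m-1$ is odd, then $O_m=O_{m-1}\cup(2^{m-1}+O_{m-1})$, and the middle row $2^{m-1}$ is free.

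If $m-1$ is even, then additionally the entire band $[2^{m-2},3\cdot 2^{m-2}]$ is obstructed.

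For $m=2n+1$, the top border band covers every row from $2^{2n-1}$ to $3\cdot2^{2n-1}$. The free rows therefore lie in $[1,2^{2n-1})$ and $(3\cdot 2^{2n-1},2^{2n+1}-1]$. These are the top half of the upper $2^{2n}$-block and the bottom half of the lower $2^{2n}$-block.

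Define $F^{\rm top}_m$ and $F^{\rm bot}_m$ to be the free heights in the upper and lower halves of a $2^m$-block, excluding the halves' shared boundary. Unwinding one more level ($m=2n$, where $m-1$ is odd and the middle row is free but lies in the excluded middle band), the top half of a $2^{2n}$-block consists of a $2^{2n-1}$-block whose free rows split into top and bottom halves. I expect to get $|F^{\rm top}_{2n+1}|=|F^{\rm bot}_{2n+1}|=2^{n-1}$, with the count doubling every two levels, giving $2^n$ in total. The base case is $n=1$, an $8$-block. Its only relevant border is the $4$-border at heights $2$ to $6$, so the free rows are $1$ and $7$, which is two rows, nonconsecutive, and including the rows adjacent to the boundary.

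\textbf{Step 3: the remaining properties and the main obstacle.} The row directly below the upper boundary has height $1$. Inductively it sits at the top of nested top sub-blocks, and it is never covered, because every border starts at height at least $2^{m-2}\ge 2$ relative to its block. The bottom row is handled symmetrically.

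Non-consecutiveness also follows by induction. Free rows from different sub-blocks are separated by a full obstructed band. Within a sub-block they are nonconsecutive by the induction hypothesis, and the boundary rows of sub-blocks (heights that are multiples of $2^{m-1}$ or $2^{m-2}$) are checked separately.

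The main obstacle is bookkeeping at the shared boundaries of sub-blocks. There I must decide exactly which rows a border's horizontal sides occupy, namely that the sides count as intersecting. I must also verify that a row at a sub-block boundary height, which is free within each sub-block, is not accidentally counted. I would handle this by stating the recursion with closed intervals $[2^{m-2},3\cdot2^{m-2}]$ and double-checking it against the explicit $8$-block and $32$-block cases.
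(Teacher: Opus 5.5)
Your proposal is correct and follows essentially the same route as the paper: the $4^n$-border of $B$ obstructs every row at heights in $[2^{2n-1},3\cdot 2^{2n-1}]$, and the remaining interior rows are exactly the interior rows of the rows of $2^{2n-1}$-blocks above and below that band, so the induction hypothesis doubles the count from $2^{n-1}$ to $2^n$, with the $8$-block as base case. Your height bookkeeping makes explicit what the paper leaves implicit, namely that obstruction depends only on height and that every counted border starts at relative height at least $2$ inside its block; the $F^{\mathrm{top}}/F^{\mathrm{bot}}$ detour is unnecessary, since you can apply the induction hypothesis directly to those $2^{2n-1}$-blocks.
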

\begin{proof}
The proof is by induction. Let $E_n$ be the number of rows in the statement.  For $n=1$, a direct verification shows that $E_1=2$. That is, one builds a $2^3$-block, which contains a $4$-border inside it. Excluding the upper and lower exterior rows of the $2^3$-block, we have 7 rows. From these 7, 5 intersect the $4$-border inside it, and the remaining 2 do not. Also observe that the two remaining rows are nonconsecutive. More precisely, they are the rows directly below the upper boundary and directly above the lower boundary respectively.

Now assume the claim holds for $n-1$. Given a $2^{2n+1}$-block, we decompose it into $2^{2n}$-blocks and then into $2^{2n-1}$-blocks. The $2^{2n}$-border inside the given $2^{2n+1}$-block creates a vertical strip of height $2^{2n}$ where every row is obstructed. The regions above and below this $2^{2n}$-border are formed by rows of $2^{2n-1}$-blocks. In these two rows of  $2^{2n-1}$-blocks, the only obstructions occur inside the individual $2^{2n-1}$-blocks, as there are no $2^{2k}$-borders intersecting their boundaries ($k\leq n$). Thus $E_{n}=2E_{n-1}$ and our claim that $E_n=2^n$ follows by the inductive hypothesis. Since the $2^{2n-1}$-blocks are between obstructed rows (obstructed by the $2^{2n}$-border inside our $2^{2n+1}$-block), the claim about consecutive rows also follows for $n$ by the inductive hypothesis. 
\end{proof}
\begin{proposition}\label{lem:free-rows-inside-a-border}
    Let $n\geq 1$. The number of free interior rows in a $2^{2n}$-border is  $2^n+1$. Furthermore, the number of free interior columns is at least $2\cdot 3^{2^{2n-1}}-1$. 
\end{proposition}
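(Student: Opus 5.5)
We work inside the $2^{2n+1}$-block $B$ that contains the given $2^{2n}$-border, and use the decomposition of \Cref{prop:border-decomposition}. For $n\geq 2$, the $2^{2n}$-border consists of two horizontal rows of $2^{2n-1}$-blocks: an upper row of $2\cdot 3^{2^{2n-1}}$ blocks and a lower row of $2\cdot 3^{2^{2n}}$ blocks. These two rows meet along one common horizontal line, namely the row containing the central crosses of the adjacent $2^{2n-1}$-blocks' boundaries. For $n=1$ the $4$-border is checked by hand, exactly as in the base case of \Cref{lem:free-rows-inside-a-block}. There its interior has $3$ rows, and since there are no smaller $4^i$-borders with $i\ge 1$, all of them are free, giving $2^1+1=3$.

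\textbf{Rows.} An interior row of the border lies either strictly inside the upper row of $2^{2n-1}$-blocks, strictly inside the lower row, or on the common horizontal line between them.
\begin{itemize}
\item For a row strictly inside the upper strip, every $2^{2n-1}$-block crossed is cut at the same relative height. So the row is free exactly when the corresponding row of a single $2^{2n-1}$-block is free in the sense of \Cref{lem:free-rows-inside-a-block} (with $n-1$ in place of $n$).
\item Here we must check that the $4^i$-borders met by the row are exactly the $2^{2k}$-borders internal to the $2^{2n-1}$-blocks. This holds because a $4^i$-border with $i<n$ is contained in some $2^{2i+1}$-block, which lies inside a single $2^{2n-1}$-block of the decomposition.
\item The same argument applies to the lower strip. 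Each strip therefore contributes $2^{n-1}$ free rows, and these exclude the strips' outer boundaries, which are the border's upper and lower sides.
\item Finally, the shared middle line is free. By \Cref{lem:free-rows-inside-a-block}, the rows just below the upper boundary and just above the lower boundary of each $2^{2n-1}$-block are free, so no $4^i$-border ($i<n$) reaches a block boundary. Hence the boundary line between the two strips avoids all smaller borders.
\end{itemize}
The total is $2^{n-1}+2^{n-1}+1=2^n+1$. The main care here is the boundary bookkeeping: showing that a block boundary line is never touched by inner $4^i$-borders. This needs the fact that inner borders sit strictly inside $2^{2k+1}$-blocks, whose boundary rows are free.

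\textbf{Columns.} We use the upper row of $2\cdot 3^{2^{2n-1}}$ blocks of size $2^{2n-1}$. Each interior column reaches the upper side of the border, so it passes through the upper strip. We consider the vertical lines along the shared vertical boundaries between consecutive upper-strip blocks; there are $2\cdot 3^{2^{2n-1}}-1$ of them.
\begin{itemize}
\item Inside the upper strip, such a line avoids inner borders by the same boundary argument as for rows.
\item Continuing downward through the lower strip, the line lies on boundaries of lower-strip $2^{2n-1}$-blocks. This is because each upper block sits exactly above $3^{2^{2n-1}}$ lower blocks (\Cref{rem:n-squares-sizes}), so block boundaries align. The same freeness argument applies there.
\item At the middle line the column crosses the shared row, which is already known to be free.
\end{itemize}
This gives at least $2\cdot 3^{2^{2n-1}}-1$ free columns.

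The expected obstacle is making rigorous that no $4^i$-border, with $i<n$, touches a $2^{2n-1}$-block boundary, including at corners where the border's own sides lie. I would handle it by induction on the natural decomposition, using \Cref{borders-are-nicely-aligned} to control vertical alignment.
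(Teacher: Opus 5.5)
Your proposal is correct and follows essentially the same route as the paper. You split the $2^{2n}$-border via \Cref{prop:border-decomposition} into two rows of $2^{2n-1}$-blocks. You take $2^{n-1}$ free rows from each via \Cref{lem:free-rows-inside-a-block}, plus the separating middle row. You get the free columns from the vertical boundaries between consecutive blocks of the upper row.

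You give more detail than the paper, which calls the column count clear. Two things need tidying. First, the freeness of the vertical block boundaries should rest on your strict-containment argument (each $4^i$-border lies strictly inside its $2^{2i+1}$-block), not on the free rows next to the horizontal block boundaries; those only rule out contact with upper and lower boundaries. Second, the common middle line is the shared boundary of the two strips, and it passes through the center of the enclosing $2^{2n+1}$-block; it is not a line through centers of $2^{2n-1}$-blocks.
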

\begin{proof}
Case $n=1$ is direct. For $n \geq 2$ we know from \Cref{prop:border-decomposition} that our $2^{2n}$-border is formed by two rows of $2^{2n-1}$-blocks. It follows from \Cref{lem:free-rows-inside-a-block} that each row of $2^{2n-1}$-blocks contributes exactly $2^{n-1}$ free rows to our $2^{2n}$-border. There is one more free row, which separates the rows of $2^{2n-1}$-blocks. Thus $2^{n-1}+2^{n-1}+1=2^n+1$ as claimed. 

For the lower bound for the number of free vertical columns, we decompose our $2^{2n}$-border into two rows of $2^{2n-1}$-blocks. In the upper half we have exactly $2\cdot 3^{2^{2n-1}}$ of them. It is clear that every pair of consecutive ones generates a free vertical column, so we obtain at least $2\cdot 3^{2^{2n-1}}-1$ free vertical columns. 
\end{proof}
\begin{lemma}\label{lem:free-obstructed-rows-disposition}
Let $n\geq 1$. In a $2^{2n}$-border, each free row above (resp. below) the center row sits directly below (resp. above) an obstructed row, or directly below (resp. above) a row in the boundary of the $2^{2n}$-border. Furthermore, the row at the center is between two free rows.
\end{lemma}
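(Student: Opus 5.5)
We argue by induction on $n$, reusing the decomposition from the proof of \Cref{lem:free-rows-inside-a-border}. The base case $n=1$ is a direct check. A $4$-border inside a $2^3$-block consists of a row of $2$-blocks on top and a row of $2$-blocks below. Its interior rows are the three rows strictly between the upper and lower sides. No $4^i$-border with $i<1$ exists, so all three are free. The top interior row sits directly below the upper boundary, the bottom one directly above the lower boundary, and the center row lies between these two free rows.

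For $n\geq 2$, \Cref{prop:border-decomposition} shows that the $2^{2n}$-border consists of an upper row and a lower row of $2^{2n-1}$-blocks. These two rows meet along the center row of the border, which is the common horizontal boundary of the two rows of blocks. The center row is free: it runs along the boundaries of $2^{2n-1}$-blocks, and no $4^i$-border with $i<n$ crosses it, since every such border lies strictly inside a single $2^{2n-1}$-block. The rows directly above and below the center row are, respectively, the row directly above the lower boundary of the upper blocks and the row directly below the upper boundary of the lower blocks. By \Cref{lem:free-rows-inside-a-block}, applied with $n-1$ to a $2^{2n-1}$-block, both of these rows are free inside each block. All blocks in a row are aligned horizontally, so the row is free across the whole border. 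This proves the second claim.

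For the first claim, take a free row above the center row. It lies inside the upper row of $2^{2n-1}$-blocks, and it is one of the $2^{n-1}$ free rows given by \Cref{lem:free-rows-inside-a-block}. There are three cases.

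\begin{enumerate}
\item If it is the row directly below the upper boundary of these blocks, then it sits directly below the upper side of the $2^{2n}$-border.
\item If it is the row directly above the lower boundary of the blocks, then it is the row adjacent to the center row. We must also show that it sits directly below an obstructed row, or else treat it separately.
\item Otherwise, it is a free row of the block strictly between the two extremes.
\end{enumerate}

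In case (3) we use the structure from the proof of \Cref{lem:free-rows-inside-a-block}. The $2^{2n-1}$-block contains a $2^{2n-2}$-border that occupies a horizontal strip of obstructed rows, with $2^{2n-3}$-blocks above and below it. A free row lies in one of these two regions of $2^{2n-3}$-blocks. If it is directly above or below the strip, it is adjacent to an obstructed row on the side facing the strip. Otherwise, we recurse inside a $2^{2n-3}$-block using the same alternating structure.

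The main obstacle is the orientation. The statement asks that a row above the center sit directly \emph{below} an obstructed row, so we need a stronger inductive invariant describing which side each free row's obstruction lies on. I would first try proving the following invariant for a $2^{2n+1}$-block: every free row other than the topmost one has an obstructed row directly above it, and symmetrically for the lower half relative to its own centre strip. I would then check that this invariant is compatible with the reflection about the center.

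If the invariant turns out to fail, the fallback is to show directly that in the upper half, each free row is bounded above by the lower side of some $4^i$-border. This should follow because the free rows come in pairs hugging a border strip, and the pair member directly below that strip has the obstructed strip above it.
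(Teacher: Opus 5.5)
\textbf{There is a genuine gap: the first claim is never proved.} Your base case and your argument for the second claim (the center row lies between two free rows) are correct. For the first claim, however, you leave case (2) open. Your recursion in case (3) only yields an obstructed row ``on the side facing the strip'', and for a free row sitting directly \emph{above} a strip in the upper half that is the wrong side. The invariant and the fallback are stated as things to try, not proved. The fallback also only covers the lower member of each pair of free rows hugging a strip. For the upper member you would still have to identify the row directly above it, which is the top boundary of some smaller block, and that is exactly the point in question.

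\textbf{The missing idea is already in \Cref{lem:free-rows-inside-a-block}.} Its last sentence says that no two free rows of a $2^{2n-1}$-block are consecutive. Every interior row of such a block either is free or meets some $4^i$-border with $i\le n-1$. In the latter case it is an obstructed row of the $2^{2n}$-border, uniformly along the whole row, since all blocks in a row of the border have the same internal arrangement of borders. Hence every free row of the block has, directly above it, either an obstructed row or the block's top boundary. Likewise, directly below it there is either an obstructed row or the block's bottom boundary.

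By \Cref{prop:border-decomposition}, the blocks in the upper row of the border have as top boundary the upper side of the $2^{2n}$-border. The blocks in the lower row have as bottom boundary its lower side. This settles the first claim in all your cases at once. In case (2), for $n\ge 2$ the row directly above the free row adjacent to the lower boundary of an upper block is an interior row of that block, so it is obstructed by non-consecutiveness.

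So the orientation is not an obstacle. The invariant you proposed to prove by a new induction is just a restatement of non-consecutiveness. This short deduction is what the paper's proof amounts to: it simply cites \Cref{prop:border-decomposition} and \Cref{lem:free-rows-inside-a-block}.
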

\begin{proof}
This follows from \Cref{prop:border-decomposition} and \Cref{lem:free-rows-inside-a-block}.
\end{proof}
\begin{remark}\label{rem:leftmost-red-border}
    Let $n\geq 1$. In a $4^n$-border, the leftmost free interior column is adjacent to the left side of the border. This follows from the same argument used in \Cref{lem:free-obstructed-rows-disposition}.
\end{remark}

\section{The Medvedev degrees of SFTs on \texorpdfstring{$\mathcal{H}_3$}{H3}}\label{sec:medvedev}
The goal of this section is to prove~\Cref{mainthm:grafo_medved}, which states that every $\Pi_1^0$ Medvedev degree is realized by an SFT on $\mathcal H_3$. 


Following Robinson's proof  \cite{robinson_undecidability_1971}, it is straightforward to embed computation diagrams of Turing machines into the ``free space'' inside borders. However, in order to realize a prescribed Medvedev degree, we must ensure that the Turing machines throughout the configuration have the same input. This is the main difficulty of the proof. In the Euclidean setting, the same problem can be solved using ``diagonal signals'' \cite{myers_nonrecursive_1974,Simpson_2014_Medvedev}, but the argument cannot be replicated easily in the hyperbolic setting.

The sketch of the proof in this section is as follows. We shall construct an $\mathcal H_3$-SFT $\mathfrak{X}$ with the property that for every $n\in\NN$ there is a single bit from $\{0,1\}$ associated to all $4^n$-borders. In this manner we encode in every configuration a sequence in $\{0,1\}^{\NN}$. Furthermore, we shall impose local rules that encode ``free'' rows and columns which induce a grid within every $4^n$-border. Finally, we will take an arbitrary $\Pi_1^0$ set $P\subset \{0,1\}^\NN$ whose Medvedev degree we want to realize and choose a Turing machine which halts precisely on those words whose cylinder sets lie in the complement of $P$. We shall encode this Turing machine within every grid induced by $4^n$-borders. This machine will have access to the first $n$ elements of the sequence, and it will be able to halt for some sufficiently large $n$ if the encoded sequence is not in $P$.

We start by fixing a $\Pi_1^0$ set $P\subset \{0,1\}^\NN$. To describe our $\mathcal H_3$-SFT $\mathfrak{X}$ in a more understandable manner, we will use the notion of ``layers''. Formally, at every step we begin with an $\mathcal H_3$-SFT, take the product of the current alphabet with a new one, and impose new local rules to produce a new $\mathcal H_3$-SFT. Each layer $i=0,1,\dots,5$ has an alphabet $A_i$, and the final SFT will have alphabet $A=A_0\times\dots\times A_5$.

\subsection*{Layer 0 (The SFT extension \texorpdfstring{$\mathfrak{X}_0$}{X0})} This layer is the SFT $\mathfrak{X}_0$ from~\Cref{thm:X_1-sofic} which extends $\mathfrak{X}_1$. Adding this layer ensures that the final result of our construction is an SFT. 



\subsection*{Layer 1 (The hierarchical shift \texorpdfstring{$\mathfrak{X}_1$}{X1})} In this layer we have the image of Layer $0$ by the map $\pi$ from $\mathfrak{X}_0$ to $\mathfrak{X}_1$ in~\Cref{thm:X_1-sofic}. Thus in Layer 1 we have configurations from $\mathfrak{X}_1$. 

\subsection*{Layer 2 (Alternating layer \texorpdfstring{$\mathfrak{X}_2$}{X2})} In this layer, we take $A_2$ as the alphabet where the boundary decorations in~\Cref{fig:alfabetito_1} are assigned either the color green $\begin{tikzpicture}[scale = 0.2]
    \draw[thick] (0,0) rectangle (1,1);
    \filldraw[green!20] (0,0) rectangle (1,1);
\end{tikzpicture}$ or red $\begin{tikzpicture}[scale = 0.2]
    \draw[thick] (0,0) rectangle (1,1);
    \filldraw[pattern=crosshatch] (0,0) rectangle (1,1);
    \filldraw[color=red, opacity=0.5] (0,0) rectangle (1,1);
\end{tikzpicture}$ with the only restriction that two decorations that cross must have distinct colors. More precisely, $A_2$ is the alphabet shown in~\Cref{fig:alfabetito_2}. Note that we have removed the black triangle markings on green borders as they will no longer be needed.

\begin{figure}[ht!]
    \centering
     \begin{tikzpicture}[scale= 0.6]

    \begin{scope}[ shift = {(15,0)}]

    \begin{scope}[ shift = {(0,3)}]
            \draw[thick] (-1,0) to (1,0);
            \draw[thick] (0,-1) to (0,0);
        \end{scope}
    
        \begin{scope}[ shift = {(0,0)}]
            \filldraw[green!20] (-1,0) -- (1,0) -- (1,0.3) -- (-1,0.3);
        \draw[] (-1,0.3) to (1,0.3);
            \draw[thick] (-1,0) to (1,0);
            \draw[thick] (0,-1) to (0,0);
        \end{scope}

        \begin{scope}[ shift = {(0,-2.5)}, rotate=0]
        \draw[thick] (0,-1) to (0,0);
    \filldraw[green!20] (-1,0) -- (1,0) -- (1,-0.3) -- (-1,-0.3);
        \draw[thick] (-1,-0.3) to (1,-0.3);
            \draw[thick] (-1,0) to (1,0);
    \end{scope}

    \begin{scope}[ shift = {(0,-5)}]
    \filldraw[pattern=crosshatch] (-1,0) -- (1,0) -- (1,0.3) -- (-1,0.3);
            \filldraw[color=red, opacity=0.5] (-1,0) -- (1,0) -- (1,0.3) -- (-1,0.3);
        \filldraw[black] (0.2,0.3) -- (-0.2,0.3) -- (0,0);
        \draw[] (-1,0.3) to (1,0.3);
            \draw[thick] (-1,0) to (1,0);
            \draw[thick] (0,-1) to (0,0);
        \end{scope}

        \begin{scope}[ shift = {(0,-7.5)}, rotate=0]
    \filldraw[pattern=crosshatch] (-1,0) -- (1,0) -- (1,-0.3) -- (-1,-0.3);
    \filldraw[color=red, opacity=0.5] (-1,0) -- (1,0) -- (1,-0.3) -- (-1,-0.3);
        \filldraw[black] (0.2,-0.3) -- (-0.2,-0.3) -- (0,0);
        \draw[] (-1,-0.3) to (1,-0.3);
            \draw[thick] (-1,0) to (1,0);
            \draw[thick] (0,-1) to (0,0);
    \end{scope}
    
    \end{scope}

    \begin{scope}[ shift = {(4.5,3)}]
        \draw[thick] (-1,0) to (1,0);
        \draw[thick] (0,-1) to (0,1);
    \end{scope}

    \begin{scope}[ shift = {(0,0)}]
        \draw[thick] (0,-1) to (0,1);
        \filldraw[green!20] (-1,0) -- (1,0) -- (1,0.3) -- (-1,0.3);
        \draw[thick] (-1,0) to (1,0);
        \draw[] (-1,0.3) to (1,0.3);
    \end{scope}

    \begin{scope}[ shift = {(3,0)}, rotate=90]
    \draw[thick] (0,-1) to (0,1);
    \filldraw[green!20] (-1,0) -- (1,0) -- (1,0.3) -- (-1,0.3);
        \draw[thick] (-1,0) to (1,0);
        \draw[] (-1,0.3) to (1,0.3);
    \end{scope}

    \begin{scope}[ shift = {(6,0)}, rotate=180]
    \draw[thick] (0,-1) to (0,1);
    \filldraw[green!20] (-1,0) -- (1,0) -- (1,0.3) -- (-1,0.3);
        \draw[thick] (-1,0) to (1,0);
        
        \draw[] (-1,0.3) to (1,0.3);
    \end{scope}

    \begin{scope}[ shift = {(9,0)}, rotate=270]
    \draw[thick] (0,-1) to (0,1);
    \filldraw[green!20] (-1,0) -- (1,0) -- (1,0.3) -- (-1,0.3);
    \draw[thick] (-1,0) to (1,0);
        \draw[thick] (-1,0.3) to (1,0.3);
    \end{scope}

    \begin{scope}[ shift = {(0,-2.5)}]
    \filldraw[green!20] (-1,0) -- (0,0) -- (0,1) -- (-0.3,1) -- (-0.3,0.3) -- (-1,0.3);
        \draw[thick] (-1,0) to (1,0);
        \draw[thick] (0,-1) to (0,1);
        \draw[] (-1,0.3) -- (-0.3,0.3) -- (-0.3,1);
    \end{scope}

    \begin{scope}[ shift = {(3,-2.5)}, rotate=90]
    \filldraw[green!20] (-1,0) -- (0,0) -- (0,1) -- (-0.3,1) -- (-0.3,0.3) -- (-1,0.3);
        \draw[thick] (-1,0) to (1,0);
        \draw[thick] (0,-1) to (0,1);
        \draw[] (-1,0.3) -- (-0.3,0.3) -- (-0.3,1);
    \end{scope}

    \begin{scope}[ shift = {(6,-2.5)}, rotate=180]
    \filldraw[green!20] (-1,0) -- (0,0) -- (0,1) -- (-0.3,1) -- (-0.3,0.3) -- (-1,0.3);
        \draw[thick] (-1,0) to (1,0);
        \draw[thick] (0,-1) to (0,1);
        \draw[] (-1,0.3) -- (-0.3,0.3) -- (-0.3,1);
    \end{scope}

    \begin{scope}[ shift = {(9,-2.5)}, rotate=270]
    \filldraw[green!20] (-1,0) -- (0,0) -- (0,1) -- (-0.3,1) -- (-0.3,0.3) -- (-1,0.3);
        \draw[thick] (-1,0) to (1,0);
        \draw[thick] (0,-1) to (0,1);
        \draw[] (-1,0.3) -- (-0.3,0.3) -- (-0.3,1);
    \end{scope}

\begin{scope}[ shift = {(0,-5)}]
       \begin{scope}[ shift = {(0,0)}]
        \filldraw[pattern=crosshatch](-1,0) -- (1,0) -- (1,0.3) -- (-1,0.3);
        \filldraw[color=red, opacity=0.5](-1,0) -- (1,0) -- (1,0.3) -- (-1,0.3);
    \filldraw[black] (0.2,0.3) -- (-0.2,0.3) -- (0,0);
    \draw[] (-1,0.3) to (1,0.3);
        \draw[thick] (-1,0) to (1,0);
        \draw[thick] (0,-1) to (0,1);
    \end{scope}

    \begin{scope}[ shift = {(3,0)}, rotate=90]
    \filldraw[pattern=crosshatch](-1,0) -- (1,0) -- (1,0.3) -- (-1,0.3);
    \filldraw[color=red, opacity=0.5](-1,0) -- (1,0) -- (1,0.3) -- (-1,0.3);
    \filldraw[black] (0.2,0.3) -- (-0.2,0.3) -- (0,0);
        \draw[thick] (-1,0) to (1,0);
        \draw[thick] (0,-1) to (0,1);
        \draw[] (-1,0.3) to (1,0.3);
    \end{scope}

    \begin{scope}[ shift = {(6,0)}, rotate=180]
    \filldraw[pattern=crosshatch] (-1,0) -- (1,0) -- (1,0.3) -- (-1,0.3);
    \filldraw[color=red, opacity=0.5] (-1,0) -- (1,0) -- (1,0.3) -- (-1,0.3);
    \filldraw[black] (0.2,0.3) -- (-0.2,0.3) -- (0,0);
        \draw[thick] (-1,0) to (1,0);
        \draw[thick] (0,-1) to (0,1);
        \draw[] (-1,0.3) to (1,0.3);
    \end{scope}

    \begin{scope}[ shift = {(9,0)}, rotate=270]
    \filldraw[pattern=crosshatch] (-1,0) -- (1,0) -- (1,0.3) -- (-1,0.3);
    \filldraw[color=red, opacity=0.5] (-1,0) -- (1,0) -- (1,0.3) -- (-1,0.3);
    \filldraw[black] (0.2,0.3) -- (-0.2,0.3) -- (0,0);
        \draw[thick] (-1,0) to (1,0);
        \draw[thick] (0,-1) to (0,1);
        \draw[] (-1,0.3) to (1,0.3);
    \end{scope}

    \begin{scope}[ shift = {(0,-2.5)}]
    \filldraw[pattern=crosshatch] (-1,0) -- (0,0) -- (0,1) -- (-0.3,1) -- (-0.3,0.3) -- (-1,0.3);
    \filldraw[color=red, opacity=0.5] (-1,0) -- (0,0) -- (0,1) -- (-0.3,1) -- (-0.3,0.3) -- (-1,0.3);
    \filldraw[black] (0,0) -- (-0.3,0) -- (0,0.3);
        \draw[thick] (-1,0) to (1,0);
        \draw[thick] (0,-1) to (0,1);
        \draw[] (-1,0.3) -- (-0.3,0.3) -- (-0.3,1);
    \end{scope}

    \begin{scope}[ shift = {(3,-2.5)}, rotate=90]
    \filldraw[pattern=crosshatch] (-1,0) -- (0,0) -- (0,1) -- (-0.3,1) -- (-0.3,0.3) -- (-1,0.3);
    \filldraw[color=red, opacity=0.5] (-1,0) -- (0,0) -- (0,1) -- (-0.3,1) -- (-0.3,0.3) -- (-1,0.3);
    \filldraw[black] (0,0) -- (-0.3,0) -- (0,0.3);
        \draw[thick] (-1,0) to (1,0);
        \draw[thick] (0,-1) to (0,1);
        \draw[] (-1,0.3) -- (-0.3,0.3) -- (-0.3,1);
    \end{scope}

    \begin{scope}[ shift = {(6,-2.5)}, rotate=180]
    \filldraw[pattern=crosshatch] (-1,0) -- (0,0) -- (0,1) -- (-0.3,1) -- (-0.3,0.3) -- (-1,0.3);
    \filldraw[color=red, opacity=0.5] (-1,0) -- (0,0) -- (0,1) -- (-0.3,1) -- (-0.3,0.3) -- (-1,0.3);
    \filldraw[black] (0,0) -- (-0.3,0) -- (0,0.3);
        \draw[thick] (-1,0) to (1,0);
        \draw[thick] (0,-1) to (0,1);
        \draw[] (-1,0.3) -- (-0.3,0.3) -- (-0.3,1);
    \end{scope}

    \begin{scope}[ shift = {(9,-2.5)}, rotate=270]
    \filldraw[pattern=crosshatch] (-1,0) -- (0,0) -- (0,1) -- (-0.3,1) -- (-0.3,0.3) -- (-1,0.3);
    \filldraw[color=red, opacity=0.5] (-1,0) -- (0,0) -- (0,1) -- (-0.3,1) -- (-0.3,0.3) -- (-1,0.3);
    \filldraw[black] (0,0) -- (-0.3,0) -- (0,0.3);
        \draw[thick] (-1,0) to (1,0);
        \draw[thick] (0,-1) to (0,1);
        \draw[] (-1,0.3) -- (-0.3,0.3) -- (-0.3,1);
    \end{scope}

\end{scope}

    \begin{scope}[ shift = {(0,-12.5)}]
        
     \begin{scope}[rotate=90]
        \filldraw[green!20] (-1,0) -- (1,0) -- (1,0.3) -- (-1,0.3);
    \end{scope}
    \filldraw[pattern=crosshatch] (-1,0) -- (1,0) -- (1,0.3) -- (-1,0.3);
        \filldraw[color=red, opacity=0.5] (-1,0) -- (1,0) -- (1,0.3) -- (-1,0.3);
        \begin{scope}[rotate=90]
    \end{scope}
    \filldraw[black] (0.2,0.3) -- (-0.2,0.3) -- (0,0.1);
    \draw[] (-1,0.3) to (-0.3,0.3) -- (-0.3,1);
    \draw[] (-0.3,-1) -- (-0.3,1);
    \draw[] (-1,0.3) to (1,0.3);
    \draw[thick] (-1,0) to (1,0);
    \draw[thick] (0,-1) to (0,1);
        
    \end{scope}

     \begin{scope}[ shift = {(3,-12.5)}, rotate = 90]
        \filldraw[green!20] (-1,0) -- (1,0) -- (1,0.3) -- (-1,0.3);
    
     \begin{scope}[rotate=90]
        \filldraw[pattern=crosshatch] (-1,0) -- (1,0) -- (1,0.3) -- (-1,0.3);
        \filldraw[color=red, opacity=0.5] (-1,0) -- (1,0) -- (1,0.3) -- (-1,0.3);
    \filldraw[black] (0.2,0.3) -- (-0.2,0.3) -- (0,0.1);
    \end{scope}
    \draw[] (-1,0.3) to (-0.3,0.3) -- (-0.3,1);
    \draw[] (-0.3,-1) -- (-0.3,1);
    \draw[] (-1,0.3) to (1,0.3);
    \draw[thick] (-1,0) to (1,0);
    \draw[thick] (0,-1) to (0,1);
        
    \end{scope}

    \begin{scope}[ shift = {(6,-12.5)}, rotate = 180]
   \begin{scope}[rotate=90]
        \filldraw[green!20] (-1,0) -- (1,0) -- (1,0.3) -- (-1,0.3);
    \end{scope}
        \filldraw[pattern=crosshatch] (-1,0) -- (1,0) -- (1,0.3) -- (-1,0.3);
        \filldraw[color=red, opacity=0.5] (-1,0) -- (1,0) -- (1,0.3) -- (-1,0.3);
     \begin{scope}[rotate=90]
    \end{scope}
    \filldraw[black] (0.2,0.3) -- (-0.2,0.3) -- (0,0.1);
    \draw[] (-1,0.3) to (-0.3,0.3) -- (-0.3,1);
    \draw[] (-0.3,-1) -- (-0.3,1);
    \draw[] (-1,0.3) to (1,0.3);
    \draw[thick] (-1,0) to (1,0);
    \draw[thick] (0,-1) to (0,1);
        
    \end{scope}

    \begin{scope}[ shift = {(9,-12.5)}, rotate = -90]
        \filldraw[green!20] (-1,0) -- (1,0) -- (1,0.3) -- (-1,0.3);
    
     \begin{scope}[rotate=90]
        \filldraw[pattern=crosshatch] (-1,0) -- (1,0) -- (1,0.3) -- (-1,0.3);
        \filldraw[color=red, opacity=0.5] (-1,0) -- (1,0) -- (1,0.3) -- (-1,0.3);
    \filldraw[black] (0.2,0.3) -- (-0.2,0.3) -- (0,0.1);
    \end{scope}
    \draw[] (-1,0.3) to (-0.3,0.3) -- (-0.3,1);
    \draw[] (-0.3,-1) -- (-0.3,1);
    \draw[] (-1,0.3) to (1,0.3);
    \draw[thick] (-1,0) to (1,0);
    \draw[thick] (0,-1) to (0,1);
        
    \end{scope}

    \begin{scope}[shift = {(0,2.5)}]
        \begin{scope}[ shift = {(0,-12.5)}]
        \filldraw[green!20] (-1,0) -- (1,0) -- (1,0.3) -- (-1,0.3);
         \begin{scope}[rotate=90]
            \filldraw[pattern=crosshatch] (-1,0) -- (1,0) -- (1,0.3) -- (-1,0.3);
            \filldraw[color=red, opacity=0.5] (-1,0) -- (1,0) -- (1,0.3) -- (-1,0.3);
        \end{scope}
        \begin{scope}[rotate=90]
        \filldraw[black] (0.2,0.3) -- (-0.2,0.3) -- (0,0.1);
        \end{scope}
        \draw[] (-1,0.3) to (-0.3,0.3) -- (-0.3,1);
        \draw[] (-0.3,-1) -- (-0.3,1);
        \draw[] (-1,0.3) to (1,0.3);
        \draw[thick] (-1,0) to (1,0);
        \draw[thick] (0,-1) to (0,1);   
        \end{scope}

     \begin{scope}[ shift = {(3,-12.5)}, rotate = 90]
     \begin{scope}[rotate=90]
        \filldraw[green!20] (-1,0) -- (1,0) -- (1,0.3) -- (-1,0.3);
    \end{scope}
        \filldraw[pattern=crosshatch] (-1,0) -- (1,0) -- (1,0.3) -- (-1,0.3);
        \filldraw[color=red, opacity=0.5] (-1,0) -- (1,0) -- (1,0.3) -- (-1,0.3);
     \begin{scope}[rotate=90]
    \end{scope}
    \filldraw[black] (0.2,0.3) -- (-0.2,0.3) -- (0,0.1);
    \draw[] (-1,0.3) to (-0.3,0.3) -- (-0.3,1);
    \draw[] (-0.3,-1) -- (-0.3,1);
    \draw[] (-1,0.3) to (1,0.3);
    \draw[thick] (-1,0) to (1,0);
    \draw[thick] (0,-1) to (0,1);
        
    \end{scope}

    \begin{scope}[ shift = {(6,-12.5)}, rotate = 180]
    \filldraw[green!20] (-1,0) -- (1,0) -- (1,0.3) -- (-1,0.3);
   \begin{scope}[rotate=90]
        \filldraw[pattern=crosshatch] (-1,0) -- (1,0) -- (1,0.3) -- (-1,0.3);
        \filldraw[color=red, opacity=0.5] (-1,0) -- (1,0) -- (1,0.3) -- (-1,0.3);
    \end{scope}
        
     \begin{scope}[rotate=90]
    \filldraw[black] (0.2,0.3) -- (-0.2,0.3) -- (0,0.1);
    \end{scope}
    \draw[] (-1,0.3) to (-0.3,0.3) -- (-0.3,1);
    \draw[] (-0.3,-1) -- (-0.3,1);
    \draw[] (-1,0.3) to (1,0.3);
    \draw[thick] (-1,0) to (1,0);
    \draw[thick] (0,-1) to (0,1);
        
    \end{scope}

    \begin{scope}[ shift = {(9,-12.5)}, rotate = -90]
    \begin{scope}[rotate=90]
        \filldraw[green!20] (-1,0) -- (1,0) -- (1,0.3) -- (-1,0.3);
    \end{scope}
        \filldraw[pattern=crosshatch] (-1,0) -- (1,0) -- (1,0.3) -- (-1,0.3);
        \filldraw[color=red, opacity=0.5] (-1,0) -- (1,0) -- (1,0.3) -- (-1,0.3);
    \begin{scope}[rotate=90]
    \end{scope}
    \filldraw[black] (0.2,0.3) -- (-0.2,0.3) -- (0,0.1);
    \draw[] (-1,0.3) to (-0.3,0.3) -- (-0.3,1);
    \draw[] (-0.3,-1) -- (-0.3,1);
    \draw[] (-1,0.3) to (1,0.3);
    \draw[thick] (-1,0) to (1,0);
    \draw[thick] (0,-1) to (0,1);
        
    \end{scope}
    \end{scope}
\end{tikzpicture} 
    \caption{The alphabet $A_2$.}
    \label{fig:alfabetito_2}
\end{figure}

The local rules we impose in this layer are the following: first, our decorated colored symbols must match with the corresponding colorless symbol from layer 1 (thus we may think that in this layer we just added the colors). Second, we ask that 
adjacent decorations carry the same color (thus each rectangle has a unique color), and finally, that $2$-blocks must all carry the color green.

As we have forced that every intersection between borders creates a color change, and since 2-borders are required to be green, it follows that the color of a $2^n$-border is red whenever $n$ is even, and green whenever $n$ is odd.

From this point forwards we will only consider the $4^n$-borders, which are thus all colored red. We remark that in this manner red borders of different size do not intersect.

\subsection*{Layer 3 (Traveling bits layer \texorpdfstring{$\mathfrak{X}_3$}{X3})} In this layer, we shall assign to every horizontal and vertical edge a bit in $\{0,1\}$ in such a way that the bit gets transmitted horizontally and vertically. Formally, the alphabet $A_3$ is given by the decorations shown in~\Cref{fig:alfabetito_3}.

\begin{figure}[ht!]
    \centering
    \begin{tikzpicture}
    \begin{scope}[shift = {(0,0)}]
        \draw [thick] (0,0) -- (-1,0) node[fill=white, midway] {$\mathtt{1}$};
        \draw [thick] (0,0) -- (1,0) node[fill=white,midway] {$\mathtt{1}$};
        \draw [thick] (0,0) -- (0,-1) node[fill=white,midway] {$\mathtt{1}$};
        \draw [thick] (0,0) -- (0,1) node[fill=white,midway] {$\mathtt{1}$};
    \end{scope}
    \begin{scope}[shift = {(0,3)}]
        \draw [thick] (0,0) -- (-1,0) node[fill=white, midway] {$\mathtt{1}$};
        \draw [thick] (0,0) -- (1,0) node[fill=white,midway] {$\mathtt{1}$};
        \draw [thick] (0,0) -- (0,-1) node[fill=white,midway] {$\mathtt{0}$};
        \draw [thick] (0,0) -- (0,1) node[fill=white,midway] {$\mathtt{0}$};
    \end{scope}
    \begin{scope}[shift = {(-3,0)}]
        \draw [thick] (0,0) -- (-1,0) node[fill=white, midway] {$\mathtt{0}$};
        \draw [thick] (0,0) -- (1,0) node[fill=white,midway] {$\mathtt{0}$};
        \draw [thick] (0,0) -- (0,-1) node[fill=white,midway] {$\mathtt{1}$};
        \draw [thick] (0,0) -- (0,1) node[fill=white,midway] {$\mathtt{1}$};
    \end{scope}
    \begin{scope}[shift = {(-3,3)}]
        \draw [thick] (0,0) -- (-1,0) node[fill=white, midway] {$\mathtt{0}$};
        \draw [thick] (0,0) -- (1,0) node[fill=white,midway] {$\mathtt{0}$};
        \draw [thick] (0,0) -- (0,-1) node[fill=white,midway] {$\mathtt{0}$};
        \draw [thick] (0,0) -- (0,1) node[fill=white,midway] {$\mathtt{0}$};
    \end{scope}

    \begin{scope}[shift = {(6,0)}]
        \begin{scope}[shift = {(0,0)}]
        \draw [thick] (0,0) -- (-1,0) node[fill=white, midway] {$\mathtt{1}$};
        \draw [thick] (0,0) -- (1,0) node[fill=white,midway] {$\mathtt{1}$};
        \draw [thick] (0,0) -- (0,-1) node[fill=white,midway] {$\mathtt{1}$};
    \end{scope}
    \begin{scope}[shift = {(0,3)}]
        \draw [thick] (0,0) -- (-1,0) node[fill=white, midway] {$\mathtt{1}$};
        \draw [thick] (0,0) -- (1,0) node[fill=white,midway] {$\mathtt{1}$};
        \draw [thick] (0,0) -- (0,-1) node[fill=white,midway] {$\mathtt{0}$};
    \end{scope}
    \begin{scope}[shift = {(-3,0)}]
        \draw [thick] (0,0) -- (-1,0) node[fill=white, midway] {$\mathtt{0}$};
        \draw [thick] (0,0) -- (1,0) node[fill=white,midway] {$\mathtt{0}$};
        \draw [thick] (0,0) -- (0,-1) node[fill=white,midway] {$\mathtt{1}$};
    \end{scope}
    \begin{scope}[shift = {(-3,3)}]
        \draw [thick] (0,0) -- (-1,0) node[fill=white, midway] {$\mathtt{0}$};
        \draw [thick] (0,0) -- (1,0) node[fill=white,midway] {$\mathtt{0}$};
        \draw [thick] (0,0) -- (0,-1) node[fill=white,midway] {$\mathtt{0}$};
    \end{scope}
    \end{scope}
    
\end{tikzpicture}
    \caption{The alphabet $A_3$.}
    \label{fig:alfabetito_3}
\end{figure}

We impose the local rule that two adjacent vertices must have the same bit along their common edge. This ensures that every row and column has a single associated bit. Finally, we impose the local rule that if in layer 1 we are on top of a cross of type DL, DR, UL or UR (a corner) associated to a red $\begin{tikzpicture}[scale = 0.2]
    \draw[thick] (0,0) rectangle (1,1);
    \filldraw[pattern=crosshatch] (0,0) rectangle (1,1);
    \filldraw[color=red, opacity=0.5] (0,0) rectangle (1,1);
\end{tikzpicture}$ border in layer 2, then the vertical and horizontal bits must coincide. This ensures that every $4^n$-border has a unique bit associated to it. 

\begin{proposition}\label{prop:border-bit-constancy}
All $4^n$-borders in a configuration carry the same bit ($n\geq 1$).
\end{proposition}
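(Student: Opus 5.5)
The idea is to propagate the bit of a $4^n$-border (for fixed $n$) along the infinite rows and columns that carry constant bits in layer 3, and then to connect any two $4^n$-borders through such rows and columns. First I would check that each $4^n$-border really carries a single bit. Its four sides lie on two rows and two columns. By the adjacency rule of layer 3 each of these carries one bit, and the corner rule (which applies since $4^n$-borders are red by layer 2) equates the horizontal and vertical bits at each of the four corners. Hence all four sides carry the same bit, which we call the bit of the border. I also note that in any graph $\mathcal{G}(\mathcal{H}_3,\varphi)$ each horizontal level is a single bi-infinite $a$-line, so a horizontal bit is constant along a whole level. Similarly, a vertical bit is constant along any downward path of $b$-edges.

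\textbf{Step 1 (vertical transfer).} Let $B$ be a $4^n$-border inside a $2\cdot 4^n$-block. By \Cref{borders-are-nicely-aligned}, following the vertical edges downward from a lower corner of $B$ reaches an upper corner of another $4^n$-border $B'$. The column bit is constant along this path, and it equals the bit of $B$ at one end and the bit of $B'$ at the other. So $B$ and $B'$ carry the same bit. Iterating, from any $4^n$-border we get an infinite descending chain of $4^n$-borders all carrying the same bit. This uses that every vertex lies in some $2\cdot4^n$-block, which follows from the definition of $\mathfrak X_1$ by local admissibility and a compactness argument.

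\textbf{Step 2 (horizontal transfer).} Let $B_1,B_2$ be two $4^n$-borders whose upper sides lie on the same horizontal level. Since that level is one bi-infinite row with a constant horizontal bit, $B_1$ and $B_2$ carry the same bit.

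\textbf{Step 3 (meeting levels), which I expect to be the main obstacle.} Given arbitrary $4^n$-borders $A$ and $C$, I would descend from each as in Step 1 and want to reach borders whose upper sides lie on a common level. Two downward columns in $H_3$ reach every level below both starting points, so the remaining issue is one of heights. I would prove, by induction on $m$ using the natural decomposition of $2^{m+1}$-blocks into two rows of $2^m$-blocks, that in any configuration of $\mathfrak X_1$ the rows of $2^m$-blocks tile the graph in horizontal strips of height $2^m$. Consequently, the levels containing centers of $2^m$-blocks all lie in a single residue class modulo $2^m$.

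Applying this with $m=2n+1$, the upper sides of all $4^n$-borders lie at levels in one residue class modulo $2^{2n+1}$. Each descent step of Step 1 lowers the height by a multiple of $2^{2n+1}$. Therefore, descending sufficiently from both $A$ and $C$, we can reach the same level, and Step 2 finishes the argument. The delicate part is justifying this height alignment globally: the local rules only give consistency inside each finite block, so one must check that overlapping blocks of the same order agree on their strip decomposition. I would derive this from the boundary-overlap structure in \Cref{fig:overlaps} together with the fact that every pattern occurs inside some large block.
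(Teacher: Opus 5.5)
Your argument is correct and is essentially the paper's proof. Borders whose upper sides share a level get the same bit because the horizontal bit is constant along that level, and \Cref{borders-are-nicely-aligned} carries the bit down a column to the row of $4^n$-borders directly below; your Step 3 only makes explicit the stacking of these rows, which the paper uses implicitly. You do not need a separate compactness argument over $\mathfrak X_1$ for the height alignment: layer 0 lies under every configuration, so \Cref{thm:hierarchical-structure-X_0} already gives partitions into well-aligned $2^{2n+1}$-cells carrying blocks, hence horizontal strips of height $2^{2n+1}$, with each descent step dropping exactly one strip.
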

\begin{proof}
    We start with the observation that the bit carried by a $4^n$-border is the same as the bit carried by the red $4^n$-border to its right. Indeed, this is because the upper sides of both borders belong to the same infinite row. Hence in a given infinite row of red $4^n$-borders, all of them carry the same bit. 
    
    Let us now observe that this bit is also transmitted to the infinite row of red $4^n$-borders immediately below. Indeed, by~\Cref{borders-are-nicely-aligned}, if we follow the vertical line of edges that starts at a lower corner of a red $4^n$-border, then we eventually reach the upper corner of a red $4^n$-border, which belongs to the row of $4^n$-borders directly below.
\end{proof}
An immediate consequence of~\Cref{prop:border-bit-constancy} is that we can define $\phi\colon \mathfrak{X}_3 \to\{0,1\}^\NN$ where $\phi(\varphi,x)=y$ is given by

\begin{equation}\label{eq:definition-y}
y(n) =  \text{bit over red $4^n$-borders in $x$.}
\end{equation}
One easily verifies that with the rules imposed so far, $\phi$ is a computable map. Furthermore, every $y\in\{0,1\}^\NN$ can be obtained in this manner. 
\begin{proposition}\label{prop:phi-surjective}
    The map $\phi$ is surjective. 
\end{proposition}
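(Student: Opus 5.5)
Given $y\in\{0,1\}^{\NN}$, I will build a configuration of $\mathfrak X_3$ by starting from a valid configuration of layers $0$–$2$ and then assigning the layer-3 bits so that every red $4^n$-border carries $y(n)$. For layers $0$ and $1$, \Cref{thm:X_1-sofic} gives $(\varphi_0,x_0)\in\mathfrak X_0$ with image $(\varphi_0,x_1)\in\mathfrak X_1$. Layer $2$ is determined by layer $1$: green on $2^n$-borders with $n$ odd and red with $n$ even. I have to check that these colors satisfy the local rules. This follows from the parity of nesting, since a cross at the center of a $2^{n+1}$-block meets decorations of the $2^n$-blocks, which have the opposite parity.

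The layer-3 bits are assigned by lines. Every maximal horizontal row and every maximal vertical column of edges receives a single bit, so the rule that adjacent vertices agree on their common edge holds automatically. For each $n\geq 1$, every horizontal row containing the upper or lower side of a red $4^n$-border gets $y(n)$, and every vertical line containing the left or right side of such a border also gets $y(n)$. All other rows and columns get $0$. After this, the corner rule holds: at a red corner the horizontal line and the vertical line through it both carry $y(n)$. Then $\phi$ of the resulting configuration is $y$.

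The main obstacle is showing that this assignment is well defined, that is, that no single row or column is required to carry two different values $y(n)\neq y(m)$. For rows, I will argue that a horizontal row of the graph lies at a fixed height, and that red $4^n$-border sides occur only at heights congruent to specific values modulo the period of the $4^n$-hierarchy. Red borders of different sizes do not intersect (layer 2). Together with the block decomposition, this gives that a horizontal side of a $4^n$-border and a horizontal side of a $4^m$-border, for $m\neq n$, never share a row. Concretely, the horizontal sides of level-$2^k$ decorations sit on rows through centers of $2^k$-blocks, and these are distinct for distinct $k$. Vertical lines are the more delicate case, because a vertical column in $H_3$ passes downward through infinitely many levels. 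Here I will use \Cref{borders-are-nicely-aligned} together with the odd counts of blocks in the natural decomposition. The aim is to show that the vertical line below a corner of a $4^n$-border only meets corners or sides of $4^n$-borders before it could reach any other red structure. Equivalently, a vertical side of a $2^k$-decoration is a column through centers of $2^k$-blocks, and such columns do not pass through centers of $2^j$-blocks for $j\neq k$.

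If this well-definedness argument turns out to be hard in the specific configuration $x_1$, I have a fallback: use compactness. Given $y$ and any $N$, take a large $2^{2N+1}$-block, where well-definedness can be checked locally from the recursive definition of blocks. Assign the bits $y(1),\dots,y(N)$ on it and extend to a valid pattern. A limit point of these configurations then lies in the closed set $\mathfrak X_3$ and has $\phi$-image $y$, because $\phi$ is continuous.
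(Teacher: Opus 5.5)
Your proposal is correct and rests on the same key fact as the paper's proof, namely that a row or column through a corner of a red $4^n$-border never passes through a corner of a red $4^k$-border for $k\neq n$. For columns, your downward argument via \Cref{borders-are-nicely-aligned} suffices once you note that every vertex has a unique downward ray, so of two vertices on a common column one lies on the other's downward ray; the paper's proof then flips the level-$n$ bit one level at a time and invokes compactness, whereas you write down the limiting configuration directly, which is the same construction the paper uses later in the proof of \Cref{lem:P-and-X}.
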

\begin{proof}

For fixed $n\geq 2$, pick an infinite row that intersects the corners of red $4^n$-borders. We claim that for $k < n$ it cannot intersect the corners of red $4^k$-borders. To see this, first recall that a red $4^n$-border lies within a $2^{2n+1}$-block. Decomposing our $2^{2n+1}$-blocks into $2^{2n}$-blocks and then into $2^{2n-1}$-blocks, we see that our infinite row travels precisely along the boundaries of the $2^{2n-1}$-blocks. Clearly it cannot intersect the corners of the red $2^{2n-2}$-borders. Thus our claim holds for $k=n-1$. For $k<n-1$, it suffices to observe that every $4^k$-border is contained in a $2^{2n-1}$-block. 

The same claim in the previous paragraph can be applied to infinite columns that intersect the corners of red $4^n$-borders.

The last two claims show the following: given an element in $\mathfrak X_3$ and a fixed $n$, it is possible to change the bit in the red $4^n$-borders, while preserving the bit in the red $4^k$-borders for $k<n$, and obtain another element in $\mathfrak X_3$. By compactness, this implies that $\phi$ is surjective. 
\end{proof}
Over the next two layers we will construct the necessary structure to be able to discard those $y \in \{0,1\}^\NN$ that do not belong to $P$ by simulating a Turing machine in the free space of the red borders.

\subsection*{Layer 4 (Obstruction signal layer \texorpdfstring{$\mathfrak{X}_4$}{X4})} In this layer we identify through local rules the free rows and columns inside borders (see~\Cref{def:free}). For this we shall assign to every edge one of various obstruction signals ($\times$, $\blacktriangleright$, $\blacktriangleleft$, $\blacktriangledown$, $\blacktriangle$) or a free symbol $\scalebox{0.8}{\begin{tikzpicture}
    \draw [] (0,0) circle (0.2);
    \node at (0,0) {$\checkmark$};
\end{tikzpicture}}$ which indicates a free row or column. The intuitive idea is that every vertex lying on a $4^n$-border will emit an obstruction signal to the outside that travels until it hits another red border. The remaining edges will be precisely the free rows and columns.

We remark that this is very similar to the way that grids are constructed on top of the Robinson tiling to prove the undecidability of the domino problem~\cite{robinson_undecidability_1971}; however, in our case we also need special rules for the bottom edge at vertices of degree 3.

Let us now proceed formally. The alphabet $A_4$ is formed by all possible combinations where vertical edges and horizontal edges are decorated by \define{signals} as in~\Cref{fig:alfabetito_4}. A few examples are shown in~\Cref{fig:alfabetito_4ex}.

\begin{figure}[ht!]
    \centering
    \begin{tikzpicture}
    \begin{scope}[shift = {(0,0)}]
        \draw [thick] (0,0) -- (1,0);
        \node at  (0.5,0) {$\bigtimes$};
    \end{scope}
    \begin{scope}[shift = {(3,0)}]
        \draw [thick] (0,0) -- (1,0) node[midway] {$\blacktriangleleft$};
    \end{scope}
    \begin{scope}[shift = {(6,0)}]
        \draw [thick] (0,0) -- (1,0) node[midway] {$\blacktriangleright$};
    \end{scope}
    \begin{scope}[shift = {(9,0)}]
        \draw [thick] (0,0) -- (1,0);
        \draw[fill=white]  (0.5,0) circle (0.2);
        \node at (0.5,0) {$\mathtt{\checkmark}$};
    \end{scope}

    \begin{scope}[shift = {(0.5,-2)}]
        \draw [thick] (0,0) -- (0,1);
        \node at  (0,0.5) {$\bigtimes$};
    \end{scope}
    \begin{scope}[shift = {(3.5,-2)}]
        \draw [thick] (0,0) -- (0,1) node[midway] {$\blacktriangle$};
    \end{scope}
    \begin{scope}[shift = {(6.5,-2)}]
        \draw [thick] (0,0) -- (0,1) node[midway] {$\blacktriangledown$};
    \end{scope}
    \begin{scope}[shift = {(9.5,-2)}]
        \draw [thick] (0,0) -- (0,1);
        \draw[fill=white]  (0,0.5) circle (0.2);
        \node at (0,0.5) {$\mathtt{\checkmark}$};
    \end{scope}

\end{tikzpicture}
    \caption{The alphabet $A_4$ is given by all possible combinations of the decorations above. }
    \label{fig:alfabetito_4}
\end{figure}

\begin{figure}[ht!]
    \centering
    \begin{tikzpicture}
    \begin{scope}[shift = {(0,0)}]
        \draw [thick] (0,0) -- (0,1);
        \draw[fill=white]  (0,0.5) circle (0.2);
        \node at (0,0.5) {$\mathtt{\checkmark}$};
        \draw [thick] (0,0) -- (0,-1);
        \draw[fill=white]  (0,-0.5) circle (0.2);
        \node at (0,-0.5) {$\mathtt{\checkmark}$};
        \draw [thick] (0,0) -- (1,0) node[midway] {$\blacktriangleright$};
        \draw [thick] (0,0) -- (-1,0) node[midway] {$\blacktriangleright$};
    \end{scope}
    \begin{scope}[shift = {(3,0)}]
       \draw [thick] (0,0) -- (1,0);
        \node at  (0.5,0) {$\bigtimes$};
        \draw [thick] (0,0) -- (-1,0);
        \node at  (-0.5,0) {$\bigtimes$};
        \draw [thick] (0,0) -- (0,-1);
        \draw[fill=white]  (0,-0.5) circle (0.2);
        \node at (0,-0.5) {$\mathtt{\checkmark}$};
        
    \end{scope}
    \begin{scope}[shift = {(6,0)}]
        \draw [thick] (0,0) -- (-1,0) node[midway] {$\blacktriangleleft$};
        \draw [thick] (0,0) -- (1,0) node[midway] {$\blacktriangleleft$};
        \draw [thick] (0,0) -- (0,-1) node[midway] {$\blacktriangle$};
        \draw [thick] (0,0) -- (0,1) node[midway] {$\blacktriangle$};
        
    \end{scope}
    \begin{scope}[shift = {(9,0)}]
        \draw [thick] (0,0) -- (-1,0) node[midway] {$\blacktriangleright$};
        \draw [thick] (0,0) -- (1,0) node[midway] {$\blacktriangleright$};
        \draw [thick] (0,0) -- (0,-1) node[midway] {$\blacktriangle$};
        
    \end{scope}
    
\end{tikzpicture}
    \caption{A few examples of symbols in $A_4$.}
    \label{fig:alfabetito_4ex}
\end{figure}

For a vertex which carries a red border in Layer 2, we say an edge is \define{outside} if it is being pointed at by the black triangle marking (see~\Cref{fig:alfabetito_2}) on the red border. In the case of corners, both edges which do not form part of the border are considered outside. Conversely, an edge which is not part of a red border and it not outside is said to be \define{inside}. 

As is usual, we ask that the common edge between two adjacent vertices has the same decoration. Furthermore, we shall impose the following set of local rules that define $\mathfrak{X}_4$.

\begin{itemize}
    \item An edge is marked by $\times$ if and only if it is associated to a red border in Layer 2. 
    \item The only admissible signals on an edge which is outside of a red border are arrows.
    \item The only admissible signal on an edge which is inside of a border is an incoming arrow (pointing towards the border) or the free symbol $\scalebox{0.8}{\begin{tikzpicture}
    \draw [] (0,0) circle (0.2);
    \node at (0,0) {$\checkmark$};
\end{tikzpicture}}$.
\item If there are no red border markings, the signals on the left and right edges must coincide. Similarly, if the vertex has degree 4, then the signals on the bottom and top edges must coincide.
\item If there are no red border markings and the vertex has degree 3, then the signal on the bottom edge cannot be the free symbol $\scalebox{0.8}{\begin{tikzpicture}
    \draw [] (0,0) circle (0.2);
    \node at (0,0) {$\checkmark$};
\end{tikzpicture}}$.
\end{itemize}

A few valid placements of signals are shown in~\Cref{fig:valid-signals}. Note that edges on red borders are always marked by crosses. The intuition is that outside edges can only emit or absorb arrows (thus the corresponding row/column cannot be free), while the inside edges can only absorb signals or be free. In this setting, the bottom edge of a vertex of degree 3 which is not incident to a border is thought of as an outside edge (and thus cannot be free). We denote the subshift obtained by adding this layer by $\mathfrak{X}_4$.

\begin{figure}[ht!]
    \centering
    \begin{tikzpicture}

\begin{scope}[ shift = {(0,0)}, rotate=270, opacity = 0.5]
    \filldraw[pattern=crosshatch] (-1,0) -- (0,0) -- (0,1) -- (-0.3,1) -- (-0.3,0.3) -- (-1,0.3);
    \filldraw[color=red, opacity=0.5] (-1,0) -- (0,0) -- (0,1) -- (-0.3,1) -- (-0.3,0.3) -- (-1,0.3);
    \filldraw[black] (0,0) -- (-0.3,0) -- (0,0.3);
        \draw[thick] (-1,0) to (1,0);
        \draw[thick] (0,-1) to (0,1);
        \draw[] (-1,0.3) -- (-0.3,0.3) -- (-0.3,1);
    \end{scope}
    \begin{scope}[shift = {(0,0)}]
    \draw[thick] (0,-1) to (0,1);
        \node at (0,0.5) {$\bigtimes$};
        \node at (0,-0.5) {$\blacktriangledown$};
        \draw [thick] (0,0) -- (1,0) node[midway] {$\bigtimes$};
        \draw [thick] (0,0) -- (-1,0) node[midway] {$\blacktriangleleft$};
    \end{scope}

    \begin{scope}[ shift = {(3,0)}, opacity = 0.5]
    \filldraw[pattern=crosshatch] (-1,0) -- (1,0) -- (1,0.3) -- (-1,0.3);
            \filldraw[color=red, opacity=0.5] (-1,0) -- (1,0) -- (1,0.3) -- (-1,0.3);
        \filldraw[black] (0.2,0.3) -- (-0.2,0.3) -- (0,0);
        \draw[] (-1,0.3) to (1,0.3);
            \draw[thick] (-1,0) to (1,0);
            \draw[thick] (0,-1) to (0,0);
        \end{scope}
    
    \begin{scope}[shift = {(3,0)}]
        \draw[thick] (-1,0) to (1,0);
        \draw[thick] (0,0) to (0,-1);
        \node at  (0.5,0) {$\bigtimes$};
        \node at  (-0.5,0) {$\bigtimes$};
        \node at (0,-0.5) {$\blacktriangledown$};
        
    \end{scope}

\begin{scope}[ shift = {(6,0)}, opacity = 0.5]
    \filldraw[pattern=crosshatch] (-1,0) -- (1,0) -- (1,-0.3) -- (-1,-0.3);
    \filldraw[color=red, opacity=0.5] (-1,0) -- (1,0) -- (1,-0.3) -- (-1,-0.3);
        \filldraw[black] (0.2,-0.3) -- (-0.2,-0.3) -- (0,0);
        \draw[] (-1,-0.3) to (1,-0.3);
            \draw[thick] (-1,0) to (1,0);
            \draw[thick] (0,-1) to (0,0);
    \end{scope}
    
    \begin{scope}[shift = {(6,0)}]
        \draw [thick] (0,0) -- (-1,0) node[midway] {$\bigtimes$};
        \draw [thick] (0,0) -- (1,0) node[midway] {$\bigtimes$};
        \draw[thick] (0,-1) to (0,0);
        \draw[fill=white]  (0,-0.5) circle (0.2);
        \node at (0,-0.5) {$\mathtt{\checkmark}$};
        
    \end{scope}

    \begin{scope}[ shift = {(9,0)}, rotate=90, opacity=0.5]
    \filldraw[pattern=crosshatch](-1,0) -- (1,0) -- (1,0.3) -- (-1,0.3);
    \filldraw[color=red, opacity=0.5](-1,0) -- (1,0) -- (1,0.3) -- (-1,0.3);
    \filldraw[black] (0.2,0.3) -- (-0.2,0.3) -- (0,0);
        \draw[thick] (-1,0) to (1,0);
        \draw[thick] (0,-1) to (0,1);
        \draw[] (-1,0.3) to (1,0.3);
    \end{scope}
    
    \begin{scope}[shift = {(9,0)}]
        \draw [thick] (0,0) -- (-1,0) node[midway] {$\blacktriangleright$};
        \draw [thick] (0,0) -- (1,0) node[midway] {$\blacktriangleleft$};
        \draw [thick] (0,0) -- (0,-1) node[midway] {$\bigtimes$};
        \draw [thick] (0,0) -- (0,1) node[midway] {$\bigtimes$};
        
    \end{scope}

    \begin{scope}[shift = {(0,-3)}]

    \begin{scope}[shift = {(0,0)}]
    \draw[thick] (0,-1) to (0,1);
        \node at (0,0.5) {$\blacktriangledown$};
        \node at (0,-0.5) {$\blacktriangledown$};
        \draw [thick] (0,0) -- (1,0) node[midway] {$\blacktriangleleft$};
        \draw [thick] (0,0) -- (-1,0) node[midway] {$\blacktriangleleft$};
    \end{scope}

    \begin{scope}[shift = {(3,0)}]
        \draw[thick] (-1,0) to (1,0);
        \draw[thick] (0,0) to (0,-1);
        \draw[fill=white]  (-0.5,0) circle (0.2);
        \draw[fill=white]  (0.5,0) circle (0.2);
        \node at  (0.5,0) {$\mathtt{\checkmark}$};
        \node at  (-0.5,0) {$\mathtt{\checkmark}$};
        \node at (0,-0.5) {$\blacktriangledown$};
        
    \end{scope}

    \begin{scope}[shift = {(6,0)}]
        \draw[thick] (-1,0) to (1,0);
        \draw[thick] (0,0) to (0,-1);
        \node at  (0.5,0) {$\blacktriangleright$};
        \node at  (-0.5,0) {$\blacktriangleright$};
        \node at (0,-0.5) {$\blacktriangle$};
        
    \end{scope}

    \begin{scope}[shift = {(9,0)}]
        \draw [thick] (0,0) -- (-1,0) node[midway] {$\blacktriangleright$};
        \draw [thick] (0,0) -- (1,0) node[midway] {$\blacktriangleright$};
        \draw [thick] (0,0) -- (0,-1);
        \draw [thick] (0,0) -- (0,1);
        \draw[fill=white]  (0,-0.5) circle (0.2);
        \draw[fill=white]  (0,0.5) circle (0.2);
        \node at (0,-0.5) {$\mathtt{\checkmark}$};
        \node at (0,0.5) {$\mathtt{\checkmark}$};

    \end{scope}
        
    \end{scope}
    
\end{tikzpicture}
    \caption{Examples of valid signal placements in $A_4$.}
    \label{fig:valid-signals}
\end{figure}

\begin{proposition}
    For every $4^n$-border in $\mathfrak{X}_4$ and every row (resp. column) within, the following are equivalent:
    \begin{enumerate}
        \item The row (resp. column) is free.
        \item The entire row (resp. column) is marked by the signal $\scalebox{0.8}{\begin{tikzpicture}
    \draw [] (0,0) circle (0.2);
    \node at (0,0) {$\checkmark$};
\end{tikzpicture}}$.
        \item The leftmost (resp. top) signal is $\scalebox{0.8}{\begin{tikzpicture}
    \draw [] (0,0) circle (0.2);
    \node at (0,0) {$\checkmark$};
\end{tikzpicture}}$.
        \item The rightmost (resp. bottom) signal is $\scalebox{0.8}{\begin{tikzpicture}
    \draw [] (0,0) circle (0.2);
    \node at (0,0) {$\checkmark$};
\end{tikzpicture}}$.  
    \end{enumerate}

\end{proposition}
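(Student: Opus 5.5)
The plan is to first establish, from the local rules of Layer 4, a propagation statement. Along any interior row (resp. column) of a $4^n$-border, the signal on consecutive edges is constant except at vertices where a red border marking is present, or (for columns) where the vertex has degree 3. Indeed, at an unmarked vertex the left and right signals coincide, and for degree-4 vertices the top and bottom signals coincide.

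An interior row of the $4^n$-border runs from its left side to its right side. At each endpoint the first edge is inside the $4^n$-border, so its signal is either $\checkmark$ or an arrow pointing towards the border. Since red borders of different sizes do not intersect, the only red markings met along the row come from smaller $4^i$-borders with $i<n$. If we exclude degree-3 issues for rows (rows are horizontal, so degree plays no role for horizontal propagation), then on a free row no red marking is encountered. The signal is therefore constant along the whole row. Being an inside edge at the left end and also at the right end, it must be $\checkmark$: a constant arrow cannot point towards the border at both ends. This gives (1)$\Rightarrow$(2), and (2)$\Rightarrow$(3),(4) is trivial.

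For (3)$\Rightarrow$(1) (and symmetrically (4)$\Rightarrow$(1)), argue by contrapositive. Suppose the row is obstructed, and let $B'$ be the first $4^i$-border ($i<n$) met when travelling from the left side. The row hits $B'$ at a vertex on the left side of $B'$. There the edge to the left is an outside edge of $B'$, so it carries an arrow. By constancy, this arrow persists back to the leftmost edge, so the leftmost signal is not $\checkmark$. A subtle point is that the row might only touch $B'$ at a corner or run along its top or bottom side. In that case the edge to the left of the first border vertex is still outside, by the convention that both non-border edges at a corner are outside, so it is still an arrow.

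Columns are handled the same way, with the extra difficulty of degree-3 vertices, which I expect to be the main obstacle. A column is a vertical line of edges, and at a degree-3 vertex (state $1$ or $2$) the column ends going upward. So we must check that interior columns of a $4^n$-border, as defined, pass only through vertices where vertical continuation holds. The relevant case is when the top of the column lies on the upper side of the border while intermediate vertices have degree 3 with no upward edge. Here the rule forbidding $\checkmark$ on the bottom edge of an unmarked degree-3 vertex is exactly what prevents spurious free signals: a column of $\checkmark$ cannot start mid-border without passing through a red marking. Conversely, for genuinely free columns, the direction "top to bottom" only uses bottom-edge propagation at degree-4 vertices. So I would verify, using \Cref{prop:border-decomposition} and \Cref{rem:leftmost-red-border}, that every vertex strictly inside a free column has degree 4. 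Each free column lies between consecutive $2^{2n-1}$-blocks of the upper row, which passes straight down through vertical lines of the hierarchy. Once this is checked, the column argument mirrors the row argument, with ``outside'' arrows of obstructing borders propagating to the top and bottom ends.
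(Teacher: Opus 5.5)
Your argument for rows is the paper's proof: the signal is constant along an unobstructed line, and the admissible inside signals at the two ends ($\blacktriangleleft$ or $\checkmark$ on the left, $\blacktriangleright$ or $\checkmark$ on the right) give (1)$\Rightarrow$(2). For (3),(4)$\Rightarrow$(1), the first smaller red border met is entered through an outside edge, whose arrow propagates back to the end of the line. Your treatment of corners, and of rows running along a side of $B'$, is a useful extra detail.

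The column paragraph should be repaired, although this does not break the proof.

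\begin{itemize}
\item The case you call relevant, an interior column with intermediate degree-$3$ vertices, cannot occur. Every vertex reached by moving down a vertical line is the endpoint of a $b_j$-edge, and $t(b_j)=\{0\}$. So it has state $0$ and degree $4$. Hence all non-top vertices of an interior column automatically have degree $4$, and no appeal to \Cref{prop:border-decomposition} is needed.
\item Your supporting claim that every free column lies between two consecutive $2^{2n-1}$-blocks of the upper row is false, and should simply be dropped. For instance, for $n=1$ every interior column of a $4$-border is free, including those passing through the centres of the upper-row $2$-blocks. \Cref{lem:free-rows-inside-a-border} only gives a lower bound.
\item The rule forbidding $\checkmark$ on the bottom edge of an unmarked degree-$3$ vertex is needed only for vertical lines rising from the bottom side that end at a degree-$3$ vertex before reaching the top side. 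It prevents such a line from carrying $\checkmark$, which is exactly how the paper uses it.
\end{itemize}
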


\begin{proof}
    Consider a row of edges within a $4^n$-border. If this row is free, then it has a single signal associated to it. Note that on the leftmost side the only possible signals are $\blacktriangleleft$ and $\scalebox{0.8}{\begin{tikzpicture}
    \draw [] (0,0) circle (0.2);
    \node at (0,0) {$\checkmark$};
\end{tikzpicture}}$, while on the rightmost side the only possibilities are $\blacktriangleright$ and $\scalebox{0.8}{\begin{tikzpicture}
    \draw [] (0,0) circle (0.2);
    \node at (0,0) {$\checkmark$};
\end{tikzpicture}}$. Hence we conclude that the signal is necessarily $\scalebox{0.8}{\begin{tikzpicture}
    \draw [] (0,0) circle (0.2);
    \node at (0,0) {$\checkmark$};
\end{tikzpicture}}$. 

Conversely, suppose the leftmost signal is $\scalebox{0.8}{\begin{tikzpicture}
    \draw [] (0,0) circle (0.2);
    \node at (0,0) {$\checkmark$};
\end{tikzpicture}}$. If the row were not free, then the row must intersect some $4^k$-border for $k <n$. As red borders do not intersect, it follows that from left to right, the first intersection occurs on the outside of one of these borders, hence the signal must be $\blacktriangleleft$ and extend all the way to the left boundary, contradicting the assumption. The argument when the rightmost signal is $\scalebox{0.8}{\begin{tikzpicture}
    \draw [] (0,0) circle (0.2);
    \node at (0,0) {$\checkmark$};
\end{tikzpicture}}$ is analogous.

The argument for columns is analogous, with the additional case that starting from a free symbol at the bottom and going up, it might eventually reach a vertex of degree 3, but as we have defined the local rules so that the vertical edge is not allowed to be free, this case cannot occur.\end{proof}

\begin{remark}
    We note that alternatively, we can get rid of the $\times$ signal and instead consider the border rows and columns ``free'' as well. The reason why we don't do it is because the sequence of words that encodes the bits that occur above the free rows has a more convenient structure with our convention. This will become evident in what follows.
\end{remark}

Next we exhibit how the sequence encoded by layer 3 is encoded by the free rows within the red borders. Let $n \geq 1$ and take a $4^n$-border. Recall that there are $2^n+1$ free rows (\Cref{lem:free-rows-inside-a-border}), and thus $2^{n-1}$ free rows above the center row. Enumerate each one of these from the center upwards, i.e., the first row is the free row directly above the center row, and the $2^{n-1}$-th row is the top free row. Each of these rows sit directly below an obstructed row (which carries a bit from a $4^k$ border, $k<n$) or the $4^n$-border.  
\begin{definition}
    For every $n \geq 1$ and $(\varphi,x)\in \mathfrak{X}_4$, define  $w_n \in \{0,1\}^*$ by
\[
w_n(i) = \text{the horizontal bit carried by the row directly above the $i$-th free row}, \ 1\leq i \leq 2^{n-1}.
\]
\end{definition}
Thus $w_n$ has length $2^{n-1}$. The fact that $w_n$ is independent of the chosen $4^n$-border in $x$ follows from \Cref{prop:border-bit-constancy}.
\begin{lemma}\label{lem:input-recursion}
    Let $y \in \{0,1\}^\NN$ be the sequence encoded by the third layer in some configuration of $\mathfrak{X}_4$. The words $w_n$ satisfy the following recursion
    \begin{align*}
    w_1 &= y(1) \\
    w_{n+1} &= w_nw_n',
\end{align*}
where $w_n'$ is $w_n$ with its last symbol replaced by $y(n+1)$.
\end{lemma}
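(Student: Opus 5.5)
Both parts will be proved by induction on $n$, using the recursive decomposition of a $4^{n+1}$-border into rows of smaller blocks given by \Cref{prop:border-decomposition} and \Cref{lem:free-rows-inside-a-block}. The reading is geometric: list, from the center upwards, the obstructed rows (or the border row) directly above each free row of the upper half of a $4^n$-border. By \Cref{prop:border-bit-constancy}, each such row carries the bit of the smallest red $4^k$-border it intersects. So $w_n(i)=y(k_i)$, where $k_i$ is that level. The claim then reduces to a purely combinatorial identity on the sequence of levels $(k_1,\dots,k_{2^{n-1}})$.

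\textbf{Base case.} A $4$-border contains no smaller red border. Its upper half has exactly one free row (\Cref{lem:free-rows-inside-a-border} with $n=1$ gives $3$ free rows, one being the center). That row lies directly below the upper side of the $4$-border, which carries $y(1)$. Hence $w_1=y(1)$.

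\textbf{Induction step.} The upper half of a $4^{n+1}=2^{2n+2}$-border is a row of $2^{2n+1}$-blocks (\Cref{prop:border-decomposition}), and free rows are determined inside one such block $B$. So I will track the free rows of $B$ from its center row upward. Decompose $B$ into its two rows of $2^{2n}$-blocks and further into $2^{2n-1}$-blocks, as in the proof of \Cref{lem:free-rows-inside-a-block}. The $4^n$-border inside $B$ occupies a middle band, and above it sits a row of $2^{2n-1}$-blocks, which is exactly the upper half of a $4^n$-border. More precisely, the top row of the $4^n$-border's upper half lies inside the region of $B$ above the center, shifted.

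I claim the free rows of the upper half of the $4^{n+1}$-border split into two groups of $2^{n-1}$ each, since the total is $2^n$:
\begin{itemize}
\item \emph{Lower group.} These lie in the lower row of $2^{2n-1}$-blocks of the upper $2^{2n}$-blocks of $B$, between the center and the $4^n$-border. They are read in the same order, relative to obstructing rows, as the upper half of a $4^n$-border, except that the last one sits directly below the bottom side of the $4^n$-border. That side carries $y(n)$, matching the last letter of $w_n$.
\item \emph{Upper group.} These lie above the $4^n$-border, inside the upper $2^{2n-1}$-blocks. They reproduce the same pattern, except that the topmost one sits directly below the $4^{n+1}$-border itself, which carries $y(n+1)$.
\end{itemize}
This gives $w_{n+1}=w_nw_n'$.

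\textbf{Main obstacle.} The hard part is the careful alignment: showing that each half really reproduces the level sequence of $w_n$ in the same order. This requires identifying which obstructing row sits above each free row, using \Cref{lem:free-obstructed-rows-disposition} and the fact that free rows are nonconsecutive (\Cref{lem:free-rows-inside-a-block}). To handle it, I would first strengthen the induction hypothesis to a statement about $2^{2n+1}$-blocks: the levels above the free rows, read bottom to top, form a fixed word $u_n$ ending with the level of the block's own boundary. Then I would verify $u_{n+1}=u_n u_n'$ directly from the natural decomposition, and deduce the lemma from it.
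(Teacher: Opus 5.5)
Your proof is essentially the paper's. The strengthened block-level statement in your last paragraph is the paper's auxiliary word $u_n$, which the paper attaches to $2^{2n-1}$-blocks and strips of its context-dependent top letter, so that $w_n=u_n\,y(n)$ by \Cref{prop:border-decomposition}. Your two groups are exactly the two factors of $w_{n+1}=u_n\,y(n)\,u_n\,y(n+1)$: one is closed off by the bottom side of the inner $4^n$-border, the other by the top side of the $4^{n+1}$-border.

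There is, however, a concrete slip in where you put the lower group. Index the rows of $B$ from $0$ (top) to $2^{2n+1}$ (bottom). The inner $4^n$-border is formed by the central crosses of the $2^{2n}$-blocks of $B$, so it occupies rows $2^{2n-1}$ through $3\cdot 2^{2n-1}$. The lower row of $2^{2n-1}$-blocks of the \emph{upper} $2^{2n}$-blocks spans rows $2^{2n-1}$ through $2^{2n}$. These are exactly the rows of the upper half of the inner $4^n$-border, so every row there is obstructed.

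The lower group actually lies in the lower row of $2^{2n-1}$-blocks of the \emph{lower} $2^{2n}$-blocks, i.e.\ rows $3\cdot 2^{2n-1}$ through $2^{2n+1}$. This is the region between the bottom boundary of $B$ (the center row of the $4^{n+1}$-border) and the bottom side of the inner $4^n$-border. For the same reason you must scan $B$ upward from its bottom boundary, not from its center row, since that is the center row of the inner $4^n$-border. Your other descriptions of this group fit the corrected location: it lies between the center and the $4^n$-border, its topmost member sits directly below the bottom side of the $4^n$-border, and it ends in $y(n)$. So the argument goes through once the location is fixed.

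Finally, take your convention that the block word ends with a placeholder for the block's own top boundary. Then the recursion is $u_{n+1}=u_n[\text{last}\mapsto n+1]\,u_n$, because it is the \emph{bottom} copy that gets closed off by the inner $4^{n+1}$-border. It is not $u_nu_n'$ with the modified copy placed second.
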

\begin{proof}
It is convenient to define an auxiliary sequence of words $(u_n)_{n\geq 1}$, which is similar to $(w_n)_{n\geq 1}$ but where we consider blocks instead of borders. Let $n\geq 1$, and consider a $2^{2n-1}$-block. We enumerate \textit{all} horizontal rows inside our block that do not intersect a $2^{2i}$-border, $1\leq i\leq n-1$, from the bottom upwards. There are $2^{n-1}$ such rows (\Cref{lem:free-rows-inside-a-block}). Furthermore, each one of them, except the uppermost one, is directly below an obstructed row which carries a bit coming from a $2^{2i}$-border inside our initial block. We define a word $u_n$ of length $2^{n-1}-1$ by  
\[u_n(i)=\text{ the bit carried by the row directly above the $i$-th free row}\]
for all $1\leq i\leq 2^{n-1}-1$. 

We claim that $(u_n)_{n\geq 1}$ and $(w_n)_{n\geq 1}$ are related by
\[
w_n=u_n y(n).
\]
Indeed, take a $4^n=2^{2n}$-border. After we exclude the horizontal row at the middle, the upper half of our border is a horizontal row of $2^{2n-1}$-blocks (\Cref{prop:border-decomposition}). It follows from the definitions that $w_n$ and $u_n$ coincide in their first $2^{n-1}-1$ digits, while it is clear that the last digit of $w_n$ is $y(n)$, as the uppermost free row is directly below the $2^{2n}$-border. 

Now consider a $4^{n+1}=2^{2(n+1)}$-border. Decomposing the $2^{2n+1}$-blocks in its upper half into $2^{2n-1}$-blocks, one can see that 
\[
w_{n+1}=u_{n}y(n)u_{n}y(n+1).
\]
We remark that the bit $y(n)$ comes from the $4^n$-borders inside our initial $4^{n+1}$-border, while the bit $y(n+1)$ is transmitted from the $4^{n+1}$-border itself to the free row below it. Since $w_n=u_ny(n)$ and $w_{n}'=u_ny(n+1)$, we have $w_{n+1}=w_nw_n'$ as claimed. 
\end{proof}
We define a function $f\colon\{0,1\}^{\NN}\to\{0,1\}^{\NN}$ as follows. Given $y\in\{0,1\}^{\NN}$, we define $f(y)\in\{0,1\}^{\NN}$ by
\[f(y)(k)=\text{the $k$-th digit in $w_n$, for $n$ sufficiently large}\]
for all $k\in\NN$. Thanks to \Cref{lem:input-recursion}, the $k$-th digit in $w_n$ remains constant as $n$ increases, so this is well-defined. 

Recall that we already fixed a $\Pi_1^0$ set $P$ whose Medvedev degree we want to realize. The following result will be important for our proof. 
\begin{proposition}\label{prop:f-P-and-P}
$f(P)$ is a $\Pi_1^0$ set, and it has the same Medvedev degree as $P$.
\end{proposition}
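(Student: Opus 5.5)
We will understand $f$ explicitly, show it is a computable injection with a computable inverse on its image, and then check effective closedness of $f(P)$ directly.

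First, unwinding the recursion of \Cref{lem:input-recursion}: $w_1=y(1)$ and $w_{n+1}=w_nw_n'$, so $w_{n+1}$ extends $w_n$ and $f(y)=\lim_n w_n$. The digit at position $2^{n}$ of $f(y)$ (the last digit of $w_{n+1}$) is $y(n+1)$, and position $1$ is $y(1)$. More generally, by induction, position $k$ of $f(y)$ equals $y(\nu(k)+1)$, where $\nu(k)$ is the $2$-adic valuation of $k$. Indeed, positions in the second half $w_n'$ copy those of $w_n$ shifted by $2^{n-1}$ (which preserves $\nu$ for $k<2^{n-1}$), and the last position $2^n$ has valuation $n$. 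This identification only requires checking the inductive step against the recursion. Consequently $f(y)(k)=y(\nu(k)+1)$, so $f$ is computable (a Turing functional that reads one bit of $y$ per output bit) and injective. Its inverse on the image is computable as well, via $y(n)=f(y)(2^{n-1})$.

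Medvedev equivalence then follows. The map $f$ restricted to $P$ sends $P$ onto $f(P)$, giving $f(P)\preceq_{\mathfrak m}P$. The map $g(z)(n)=z(2^{n-1})$ is computable on all of $\{0,1\}^\NN$ and sends $f(P)$ onto $P$, giving $P\preceq_{\mathfrak m}f(P)$.

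It remains to prove that $f(P)$ is $\Pi_1^0$. We describe $f(P)$ as the intersection of two sets. The first is
\[
I=\{z : z(k)=z(2^{\nu(k)}) \text{ for all } k\},
\]
the image $f(\{0,1\}^\NN)$, which is $\Pi_1^0$ because each condition is decidable on a finite prefix and its violation is witnessed by a finite prefix. The second is $g^{-1}(P)$. Since $P$ is $\Pi_1^0$ and $g$ is a total computable functional, $g^{-1}(P)$ is $\Pi_1^0$: enumerate prefixes $z|_{m}$ whose $g$-image prefix falls into an enumerated forbidden cylinder of $P$. We then need $f(P)=I\cap g^{-1}(P)$. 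The inclusion $\subseteq$ is clear. Conversely, if $z\in I$, then $z=f(g(z))$, and $g(z)\in P$ gives $z\in f(P)$.

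The only real step is the closed-form description of $f$ through the $2$-adic valuation, which we verify by induction from \Cref{lem:input-recursion}, taking care with the indexing shift. The remaining steps are routine.
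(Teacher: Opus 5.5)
Your proof is correct and follows the paper's route: $f$ is a computable injection with the computable left inverse $g(z)(n)=z(2^{n-1})$, which is exactly the paper's ``recover $y$ from the last digits of the $w_n$'', and these two maps give the two Medvedev reductions. Your closed form $f(y)(k)=y(\nu(k)+1)$ (with $\nu$ the $2$-adic valuation) and the decomposition $f(P)=I\cap g^{-1}(P)$ make explicit the $\Pi_1^0$ claim that the paper leaves implicit; that claim also follows from effective compactness, since the image of a $\Pi_1^0$ subset of $\{0,1\}^{\NN}$ under a total computable map is again $\Pi_1^0$.
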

\begin{proof}
It is sufficient to observe that $f$ is computable, injective, and it admits a computable left inverse $g$ from $f(\{0,1\}^{\NN})$ to $\{0,1\}^{\NN}$. It is clear from the definition that $f$ is computable and injective. Furthermore, note that one can computably recover the first $n$ digits of $y$ from $w_n$ (equivalently, the first $2^{n-1}$ digits in $f(y)$). This procedure provides an algorithm for $g$. 
\end{proof}

\subsection*{Layer 5 (Computation layer \texorpdfstring{$\mathfrak{X}_5$}{X5})} This technique of simulating Turing machines on grids is well-known; we describe the details for completeness. Importantly, the passage of time is from left to right, each free row in a border carries one step of computation, and the starting tape is coded vertically in the leftmost free column (which is the one right after the left border).

We shall consider Turing machines that take as input an infinite sequence and return no output; they will either halt or not. More precisely, a Turing machine $M$ will consist of a bi-infinite vertical tape with the alphabet $S = \{0,1,b\}$, a finite set of states $Q = \{q_1,q_2,\dots,q_n \}$, a reading head which may go up $(u)$ or down $(d)$, and a transition function
\[
\delta \colon D \to Q \times S \times \{u,d\},
\]
where $D$ is a subset of $Q \times S$. For instance, $\delta(q_i,s) = (q_j,s',u)$ means that if the head is in state $q_i$ reading the symbol $s$, then it will transition to the state $q_j$, replace $s$ by $s'$ and move up.

Given $x \in \{0,1\}^\NN$, the starting position of $M$ on input $x$ is defined as follows. The head starts in state $q_1$ reading the blank symbol $b$ at position $0$ of the tape. The infinite sequence $x$ is inscribed at positions $1,2,3,\dots$ going upwards. The rest of the cells are left with the blank symbol $b$.

We say that the machine $M$ halts on input $x$ if, from the starting position, the head eventually reaches a state $(q_i,s)$ which is not in the domain $D$ of the transition function $\delta$. Otherwise, we say that the machine does not halt.

With this description, a set $C\subset\{0,1\}^{\NN}$ is $\Pi_1^0$ if and only if there exists a Turing machine $M$ such that $x \in C$ if and only if $M$ does not halt on input $x$. Since $f(P)$ is a $\Pi_1^0$ set (\Cref{prop:f-P-and-P}), we can fix a Turing machine $M$ which does not halt if and only if its input belongs to $f(P)$. 

We are ready to describe the alphabet $A_5$ of this layer. Let $\delta \colon D\to Q \times S \times \{u,d\}$ be the transition function of the Turing machine $M$. The horizontal edges will be either blank ($\sqcup$) or labeled by elements from
\[
S \cup \{ q_is : q_i \in Q, s \in S \},
\]
and the vertical edges may be blank ($\sqcup$) or labeled by
\[
\{\uparrow, \downarrow \} \cup Q.
\]

Besides the usual rule that the labels on edges shared by adjacent vertices must coincide, we impose the following local rules:

\begin{enumerate}
    \item A vertex which has no incident edges marked by the free symbol $\scalebox{0.8}{\begin{tikzpicture}
    \draw [] (0,0) circle (0.2);
    \node at (0,0) {$\checkmark$};
\end{tikzpicture}}$ will have all its edges labeled blank.
\item For a vertex which has two edges marked with the free symbol and an obstruction signal on the other two, we make it transmit the information by allowing only the following diagram for all (possibly blank) symbols $a$ and $b$.

\begin{center}
\begin{tikzpicture}[scale=1.2]
    
    \begin{scope}[shift={(0,0)}]
        \vertexlabels{a}{b}{a}{b}
    \end{scope}

\end{tikzpicture}
\end{center}
\item For a vertex of degree 3 which has two horizontal edges marked with the free symbol, we make it transmit the information horizontally by allowing only the following diagram for all (possibly blank) symbols $a$ and $b$.

\begin{center}
\begin{tikzpicture}[scale=1.2]
    
    \begin{scope}[shift={(0,0)}]
        \vertexlabeldegreethree{a}{b}{a}
    \end{scope}

\end{tikzpicture}
\end{center}
\item We set the starting position of the machine and the input tape as follows. On a vertex which has four incident edges marked by the free symbol $\scalebox{0.8}{\begin{tikzpicture}
    \draw [] (0,0) circle (0.2);
    \node at (0,0) {$\checkmark$};
\end{tikzpicture}}$, and which lies on the leftmost free vertical column (this is a local condition thanks to \Cref{rem:leftmost-red-border}), we allow the following diagrams for each $s \in \{0,1\}$. 
\begin{figure}[ht!]
\begin{tikzpicture}[scale=1.2]
    \begin{scope}[shift={(0,0)}]
        \upborder{s}
    \end{scope}

    \begin{scope}[shift={(2.5,0)}]
        \centerborder{q_1\sqcup}
    \end{scope}

    \begin{scope}[shift={(5,0)}]
        \downborder{\sqcup}
    \end{scope}
\end{tikzpicture}
\end{figure}
We impose that when the first diagram occurs, $s$ must be equal to the bit from layer 3 carried by the row immediately above. Finally, we also force the second diagram to occur on the center row by imposing that this diagram must be immediately between two free rows. Note that this can be determined locally as it is the only free row having free rows above and below (\Cref{lem:free-obstructed-rows-disposition}). The arrows in the image represent the symbols $\{\uparrow,\downarrow\}$ and thus are transmitted vertically.

    \item On a vertex which has four incident edges marked by the free symbol $\scalebox{0.8}{\begin{tikzpicture}
    \draw [] (0,0) circle (0.2);
    \node at (0,0) {$\checkmark$};
\end{tikzpicture}}$, and which is not on the leftmost free column, the following diagrams are allowed to occur for all $s \in S$ and $q_i \in Q$.

\begin{center}
\begin{tikzpicture}[scale = 1.2]
    \begin{scope}[shift={(0,0)}]
        \vertexlabels{s}{}{s}{}
    \end{scope}

    \begin{scope}[shift={(2.5,0)}]
        \vertexlabels{q_is}{}{s}{q_i}
    \end{scope}

    \begin{scope}[shift={(5,0)}]
        \vertexlabels{q_is}{q_i}{s}{}
    \end{scope}
\end{tikzpicture}
\end{center}

Furthermore, if $\delta(q_i,s) = (q_j,s',u)$ or $\delta(q_i,s) = (q_j,s',d)$, then the following diagrams are allowed respectively.

\begin{center}
\begin{tikzpicture}[scale=1.2]
    
    \begin{scope}[shift={(2.5,0)}]
        \vertexlabels{s'}{q_j}{q_is}{}
    \end{scope}

    \begin{scope}[shift={(5,0)}]
        \vertexlabels{s'}{}{q_is}{q_j}
    \end{scope}

\end{tikzpicture}
\end{center}
\end{enumerate}

    




Thus layer $5$ is the $\mathcal{H}_3$-SFT $\mathfrak{X}_5$ defined by those local rules above. At this point we set $\mathfrak{X}=\mathfrak{X}_5$ and thus we have finished the definition of our $\mathcal{H}_3$-SFT. It only remains to prove that $\mathfrak{X}$ has the desired properties. Let $\pi_3 \colon \mathfrak{X}\to \mathfrak{X}_3$ be the projection to layer 3 and let $\widehat{\phi}\colon \mathfrak{X}\to \{0,1\}^{\NN}$ be given by $\widehat{\phi}(\varphi,x)= \phi(\pi_3(\varphi,x))$. Note that this is still a computable map, but we may no longer have surjectivity. In what follows we will show that in fact $\widehat{\phi}(\mathfrak{X})=P$.

\begin{lemma}\label{lem:X-and-P}
    $\widehat\phi(\mathfrak{X})\subset P$.
\end{lemma}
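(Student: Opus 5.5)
We argue by contradiction. Take $(\varphi,x)\in\mathfrak X$ with $y=\widehat\phi(\varphi,x)\notin P$, and aim to show that some local rule of layer 5 is violated. Since $f$ is injective on $\{0,1\}^{\NN}$ and $g\circ f=\mathrm{id}$, we get $f(y)\notin f(P)$. By the choice of $M$, the machine then halts on input $f(y)$. Halting happens after finitely many steps $T$, and the head reads only finitely many tape cells during the run, say positions $0,\dots,L$. So there is a finite prefix of $f(y)$ of length $L$ that determines the entire halting computation.

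Next we choose $n$ large enough that $2^{n-1}\geq L$ and that a $4^n$-border has at least $T+1$ free columns and enough free rows for the head to stay inside. This is possible because, by \Cref{lem:free-rows-inside-a-border}, the number of free columns grows like $3^{2^{2n-1}}$ and the number of free rows like $2^n+1$. Since the configuration belongs to $\mathfrak X_1$, it contains a $4^n$-border: take any vertex and the border produced by a sufficiently large block around it, or note that all $2^{2n+1}$-blocks occur in every configuration by the hierarchical structure. The free rows and free columns inside this border form a grid; layer 4 identifies them locally by the proposition preceding the definition of $w_n$.

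Inside this grid, the rules of layer 5 force the following. On the leftmost free column, the free rows above the center row carry the bits of the rows directly above them, which by definition is the word $w_n$. The center free row carries $q_1\sqcup$. Free rows below the center carry $\sqcup$, and the arrows force the upward/downward pattern to be consistent. By \Cref{lem:input-recursion}, $w_n$ is the length-$2^{n-1}$ prefix of $f(y)$. Rules (2) and (3) transmit tape contents and head signals across obstructed rows and columns unchanged. Hence consecutive free columns encode consecutive configurations of $M$ on the finite tape window, with time running left to right.

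By induction on the column index, the $t$-th free column must encode exactly the configuration of $M$ after $t$ steps. This holds as long as the head stays within the window and the tape cells above the written input agree with the initial tape; they do, because the window contains the whole relevant prefix. At step $T$ the head is in some pair $(q_i,s)\notin D$. The next free column then has no allowed diagram in rule (5) for an edge labelled $q_is$, which contradicts $(\varphi,x)\in\mathfrak X$.

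The main obstacle is making the simulation claim rigorous. We must check that the grid has the right shape: the head never leaves the free rows between the top and bottom of the border (the finite window above and the blank region below the center), and the transmission through obstructed crossings does not alter signals. We also need the existence of a $4^n$-border with sufficient room in every configuration. For the head-range issue, the plan is to choose $n$ so that the number of free rows on each side of the center exceeds $T$, since the head moves one cell per step.
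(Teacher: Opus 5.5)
Your argument follows the paper's proof. You assume $y\notin P$, so $M$ halts on $f(y)$ within $T$ steps. You take a red $4^n$-border with a large grid and use that $w_n$ is the length-$2^{n-1}$ prefix of $f(y)$. You then conclude that the encoded computation must reach a pair outside $D$, for which rule (5) has no diagram. You are more careful than the paper on two points: you give the head room on both sides of the center row, and you get the existence of a $4^n$-border from the hierarchical structure of $\mathfrak X_0$ (use that justification, not ``any vertex lies in a large block''). However, the step you flag as the main obstacle is a genuine gap. That step is the induction ``the $t$-th free column encodes exactly the configuration of $M$ after $t$ steps''. The paper's one-line justification has the same hole, and it is not a question of grid shape or head range.

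The vertical labels in layer 5 are bare states $q_i$, so a vertical edge does not record which endpoint sent the head. In rule (5), take the diagram where the head enters from below ($q_i$ on the bottom edge). Place it directly above the diagram where the head enters from above ($q_i$ on the top edge). The two agree on the shared edge, so two heads appear from nothing. Dually, a transition moving up into $q_j$ directly below a transition moving down into $q_j$ is admissible, and two heads vanish. Such phantom heads can rewrite cells before the genuine head reads them. For example, if $\delta(q,1)=(q',0,u)$ and $\delta(q,0)=(q',0,d)$, a pair created on cells holding $1,0$ annihilates one column later, having turned the $1$ into a $0$. They can also annihilate the genuine head. Adding unreachable states with such transitions does not change the halting set of $M$. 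So for a suitable admissible $M$ one can build configurations of $\mathfrak X$ with $\widehat\phi(\varphi,x)\notin P$.

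Separately, the border vertices at the ends of a free column have a single free edge, and no layer-5 rule constrains them. Heads can therefore enter through the top and bottom free rows at any column.

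To make your induction true you must strengthen layer 5:
\begin{enumerate}
\item Label vertical edges by $Q\times\{\uparrow,\downarrow\}$. Then every nonblank vertical edge has exactly one emitting and one receiving endpoint. This rules out creation and annihilation in the interior and makes collisions inconsistent.
\item Neutralize heads entering from the border, either by a rule at those vertices or by a light-cone argument. The light-cone argument requires $2^{n-1}>2T$, which is stronger than your ``more than $T$ free rows on each side''. Under it, no foreign head reaches a cell in $[-T,T]$ before column $T$. The genuine head's first $T$ steps are then exactly those of $M$ on $f(y)$.
\end{enumerate}
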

\begin{proof}

Let $(\varphi,x)\in\mathfrak{X}$, and let $\widehat{\phi}(\varphi,x)=y\in\{0,1\}^{\NN}$. We argue that $y\in P$. Suppose for a contradiction that this is not the case. Since $y\in P$ if and only if $f(y)\in f(P)$, it follows that $f(y)\not\in f(P)$. By our choice of $M$, it must halt on input $f(y)$. Let $t\in\NN$ be large enough so that $M$ halts on input $f(y)$ on less than $t$ steps. In order to do this, $M$ only needs the first $t$ symbols of its input tape. Pick $n\in\NN$ large enough so that the grid of free columns and rows in the upper half of a red $4^n$-border has size at least $t\times t$. In the definition of Layer 5, the only allowed transitions are those that do not reach a halting state (see the condition on the domain of $\delta$). As a consequence, there is no valid pattern with support a red $4^n$-border whose initial bits are a prefix of $f(y)$, hence $(\varphi,x)\notin \mathfrak{X}$, yielding a contradiction. \end{proof}

\begin{lemma}\label{lem:P-and-X}
    There is a computable map that, on input $y \in P$, computes $(\varphi,x)\in \mathfrak{X}$ such that $\widehat{\phi}(\varphi,x)=y$.
\end{lemma}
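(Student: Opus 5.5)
We start from the computable configuration $(\varphi_0,x_0)\in\mathfrak X_0$ given by \Cref{thm:X_1-sofic}, with image $(\varphi_0,x_1)=\Phi(\varphi_0,x_0)\in\mathfrak X_1$. The model $\varphi_0$ and the first two layers do not depend on $y$, so they are fixed and computable. Layer 2 is determined by layer 1: a $2^n$-border is colored red exactly when $n$ is even. We then build layers 3, 4 and 5 by a uniform algorithm that uses $y$ as an oracle. Throughout, we use that $\mathcal H_3$ has decidable word problem (\Cref{lem:WP_decidable_Hk}), so we can work with normal forms and compute local neighbourhoods of any vertex.

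\textbf{Layer 3.} The technical condition in \Cref{thm:X_1-sofic} says that every $w\in\supp(\varphi_0)$ lies inside the $4^n$-border of some $2^{2n+1}$-block of $x_1$. Given $w$, we search for such an $n$ and such a block, which is possible because $x_1$ is computable. Inside this block the entire hierarchy is explicit, so we can decide which red $4^k$-border, $k\le n$, owns the row and the column through $w$, or whether they are owned by none. We set the row and column bits to $y(k)$ when the line lies along a red $4^k$-border.

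Every row or column carries a single bit, but a free line meets no red border inside any finite block. For such lines we still have to fix a canonical value. The natural choice is the bit of the first red border the line meets in the global configuration. The catch is that such a border may lie outside the current block, or may not exist at all. So we plan to justify one of two things: either we assign the constant $0$ to every line that never meets a red border, or we show that by enlarging $n$ (the blocks are nested) the owning border is eventually found. By \Cref{prop:border-bit-constancy} and the proof of \Cref{prop:phi-surjective}, the resulting assignment satisfies the layer-3 rules, and $\phi$ of the result is $y$.

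\textbf{Layer 4.} The obstruction signals are determined by layer 2. Inside the enclosing block we compute whether each incident edge is a border edge ($\times$), free ($\checkmark$), or carries an arrow, and if so which direction it points toward the nearest red border. Lines that extend to infinity without meeting a border get an arrow chosen by a fixed convention, for example $\blacktriangleright$ horizontally and $\blacktriangledown$ vertically. These lines need a local check at vertices of degree 3, where the bottom edge must not be free.

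\textbf{Layer 5.} Inside each red $4^n$-border we write the computation of $M$ on the input word $w_n$. By \Cref{lem:input-recursion}, $w_n$ is a prefix of $f(y)$ and is computable from $y(1),\dots,y(n)$. Since $y\in P$, we have $f(y)\in f(P)$, so $M$ never halts on $f(y)$. The finite simulation for as many steps as there are free columns therefore never reaches a state outside $D$. The tape cells beyond $w_n$ are read only from the tape cells represented inside the border. The machine may read past $w_n$, but the local rules only fix the tape from the layer-3 bits, which agree with $f(y)$, so this is consistent. All other edges are blank.

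The main obstacle is in Layer 3. We must check that lines not bounded by any red border inside the chosen finite block can be assigned bits consistently and computably. We must also check that lines meeting no red border at all are handled by a uniform convention that respects the corner rule. We expect this to follow from the structure guaranteed by the technical condition together with \Cref{borders-are-nicely-aligned}, which tells us which border each line reaches.
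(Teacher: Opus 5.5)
Your overall route is the paper's: fix the computable $(\varphi_0,x_0)$ from \Cref{thm:X_1-sofic}, colour layer 2 from layer 1, write the bits of $y$ on the red borders, fill layer 4 deterministically, and run $M$ on $w_n$ in layer 5, using $y\in P$ to rule out halting. However, the proof is not finished. The Layer 3 step you flag as the main obstacle is left as two unproved alternatives, and it is the only non-routine point.

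First, ``the first red border the line \emph{meets}'' is the wrong notion. Crossing a side of a red border imposes nothing on the crossing line; only the corner rule links bits. So a line's bit is forced exactly when the line runs \emph{along} a side of some red $4^k$-border, and then it is $y(k)$. Every other line is unconstrained and may get $0$, which is the paper's default. What you need is a procedure that decides from finite data whether the line through $w$ ever runs along a red side, and at which level. For rows this is easy, and it justifies the paper's rule ``assign $0$ to any row still undefined inside a $4^n$-border after stage $n$''. A row carrying a horizontal side of a $4^k$-border sits at a fixed relative height in a strip of $2^{2k+1}$-blocks, on a boundary row of $2^{2k-1}$-blocks. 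Hence a row meeting the interior of a $4^n$-border can carry only sides of $4^k$-borders with $k<n$, and those already occur inside that border.

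For columns your worry is justified, and the analogous statement is false. So neither option (a) applied stage by stage (this is also the paper's stage-$n$ default) nor ``enlarge $n$ until the owner appears in the block'' works as stated. A column is a downward ray. The column through an upper corner of the $4^k$-border of a $2^{2k+1}$-block $B$ continues upward out of $B$ into the block $B^+$ above it, and there it can cross a red $4^{k-1}$-border from its bottom side to its top side. For instance, take $k=2$ and let $B$ be the block of index $3827$ (counting from $0$) among the $3^{32}$ blocks under $B^+$. The column rises $v_3(4\cdot 3827+1)=7$ levels into $B^+$. On the way it crosses the interior of the $4$-border of the third lower $8$-block of the leftmost lower $16$-block of $B^+$. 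That column is a free interior column of this $4$-border, yet it must carry $y(2)$.

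Column bits therefore have to be found by following the column downward. One way: since $3^{2^m}\equiv 1 \pmod{2^m}$, a column enters every $2^m$-block it traverses from the top at the same offset modulo $2^m$. These offsets are compatible in $m$ and define a $2$-adic integer $\rho$. The column runs along red $4^k$-sides exactly when $v_2(\rho)=2k-1$, and is unconstrained otherwise. Computing $\rho \bmod 2^m$ for increasing $m$ terminates whenever $\rho\neq 0$. You must still handle or exclude columns with $\rho=0$ to get a total algorithm. Until this is done, layer 3, and hence the map $y\mapsto(\varphi,x)$, is not shown to be well defined and computable.

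A smaller point on Layer 4: the arrow on a segment is forced by an endpoint that is an inside edge, where it must point into that border, not by the nearest red border. Also, by the technical condition there are no unbounded segments, so your convention for them is unnecessary.
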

\begin{proof}
Let $y \in P$. Let $(\varphi_0,x_0)\in \mathfrak X_0$ and $(\varphi_0,x_1)\in \mathfrak X_1$ be computable configurations as in \Cref{thm:X_1-sofic}. Thus every vertex eventually lies within a $4^n$-border in $x_1$. 

From $(\varphi_0,x_1)$ we compute $(\varphi_0,x_2)\in \mathfrak{X}_2$ by assigning color red to the $4^n$-borders and green to the rest ($2^k$-borders with $k\geq 1$ odd). Using as input $y \in P$ we construct $(\varphi_0,x_3)\in \mathfrak{X}_3$ by assigning symbol $y(n)$ to the $4^n$-borders iteratively on $n$, and extending the symbols horizontally and vertically according to the local rules of layer 3. If within some $4^n$-border some row or column remains undefined at some step of this procedure, we assign it the symbol $0$. This procedure is computable from $y$ and thus it provides a computable configuration $(\varphi_0,x_3)\in \mathfrak{X}_3$. 

Next, we add the obstruction signals corresponding to layer 4. By the assumption that every vertex is eventually within some $4^n$-border, we just need to describe how to algorithmically assign signals to $4^n$-borders for fixed $n\geq 1$. The first step is to assign symbols to any $4^k$-border for $k <n$ that lies within the $4^n$-border and has still not been filled. Next we assign all symbols that have only one possibility ($\times$, the free symbol and arrows that connect the border to some smaller border or a vertex of degree 3). Finally, the remaining arrows are assigned by giving priority to left arrows over right arrows and down arrows over upward arrows. Through this procedure we have computed from $y \in P$ a configuration $(\varphi_0,x_4)\in \mathfrak{X}_4$. 

Finally, we compute $(\varphi_0,x_5)\in \mathfrak{X}_5$ as follows. Note that as $y \in P$, we have $f(y)\in f(P)$, and thus the machine $M$ does not halt on input any prefix of $f(y)$ (meaning, the states stay in $D$). Hence for any such prefix the machine $M$ produces arbitrarily long space-time diagrams, which we use to fill in (deterministically) the symbols in layer 5 within every $4^n$-border that contains the origin. This procedure is clearly computable.

Setting $(\varphi,x)=(\varphi_0,x_5)\in \mathfrak{X}$ and noting that by construction $\widehat{\phi}(\varphi,x)=y$ finishes the proof.
\end{proof}

\begin{corollary}
    The $\mathcal{H}_3$-SFT $\mathfrak{X}$ has the same Medvedev degree as $P$. 
\end{corollary}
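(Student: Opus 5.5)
The plan is to prove the two Medvedev reductions $m(\mathfrak{X})\preceq_{\mathfrak{m}} m(P)$ and $m(P)\preceq_{\mathfrak{m}} m(\mathfrak{X})$ separately. Each will follow directly from one of the two preceding lemmas together with the computability of $\widehat{\phi}$. Throughout, configurations $(\varphi,x)\in\mathfrak{X}$ are identified with elements of $\NN^{\NN}$ via the shortlex identification of $S_3^*$ with $\NN$ fixed in the definition of Medvedev degrees of blueprint subshifts.

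\emph{First reduction: $P\preceq_{\mathfrak{m}}\mathfrak{X}$.} I will show that $\widehat{\phi}$ is a computable map from $\mathfrak{X}$ to $\{0,1\}^{\NN}$; by \Cref{lem:X-and-P} its image lies in $P$. Computability needs a short argument, because $\widehat{\phi}=\phi\circ\pi_3$ and $\phi$ was asserted to be computable on $\mathfrak{X}_3$. Given an oracle for $(\varphi,x)$ and an input $n$, the algorithm enumerates words $w\in\supp(\varphi)$ in shortlex order. For each $w$ it reads the layer-1 and layer-2 symbols there, and it searches for a vertex carrying a red corner cross of a $4^n$-border. Recognizing that a red corner belongs to a border of level exactly $n$ is a local check on a finite neighbourhood: it suffices to read the red rectangle around the cross, whose dimensions are computable from $n$ by \Cref{prop:border-decomposition}. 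Once such a vertex is found, the algorithm outputs the layer-3 bit there. The search terminates because every configuration of $\mathfrak{X}_1$ contains $4^n$-borders for every $n$: every pattern lies in some $2^m$-block, and arbitrarily large blocks appear by compactness. The output is correct by \Cref{prop:border-bit-constancy}.

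\emph{Second reduction: $\mathfrak{X}\preceq_{\mathfrak{m}}P$.} This is exactly \Cref{lem:P-and-X}, which gives a computable map sending each $y\in P$ to some element of $\mathfrak{X}$. Combining the two reductions gives Medvedev equivalence. Since $\mathfrak{X}$ is an $\mathcal{H}_3$-SFT, its degree is $\Pi_1^0$. Because $P$ was an arbitrary $\Pi_1^0$ subset of $\{0,1\}^{\NN}$, this realizes every $\Pi_1^0$ degree, which is \Cref{mainthm:grafo_medved}.

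The main obstacle I expect is the uniformity hidden in \Cref{lem:P-and-X}: the map must be computed by a single oracle machine that works for every $y\in P$. I would check this by noting that all the ingredients are uniform in $y$. The configuration $(\varphi_0,x_0)$ from \Cref{thm:X_1-sofic} is fixed and computable. The layer-3 bits at level $n$ depend only on $y(n)$. Layers 4 and 5 are filled deterministically by simulating $M$ on the prefixes of $f(y)$; no run of this simulation halts because $y\in P$, but the construction never needs to know that in advance. A secondary point is the locality claim used in the first reduction. If recognizing the level of a red border turns out to be awkward, I would instead read the level off the $\mathfrak{X}_0$-layer, whose appendix decorations can record it, or simply search for a $2^{2n+1}$-block pattern, which is decidable by comparing against the computable list of $2^m$-blocks.
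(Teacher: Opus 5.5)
Your proposal is correct and follows the paper's proof: computability of $\widehat{\phi}$ together with \Cref{lem:X-and-P} gives $P\preceq_{\mathfrak{m}}\mathfrak{X}$, and \Cref{lem:P-and-X} gives $\mathfrak{X}\preceq_{\mathfrak{m}}P$. The paper takes computability of $\widehat{\phi}$ for granted; in your own justification of it, the fact that every configuration contains $4^n$-borders for every $n$ should be credited to \Cref{thm:hierarchical-structure-X_0} (applied to layer 0), not to a compactness argument.
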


\begin{proof}
    By~\Cref{lem:X-and-P} we have $\widehat{\phi}(\mathfrak{X})\subset P$. As $\widehat{\phi}$ is a computable map, it follows that $P\preceq_{\mathfrak M} \mathfrak{X}$. Conversely, by~\Cref{lem:P-and-X} there is a computable map which takes $y\in P$ and computes $(\varphi,x)\in \mathfrak{X}$. In particular $\mathfrak{X} \preceq_{\mathfrak M} P$.
\end{proof}

\section{A hyperbolic analogue of Robinson's tiling, and proof of \texorpdfstring{\Cref{thm:X_1-sofic}}{Theorem 4.6}}\label{sec:appendix}%
In this section we define and study the SFT $\mathfrak X_0$, and prove \Cref{thm:X_1-sofic} involving $\mathfrak X_0$ and $\mathfrak X_1$. The SFT $\mathfrak X_0$ can be seen as a hyperbolic version of Robinson's tiling of the plane \cite{robinson_undecidability_1971} for $\mathcal H_3$.  

As mentioned in the introduction, finding such an adaptation was an open problem for some time (see \cite{Robinson_undecidable_hyperbolic}, \cite{Margenstern_2008_domino_hyp}, and \cite{goodman-strauss_hierarchical_2010}). The SFT we present is based on Goodman-Strauss's construction \cite{goodman-strauss_hierarchical_2010}. Besides the blueprint formalism, a difference in our approach is that we place the symbols of the alphabet on vertices of the graph instead of its faces. These two approaches are in a sense equivalent, but working with vertices creates somewhat simpler hierarchical decompositions\footnote{In contrast to \cite[Figure 6]{goodman-strauss_hierarchical_2010}, there are no Ziggurats or stair-like borders.}. This allows us to drop the requirement $k\equiv 1\mod 4$ from \cite{goodman-strauss_hierarchical_2010}, and the construction we present can be easily adapted to $\mathcal H_k$ for all odd $k\geq 3$. 

We will define $\mathfrak X_0$ in \Cref{subsec:definition-X-0}. We remark that it can be seen as a decorated version of the simpler object $\mathfrak X_1$, defined in \Cref{sec:sofic}. We will continue to use some terminology introduced in \Cref{sec:sofic}. In order to facilitate the understanding of the decorations we use, we will first explain them in the Euclidean case in \Cref{subsec:explanation-robinson}, using Robinson's SFT as a toy case. In fact, let us stress that the modifications we need to do are rather elementary. 

\begin{remark}\label{rem:standing-convention}
We will continue to use the following convention from \Cref{sec:sofic}. When we place symbols on vertices of a model graph, these symbols are ordered tuples of decorations for the edges incident to a vertex. The following local rule is always included: it is forbidden that two neighboring vertices assign different decorations to the same edge. This convention allows us to speak of decorations for edges, if convenient. We illustrate this in \Cref{fig:matching-rules}.
\end{remark}

\begin{figure}[h]
    \centering
    \includegraphics[page=1,width=0.3\textwidth]{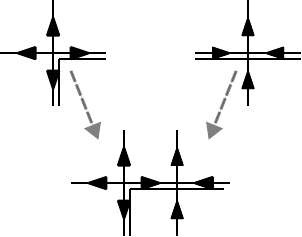}
    \caption{In the first row we have two alphabet symbols (coming from \Cref{fig:robinson-presentation}). In the second row, we have two neighboring vertices where we place these symbols, obeying our standing local rule that symbols on neighboring vertices must assign the same decoration to a common edge. }
    \label{fig:matching-rules}
\end{figure}

\subsection{The subshift $\mathcal R$ and some decorated versions}\label{subsec:explanation-robinson} 
We now do a short overview of Robinson's tiling of the plane, some relevant properties, and some elementary variations of it. Our goal is to highlight what properties we want to translate from the Euclidean case to $\mathcal H_3$, and what kind of extra decorations we use for this.

The objects in this subsection can be interpreted with the usual meaning of an SFT on the group $\ZZ^2$, or on the blueprint $\mathcal{H}_1$, whose only model gives rise to the Cayley graph of $\ZZ^2$ with the generating set $\{(0,1),(1,0)\}$. See \Cref{rem:H-1}. 
\subsubsection{Definition of $\mathcal R$} 
The alphabet of $\mathcal R$, consisting of elements called \define{crosses} and \define{arms}, is defined in \Cref{fig:robinson-presentation}. Then $\mathcal R$ is the set of all configurations satisfying the following constraints: 
\begin{enumerate}
    \item Neighboring vertices assign the same decoration to a common edge (\Cref{fig:matching-rules}).
    \item There exists an infinite $2\times 2$ grid such that every vertex in this set carries a cross. By an infinite $2\times 2$ grid we mean a translate of $(2\ZZ)^2$. 
\end{enumerate}
\begin{remark}
The second condition can be enforced with forbidden patterns of size $8\times 8$. Indeed, there are exactly four ways to place a $2\times 2$ grid in a support of size $8\times 8$. The key observation is that it is not possible to decorate this $8\times 8$ support with the symbols from \Cref{fig:robinson-presentation}, respecting the constraint that neighboring vertices assign the same decoration to a common edge, and with at least two of the $2\times 2$ sub-grids of the support having crosses. 

Let us recall that in Robinson's original construction \cite{robinson_undecidability_1971}, the property that crosses appear in an infinite $2\times 2$ grid is obtained by adding parity markings or bumpy corners in a different step, but this is done to guarantee the nearest neighbor property.
\end{remark}

\begin{figure}[h]
    \centering
    \includegraphics[page=1,width=0.6\textwidth]{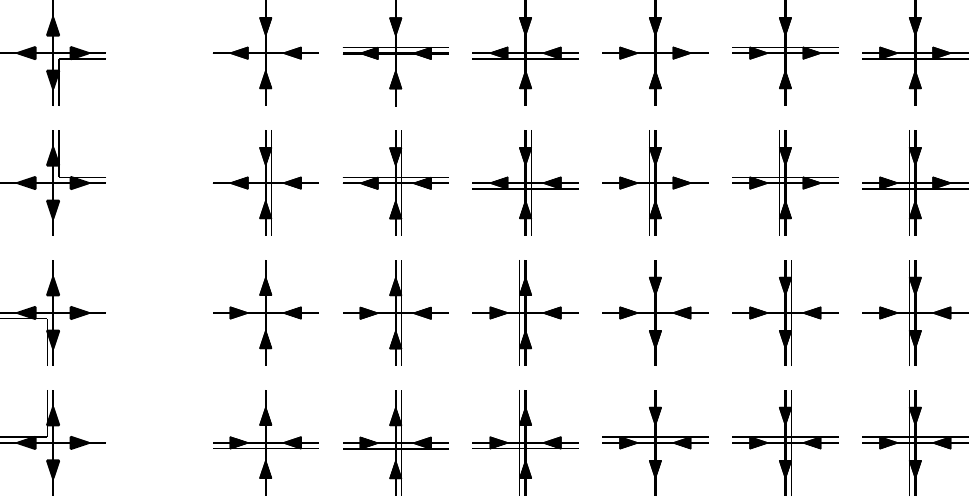}
    \caption{Alphabet of $\mathcal R$. Those on the left are called \define{crosses}, and those on the right are called \define{arms}. The first two rows of arms are \define{horizontal arms}, and the next two rows are \define{vertical arms}. It is convenient to think that a horizontal (resp. vertical) arm transmits arrows in the horizontal (resp. vertical) direction. We say that a cross \textbf{faces} in the directions indicated by its double arrows. For instance, the first cross faces right and down.}
    \label{fig:robinson-presentation}
\end{figure}
We remark that our pictures for double-arrows are slightly different from those in \cite[Figure 2]{robinson_undecidability_1971}, but this difference is just cosmetic (after identifying the vertices and faces of the Cayley graph of $\ZZ^2$ with respect to $\{(0,1),(1,0)\}$, which is possible because it is self-dual as a planar graph). 
\begin{figure}[h]
    \centering
\includegraphics[page=1,width=1\textwidth]{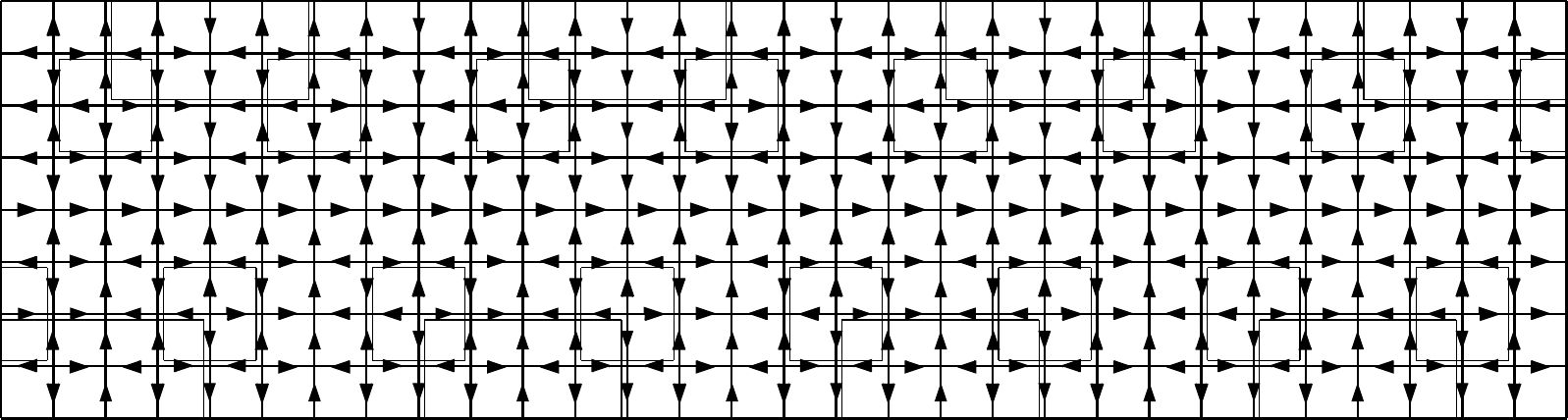}
    \caption{Finite portion of a configuration in $\mathcal R$.}\label{fig:appendix-robinson-0-example}
\end{figure}
\subsubsection{The hierarchical structure in $\mathcal R$} 
The hierarchical structure of $\mathcal R$ is given by patterns which behave like larger and larger copies of crosses (called $(2^n-1)$-squares in \cite{robinson_undecidability_1971}), which we call \define{blocks}. A $2$-block is just a cross. Inductively, a $2^{n+1}$-block is obtained by putting together four $2^n$-blocks so that their central crosses face each other and form the corners of a square. In the center of the $2^{n+1}$-block we place a cross, and arrows born in this cross propagate through the boundaries of the four $2^n$-blocks, via arms. Thus the support of a $2^n$-block is a square grid of vertices with side $2^n-1$. We say that a block \define{faces} in the direction indicated by its central cross (see \Cref{fig:robinson-presentation}).
\begin{figure}[h]
    \centering
\includegraphics[page=1,width=0.5\textwidth]{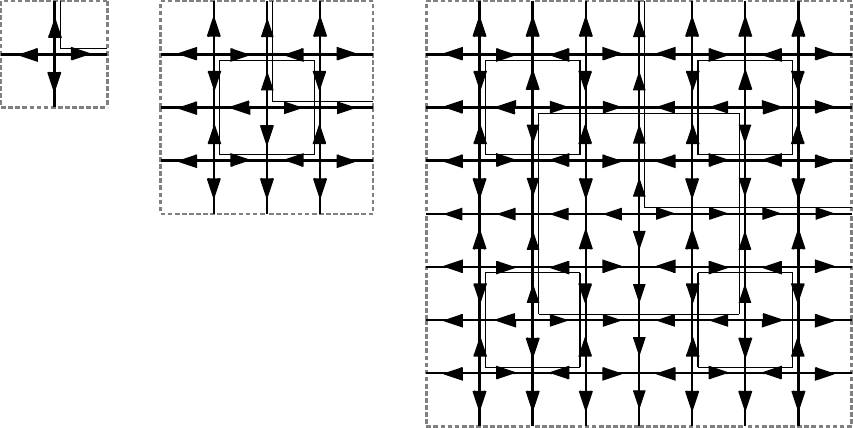}
    \caption{A $2$-block, a $4$-block (formed by four $2$-blocks), and an $8$-block (formed by four $4$-blocks) in $\mathcal R$. They face up and right, and edges in their boundaries are drawn with gray dashed lines. }
    \label{fig:robinson-0-blocks}
\end{figure}
It can be proved by induction on $n$ that every configuration in $\mathcal R$ can be partitioned into an infinite arrangement of $2^n$-blocks (up to the boundaries of these $2^n$-blocks), for all $n\geq 1$. See \cite[\S 3]{robinson_undecidability_1971}. 
\subsubsection{The alignment property in $\mathcal R$}\label{subsec:euclidean-alignment} There is a crucial asymmetry between $2$-blocks and larger blocks in $\mathcal R$. That is, $2$-blocks are always well-aligned, while this is not necessarily true for $2^n$-blocks, $n>1$. We will now explain what we mean by alignment (see also \cite[p. 189]{robinson_undecidability_1971} and \cite[p. 31]{gayral_complexite_2023}). 
\begin{figure}[h]
    \centering
    \includegraphics[page=1,width=1\textwidth]{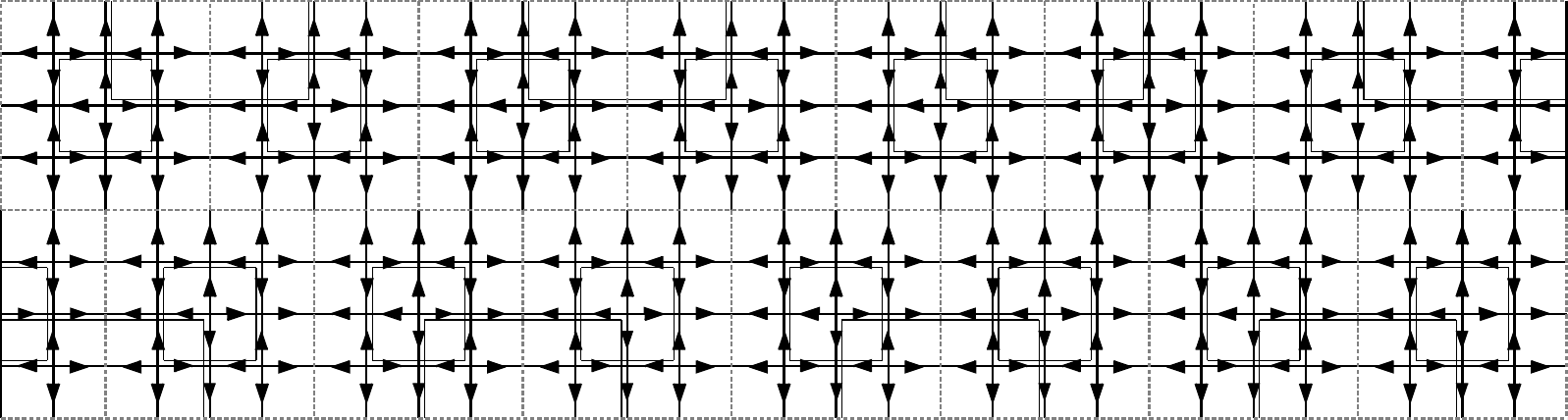}
    \caption{This is the pattern from \Cref{fig:appendix-robinson-0-example}, where we have removed the decorations on boundaries of $4$-blocks, and replaced them by gray dashed lines. We can see that the $4$-blocks are not vertically aligned. Decomposing them into $2$-blocks, we see that this is because $2$-blocks facing right (resp. left) sit above $2$-blocks facing left (resp. right). The infinite horizontal line separating these $4$-blocks is called a ``fault line'' in \cite{robinson_undecidability_1971} (the horizontal gray line in the middle).}
\label{fig:robinson-0-alignment}
\end{figure}

We say that $2^n$-blocks in a configuration of $\mathcal R$ are \define{vertically well-aligned} when every $2^n$-block has exactly one lower vertical neighbor. We illustrate the failure of vertical alignment for $4$-blocks in \Cref{fig:robinson-0-alignment}. This illustration for $4$-blocks is in fact representative of what happens for $2^{n+1}$-blocks for all $n$, and we have the following characterization.
\begin{proposition}\label{prop:euclidean-alignment-characterization}
For a configuration in $\mathcal R$ and $n\geq 1$, the following two properties are equivalent.
\begin{enumerate}
\item $2^n$-blocks are vertically aligned, but $2^{n+1}$-blocks are not.
\item There exists a $2^n$-block facing right (resp. left), sitting above a $2^n$-block facing left (resp. right). 
\end{enumerate}
\end{proposition}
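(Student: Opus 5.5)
We work inside a single configuration of $\mathcal R$ and use the known fact (cited above from \cite[\S 3]{robinson_undecidability_1971}) that for every $m$ the configuration is partitioned, up to boundaries, into $2^m$-blocks. The first step is to describe exactly how a $2^n$-block sits relative to its neighbours in the grid of $2^n$-blocks. Since $2^{n+1}$-blocks are built from four $2^n$-blocks whose central crosses face each other, the facing direction of a $2^n$-block determines its position inside the $2^{n+1}$-block containing it. A $2^n$-block facing right occupies a left slot (upper-left or lower-left), and a $2^n$-block facing left occupies a right slot. Likewise, facing up or down determines whether it sits in a lower or an upper slot. This dictionary, read off directly from the inductive definition of blocks, is the main tool.

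For (2)$\Rightarrow$(1), suppose a $2^n$-block $B$ facing right sits directly above a $2^n$-block $B'$ facing left; the other case is symmetric. The hypothesis that they sit one above the other already gives that $2^n$-blocks are vertically aligned, at least locally. To get this globally we would argue by induction: if $2^n$-blocks are not aligned, then $2^n$-blocks cannot sit exactly above one another anywhere along a row. By the dictionary, $B$ is in a left column of its parent $2^{n+1}$-block $P$, while $B'$ is in a right column of its parent $P'$. Hence $P$ and $P'$ are horizontally offset by one $2^n$-block, that is, by half their width. Therefore $P$ does not have exactly one lower vertical neighbour, and $2^{n+1}$-blocks are not vertically aligned.

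For (1)$\Rightarrow$(2), assume $2^n$-blocks are aligned and $2^{n+1}$-blocks are not. Pick a $2^{n+1}$-block $P$ whose lower side meets two $2^{n+1}$-blocks. Since $2^n$-blocks are aligned, each $2^n$-block in the bottom row of $P$ has a unique $2^n$-block directly below it. Because $P$ is not aligned, the parents of these lower blocks are shifted by an odd number of $2^n$-blocks, and hence by exactly one, since parents have width two. So below the lower-left child of $P$, which faces right, lies a block occupying a right slot of its parent, and that block faces left. This gives (2).

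The main obstacle is making the slot/facing dictionary rigorous, and in particular justifying that the only possible misalignment of $2^{n+1}$-blocks above aligned $2^n$-blocks is a shift by exactly one $2^n$-block. I would handle this by induction on $n$, using that a row of $2^n$-blocks is grouped into consecutive pairs by its parents, so any two pairings of the same row differ by either zero or one block.
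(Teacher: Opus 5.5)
Your local analysis is sound: the slot/facing dictionary, the direction (1)$\Rightarrow$(2), and the part of (2)$\Rightarrow$(1) showing that $2^{n+1}$-blocks are misaligned. For that last part, add the one-line check that $B$ lies in the bottom row of $P$ and $B'$ in the top row of $P'$. If $B$ were an upper child of $P$, the block directly below it would be the lower-left child of $P$, which faces right; the case of $B'$ is symmetric. This is what makes $P'$ a lower neighbour of $P$.

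The paper itself only says the statement follows by induction, so the question is what a complete argument needs. The genuine gap is the other half of (2)$\Rightarrow$(1). Condition (1) requires that \emph{every} $2^n$-block of the configuration have exactly one lower neighbour, while (2) only gives you one pair, $B$ over $B'$. The justification you sketch does not close this.
\begin{itemize}
\item Read globally (``if $2^n$-blocks are not aligned, no two of them sit exactly on top of each other anywhere''), it is false. In a configuration with a horizontal fault line across which $4$-blocks are offset, as in \Cref{fig:robinson-0-alignment}, $4$-blocks are not aligned, yet away from the fault line they stack exactly.
\item Read as a statement about the row where a misalignment occurs, it says nothing about the line $\ell$ separating $B$ from $B'$ unless you already know that $\ell$ is that row. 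That is exactly the point at issue.
\end{itemize}

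What is missing is the global fault-line structure of configurations of $\mathcal R$. Concretely, you need two facts.
\begin{itemize}
\item[(F1)] Suppose some $2^k$-block has two lower neighbours, and let $L$ be the horizontal line through its lower side. Then the $2^k$-blocks bordering $L$ form bi-infinite rows of period $2^k$ on both sides of $L$. Hence the offset across $L$ is the same along all of $L$, and no block of any level crosses $L$: blocks of level $\ge k$ would contain aligned $2^k$-blocks on both sides, and smaller blocks lie inside $2^k$-blocks.
\item[(F2)] At most one horizontal line is crossed by no block. Indeed, the strip between two such lines has bounded height, but it must be covered, up to boundaries, by $2^m$-blocks for every $m$.
\end{itemize}
With these the argument closes:
\begin{itemize}
\item $P$ and $P'$ are offset across $\ell$, so (F1) at level $n+1$ shows that $\ell$ is crossed by no block.
\item If some $2^n$-block had two lower neighbours, (F1) and (F2) force this misalignment to occur across $\ell$ itself.
\item By uniformity along $\ell$, $B$ would then also have two lower neighbours, contradicting the fact that $B$ sits exactly over $B'$.
\end{itemize}
Without (F1)--(F2), or an induction that proves them, the global alignment of $2^n$-blocks in (1) is not established.
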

\begin{proof}
    This can be proved by induction, and the argument is left to the interested reader.
\end{proof}
\begin{remark}\label{rem:alignment}
It follows from \Cref{prop:euclidean-alignment-characterization}  that one can enforce vertical alignment of $2^n$-blocks for all $n$ by adding further decorations to arrows in $\mathcal R$, so that blocks with crosses facing left (resp. right) can only sit above blocks with crosses facing left (resp. right). The same applies to horizontal alignment. This method is well-known; see for instance \cite[\S 2.7.2]{gayral_complexite_2023} for a detailed exposition.
\end{remark}
\subsubsection{A first variation of $\mathcal R$, which ensures vertical alignment} 
We now define $\mathcal R_1$, a variation of $\mathcal R$ where we add decorations to the arrows to ensure vertical alignment for blocks, as explained in \Cref{rem:alignment} (see \Cref{prop:euclidean-alignment-characterization}). The alphabet for $\mathcal R_1$ is defined in \Cref{fig:robinson-1-presentation}. Then $\mathcal R_1$ and its blocks are defined as we did for $\mathcal R$. Some blocks for $\mathcal R_1$ are illustrated in \Cref{fig:robinson-1-blocks}.

In $\mathcal R_1$ we decorate the central crosses according to whether they face left or right. These decorations also propagate vertically, ensuring that a $2^n$-block facing left (resp. right) can only be a vertical neighbor of a $2^n$-block facing left (resp. right). This prevents the situation in \Cref{fig:robinson-0-alignment}. It follows from \Cref{prop:euclidean-alignment-characterization}  that in $\mathcal R_1$, $2^n$-blocks are vertically aligned for all $n\geq 1$.  
\begin{figure}[h]
    \centering
    \includegraphics[page=1,width=0.6\textwidth]{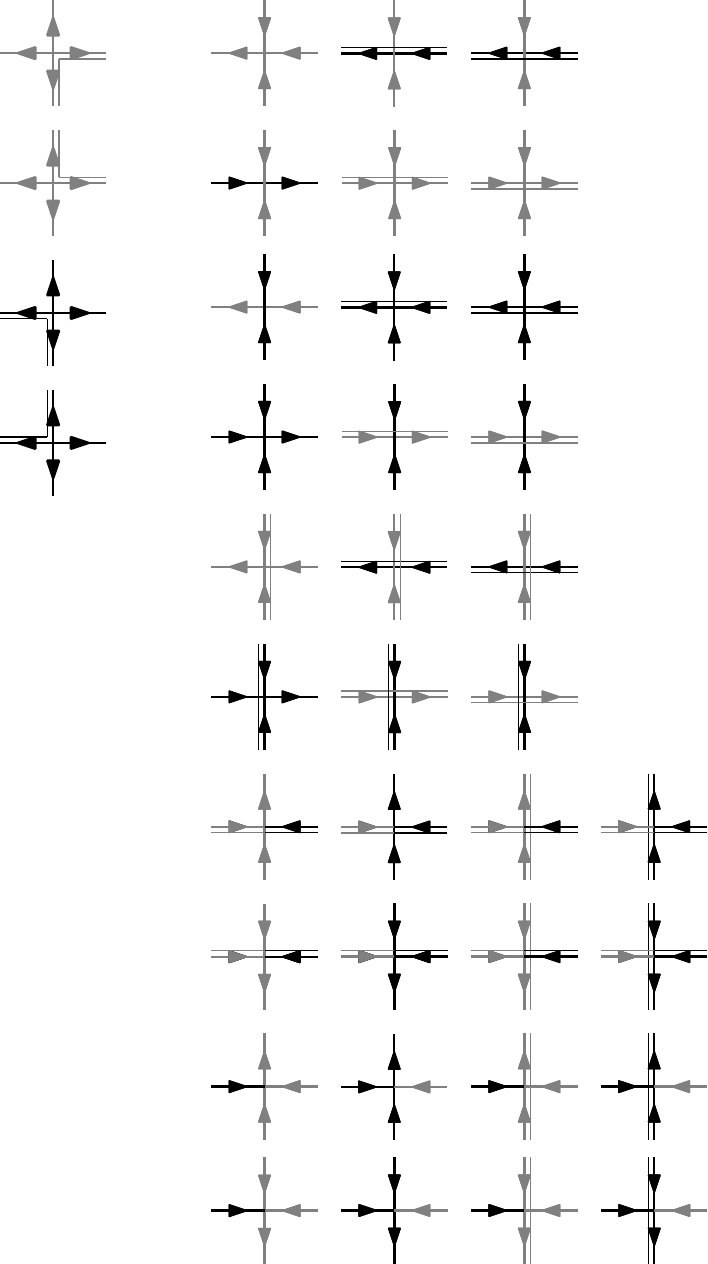}
    \caption{Alphabet of $\mathcal R_1$, obtained by adding extra marks to the alphabet of $\mathcal R$.} 
    \label{fig:robinson-1-presentation}
\end{figure}
\begin{figure}[h]
    \centering
    \includegraphics[page=1,width=0.6\textwidth]{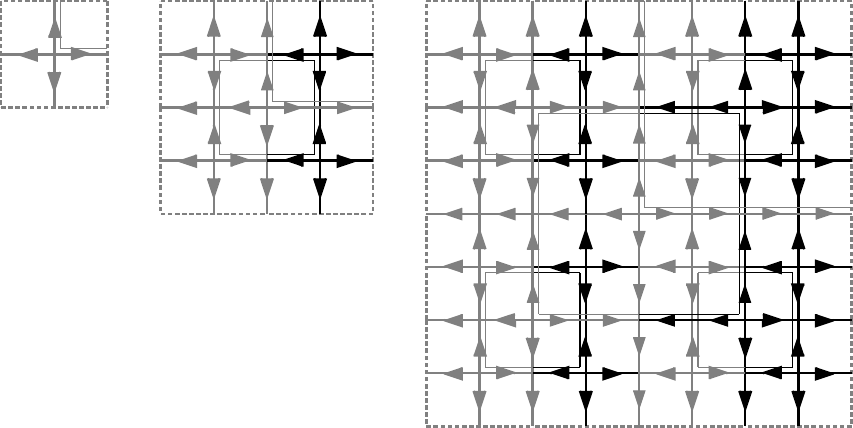}
    \caption{A $2$-block, a $4$-block, and an $8$-block in $\mathcal R_1$. Their boundaries are drawn with gray dashed lines.} 
    \label{fig:robinson-1-blocks}
\end{figure}

\subsubsection{A second variation of $\mathcal R$, which ensures horizontal alignment}
We now define $\mathcal R_2$, a variation of $\mathcal R$ where we decorate the arrows to ensure horizontal alignment. The alphabet for $\mathcal R_2$ is defined in \Cref{fig:robinson-2-presentation}. Then $\mathcal R_2$ and its blocks are defined as we did for $\mathcal R$. Some blocks for $\mathcal R_2$ are illustrated in \Cref{fig:robinson-2-blocks}. 

The extra decorations in $\mathcal R_2$ ensure horizontal alignment for the same reason as in $\mathcal R_1$, replacing ``vertical'' by ``horizontal''. 
\begin{remark}
It might seem arbitrary that we use a different visual mark for $\mathcal R_1$ and $\mathcal R_2$. In fact, we could have also used either decoration for both vertical and horizontal arrows in a single step. However, the symmetry between ``horizontal'' and ``vertical'' that we have in $\ZZ^2$ will be lost in $\mathcal H_3$, and for this reason we prefer different visual marks. 
\end{remark}
\begin{figure}[h]
    \centering
    \includegraphics[page=1,width=0.6\textwidth]{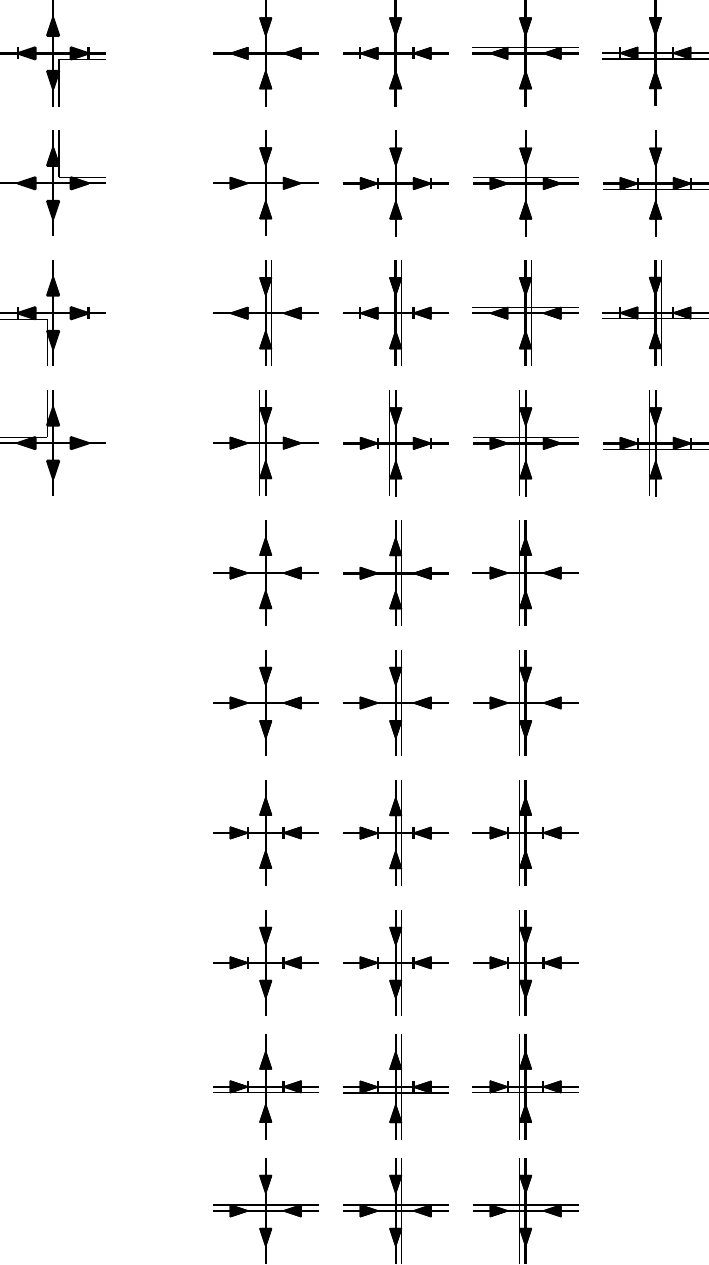}
    \caption{Alphabet of $\mathcal R_2$, obtained by adding extra marks to the alphabet of $\mathcal R$.} 
    \label{fig:robinson-2-presentation}
\end{figure}
\begin{figure}[h]
    \centering
    \includegraphics[page=1,width=0.6\textwidth]{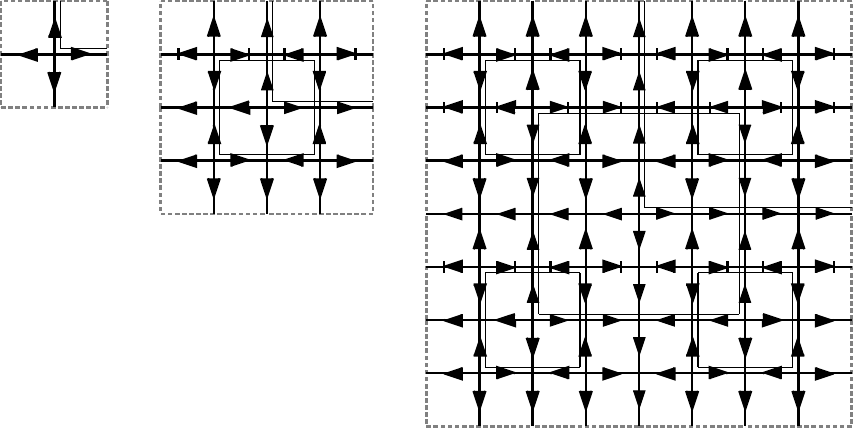}
    \caption{A $2$-block, a $4$-block, and an $8$-block in $\mathcal R_2$. Their boundaries are drawn with gray dashed lines.} 
\label{fig:robinson-2-blocks}
\end{figure}

\subsubsection{Decorating boundaries of blocks in $\mathcal R$}\label{subsubsection:boundaries-property}
Consider a $2^n$-block in $\mathcal R$ which lies above a well-aligned vertical neighbor. Their shared boundary is a horizontal strip of $2^n$ edges and $2^n-1$ vertices. We claim that this strip is decorated in a uniform manner, meaning that all these horizontal edges carry the same kind of arrow. Indeed, first observe that each vertex in this strip receives vertical arrows from above and below, coming from the $2^n$-blocks. This implies that all vertices in this strip carry arms (\Cref{fig:robinson-presentation}). 
Because neighboring vertices must put the same decoration on a shared edge,  these arms will ``transmit'' any possible decoration from any edge to the whole strip of edges. This is illustrated in \Cref{fig:boundaries}.
\begin{remark}
The previous argument does not work naturally in $\mathcal H_3$, as in this case, the bottom side is much longer than the upper side (see \Cref{rem:n-squares-sizes}). 
\end{remark}
\begin{figure}[h]
    \centering
    \includegraphics[page=1,width=0.6\textwidth]{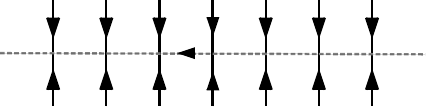}
    \caption{Illustration of a horizontal strip of edges in-between two 8-blocks (see \Cref{fig:robinson-0-blocks}). This horizontal strip of edges is decorated uniformly. To see this, it suffices to note that each vertex receives vertical arrows from above and below, coming from the $8$-blocks, and then it necessarily carries a horizontal arm. Such arms ``transmit'' arrows horizontally.}
    \label{fig:boundaries}
\end{figure}
\subsubsection{A third variation of $\mathcal R$, which ensures that bottom boundaries are decorated uniformly}
We now define $\mathcal R_3$, a further variation of $\mathcal R$. The alphabet for $\mathcal R_3$ is defined in \Cref{fig:robinson-3-presentation}. Then $\mathcal R_3$ and its blocks are defined as we did for $\mathcal R$. Some blocks for $\mathcal R_3$ are illustrated in \Cref{fig:robinson-3-blocks}.

The extra decorations in $\mathcal R_3$ allow us to prove the following, without looking at the lower vertical neighbor of a block (as opposed to the argument in \Cref{subsubsection:boundaries-property}).
\begin{proposition}
Let $n\geq 1$. The bottom horizontal side of the boundary of a $2^n$-block in $\mathcal R_3$ is decorated with arrows in a uniform direction, and  this direction is determined by the central cross of the $2^n$-block. 
\end{proposition}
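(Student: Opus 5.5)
We would prove the statement by induction on $n$, following the inductive construction of $2^n$-blocks in $\mathcal R_3$ (four $2^{n-1}$-blocks around a central cross, with arrows born at the center propagating along the boundaries of the four sub-blocks via arms). The induction hypothesis will be slightly stronger than the statement: for every $2^n$-block, the bottom side of its boundary carries arrows in a single direction, this direction being the horizontal direction in which the central cross faces, and the extra $\mathcal R_3$ mark (the decoration added in \Cref{fig:robinson-3-presentation}) appears on every edge of the bottom side. Keeping track of this mark is what lets the argument avoid looking at the lower vertical neighbor, unlike the argument of \Cref{subsubsection:boundaries-property}.

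For the base case $n=1$, a $2$-block is a single cross. Its bottom boundary consists of the edges incident to it that leave it downward and sideways, and the alphabet of $\mathcal R_3$ prescribes the marks of these edges directly from the cross type. So this reduces to a finite inspection of \Cref{fig:robinson-3-presentation}.

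For the inductive step, take a $2^{n+1}$-block $B$ and decompose it into its four $2^n$-blocks. The two lower sub-blocks face each other horizontally, since their central crosses form the corners of a square. Their bottom sides together with the middle vertex below the central cross of $B$ form the bottom side of $B$. By the induction hypothesis, the left lower sub-block's bottom side carries arrows pointing right toward the center, and the right one's carries arrows pointing left. We need these to be overwritten: the arms on those bottom edges must also carry the arrow coming from the central cross of $B$ along its vertical arm down to the bottom and then horizontally. We would check, using the matching rule (\Cref{rem:standing-convention}) and the list of arms, that a vertex on a bottom edge receiving the $\mathcal R_3$-marked vertical signal from the central strip forces a horizontal arm whose outer arrow direction equals the facing direction of the central cross of $B$. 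Arms transmit decorations horizontally, so this propagates uniformly along the whole bottom strip of $B$.

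The main obstacle will be this step: verifying against the specific $\mathcal R_3$ alphabet that the mark really propagates along the entire bottom strip, including across the junction vertices at the lower corners of sub-blocks, where crosses rather than arms sit. There one must use that the relevant crosses face downward and hence emit compatible marks. We would first try to settle it by a short case check over the four orientations of the central cross, exploiting left/right symmetry to reduce to two cases.
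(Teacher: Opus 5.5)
There is a genuine gap, and it lies in the strengthened induction hypothesis itself. The claim that the bottom side of a $2^n$-block carries arrows in ``the horizontal direction in which the central cross faces'' is false, and your inductive step already shows why. The lower-left and lower-right $2^n$-blocks of a $2^{n+1}$-block $B$ face horizontally towards each other. The edges of the bottom side of $B$ are \emph{exactly} the $2^n+2^n$ edges of their two bottom sides: the two edges at the middle vertex are the last edge of one and the first edge of the other. A configuration assigns a single decoration to each edge, so nothing can be ``overwritten''. Your hypothesis at level $n$ therefore puts opposite arrows on the two halves of the bottom side of $B$, which contradicts the same hypothesis at level $n+1$.

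It fails for down-facing blocks too. If $B$ is the upper-left quarter of a $2^{n+2}$-block, its bottom side lies on that block's central row and carries the leftward arrow of its central cross, even though $B$ faces right. So the bottom direction is inherited from higher levels and is not a function of the facing direction. This is precisely why $\mathcal R_3$ adds the dot decoration: an extra bit on the cross that is independent of which way it faces.

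Your base case is also misidentified. The bottom side of a $2$-block is the vertex directly below the cross together with that vertex's two horizontal edges. None of these edges is incident to the cross, so you have to go through the arm at that vertex.

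The invariant to carry is that the bottom direction is a function of the dot decoration of the central cross. The real content of the inductive step is that the two lower $2^n$-blocks of $B$ carry the \emph{same} dot. This comes from a specific local rule, not from propagation along the bottom strip. The arrow emitted downwards by the central cross of $B$ passes through an arm as in the seventh row of \Cref{fig:robinson-3-presentation}, at the point where it meets the double arrows that the two lower crosses emit towards each other. That arm ties both dots to the decoration of this downward arrow. The induction hypothesis then gives one common direction on both halves, determined by the central cross of $B$, without ever looking at the block below.

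Your propagation step (``arms transmit decorations horizontally along the bottom strip'') presupposes that every vertex of the strip is a horizontal arm. In \Cref{subsubsection:boundaries-property} that fact is deduced from arrows entering each vertex from both above and below, that is, from the lower neighbor. That is exactly the dependence $\mathcal R_3$ is designed to remove.
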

\begin{proof}
The proof is by induction on $n$, and we only sketch the induction step. We decompose a $2^{n+1}$-block into $2^n$-blocks, and focus on the $2^n$-blocks in the lower half. Local rules ensure that these $2^n$-blocks have the same ``dot'' decoration, which in turn determines the direction of the arrows in their bottom boundaries. Finally, observe that this dot decoration is determined by the kind of central cross in the $2^{n+1}$-block (the arrow it emits downwards eventually goes through an arm as in the seventh row in \Cref{fig:robinson-3-presentation}).
\end{proof}
This argument will continue to work in $\mathcal H_3$.
\begin{figure}[h]
    \centering
    \includegraphics[page=1,width=0.6\textwidth]{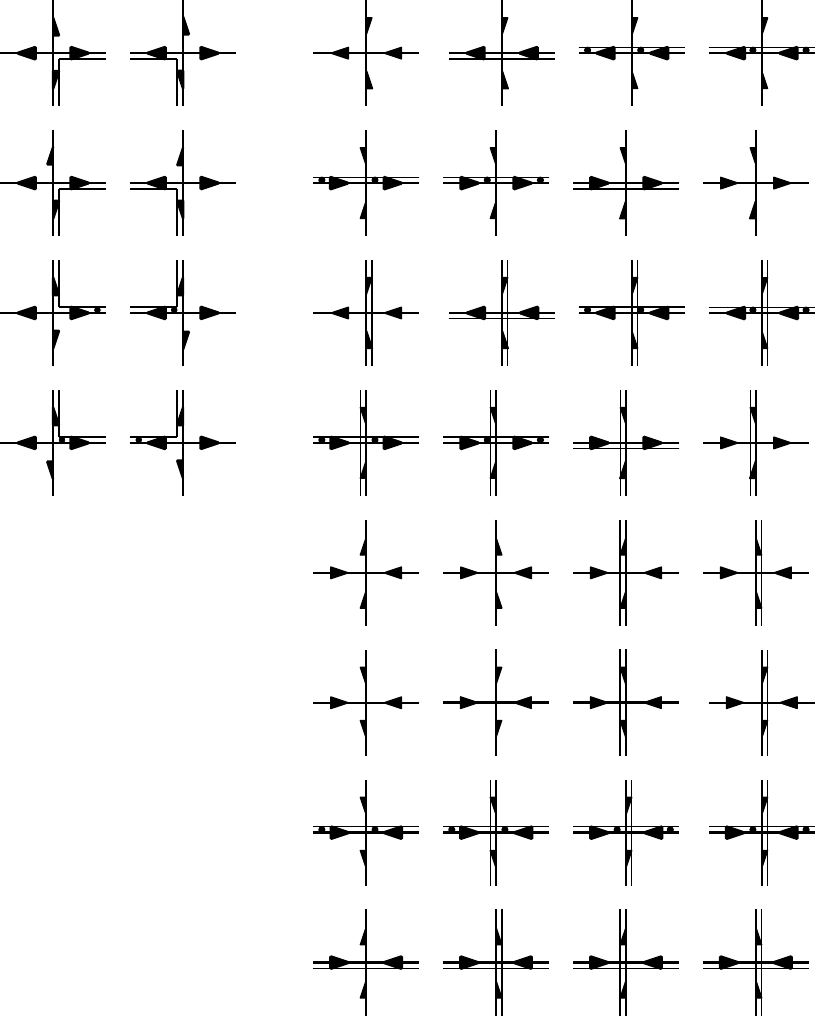}
    \caption{Alphabet of $\mathcal R_3$, obtained by adding extra marks to the alphabet of $\mathcal R$.} 
    \label{fig:robinson-3-presentation}
\end{figure}

\begin{figure}[h]
    \centering
    \includegraphics[page=1,width=0.6\textwidth]{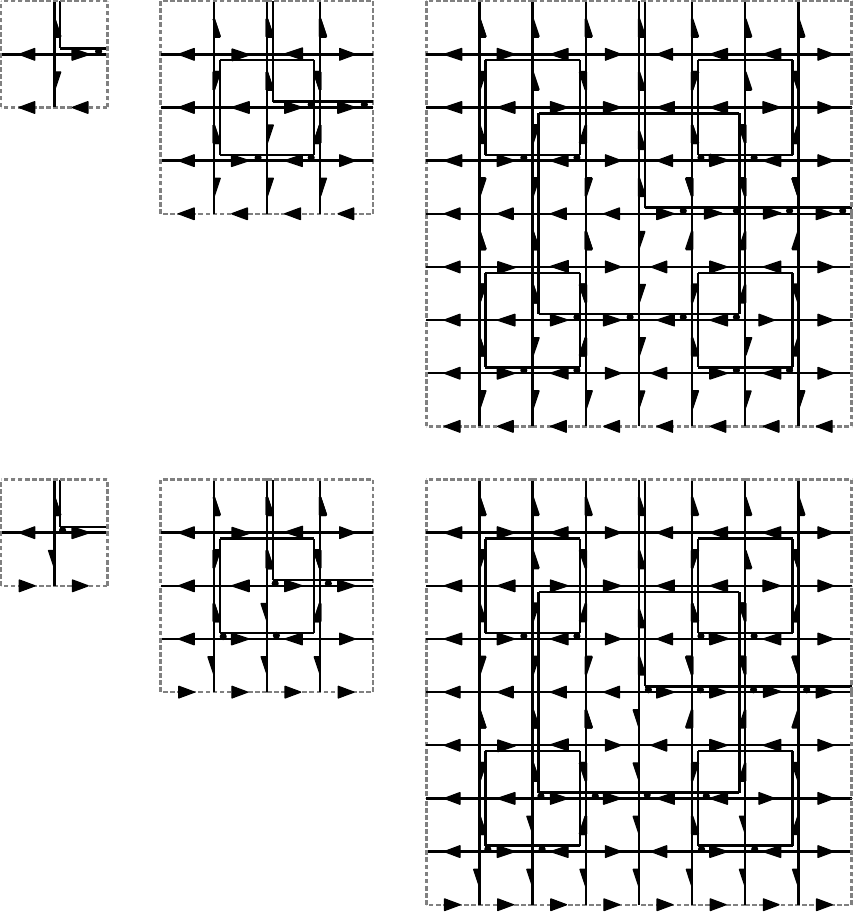}
    \caption{Two $2$-blocks, two $4$-blocks, and two $8$-blocks in $\mathcal R_3$. Their boundaries are highlighted with dashed gray lines, and arrows have been included on their bottom boundaries. The direction of these horizontal arrows is tied to the dot decoration in their central crosses. This can be seen by a direct inspection of the list of arms.} 
    \label{fig:robinson-3-blocks}
\end{figure}
\subsubsection{A combination of these variations}
Let $\mathcal R_4$ be the variation of $\mathcal R$ where we simultaneously add the decorations of $\mathcal R_1$,  $\mathcal R_2$,  and $\mathcal R_3$. $\mathfrak X_0$  is essentially the naive adaptation of $\mathcal R_4$ to a subshift on $\mathcal{H}_3$. The decorations from $\mathcal R_1$ and $\mathcal R_2$ will ensure alignment of the corresponding blocks, while the decorations from $\mathcal R_3$ will ensure that the bottom sides of the boundaries of the blocks are decorated uniformly. 

There is one further decoration that we cannot meaningfully represent in the Euclidean case. That is, in $\mathcal H_3$, new vertices appear when we move in the downwards direction. In order to fill this ``extra space'', we will use white arrows as in \Cref{fig:white-arrow}.
\begin{figure}[h]
    \centering
    \includegraphics[page=1,width=0.1\textwidth]{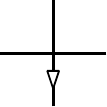}
    \caption{White arrow, which will be used to ``fill'' extra space when adapting $\mathcal R_4$ to $\mathcal H_3$.}
    \label{fig:white-arrow}
\end{figure}
\subsection{Partitions of models of $\mathcal H_3$ into cells}\label{subsec:cells}
Recall that we defined $2^n$-cells in \Cref{subsection:definition-of-X_1}. It is convenient to review some properties of partitions (up to boundaries) of the graph associated to a model of $\mathcal H_3$ into $2^n$-cells. 

Fix $n\geq 1$, a model $\varphi\in\mathcal M(\mathcal H_3)$, and the corresponding graph $\mathcal G(\mathcal H_3,\varphi)$. By a partition (up to boundaries) of $\varphi$ into $2^n$-cells, we mean a disjoint collection of $2^n$-cells such that every vertex in the associated graph belongs to some $2^n$-cell, or to the boundary of some $2^n$-cell. We also require that if a vertex is in the boundary of a $2^n$-cell, then it must belong to the boundary of at least two of them (in other words, these boundaries need to overlap). For such a partition, we define the following two properties:
\begin{itemize}
    \item \define{Horizontal alignment}: Every $2^n$-cell is the right (resp. left) horizontal neighbor of exactly one $2^n$-cell.
    \item \define{Vertical alignment}: Every $2^n$-cell is below exactly one $2^n$-cell.  
\end{itemize}
If both conditions are satisfied, we simply call it a partition (up to boundaries) of $\varphi$ into \define{well-aligned} $2^n$-cells. We illustrate these properties in \Cref{fig:2-cell-alignment-example}. 
\begin{figure}[h]
    \centering
    \includegraphics[page=1,width=1\textwidth]{appendix-2-cell-alignment-example}
    \caption{The leftmost picture shows some $2$-cells satisfying both vertical and horizontal alignment, the middle picture illustrates the failure of vertical alignment, and the rightmost picture illustrates the failure of horizontal alignment.  The boundaries of $2$-cells are highlighted in black.}
    \label{fig:2-cell-alignment-example}
\end{figure}
The following result will be key for us. 
\begin{proposition}\label{prop:sequence-of-partitions}%
    Let $\varphi\in \mathcal M(\mathcal H_3)$. For $n\geq 1$, let $P$ be a partition of $\varphi$ into well-aligned $2^n$-cells. There are exactly four choices of a partition $Q$ of $\varphi$ into well-aligned $2^{n+1}$-cells such that, if we decompose $Q$ into $2^n$-cells, we obtain $P$. Furthermore, given an arbitrary  $2^{n+1}$-cell $C$ formed by $2^n$-cells in $P$, there is exactly one such partition $Q$ having $C$ as one of its elements. 
\end{proposition}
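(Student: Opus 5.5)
We will work with the combinatorics of the well-aligned partition $P$ directly. Vertical alignment says each $2^n$-cell sits below exactly one $2^n$-cell. Combined with \Cref{rem:n-squares-sizes}, it follows that each $2^n$-cell $C'$ sits above exactly $3^{2^n}$ cells of $P$, forming a consecutive horizontal run. Horizontal alignment lets us index the cells of a fixed row of $P$ by $\ZZ$. So the cells of $P$ form a ``tree of rows'': each row of $2^n$-cells is a bi-infinite sequence, and each cell has a parent in the row above and $3^{2^n}$ consecutive children in the row below.

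A $2^{n+1}$-cell made of $P$-cells is, by the natural decomposition, determined by a choice of two horizontally adjacent $P$-cells $C_1C_2$ forming its upper row. The lower row is then forced: it consists of the $2\cdot 3^{2^n}$ children of $C_1$ and $C_2$. Hence a partition $Q$ compatible with $P$ is determined by two pieces of data.
\begin{enumerate}
\item A choice of which rows of $P$ are ``upper rows'' of $Q$-cells. Since every $2^{n+1}$-cell spans two consecutive rows of $2^n$-cells and $Q$ must cover everything, these are the rows of one parity class among the (vertically indexed by $\ZZ$) rows. This gives $2$ choices.
\item For one upper row, a choice of the pairing of consecutive cells into pairs $(C_{2j+\epsilon},C_{2j+1+\epsilon})$, which also gives $2$ choices.
\end{enumerate}

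The key step is to show that the pairing in one upper row forces the pairing in every other upper row, in a way that automatically yields well-alignment of $Q$. Going down one $Q$-level, the children of a pair $C_{2j}C_{2j+1}$ occupy $2\cdot 3^{2^n}$ consecutive cells in the row below. Their children in turn occupy $2\cdot 3^{2\cdot 2^n}$ cells in the next upper row. This is an even number, and the blocks for consecutive pairs tile that row contiguously. Therefore vertical alignment of $Q$ (each $Q$-cell below exactly one $Q$-cell) forces the pairing in the lower upper-row to be the one whose pairs partition these even blocks, i.e.\ a unique parity.

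Going upward, the parent map sends pairs to pairs only if the pairing is compatible. One checks that a cell's parent index is $\lfloor i/3^{2^n}\rfloor$. Given the pairing on an upper row, the pairing two rows up is the unique one such that each pair's grandchildren form a union of pairs below. Since $2\cdot 3^{2^{n+1}}$ grandchildren of a pair is even and the blocks are aligned with multiples of $3^{2^{n+1}}$, we get one parity choice upward that is consistent. I expect this upward step to be the main obstacle. One must verify, via the explicit indexing of horizontal positions inherited from $\varphi$ (using the normal forms of \Cref{lem:WP_decidable_Hk}, where the lower-row cell indices are offsets by multiples of $3^{2^n}$), that the chosen pairing propagates consistently to all rows. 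This includes rows not linked by a common ancestor, which are reached by going up and then down using connectivity. Horizontal alignment of $Q$ is then immediate, since each upper row is a bi-infinite sequence of pairs.

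This yields exactly $2\times 2=4$ partitions $Q$. Distinctness is clear because they differ either in which rows are upper rows or in the pairing. For the final claim, fix a $2^{n+1}$-cell $C$ formed by $P$-cells. Its upper row determines both the parity class of the upper rows and the pairing on that row. By the forcing argument above, these two data determine $Q$ uniquely, and conversely that $Q$ contains $C$.
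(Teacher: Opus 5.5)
Your proposal is correct and follows essentially the same route as the paper: there are two parity choices (which rows of $P$ become upper halves, and the pairing of cells in one upper row), and vertical alignment of $Q$ forces a unique compatible pairing in each neighboring upper row, both downward and upward. The upward step you flag as the main obstacle is no harder than the downward one, and it needs neither the normal forms nor the ``rows not linked by a common ancestor'' case: switching between the two pairings of an upper row shifts the induced runs of $2\cdot 3^{2^{n+1}}$ grandchildren in the next upper row by the odd number $3^{2^{n+1}}$, so exactly one of the two is compatible with a given pairing below, and the rows of $P$ form a single $\ZZ$-indexed chain.
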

\begin{proof}
    Let $n$, $P$, and $C$ be as in the statement. Because $2^n$-cells are horizontally well-aligned, it makes sense to speak of infinite horizontal rows of $2^n$-cells in $P$. We consider the two infinite horizontal rows of $2^n$-cells that intersect $C$. These two rows can be combined to form an infinite horizontal row of $2^{n+1}$-cells. There are exactly two ways to do this, and exactly one compatible with $C$ (the two choices can be parametrized by the following parity choice: take an arbitrary $2^n$-cell in the first row, and choose whether it will belong to the left, or right half, of a $2^{n+1}$-cell). Next, if we move down and consider the next two infinite horizontal rows of $2^n$-cells, they can also be combined to form a row of $2^{n+1}$-cells. As before, there are only two possibilities. Of these two possible choices, there is only one with the property that the $2^{n+1}$-cells we form are vertically well-aligned with respect to the $2^{n+1}$-cells above (the row where $C$ appears). The same argument applies to the row of $2^{n+1}$-cells above the one in which $C$ appears. Because the previous argument is valid for arbitrary $C$, we see that the  choice we made for the row of $2^{n+1}$-cells containing $C$ propagates arbitrarily both up and down, defining a partition $Q$ of $\varphi$ into $2^{n+1}$-cells as in the statement.
    
    The possible partitions $Q$ can be parametrized by four choices. First we choose which infinite horizontal rows of $2^n$-cells in $P$ will become upper/lower halves in $Q$. After this is done, we take some infinite horizontal row of $2^n$-cells in $P$ corresponding to upper halves, and then we choose which $2^n$-cells will be on the left or right half of its $2^{n+1}$-cell.
\end{proof}
An elementary observation that we have omitted until now is that for an arbitrary $\varphi\in\mathcal M(\mathcal H_3)$, a partition into well-aligned $2$-cells does exist. This follows from the argument in \Cref{prop:sequence-of-partitions}, but using the \textit{faces} of $\mathcal G(\mathcal H_3,\varphi)$ as the smaller cells (in the sense of planar graph). Applying \Cref{prop:sequence-of-partitions} $n-1$ times, we see that a fixed model in $\mathcal H_3$ admits exactly $4^n$ possible partitions into well-aligned $2^n$-cells, and these can be parametrized by a sequence of $n$ choices among four options. It also follows that they are all individually computable for computable $\varphi$. 

\subsection{Definition of \texorpdfstring{$\mathfrak{X}_0$}{X0} on \texorpdfstring{$\mathcal{H}_3$}{H3}}\label{subsec:definition-X-0}
We are now ready to define $\mathfrak{X}_0$, our hyperbolic analogue of $\mathcal R$. We start by defining the relevant alphabet. 
\begin{definition}\label{def:alphabet}
    The alphabet $A_0$ is the set of crosses and arms shown in \Cref{fig:decorated-crosses-left,fig:decorated-crosses-right,fig:decorated-vertical-arms,fig:decorated-horizontal-arms-left,fig:decorated-horizontal-arms-right,fig:decorated-horizontal-arms-degree-3-left,fig:decorated-horizontal-arms-degree-3-right,fig:decorated-special-arms}.
\end{definition}

In order to define $\mathfrak X_0$, we first define decorated $2$-blocks. 
\begin{definition}\label{def:decorated-blocks}
    A decorated $2$-block is a pattern whose support is a 2-cell, which in the center carries a decorated cross (\Cref{fig:decorated-crosses-left,fig:decorated-crosses-right}). Arrows that emanate from this cross propagate until they reach the boundary, through arms as in the first row of \Cref{fig:decorated-horizontal-arms-degree-3-left,fig:decorated-horizontal-arms-degree-3-right}. A decorated 2-block is shown in \Cref{fig:decorated-2-block-example}.
\end{definition}
\begin{definition}\label{def:stacking}\label{def:boundary}
Let $(\varphi,x)$ be a configuration with model $\varphi\in\mathcal M(\mathcal H_3)$ and alphabet $A_0$. Assume that $(\varphi,x)$ verifies our standing assumption that neighboring vertices assign the same decoration to a common edge (\Cref{rem:standing-convention}). We say that $(\varphi,x)$ satisfies the \define{stacking conditions} for decorated $2$-blocks if we can find a partition (up to boundaries) of $\varphi$ into well-aligned $2$-cells, in such a way that each of them supports a $2$-block. 
    
In addition, we say that $(\varphi,x)$ satisfies the \textbf{boundary conditions} for decorated $2$-blocks if the decorations on the edges in the boundaries of these $2$-blocks satisfy the following.
    \begin{itemize}
        \item \define{Degree 3 vertices}: every vertex with degree $3$ in the boundary of a decorated $2$-block carries a horizontal arm as in the second and third rows of \Cref{fig:decorated-horizontal-arms-degree-3-left,fig:decorated-horizontal-arms-degree-3-right} (with the white arrow going down). In other words, the symbols in the first row of \Cref{fig:decorated-horizontal-arms-degree-3-left,fig:decorated-horizontal-arms-degree-3-right} cannot appear in these boundaries.
        \item \define{Lower boundary:} the bottom side of the boundary of a decorated $2$-block is decorated uniformly. That is, all edges carry the same kind of arrow. 
    \end{itemize}
\end{definition}

\begin{definition}\label{def:X_0}
    We define $\mathfrak{X}_0$ as the set of all configurations satisfying the stacking and boundary conditions in \Cref{def:boundary}.
\end{definition}
The following can be easily verified from the definition. 
\begin{proposition}$\mathfrak{X}_0$ is an SFT.
\end{proposition}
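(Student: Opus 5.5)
We must show that the stacking and boundary conditions of \Cref{def:boundary} can be expressed by a finite set of forbidden patterns. The plan mirrors the remark made for $\mathcal R$: pick a finite radius $r$, e.g. enough to contain a $2$-square together with all its horizontal and vertical neighbouring $2$-squares (a window of words of bounded length around $\varepsilon$ using $a,\overline a, b,\overline b$). We forbid every pattern on this window that is not a subpattern of some configuration with a partition into well-aligned decorated $2$-blocks satisfying the boundary conditions. The alphabet is finite and $\mathcal H_3$ has only three states. Hence there are finitely many window shapes (one for each assignment of states in the window, determined by the model locally) and finitely many forbidden patterns. It then remains to check that the SFT $X[\mathcal H_3,\mathcal F]$ so defined equals $\mathfrak X_0$.

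\textbf{Inclusion $\mathfrak X_0\subset X[\mathcal H_3,\mathcal F]$.} This is immediate from the definition of $\mathcal F$.

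\textbf{Reverse inclusion.} Take $(\varphi,x)$ avoiding $\mathcal F$. Every vertex carrying a cross is, locally, the center of a decorated $2$-block, since its arrows propagate to a boundary through arms and the local window sees the entire $2$-cell. Inspection of the alphabet shows that every vertex lies in such a $2$-cell or on its boundary, and arms cannot form closed structures without crosses. The local window also sees the horizontal and vertical neighbours of each such cell. Therefore the collection of $2$-cells centred at crosses is locally a partition up to boundaries, with overlapping boundaries, and with each cell having exactly one left and one right neighbour and exactly one cell above. Alignment is a local property of neighbouring cells, so these local facts assemble into a global partition into well-aligned $2$-cells, which is the stacking condition. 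The boundary conditions are also local. The degree-$3$ vertex condition concerns a single vertex together with the knowledge that it lies on a $2$-block boundary, and this is visible in the window. Uniformity of the lower boundary concerns a $2$-cell whose bottom side has $2\cdot 9$ edges, hence is bounded and contained in the window.

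\textbf{Main obstacle.} The delicate point is the claim that local crosses determine a \emph{unique, globally consistent} partition. In particular, one must rule out two overlapping candidate $2$-cells or an unpartitioned vertex. I would settle this with a finite case check on the alphabet in the figures: a cross cannot lie strictly inside the boundary-propagation region of another cross, and the vertical alignment near degree-$3$ vertices follows from the white-arrow arms. Since everything concerns $2$-cells of bounded size, this is a finite verification, which is why the statement can reasonably be called easily verified.
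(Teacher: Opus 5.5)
Your overall strategy is the right one, and it is what the paper's one-line ``easily verified from the definition'' implicitly relies on. You forbid every window pattern that does not occur in $\mathfrak X_0$ and then show that locally admissible configurations are globally admissible. Compare the $8\times 8$ remark for $\mathcal R$: its whole content is that a large window pins down which of the finitely many candidate grids carries the level-one crosses.

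However, the way you recover the partition in the reverse inclusion fails. It is not true that every cross is the centre of a decorated $2$-block. By \Cref{thm:hierarchical-structure-X_0}, every configuration of $\mathfrak X_0$ is also an arrangement of decorated $2^n$-blocks for $n\geq 2$. Their central crosses sit at corners of $2$-squares, on boundary rows of the $2$-cell partition. For a $4$-block whose square has top-left vertex $w$, the centre $wb^2a^{18}=wa^2b^2$ carries a cross and is the common bottom corner of the upper $2$-squares at $w$ and $wa^2$. The ``$2$-cell centred'' at this vertex has its middle row on a genuine boundary row. Its bottom row passes through the centres of two genuine lower $2$-cells. So it overlaps genuine $2$-cells, and the collection of $2$-cells centred at crosses is not a partition. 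Your proposed case check (no cross strictly inside another cross's propagation region) does not exclude this. The genuine centres lie on the boundary of the spurious cell, not strictly inside it, and they are not on the larger cross's propagation paths.

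What is missing is a local mechanism that selects the correct cells. By the face argument and \Cref{prop:sequence-of-partitions}, a model admits exactly four well-aligned partitions into $2$-cells. They are parametrized by which rows are middle rows and by a horizontal parity. You need the analogue of the $8\times 8$ observation: in a sufficiently large window, a pattern occurring in $\mathfrak X_0$ has crosses at all $2$-cell centres of exactly one of these four candidates. This is a finite check. The centres of a competing candidate fall in one of two places:
\begin{itemize}
\item on the same middle rows shifted by three positions, i.e.\ on the vertical sides of genuine $2$-cells, which carry arms;
\item on genuine boundary rows, where crosses are much sparser than the spacing $6$ between consecutive $2$-cell centres.
\end{itemize}
With this uniqueness, overlapping windows select the same candidate, which yields a global well-aligned partition. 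Your locality arguments for the $2$-blocks and for the two boundary conditions then go through.
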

We will now proceed to define decorated $2^n$-blocks for $\mathfrak X_0$, and prove that configurations are made by stacking well-aligned decorated $2^n$-blocks for all $n\geq 1$ (\Cref{thm:hierarchical-structure-X_0}).

\begin{figure}[]
    \centering
    \includegraphics[page=1,width=1\textwidth]{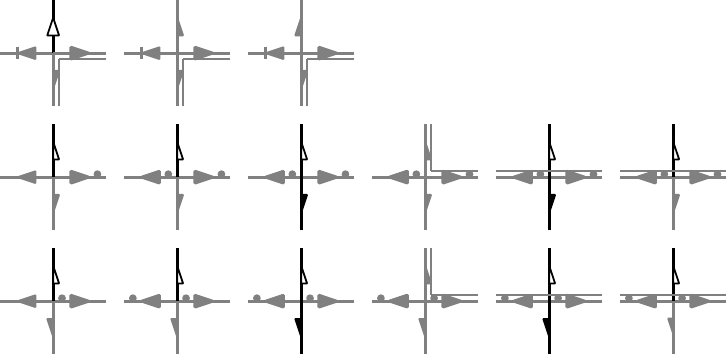}
    \caption{Decorated crosses. They will go on the left half of a decorated $2^n$-block. Those in the first row face down, and the rest face up.}
    \label{fig:decorated-crosses-left}
\end{figure}
\begin{figure}[]
    \centering
    \includegraphics[page=1,width=1\textwidth]{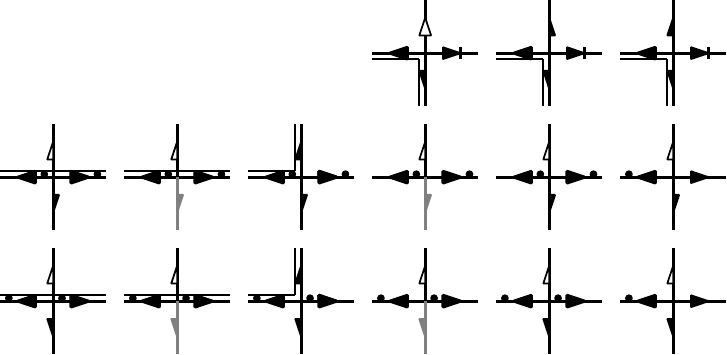}
    \caption{Decorated crosses. They will go in the right half of a decorated $2^n$-block. Those in the first row face down, and the rest face up.}
    \label{fig:decorated-crosses-right}
\end{figure}
\begin{figure}[]
    \centering
    \includegraphics[page=1,width=0.99\textwidth]{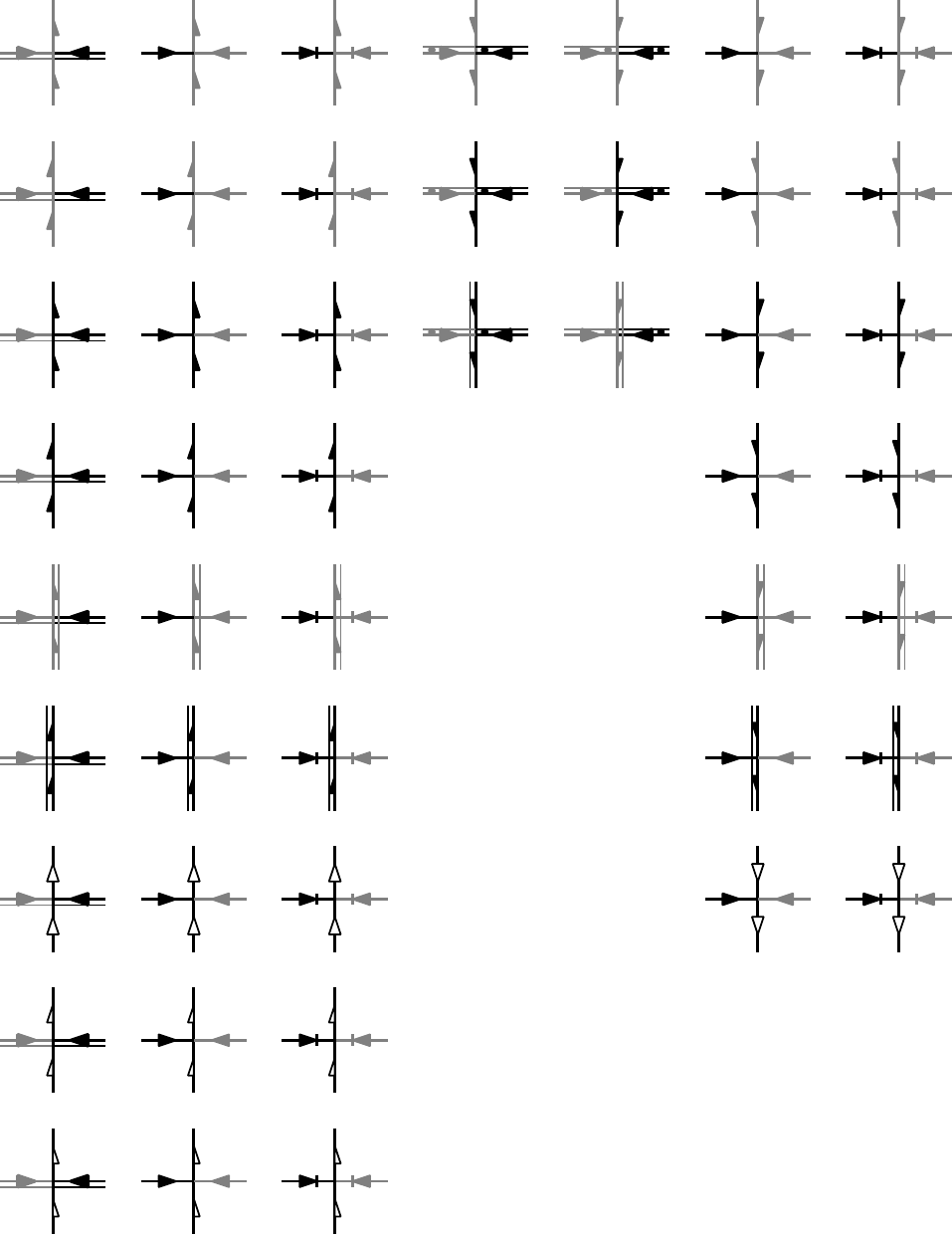}
    \caption{Decorated vertical arms.}
    \label{fig:decorated-vertical-arms}
\end{figure}
\begin{figure}[]
    \centering
    \includegraphics[page=1,width=0.8\textwidth]{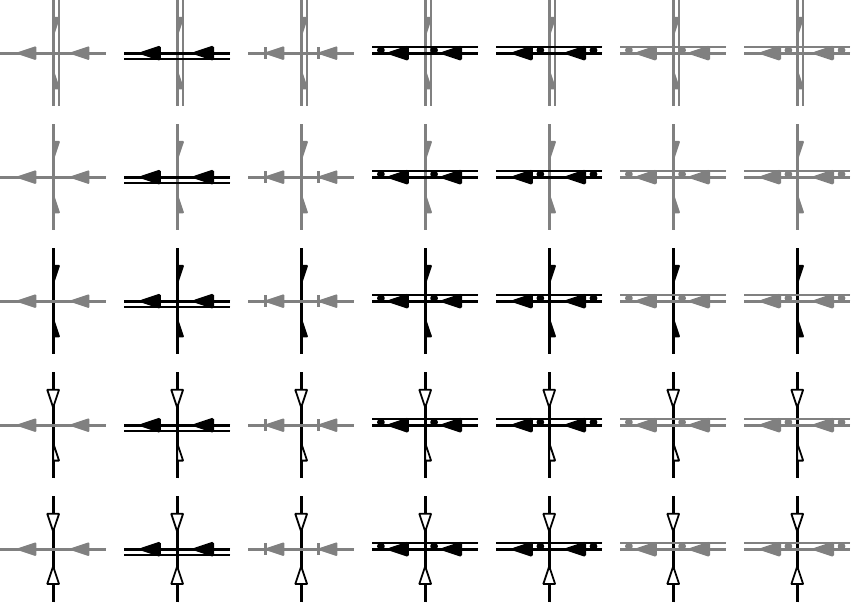}
    \caption{Decorated horizontal arms (degree 4 vertices).}
    \label{fig:decorated-horizontal-arms-left}
\end{figure}
\begin{figure}[]
    \centering
    \includegraphics[page=1,width=0.8\textwidth]{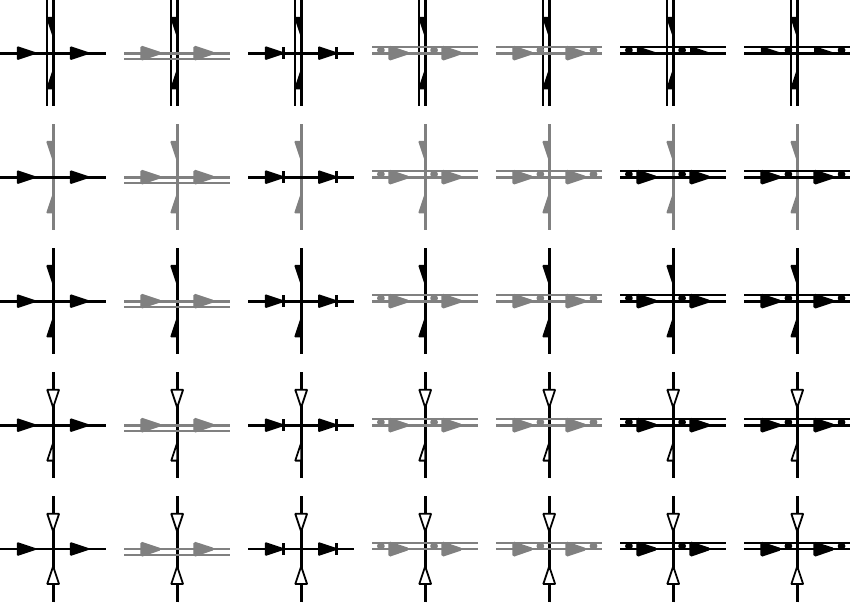}
    \caption{Decorated horizontal arms (degree 4 vertices).}
    \label{fig:decorated-horizontal-arms-right}
\end{figure}
\begin{figure}[]
    \centering
    \includegraphics[page=1,width=0.8\textwidth]{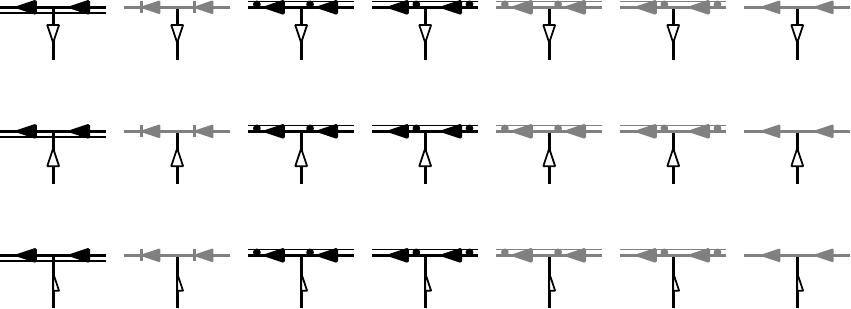}
    \caption{Decorated horizontal arms (degree 3 vertices).}
    \label{fig:decorated-horizontal-arms-degree-3-left}
\end{figure}
\begin{figure}[]
    \centering
    \includegraphics[page=1,width=0.8\textwidth]{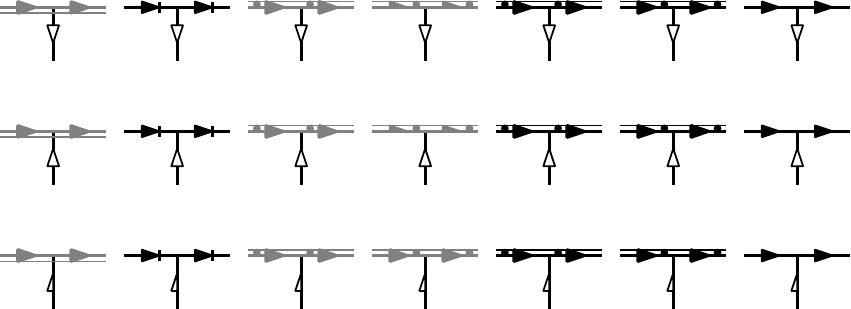}
    \caption{Decorated horizontal arms (degree 3 vertices).}
    \label{fig:decorated-horizontal-arms-degree-3-right}
\end{figure}
\begin{figure}
    \centering
    \includegraphics[page=1,width=0.8\textwidth]{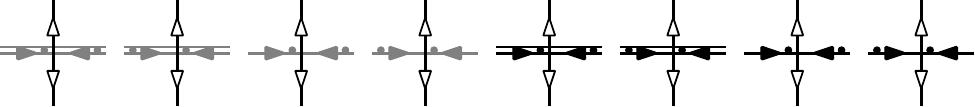}
    \caption{Decorated special arms. We use the word special because they are not analogous to arms in $\mathcal R$. Special arms will be created by neighboring decorated $2^n$-blocks in the lower half of a decorated $2^{n+1}$-block (defined below).}
    \label{fig:decorated-special-arms}
\end{figure}
\begin{figure}[]
    \centering
    \includegraphics[page=1,width=0.4\textwidth]{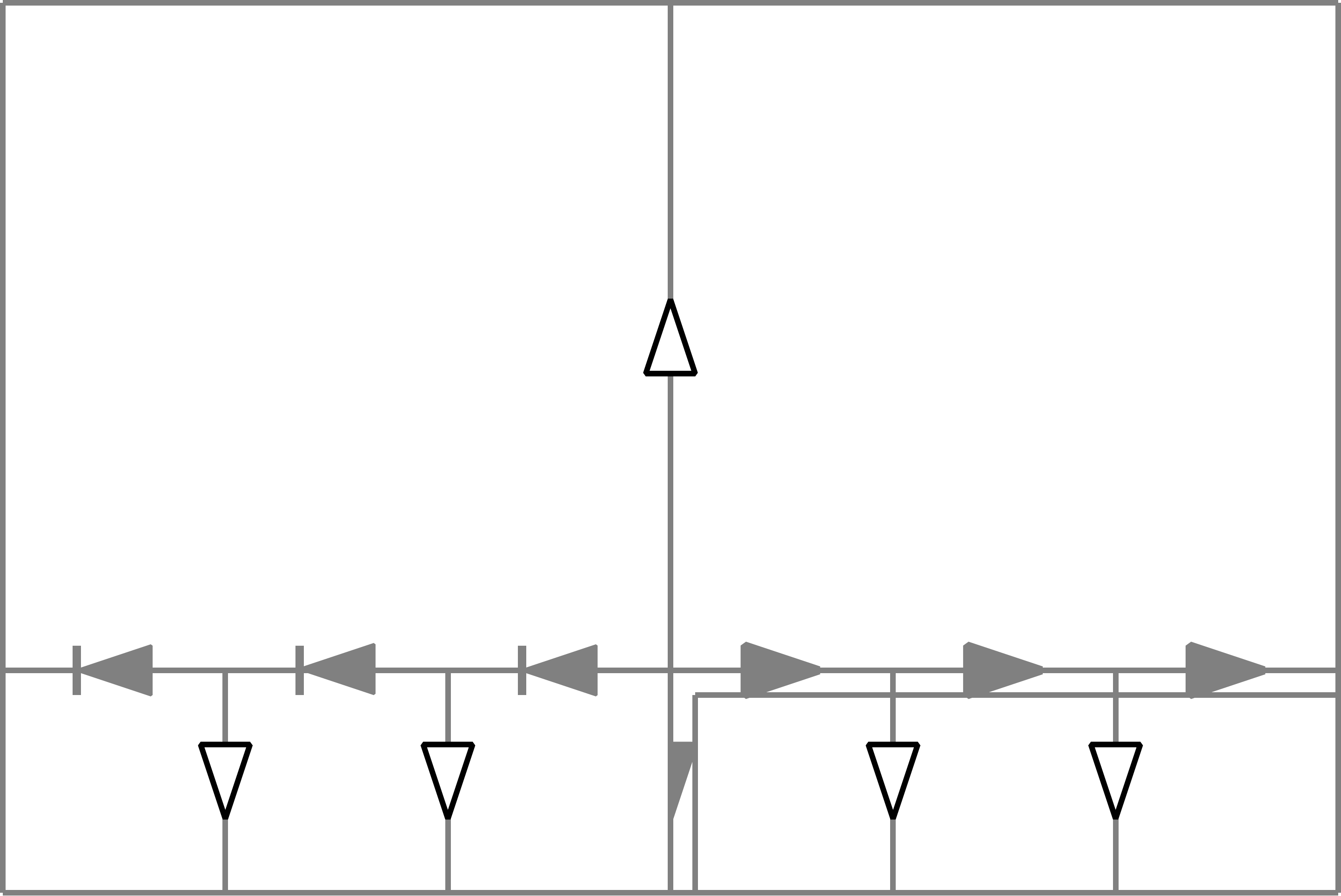}
    \caption{A decorated $2$-block. Its central cross is the leftmost in the first row of \Cref{fig:decorated-crosses-left}.}
    \label{fig:decorated-2-block-example}
\end{figure}
\subsection{Decorated blocks for \texorpdfstring{$\mathfrak{X}_0$}{X0}}\label{subsec:decorated-blocks} We will now define \define{decorated $2^n$-blocks} ($n\geq 1$), which are patterns with support a $2^n$-cell. We will also define a property of them called \define{upper characteristic}. We declare all $2$-blocks to have the same upper characteristic. For $n>1$, the upper characteristic of a decorated $2^n$-block is the ordered tuple of all vertical arrows reaching its upper boundary from below, excluding the one coming from the central cross. We remark that each such vertical arrow comes from the central cross of a smaller decorated block. Thus the upper characteristic of a decorated block depends on certain choices made for smaller blocks inside it.

We now proceed with a recursive definition of decorated $2^n$-blocks.  Fix $n\geq 1$ and suppose that we have already defined decorated $2^n$-blocks. We define a decorated $2^{n+1}$-block as any pattern with support a $2^{n+1}$-cell which can be obtained by the following procedure. 

We start by decomposing a $2^{n+1}$-cell into $2^n$-cells in the natural manner (see \Cref{fig:overlaps}). We obtain two rows of $2^n$-cells, the upper row having $2$ of them and the lower one having $2\cdot 3^{2^n}$ of them. We will now decorate the set of edges in our $2^{n+1}$-cell that belong to the boundary of some $2^n$-cell inside it. The shape of this collection is a sort of cross (with height $2^{n+1}$), plus $2\cdot 3^{2^{n}}-2$ vertical strips of edges of height $2^n$; see \Cref{fig:4-square-skeleton}. We choose a cross from \Cref{fig:decorated-crosses-left} or \Cref{fig:decorated-crosses-right}, we place it in the central vertex of our $2^{n+1}$-cell, and we propagate its arrows until they reach the boundary. The vertical strips will carry decorated special arms at their middle vertices (\Cref{fig:decorated-special-arms}). For now, we only place the corresponding vertical white arrow, and propagate it vertically. We illustrate these steps in Figures \ref{fig:4-square-skeleton}, \ref{fig:boundaries-2-cells-4-cell-white-arrows}, and \ref{fig:boundaries-2-cells-4-cell-white-arrows-zoom} for $n=1$.
\begin{figure}[]
    \centering
    \includegraphics[page=1,width=0.9\textwidth]{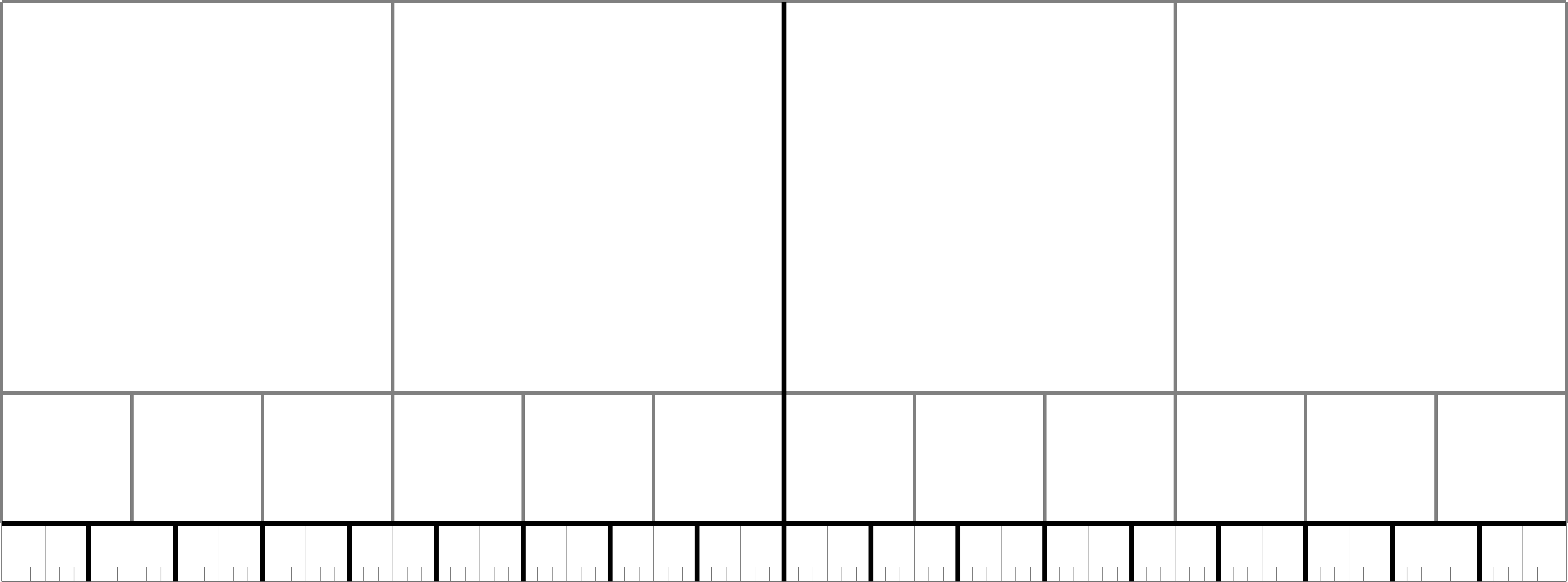}
    \caption{Edges in a $4$-cell that belong to the boundary of some $2$-cell in its natural decomposition.} 
    \label{fig:4-square-skeleton}
\end{figure}
\begin{figure}[]
    \centering
    \includegraphics[page=1,width=0.9\textwidth]{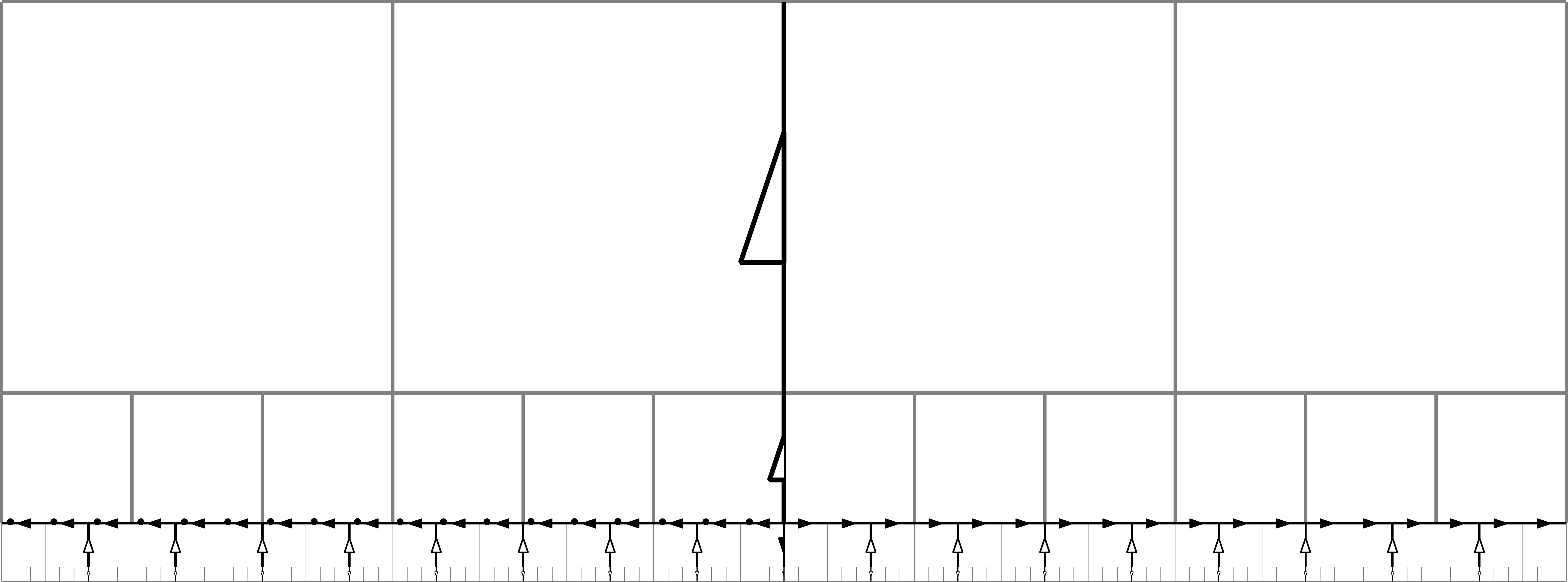}
    \caption{Decoration of the highlighted edges from \Cref{fig:4-square-skeleton}. Here we used the central cross at the bottom right of 
    \Cref{fig:decorated-crosses-right}. See \Cref{fig:boundaries-2-cells-4-cell-white-arrows-zoom} for a detail of the lower half.
    } 
    \label{fig:boundaries-2-cells-4-cell-white-arrows}
\end{figure}
\begin{figure}[]
    \centering
    \includegraphics[page=1,width=0.7\textwidth]{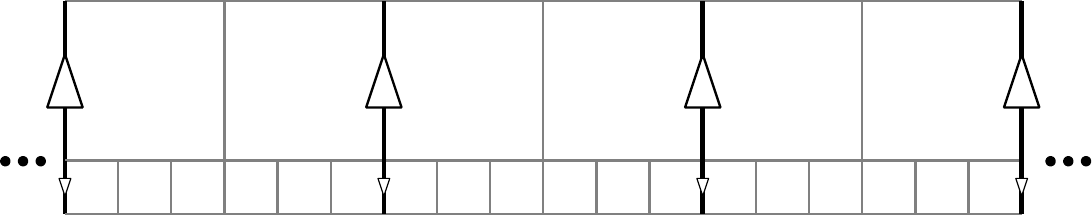}
    \caption{Detail of the lower half of \Cref{fig:boundaries-2-cells-4-cell-white-arrows}.} 
    \label{fig:boundaries-2-cells-4-cell-white-arrows-zoom}
\end{figure}

We decomposed our $2^{n+1}$-cell into $2^n$-cells, and we have chosen decorations for those edges in our $2^{n+1}$-cell that are not in the smaller $2^n$-cells. Next we decorate these $2^{n}$-cells by placing a $2^n$-block in each of them. Their central crosses are chosen as follows. The easy part is the upper half, which has just $2$ of them. On the left we choose a central cross facing down and right  (\Cref{fig:decorated-crosses-left}), and on the right we choose a central cross facing down and left (\Cref{fig:decorated-crosses-right}). In the lower half we need to choose $2\cdot 3^{2^n}$ such crosses among those facing up in \Cref{fig:decorated-crosses-left} and \Cref{fig:decorated-crosses-right}. For the first $3^{2^n}$ of them we require that they are gray (\Cref{fig:decorated-crosses-left}), and that the one in the central position is a cross carrying a corner of double arrows. For the next $3^{2^n}$ of them, we require that they are black (\Cref{fig:decorated-crosses-right}), and that the one in the central position is a cross carrying a corner of double arrows. For the remaining requirements it is convenient to use  finite state automata.  Omitting the dot decorations, a valid choice must be accepted by the finite state automaton in \Cref{fig:FSA-lower-half-simplified}. Including the dot decorations, a valid choice must be accepted either by the finite state automaton in \Cref{fig:FSA-lower-half-left}, or by the one in \Cref{fig:FSA-lower-half-right}. 
\begin{figure}[]
    \centering
    \includegraphics[page=1,width=0.96\textwidth]{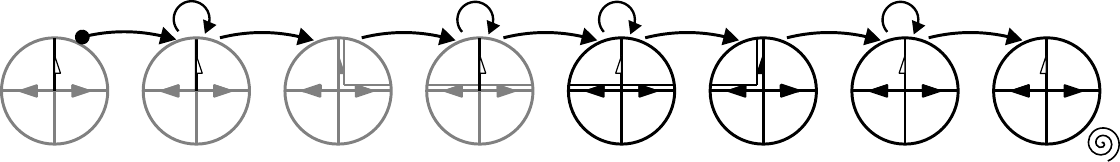}
    \caption{Omitting dot decorations, the sequence of $2\cdot 3^{2^n}$ crosses that we need to choose, understood as a word of length $2\cdot 3^{2^n}$, must be accepted by this finite state automaton. The initial state is the leftmost, and the final state is the rightmost (labeled with a spiral).} 
    \label{fig:FSA-lower-half-simplified}
\end{figure}
\begin{figure}[]
    \centering
    \includegraphics[page=1,width=0.9\textwidth]{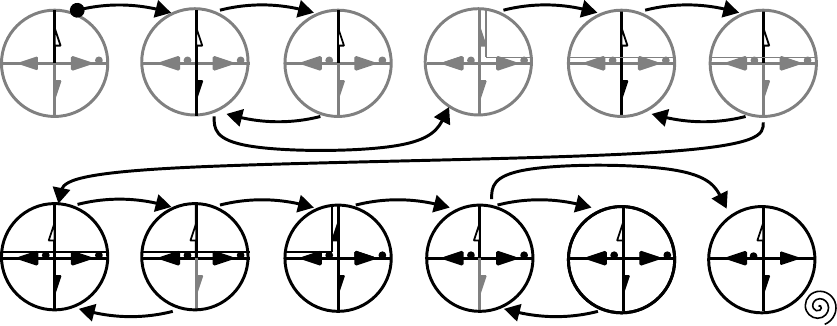}
    \caption{The sequence of $2\cdot 3^{2^n}$ crosses that we need to choose, understood as a word of length $2\cdot 3^{2^n}$, must be accepted by this finite state automaton, or the one in \Cref{fig:FSA-lower-half-right}. The initial state is the one at the top left, and the final state is labeled with a spiral.} 
    \label{fig:FSA-lower-half-left}
\end{figure}
\begin{figure}[]
    \centering
    \includegraphics[page=1,width=0.9\textwidth]{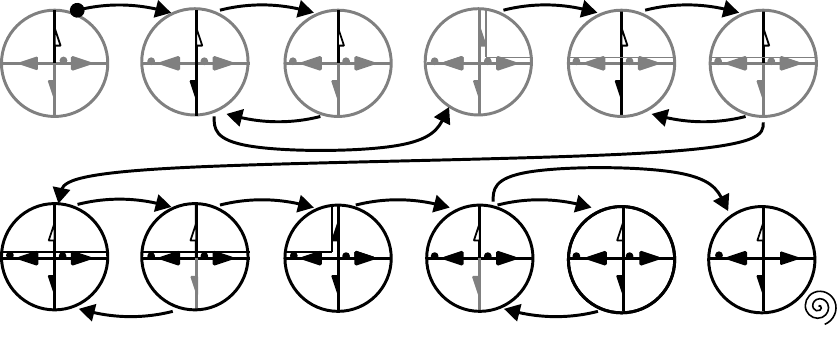}
    \caption{The sequence of $2\cdot 3^{2^n}$ crosses that we need to choose, understood as a word of length $2\cdot 3^{2^n}$, must be accepted by this finite state automaton, or the one in \Cref{fig:FSA-lower-half-left}. The initial state is the one at the top left, and the final state is labeled with a spiral.} 
    \label{fig:FSA-lower-half-right}
\end{figure}

In order to finish the definition of our $2^{n+1}$-block, it only remains to choose the upper characteristic of the $2^n$-blocks inside it. For those in the upper half, we declare any choice to be valid. For a $2^n$-block in the lower half, there is a unique choice that ensures that every vertex in its upper boundary carries an arm as in Figures \ref{fig:decorated-horizontal-arms-left}, \ref{fig:decorated-horizontal-arms-right}, \ref{fig:decorated-horizontal-arms-degree-3-left}, and \ref{fig:decorated-horizontal-arms-degree-3-right}. 

At this point we have finished the definition of decorated $2^{n+1}$-block. We display two decorated $4$-blocks  in  \Cref{fig:decorated-4-block} and \Cref{fig:decorated-4-block-alternative}.
\begin{remark}
Let us emphasize the following asymmetry between the upper and lower boundaries of a decorated $2^n$-block. One can verify that the pattern of arrows that we can see reaching the lower boundary is fully determined by the central cross. For this reason we do not bother to define ``lower characteristic''. In the case of the upper boundary, however, the pattern of arrows that we can see is not constrained by the central cross, and there are multiple possibilities. This flexibility is indispensable, because the upper characteristic of a decorated $2^n$-block is tied to the position it can occupy below another decorated $2^n$-block. Recall that the number of such positions grows with $n$, and it equals $3^{2^n}$. 
\end{remark}
\begin{figure}[]
    \centering
    \includegraphics[page=1,width=0.96\textwidth]{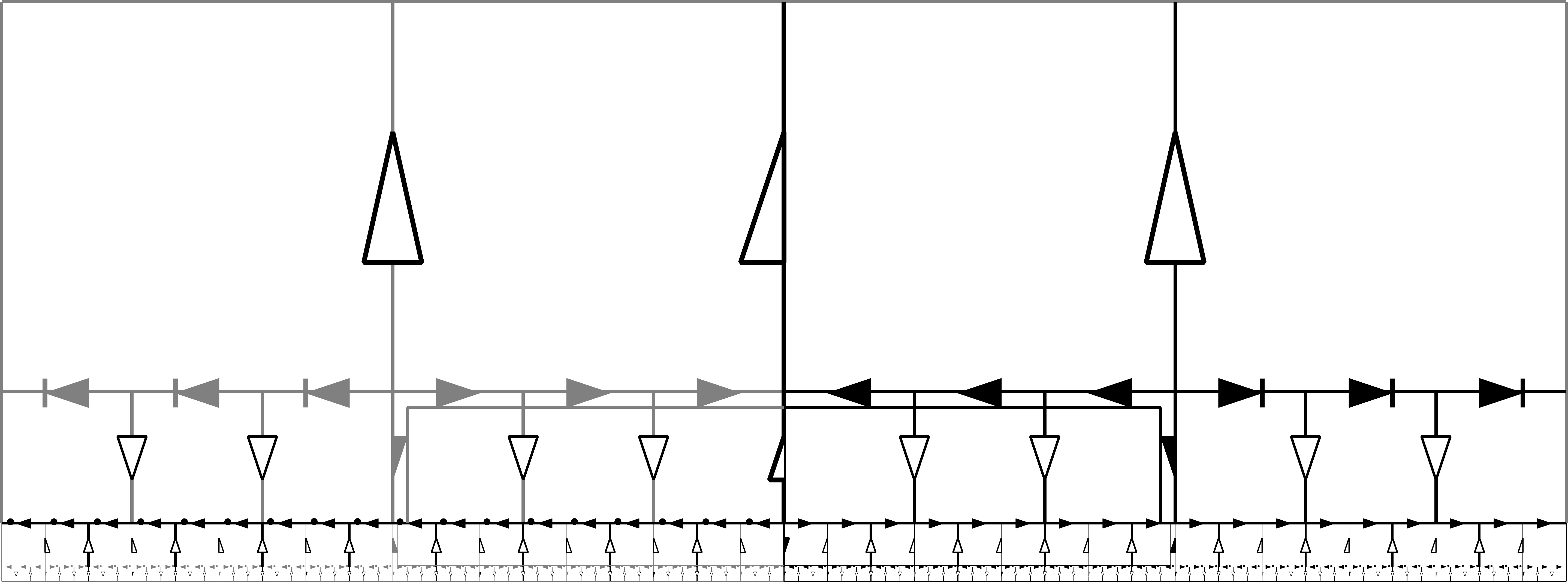}
    \caption{A decorated $4$-block in $\mathfrak X_0$, with some arrows or lines resized for visual purposes. Its central cross ensures that it lives inside the lower half of a decorated $8$-block.} 
    \label{fig:decorated-4-block}
\end{figure}
\begin{figure}[]
    \centering
    \includegraphics[page=1,width=0.96\textwidth]{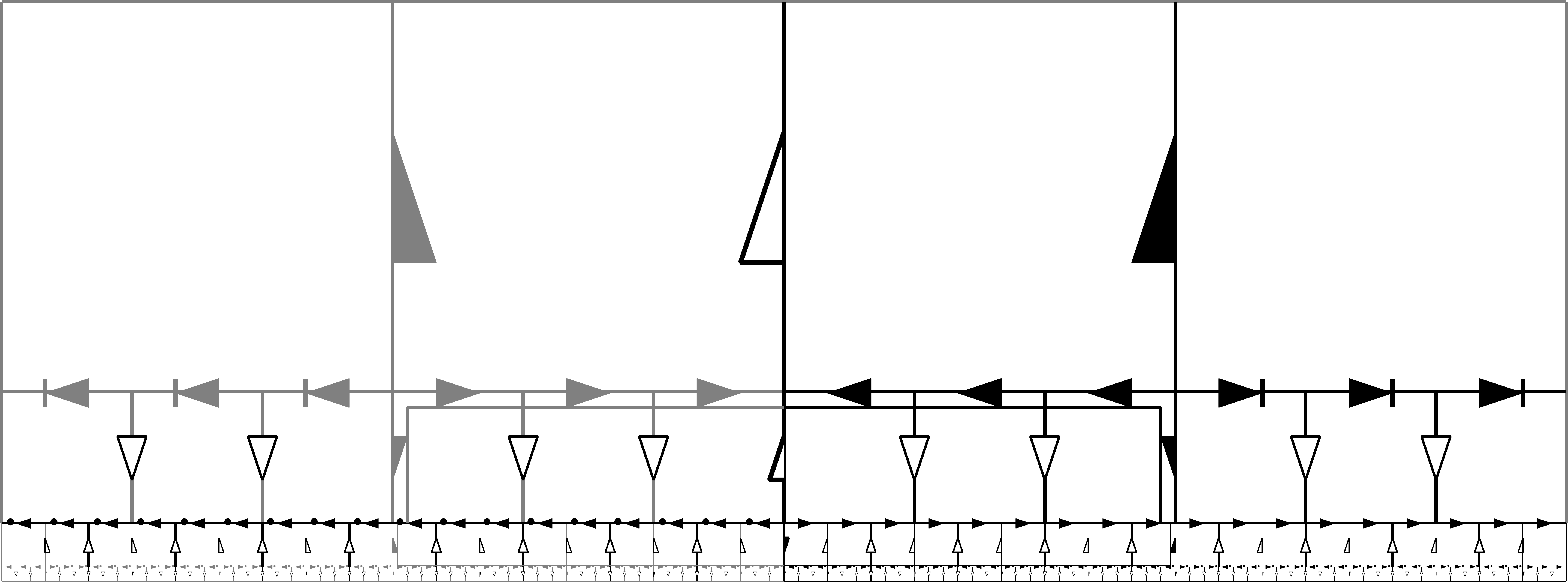}
    \caption{A decorated $4$-block in $\mathfrak X_0$, with some arrows or lines resized for visual purposes. It has the same central cross as the one in \Cref{fig:decorated-4-block}, but its upper characteristic is different. }
    \label{fig:decorated-4-block-alternative}
\end{figure}
\begin{figure}[]
    \centering
    \includegraphics[page=1,width=0.96\textwidth]{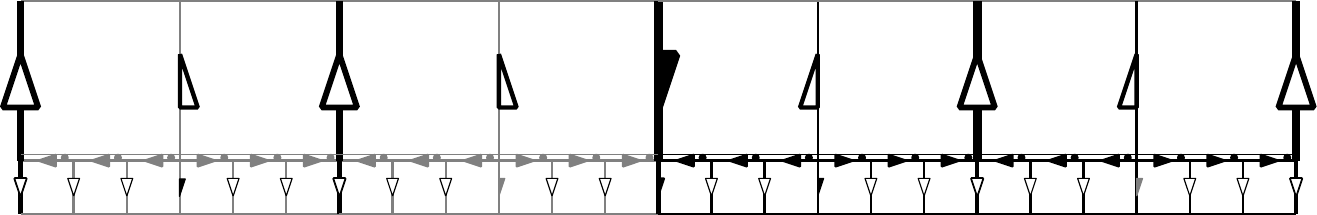}
    \caption{Detail of the central four $2$-blocks in the lower half of the decorated $4$-block from \Cref{fig:decorated-4-block}. The transition between gray and black indicates the center of this horizontal row, where we have a vertical arm as in the fifth column of \Cref{fig:decorated-vertical-arms}. Some arrows or lines have been resized for visual reasons.} 
    \label{fig:decorated-4-block-detail}
\end{figure}
Observe that by construction, decorated $2$-blocks inside a decorated $2^{n+1}$-block satisfy both the stacking and boundary properties. By a standard compactness argument (or K\"onig's infinity lemma), the next result follows. 
\begin{proposition}\label{prop:X_0-is-nonempty}
$\mathfrak{X}_0$ is nonempty.
\end{proposition}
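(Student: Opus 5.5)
Since $\mathfrak{X}_0$ is defined by local conditions (stacking and boundary conditions for decorated $2$-blocks, \Cref{def:X_0}), the plan is to show that some configuration satisfies them. We build it as a limit of ever larger decorated blocks and apply compactness.

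\textbf{Step 1: a nested family of blocks.} Fix the computable model $\varphi_\circ$ from \Cref{ex:maingraph}. By \Cref{prop:sequence-of-partitions}, together with the remark after it that well-aligned $2$-cell partitions exist, we obtain nested partitions $Q_1 \preceq Q_2 \preceq \cdots$ of $\varphi_\circ$. Here $Q_n$ is a partition into well-aligned $2^n$-cells, and each cell of $Q_{n+1}$ is the natural union of cells of $Q_n$. Choose the parities so that some fixed vertex $v_0$, say the root, lies in the interior of a cell $C_n\in Q_n$ for every $n$, with $C_n\subset C_{n+1}$. This is possible because at each step there are four choices, and at least one keeps the previous cell away from the new boundary.

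\textbf{Step 2: patterns on $C_n$.} For each $n$, pick a decorated $2^{n}$-block $p_n$ on $C_n$. Such blocks exist by induction on the recursive definition in \Cref{subsec:decorated-blocks}. The inductive step needs three things: (i) an admissible word of $2\cdot 3^{2^n}$ crosses for the lower row, accepted by one of the automata in \Cref{fig:FSA-lower-half-left,fig:FSA-lower-half-right}; (ii) the central-cross requirements on the upper row; (iii) the existence, for each lower block, of the upper characteristic matching the arms on its upper boundary. The paper asserts (iii) as a unique choice, so it is given. For (i), write down an explicit accepted word, which should be routine from the automata.

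As observed just before the proposition, within $p_n$ the constituent decorated $2$-blocks satisfy the stacking and boundary conditions. Every forbidden pattern of $\mathfrak{X}_0$ has bounded support. Hence any ball $B_r(v_0)$ that sits well inside $C_n$ sees no forbidden pattern in $p_n$; this uses that the $Q_1$-partition restricted to $C_n$ is exactly the decomposition of $p_n$ into $2$-blocks.

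\textbf{Step 3: compactness.} Extend each $p_n$ arbitrarily to a configuration $(\varphi_\circ,x_n)$. The set of such configurations is compact, so some subsequence converges to $(\varphi_\circ,x)$. We need the radius of a ball around $v_0$ contained in $C_n$ to tend to infinity. This is the main point to check. It holds if the choices in Step 1 keep $v_0$ at distance at least roughly $2^{n-1}$ from $\partial C_n$, for instance by placing $C_n$ in the middle of the upper half of $C_{n+1}$ at each step. Granting this, every finite pattern of $x$ around $v_0$ appears inside some $p_n$ in the interior region. So $x$ avoids all forbidden patterns, and the local stacking data are consistent with the global partition $Q_1$. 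Thus $(\varphi_\circ,x)\in\mathfrak{X}_0$.

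The main obstacle is bookkeeping rather than any deep step. One must confirm that the SFT rules of $\mathfrak{X}_0$ are genuinely local and witnessed inside each $p_n$, and in particular that the uniform-lower-boundary condition holds for interior $2$-blocks of $p_n$. That condition follows from the automaton constraints and the dot decorations.
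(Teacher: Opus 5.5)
Your strategy is the paper's: decorated $2^n$-blocks exist for every $n$, the decorated $2$-blocks inside them satisfy the stacking and boundary conditions, and a compactness (K\"onig) argument gives a configuration of $\mathfrak{X}_0$. The gap is in the step you yourself flag as the main point: arranging that the nested cells $C_n$ exhaust the graph, so that every ball around $v_0$ eventually lies deep inside some $C_n$.

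Your suggested choice does not achieve this. The upper half of $C_{n+1}$ consists of only two $2^n$-cells, so there is no ``middle'' one. Moreover, their top sides together form the top side of $C_{n+1}$. If $C_n$ always lies in the upper half of $C_{n+1}$, then the top side of $C_1$ is contained in the top side of every $C_n$. Consequently:
\begin{itemize}
\item $v_0$ stays within bounded distance of $\partial C_n$ for all $n$;
\item the vertices above the top row of $C_1$ are never covered by any $p_n$;
\item the limit $x$ is unconstrained there, so it need not lie in $\mathfrak{X}_0$.
\end{itemize}
Your justification in Step 1, that ``at least one of the four choices keeps the previous cell away from the new boundary,'' is also false. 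In the natural decomposition, every $2^n$-cell shares its top side (upper row) or its bottom side (lower row) with $\partial C_{n+1}$, so no single step can do this. Keeping $v_0$ itself off the boundary is automatic once $v_0$ is interior to $C_1$, and says nothing about the distance growing.

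The fix is to alternate the choices given by \Cref{prop:sequence-of-partitions}, so that each of the following happens infinitely often:
\begin{itemize}
\item $C_n$ is the upper-left cell of $C_{n+1}$, which pushes the bottom and right sides away;
\item $C_n$ is the upper-right cell, which pushes the bottom and left sides away;
\item $C_n$ lies in the lower row below the upper-left cell, which pushes the top and right sides away;
\item $C_n$ lies in the lower row below the upper-right cell, which pushes the top and left sides away.
\end{itemize}
This is exactly the hypothesis of \Cref{prop:prescribe-directions} that each of $\nearrow,\searrow,\swarrow,\nwarrow$ occurs infinitely often; for instance, take the periodic sequence used in the proof of \Cref{thm:X_1-sofic}. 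With this choice, $\bigcup_n C_n$ covers every vertex, and the distance from $v_0$ to $\partial C_n$ tends to infinity. Then every $2$-cell of $Q_1$, together with its boundary, eventually lies strictly inside some $C_n$, and the rest of your compactness argument goes through.
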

\begin{remark} Not all decorated $2^n$-blocks in our definition can be extended to global configurations. This depends on the chosen upper characteristic. Filtering out the ``bad'' upper characteristics requires some care, and for this reason we do not make it part of the definition of decorated block. For a given central cross and $n\geq 1$, it is not difficult to (computably) characterize those upper characteristics for which the corresponding decorated $2^n$-block can be extended to a configuration in $\mathfrak X_0$. 
\end{remark}
\subsection{The hierarchical structure in \texorpdfstring{$\mathfrak{X}_0$}{X0}}
In order to better understand the  hierarchical structure of configurations in $\mathfrak{X}_0$, we need the following definition.
\begin{definition}\label{def:stacking-boundary-for-n}
Let $n>1$. The \define{stacking} and \define{boundary} conditions for decorated $2^n$-blocks are obtained by replacing $2$-blocks by $2^n$-blocks, and $2$-cells by $2^n$-cells, in Definition \ref{def:stacking}.
\end{definition}
We will now prove \Cref{thm:hierarchical-structure-X_0}, our main result about the hierarchical structure of $\mathfrak{X}_0$. It states that every configuration in $\mathfrak{X}_0$ satisfies the stacking and boundary conditions for decorated $2^n$-blocks, for all $n\geq 1$. 
\begin{theorem}
\label{thm:hierarchical-structure-X_0}%
    Let $(\varphi,x)\in\mathfrak{X}_0$. Then $2^n$-blocks in this configuration satisfy stacking and boundary conditions for all $n\geq 1$. 
\end{theorem}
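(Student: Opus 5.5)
We argue by induction on $n$. The base case $n=1$ is exactly \Cref{def:X_0}: every $(\varphi,x)\in\mathfrak X_0$ admits a partition (up to boundaries) into well-aligned $2$-cells, each supporting a decorated $2$-block, and the degree-3 and lower-boundary conditions hold. For the inductive step, assume $(\varphi,x)$ satisfies the stacking and boundary conditions for decorated $2^n$-blocks, witnessed by a partition $P$ into well-aligned $2^n$-cells. By \Cref{prop:sequence-of-partitions}, $P$ has exactly four coarsenings into well-aligned $2^{n+1}$-cells, and we select the correct one from the decorations. The central crosses of the $2^n$-blocks face up or down; a down-facing cross must sit in the upper row of a $2^{n+1}$-cell, and an up-facing one in the lower row. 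The crosses facing right (resp. left), in the gray/black versions of \Cref{fig:decorated-crosses-left,fig:decorated-crosses-right}, must be the left (resp. right) halves. This pins down the row parity and the horizontal parity, so the coarsening $Q$ is unique. The key local fact is that the double arrows of facing crosses must be continued by arms, and the matching rule forces the arms of two adjacent blocks to meet consistently. This is the hyperbolic analogue of \Cref{prop:euclidean-alignment-characterization}, with the $\mathcal R_1$- and $\mathcal R_2$-type decorations excluding misaligned neighbours.

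Next we check that each $2^{n+1}$-cell $C$ of $Q$ carries a decorated $2^{n+1}$-block. For the skeleton of \Cref{fig:4-square-skeleton}, the edges between the $2^n$-blocks lie on their boundaries. The vertical strips between consecutive lower $2^n$-blocks have degree-3 vertices carrying white arrows, and the middle vertex of each strip must hold a special arm (\Cref{fig:decorated-special-arms}), since no other symbol fits between two upward-facing block boundaries. The central vertex of $C$ lies at the meeting of four $2^n$-block boundaries carrying incoming arrows, so only a cross from \Cref{fig:decorated-crosses-left,fig:decorated-crosses-right} fits there, and its arrows propagate by arms. The sequence of $2\cdot 3^{2^n}$ crosses in the lower row must be accepted by the automata of \Cref{fig:FSA-lower-half-simplified,fig:FSA-lower-half-left,fig:FSA-lower-half-right}. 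We verify this by translating each automaton transition into the local matching rule between consecutive special arms and horizontal arms along the shared boundary, with the transitions forced by the arrows reaching the strips. Finally, the upper characteristics of the lower $2^n$-blocks are exactly the ones that make their upper-boundary vertices carry legal arms, and this holds because $x$ uses only alphabet symbols.

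For the boundary conditions at level $2^{n+1}$, the degree-3 condition reduces to the fact that a degree-3 vertex on the boundary of $C$ either lies on the boundary of a $2^n$-block, where the inductive hypothesis applies, or lies in its interior on a row transmitted by a first-row symbol. The second case is excluded because the boundary rows of $C$ coincide with the boundary rows of the $2^n$-blocks. Uniformity of the lower boundary follows as in the $\mathcal R_3$ argument. The lower boundary of $C$ is the union of the lower boundaries of the lower-row $2^n$-blocks, and each of these is uniform by induction with direction determined by its dot decoration. The automata force all lower-row blocks to carry a common dot, which is dictated by the downward arrow of the central cross of $C$. This gives uniformity and also ties the direction to the central cross.

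The main obstacle is the first step: showing that the local rules force the coarsening and the automaton-accepted word, rather than permitting misaligned or mixed arrangements (fault lines). I would handle it by a careful case analysis of which cross types can be adjacent along shared boundary edges, using the gray/black and dot decorations, and checking that every admissible adjacency is an automaton transition.
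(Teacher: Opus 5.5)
Your outer structure matches the paper's: induction on $n$, and boundary conditions at level $2^{n+1}$ obtained because the boundary of a $2^{n+1}$-cell lies inside the boundaries of its $2^n$-cells, with lower-boundary uniformity coming from the common dot decoration. The gap is in the core of the inductive step, which you defer to a ``careful case analysis of adjacencies''. Reading the coarsening off the crosses presupposes what has to be proved. One must show that each row of $2^n$-blocks is uniformly up- or down-facing, that down-facing rows alternate $\searrow\swarrow$, and that the rows alternate vertically consistently with the choice propagated by \Cref{prop:sequence-of-partitions}. Above all, one must show that under each pair $B_l$ ($\searrow$), $B_r$ (its right neighbour) there are exactly $3^{2^n}$ gray blocks followed by $3^{2^n}$ black ones.

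That last fact does not follow from adjacency constraints between neighbouring lower-row blocks. The horizontal compatibility SFT for up-facing blocks (\Cref{fig:SFT-lower-half-simplified}) has loops, so it allows constant runs of any length. The automaton's initial and final states and its run lengths are not visible locally. So checking that every admissible adjacency is an automaton transition cannot exclude fault lines. This is exactly where $\mathcal H_3$ differs from $\ZZ^2$: the lower side of a $2^n$-block is $3^{2^n}$ times longer than its upper side.

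The missing idea is that the inductive lower-boundary condition has to be used in the assembly itself, not only propagated to level $2^{n+1}$. In the paper, the middle blocks $B_l'$, $B_r'$ below $B_l$, $B_r$ carry corner crosses (\Cref{fig:SFT-vertical-column}). The arm where the crosses of $B_l$ and $B_l'$ meet forces the arrows on the lower boundary of $B_l$ to point left (right for $B_r$). By the inductive uniformity, this direction holds along the whole lower boundary. Every other block $B'$ below $B_l$ sends a white arrow up to this boundary, and the arm formed there ties the kind of white arrow, hence the colour of $B'$, to that direction. So everything under $B_l$ is gray and everything under $B_r$ is black. Applying the same fact to the neighbouring pairs, the simplified SFT then forces exactly the automaton word.

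Two further pieces are also missing. First, the dot decorations need the parity claim that adjacent up-facing blocks emit downward arrows of different colours. This holds because their middle neighbours below are separated by $3^{2^n}-1$ (an even number of) blocks in a down-facing row that alternates. Second, your justification of the central cross of $C$ is unsupported. The cross is forced because the arrows at that vertex point \emph{away} from it: left along $B_l$'s lower boundary, right along $B_r$'s, and down along the vertical arm at the gray/black transition. That again comes from the argument above, not from a purely local inspection.
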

\begin{proof}
Fix $(\varphi,x)$ in $\mathfrak{X}_0$. We prove by induction on $n$ that stacking and boundary conditions are satisfied for decorated $2^n$-blocks in $x$. The base case $n=1$ is just the definition of $\mathfrak{X}_0$. Now fix $n\geq1$, and assume that stacking and boundary conditions for decorated $2^n$-blocks are satisfied. Our goal is proving that stacking and boundary conditions for decorated $2^{n+1}$-blocks are also satisfied.

The general mechanism is as follows. First observe that thanks to stacking conditions, we can speak of infinite horizontal rows of decorated $2^n$-blocks in $x$ (from now on we may not mention $x$). Now, consider two decorated $2^n$-blocks that are horizontal neighbors. Then the arrows emitted by their central crosses must meet somewhere in their shared boundary. Next, boundary conditions ensure that an arm is created. This process is illustrated in \Cref{fig:dramatization}. In this manner, the list of possible arms creates adjacency rules for our decorated $2^n$-blocks. A similar remark applies to a decorated $2^n$-block and its vertical neighbor that lies beneath, right in the middle. 

\begin{figure}[]
    \centering
    \includegraphics[page=1,width=0.8\textwidth]{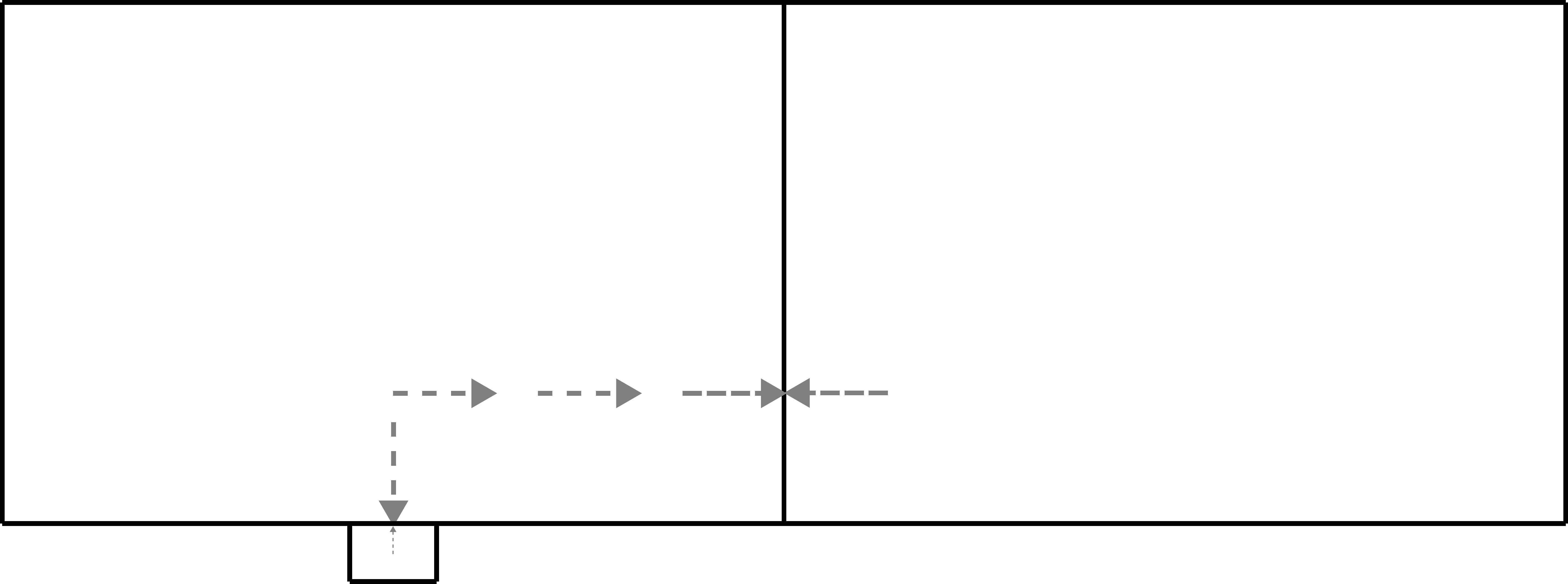}
    \caption{Simplified representation of a decorated $2^n$-block (top left), together with a horizontal and vertical neighbor. The key is that central crosses ``emit'' arrows that travel until they reach the boundaries. Because boundary conditions are satisfied by inductive hypothesis, the vertex where these arrows meet can only carry an arm. From the list of possible arms, we obtain constraints for the kind of central crosses that can be carried by neighboring decorated $2^n$-blocks.}
    \label{fig:dramatization}
\end{figure}

\begin{figure}[]
    \centering
    \includegraphics[page=1,width=0.2\textwidth]{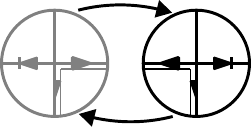}
    \caption{
    Vertex presentation of a $\ZZ$-subshift of finite type on 2 symbols. It describes the horizontal compatibility of decorated $2^n$-blocks whose central crosses face down. Decorations in the vertical arrow going up are omitted.}
    \label{fig:SFT-upper-half}
\end{figure}
\begin{figure}[]
    \centering
    \includegraphics[page=1,width=0.8\textwidth]{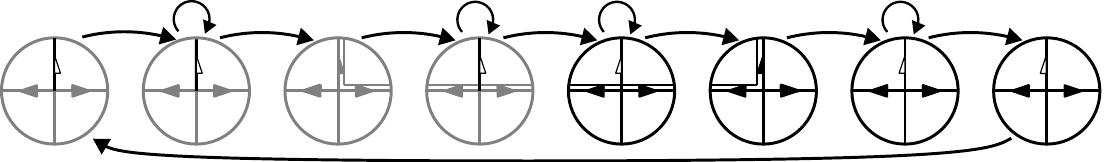}
    \caption{
    Vertex presentation of a $\ZZ$-subshift of finite type on 8 symbols. It describes the horizontal compatibility of decorated $2^n$-blocks whose central cross faces up. Decorations in the vertical arrow going down and dot decorations are omitted.}
    \label{fig:SFT-lower-half-simplified}
\end{figure}
\begin{figure}[]
    \centering
    \includegraphics[page=1,width=0.8\textwidth]{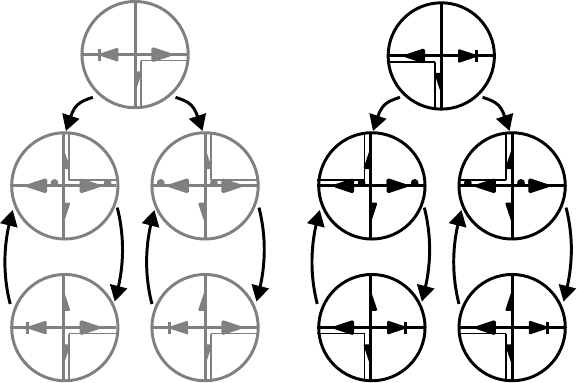}
    \caption{
    Vertex presentation of a $\mathbb{N}$-SFT on 10 symbols. It describes adjacency constraints for decorated $2^n$-blocks that are vertical neighbors (when we move to the vertical neighbor below, in the middle position) and carry a cross with a corner of double arrows. The top symbol has the decoration on its upwards arrow omitted, as the three possibilities are allowed (see \Cref{fig:decorated-crosses-left} and \Cref{fig:decorated-crosses-right}).} 
    \label{fig:SFT-vertical-column}
\end{figure}

\begin{figure}[]
    \centering
    \includegraphics[page=1,width=0.95\textwidth]{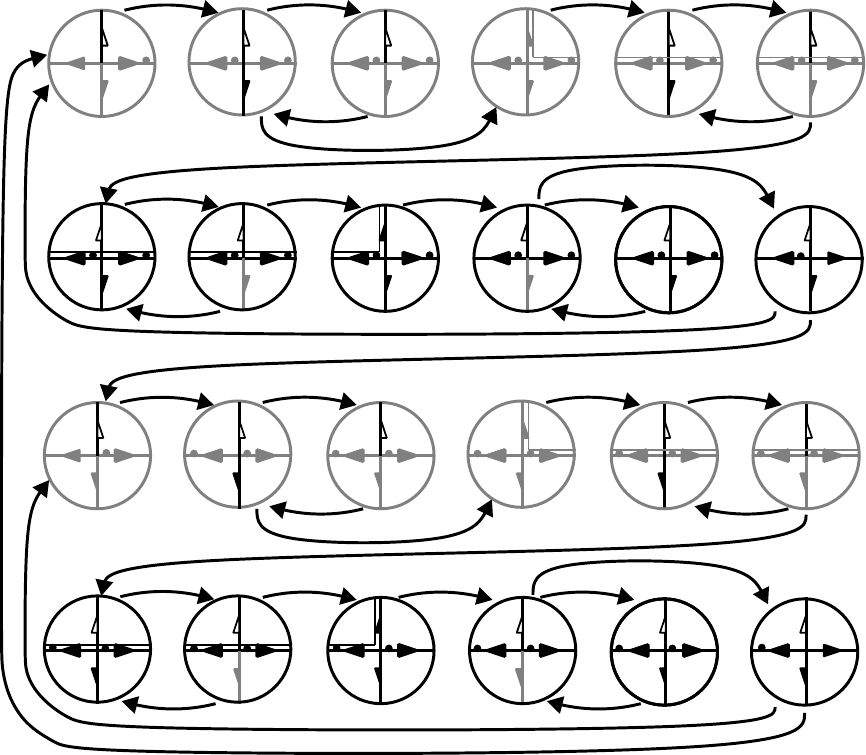}
    \caption{
    Vertex presentation of a $\ZZ$-subshift of finite type on 24 symbols. It describes the horizontal compatibility of decorated $2^n$-blocks whose central cross faces up.}
    \label{fig:SFT-lower-half-rich}
\end{figure}

By the general mechanism explained in the previous paragraph, from the list of crosses and arms we can make conclusions about the global arrangement of decorated $2^n$-blocks in our arbitrary configuration. For instance, observe that horizontal arrows emitted by crosses facing down are different from those emitted by crosses facing up. Since vertical arms do not mix up these kinds of arrows, we have proved the following. 
\begin{claim}\label{claim:horizontal-rows-have-constant-crosses}
     Consider an infinite horizontal row of decorated $2^n$-blocks. Then either all their central crosses face up, or all their central crosses face down. 
\end{claim}
Reading the list of vertical arms, we can obtain a more precise description of the possible central crosses that we can see in an infinite horizontal row of decorated $2^n$-blocks. It is convenient to represent these constraints by a $\ZZ$-subshift of finite type. 
\begin{claim}\label{claim:SFT-upper-half}
     Consider an infinite horizontal row of decorated $2^n$-blocks whose central crosses face down. This sequence of central crosses must belong to the $\ZZ$-SFT in \Cref{fig:SFT-upper-half}.
\end{claim}
\begin{claim}\label{claim:SFT-lower-half-simplified}
     Consider an infinite horizontal row of decorated $2^n$-blocks whose central crosses face up. This sequence of central crosses must belong to the $\ZZ$-SFT in \Cref{fig:SFT-lower-half-simplified}, provided we omit  decorations in the vertical arrows going down and dot decorations.
     \end{claim}
In order to make a more precise statement about an infinite horizontal row of decorated $2^n$-blocks facing up, we need to understand its interaction with the infinite horizontal row of decorated $2^n$-blocks above it (which will face down). Indeed, if we solely study the horizontal constraints for an infinite horizontal row of decorated $2^n$-blocks facing up, it might seem that a constant sequence is possible (for instance, crosses from \Cref{fig:decorated-crosses-left} and \Cref{fig:decorated-crosses-right} that carry a loop in \Cref{fig:SFT-lower-half-simplified}). As we shall see, rows of decorated $2^n$-blocks facing down will prevent this behavior. 
\begin{claim}\label{claim:SFT-vertical-column}
    Below a decorated $2^n$-block facing down (resp. up), right in the middle position, we have a decorated $2^n$-block facing up (resp. down). Furthermore, if we start at an arbitrary decorated $2^n$-block facing down, and we consider the infinite vertical column of decorated $2^n$-blocks obtained by always moving to the vertical neighbor below in the middle position, then the corresponding sequence must belong to the $\NN$-SFT in \Cref{fig:SFT-vertical-column}.  
\end{claim}
Observe that the infinite vertical column mentioned in this statement is well-defined by the vertical alignment condition. Both assertions in the statement follow by direct examination of our allowed horizontal arms (\Cref{fig:decorated-horizontal-arms-left} and \Cref{fig:decorated-horizontal-arms-right}).

\begin{claim}\label{claim:alternation-gray-black-lower-crosses}
     Consider an infinite horizontal row of decorated $2^n$-blocks with central crosses facing up. Then the vertical arrows that they emit downwards alternate periodically between black and gray. 
\end{claim}
To prove this claim, let $B_l$ be a decorated $2^n$-block whose central cross faces up, and let $B_r$ be its right horizontal neighbor. Let $B_l'$ be the decorated $2^n$-block below $B_l$ in the middle position, and let $B_r'$ be the decorated $2^n$-block below $B_r$ in the middle position. Then $B_l$ (resp. $B_r$) emits downwards an arrow which is either black or gray. Because of the possible horizontal arms (see the second and third rows in \Cref{fig:decorated-horizontal-arms-left} and \Cref{fig:decorated-horizontal-arms-right}), this arrow meets an upwards arrow of the same color, coming from the central cross of $B_l'$ (resp. $B_r'$). The color of this arrow only depends on whether $B_l'$ (resp. $B_r'$) faces  left or right (see the first row of \Cref{fig:decorated-crosses-left} and \Cref{fig:decorated-crosses-right}). As a consequence, in order to prove the statement, it suffices to show that $B_l'$ and $B_r'$ face in different horizontal directions. An elementary counting argument shows that $B_l'$ and $B_r'$ are separated by an even number of decorated $2^n$-blocks (exactly $3^{2^{n}}-1$), so our claim follows from \Cref{claim:SFT-upper-half}.

We can now upgrade \Cref{claim:SFT-lower-half-simplified} to a more detailed statement. Specifically, combining \Cref{claim:alternation-gray-black-lower-crosses} with \Cref{claim:SFT-lower-half-simplified}, we obtain the following. 
\begin{claim}\label{claim:SFT-lower-half-rich}
     Consider an infinite horizontal row of decorated $2^n$-blocks whose central crosses face up. Then the corresponding sequence of central crosses must belong to the $\ZZ$-SFT in \Cref{fig:SFT-lower-half-rich}.
\end{claim}
We are now ready to prove the following. 
\begin{claim}\label{claim:small-blocks-form-large-blocks}
    Every decorated $2^{n}$-block is part of a unique larger decorated $2^{n+1}$-block. 
\end{claim}
It is clear that two decorated $2^{n+1}$-blocks cannot share a decorated $2^n$-block, so the uniqueness part of the statement comes ``for free''. We only need to prove that every decorated $2^n$-block is part of a decorated $2^{n+1}$-block. Thanks to the previously established properties, it is sufficient to prove our claim for the special case of a decorated $2^n$-block whose central cross faces down and right. Let us pick such a block, and denote it by $B_l$. Let $B_r$ be the horizontal neighbor of $B_l$ to its right. Let $B_l'$ be the vertical neighbor below $B_l$ in the middle, and let $B_r'$ be the vertical neighbor of $B_r$ below in the middle. By \Cref{claim:SFT-vertical-column}, we know that $B_l'$ and $B_r'$ carry a central cross with a corner of double arrows, and the only information which is not yet determined is the kind of dot decoration they have (which determines the decoration of the arrow going downwards). By inductive hypothesis, we know that all arrows in the bottom boundary of $B_l$ (resp. $B_r$) go in the same direction. The horizontal arm created by the central crosses of $B_l$ and $B_l'$ (resp. $B_r$ and $B_r'$) ensures that they all go left (resp. right), see the first row in \Cref{fig:decorated-horizontal-arms-left} and \Cref{fig:decorated-horizontal-arms-right}. 

We claim that all vertical neighbors below  $B_l$ (resp. $B_r$) different from $B_l'$ (resp. $B_r'$) carry a gray central cross (resp. black). Indeed, let $B'$ be such a decorated $2^n$-block. The color of the central cross of $B'$ is in correspondence with the kind of white arrow that it emits upwards (see \Cref{fig:decorated-crosses-left} and \Cref{fig:decorated-crosses-right}). This white arrow will eventually meet the lower boundary of $B_l$ (resp. $B_r$) and create an arm, either at a vertex of degree 3 or 4 (see rows 4 of  \Cref{fig:decorated-horizontal-arms-left} and \Cref{fig:decorated-horizontal-arms-right}). It follows that the kind of white arrow is in correspondence with the direction of horizontal arrows in this arm, which by the observation in the previous paragraph, depends only on whether $B'$ is below $B_l$ or $B_r$. Thus our claim holds.

Next, observe that the properties we proved until now are also valid for the pair of decorated $2^n$-blocks at the left and right of $B_l$ and $B_r$. In particular, the left neighbor of the leftmost $2^n$-block below $B_l$ carries a black central cross, and the right neighbor of the rightmost $2^n$-block below $B_r$ carries a gray central cross. Combining this observation with \Cref{fig:SFT-lower-half-simplified}, we can already conclude that the only way to choose the central crosses of the decorated $2^n$-blocks below $B_l$ and $B_r$ is exactly as described in the simplified finite state automaton from \Cref{fig:FSA-lower-half-simplified}, including the initial and final states. Having established this, \Cref{claim:SFT-lower-half-rich} shows that the only way to complete the missing decorations matches the definition of decorated $2^{n+1}$-block (see \Cref{fig:FSA-lower-half-left} and \Cref{fig:FSA-lower-half-right}). 

The transition from gray to black decorated $2^n$-blocks in the middle of the lower half of our tentative decorated $2^{n+1}$-block creates a vertical arm that transmits arrows going down, as in the fourth or fifth column of \Cref{fig:decorated-vertical-arms}. At this point we can conclude from the boundary conditions that our tentative $2^{n+1}$-block indeed carries a cross in its center, whose arrows are propagated until they reach the boundary (this is just Robinson's  argument in \cite[p. 189]{robinson_undecidability_1971}). Next, consider the vertical strip of edges between two arbitrary neighboring decorated $2^n$-blocks of the same color, in the lower half of our tentative $2^{n+1}$-block. The vertex in the middle of this strip carries a special arm, created by the central crosses of the $2^n$-blocks at the left and right. The vertical arrows born in this special arm propagate both upwards and downwards (\Cref{fig:decorated-special-arms}). Thus our tentative decorated $2^{n+1}$-block satisfies all conditions in the definition of decorated $2^{n+1}$-block (\Cref{subsec:decorated-blocks}), and \Cref{claim:small-blocks-form-large-blocks} is proved. 

To finish the proof by induction, let us verify that decorated $2^{n+1}$-blocks in $x$ satisfy the stacking and boundary conditions (see \Cref{def:stacking}, \Cref{def:boundary}, and \Cref{def:stacking-boundary-for-n}). By inductive hypothesis, every edge is either in the support of a unique decorated $2^n$-block, or in the boundary of such a block. Then \Cref{claim:small-blocks-form-large-blocks} shows that the same is true for decorated $2^{n+1}$-blocks. The horizontal alignment property for $2^{n+1}$-blocks follows from \Cref{claim:SFT-upper-half}, and the vertical alignment property follows from \Cref{claim:SFT-vertical-column} plus an elementary parity argument. Next we observe that the bottom side of the boundary of a decorated $2^{n+1}$-block is decorated with arrows in the same direction. This follows from three facts: (i) that decorated $2^{n}$-blocks verify this property by inductive hypothesis, (ii) that for a decorated $2^{n}$-block facing up, the direction of the arrows in its bottom boundary corresponds bijectively with the dot decoration in its central cross (rows 2 and 3 in \Cref{fig:decorated-horizontal-arms-left} and \Cref{fig:decorated-horizontal-arms-right}), and finally, (iii) that all decorated $2^{n}$-blocks in the lower half of a decorated $2^{n+1}$-block carry the same kind of dot decoration (\Cref{fig:FSA-lower-half-left} and \Cref{fig:FSA-lower-half-right}). As a consequence, and since decorated $2^{n}$-blocks satisfy the boundary conditions, the same is true for decorated $2^{n+1}$-blocks.  
\end{proof}

\subsection{Assembling infinite configurations in \texorpdfstring{$\mathfrak{X}_0$}{X0}}\label{subsec:directions}
We will now analyze how configurations in $\mathfrak X_0$ are assembled from blocks starting from the ``origin'' (the empty word in a model). The next results are, in a sense, the $\mathcal H_3$ analogue of the analysis in \cite[\S 8]{robinson_undecidability_1971}.

It is convenient to define \define{directions} in the alphabet $\{\nearrow, \searrow, \swarrow, \nwarrow\}$. The direction of a cross symbol in the alphabet $A_0$ is defined as follows.
\begin{itemize}
    \item $\searrow$ is associated to a cross as in the first row of     \Cref{fig:decorated-crosses-left}. 
    \item $\nearrow$ is associated to a cross as in the second and third rows of     \Cref{fig:decorated-crosses-left}. 
    \item $\swarrow$ is associated to a cross as in the first row of     \Cref{fig:decorated-crosses-right}.
    \item $\nwarrow$ is associated to a cross as in the second and third rows of     \Cref{fig:decorated-crosses-right}.
\end{itemize}
The direction of a decorated $2^n$-block is defined as the direction of its central cross. 

The idea behind this terminology is as follows. Suppose that we have a decorated $2^n$-block with direction $\searrow$, and it appears in some configuration in $\mathfrak X_0$. By \Cref{thm:hierarchical-structure-X_0}, it is part of a larger decorated $2^{n+1}$-block. If we interpret this as having our decorated $2^n$-block \textit{grow} to a decorated $2^{n+1}$-block, then its support grows precisely in the bottom and right direction ($\searrow$). 

Pick a configuration $(\varphi,x)\in \mathfrak X_0$. We define its \define{sequence of directions} 
\[
(d_n)_{n\geq 1}
\]
as follows. We start by finding the minimal  $w\in\supp(\varphi)$ (in the shortlex order) that carries the central cross of a $2$-block, and we let $d_1$ be the direction of this cross. By \Cref{thm:hierarchical-structure-X_0}, this $2$-block appears within a $2^2$-block in the same configuration. We define $d_2$ as the direction of this $2^2$-block. Inductively, we always use a $2^n$-block to define $d_n$, and then we define $d_{n+1}$ as the direction  of the unique $2^{n+1}$-block containing the previously used $2^n$-block. 

In order to better understand what sequences of directions can arise in $\mathfrak X_0$, we need the following elementary observation. 
\begin{proposition}\label{prop:boundary-determination}
    Let $n\geq 1$ and let $B$ be a decorated $2^n$-block whose central cross faces down (resp. up). Let $k\geq 1$ and suppose that $B$ appears within a larger decorated $2^{n+k}$-block, in such a way that the direction of the horizontal arrows in the upper (resp. lower) horizontal  boundary of $B$ is already determined. This information, plus the relative position of $B$ within the larger block,  uniquely determines the central cross of $B$.
\end{proposition}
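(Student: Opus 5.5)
The plan is to read off the central cross of $B$ from two pieces of information: its position inside the $2^{n+1}$-block $B^+$ that contains it (which is itself fixed by the position of $B$ in the $2^{n+k}$-block), and the direction of the horizontal arrows on one horizontal side of $B$. First I will use the position of $B$ inside $B^+$, via the construction of decorated $2^{n+1}$-blocks in \Cref{subsec:decorated-blocks}. This tells us whether $B$ lies in the upper half or the lower half of $B^+$, which is consistent with the assumption that $B$ faces down or up respectively. It also tells us the exact index of $B$ within its half.

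Consider first the case where $B$ faces down. Then $B$ is one of the two blocks in the upper half of $B^+$, and the construction forces its direction: $\searrow$ on the left and $\swarrow$ on the right. By \Cref{fig:decorated-crosses-left,fig:decorated-crosses-right}, a down-facing cross has finitely many possible decorations, differing in the decoration of the upward vertical arrow. I will argue that this upward arrow, propagated vertically through arms, reaches the upper horizontal boundary of $B$ and meets the horizontal arrows there, producing a horizontal arm of the kind in \Cref{fig:decorated-horizontal-arms-left,fig:decorated-horizontal-arms-right}. By hypothesis the direction of those horizontal arrows is already fixed. A direct inspection of the list of arms should then show that exactly one decoration of the upward arrow is compatible with that direction. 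This pins down the cross.

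Now consider the case where $B$ faces up. Then $B$ lies in the lower half of $B^+$, and its index determines several features. First, it determines the colour, gray or black, according to which half of the row $B$ lies in. Second, it determines the left or right direction of $B$ and whether it carries a corner of double arrows, and both of these are fixed by the automaton in \Cref{fig:FSA-lower-half-simplified} once the index is known. What remains undetermined is the dot decoration. Here I would invoke fact (ii) from the proof of \Cref{thm:hierarchical-structure-X_0}: for an up-facing block, the direction of the arrows on its bottom boundary is in bijection with its dot decoration. By the boundary conditions, all arrows on this bottom boundary point the same way, and by hypothesis that direction is known. Hence the dot decoration is determined, and with it the whole cross.

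The main obstacle I expect is the case distinction and table-checking against the figures of arms. In particular, the degree-3 vertices and the white arrows on the upper boundary need care, because they must not leave any residual freedom in the decoration of the upward arrow. I would first check the claimed bijection carefully for $n=1$, and then observe that it transfers to general $n$, since only the arms at the meeting vertex are involved.
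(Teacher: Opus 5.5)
Your up-facing case is the paper's argument. Position inside $B^+$ fixes everything except the dot decoration, and the arms in rows 2 and 3 of \Cref{fig:decorated-horizontal-arms-left,fig:decorated-horizontal-arms-right} tie the dot to the direction of the lower boundary.

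The down-facing case has the right shape, but its key step fails as you state it. You use only the index of $B$ inside $B^+$, and then claim that the direction of the arrows on the upper boundary singles out the decoration of the upward arrow. The upward arrow of a down-facing cross can be white or non-white. Which one occurs depends on whether $B$ sits in the middle position below the $2^n$-block $D$ directly above it; only the middle block's upward arrow meets the downward arrow of $D$'s central cross. The horizontal direction cannot detect this. The upper boundary of $B$ is part of the lower boundary of $D$, which by the boundary conditions carries arrows in a single uniform direction. So all $3^{2^n}$ blocks under $D$ see the same direction, yet only the middle one has a non-white upward arrow. This is exactly the residual freedom you flagged in your last paragraph, and inspecting the arms against the direction does not remove it.

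The missing information is positional, but it does not live inside $B^+$. Since $B$ is in the upper half of $B^+$, the block $D$ belongs to the $2^{n+1}$-block above $B^+$. Whether $B$ is in the middle position below $D$ is therefore read off from the position of $B$ in the larger $2^{n+k}$-block, not from its index in $B^+$. The paper's proof makes precisely this case split:
\begin{itemize}
\item If $B$ is not in the middle position, its upward arrow is white and the whole cross is already fixed by position.
\item If $B$ is in the middle position, the upward arrow is non-white with two options left. Only then does the direction on the upper boundary choose between them, via rows 2 and 3 of the horizontal arms.
\end{itemize}
With this split added, your argument coincides with the paper's.
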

\begin{proof}
    Suppose first that the central cross of $B$ faces down. Its relative position within the larger block determines most decorations in its central cross. The only nontrivial case is when $B$ lies in the middle position below the $2^n$-block above it. If this is the case, then the upwards arrow in the central cross of $B$ is not white, but two options remain to be determined (\Cref{fig:decorated-crosses-left} and \Cref{fig:decorated-crosses-right}). Inspecting \Cref{fig:decorated-horizontal-arms-left} and \Cref{fig:decorated-horizontal-arms-right}, rows 2 and 3, we see that this choice is determined by the direction of the horizontal arrows in the upper boundary of $B$. 
    
    A similar argument applies if $B$ faces up. In this situation, its position within the larger block determines all decorations in its central cross, with the exception of the dot decoration. But again, this is determined by the direction of the horizontal arrows in its lower boundary, by the arms from \Cref{fig:decorated-horizontal-arms-left} and \Cref{fig:decorated-horizontal-arms-right}, rows 2 and 3. 
\end{proof}

\begin{proposition}\label{prop:prescribe-directions}
    Let $\varphi\in \mathcal M(\mathcal H_3)$ be an arbitrary model, and let $(d_n)_{n\geq 1}$ be an arbitrary sequence of elements in $\{\nearrow, \searrow, \swarrow, \nwarrow\}$ such that every symbol appears infinitely often. Then there exists $(\varphi, x)\in \mathfrak X_0$ whose sequence of directions is exactly $(d_n)_{n\geq 1}$. 
\end{proposition}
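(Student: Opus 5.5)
The plan is to build $x$ as a limit of a nested sequence of decorated blocks $B_1\subset B_2\subset\cdots$ around the root, where $B_n$ is a decorated $2^n$-block whose central cross has direction $d_n$. Fix the model $\varphi$. By the discussion after \Cref{prop:sequence-of-partitions}, $\varphi$ admits partitions into well-aligned $2^n$-cells, and successive refinements are parametrized by a choice among four options at each level. I would let the sequence $(d_n)$ select these options. Choose a $2$-cell $C_1$ whose center is the shortlex-minimal vertex carrying a central cross; to simplify, I would instead fix the target root so that $\varepsilon$ lies in a prescribed position. Given a $2^n$-cell $C_n$, the direction $d_{n+1}$ determines which of the four positions $C_n$ occupies inside $C_{n+1}$: upper-left or upper-right for $\searrow,\swarrow$, and left or right half of the lower row for $\nearrow,\nwarrow$. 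In the lower-row case there are many candidate slots, so I would fix one computable convention, say the central slot of the corresponding half.

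Because every direction occurs infinitely often, $C_n$ is eventually surrounded on all sides by $C_{n+k}$ with a margin tending to infinity. Concretely, after a $\searrow$ followed by a $\nwarrow$ (and symmetrically for the others), the region grows past each side. Hence $\bigcup_n C_n$ covers the whole graph $\mathcal G(\mathcal H_3,\varphi)$; this is the analogue of Robinson's \S8.

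Next I would decorate consistently. The key is a top-down determination. Once the central crosses of all $2^m$-blocks inside $C_{n+k}$ are chosen according to the recursive definition of decorated $2^{n+k}$-blocks (the automata in \Cref{fig:FSA-lower-half-left,fig:FSA-lower-half-right}, the upper-row crosses, and the upper characteristics forced by arms), the pattern on $C_n$ must agree with the one chosen at stage $n$. \Cref{prop:boundary-determination} is exactly the tool for this. The central cross of the sub-block sitting at the relative position of $C_n$ is fixed by that position together with the direction of arrows on its upper/lower boundary, and that direction is dictated by $d_{n+1}$ and the larger block. So I would define $B_{n+1}$ by the recursive definition, requiring its sub-block at the slot of $C_n$ to be $B_n$. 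This requires that the set of allowed $2^n$-blocks at that slot contains $B_n$, where $B_n$ itself was chosen to be compatible. The cleanest way to secure this is to choose the decorations from the top down on finite stages, then apply compactness (König's lemma) along $N\to\infty$: for each $N$, construct a decorated $2^N$-block with the prescribed directions $d_1,\dots,d_N$ on the nested chain, restrict it to $C_N$, and take a limit point.

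A limit point $(\varphi,x)$ satisfies the stacking and boundary conditions for $2$-blocks, since these are local and hold inside every decorated block. The cells $C_n$ exhaust the graph, so it lies in $\mathfrak X_0$. By the uniqueness part of \Cref{claim:small-blocks-form-large-blocks}, its sequence of directions is the sequence of directions of the nested chain, which is $(d_n)$.

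The main obstacle is the upper characteristic. When $C_n$ sits in the lower half of $C_{n+1}$, the arrows arriving at $B_n$'s upper boundary from its interior must match the arm constraints imposed by the block above. As the Remark after \Cref{prop:X_0-is-nonempty} warns, not every upper characteristic extends. Doing the construction top-down avoids this: the upper characteristic of each sub-block is forced by the larger block rather than chosen beforehand, and compactness then handles consistency across $N$. I would check carefully that this top-down choice at stage $N$ never conflicts with the requirement that the prescribed directions occupy the chain positions. This reduces to \Cref{prop:boundary-determination} and to the fact that each direction corresponds to an admissible slot.
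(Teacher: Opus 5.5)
Your argument is correct in its main line, but it produces the configuration by a different mechanism than the paper.

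\textbf{Comparison with the paper.} Both proofs build the same nested chain of cells. In both, the quadrant of $C_n$ inside $C_{n+1}$ is dictated by the prescribed direction; the paper phrases this through partitions $P_n$ and \Cref{prop:sequence-of-partitions}. The paper then avoids compactness altogether. It shows that requiring $p_n$ to be a sub-block of $p_{n+1}$ eventually fixes the direction of every arrow on the boundary of $p_n$. \Cref{prop:boundary-determination} then forces the central cross of $p_n$ and of all its sub-blocks, hence its upper characteristic. So the nested blocks are uniquely determined, and $x$ is simply their union.

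You instead take, for each $N$, an arbitrary decorated $2^N$-block on $C_N$ with central direction $d_N$ and pass to a limit point. This is the same K\"onig argument behind \Cref{prop:X_0-is-nonempty}. It is legitimate, and it does not even need \Cref{prop:boundary-determination}. The direction of the sub-block on $C_n$ depends only on which quadrant of $C_{n+1}$ it occupies. So every stage-$N$ block automatically realizes $d_1,\dots,d_N$ along the chain, and the conflict you worry about in your last paragraph cannot arise.

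What the paper's route buys is uniqueness and an explicit deterministic procedure. That is exactly what \Cref{prop:m(X_0)-is-zero} reuses to obtain a computable configuration. Compactness alone gives existence but not computability.

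\textbf{Details to correct.}

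First, it is $d_n$, not $d_{n+1}$, that must decide the position of $C_n$ in $C_{n+1}$: upper-left for $\searrow$, upper-right for $\swarrow$, gray lower-left half for $\nearrow$, black lower-right half for $\nwarrow$. That position is what fixes the direction of the block on $C_n$, so as written you would realize a shifted sequence.

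Second, there is no choice of slot in the lower row. The $2^n$-cell above $C_n$ is unique. There are exactly two $2^{n+1}$-cells having $C_n$ in their lower row, according to whether that upper cell becomes the upper-left or upper-right subcell. The slot within the half is forced by where $C_n$ sits under its upper neighbour, consistent with the four refinements in \Cref{prop:sequence-of-partitions}. So the ``central slot'' convention cannot be imposed. It is also unnecessary, since only the half matters for the direction. Exhaustion still holds: $\swarrow,\nwarrow$ enlarge the cell to the left, $\searrow,\nearrow$ to the right, $\nearrow,\nwarrow$ upward, and $\searrow,\swarrow$ downward.

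Third, drop the aside about fixing the root in a prescribed position. The model is given, and a $2$-cell center $wba^3$ always has state $0$, so if $\varphi(\varepsilon)\neq 0$ the root is never a center. The sequence of directions is read from the $2$-block with shortlex-minimal center. So you must fix $P_1$ and take $C_1$ to be its cell with shortlex-minimal center, as in your first option and in the paper.
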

\begin{proof}
Fix $\varphi$ and $(d_n)_{n\geq 1}$ as in the statement. In order to find $(\varphi,x)$ whose sequence of directions equals $(d_n)_{n\geq 1}$, we will define three sequences.
\begin{itemize}
    \item A sequence $(p_n)_{n\geq 1}$ of decorated $2^{n}$-blocks. 
    \item A sequence $(E_n)$ of  $2^n$-cells. Each $p_n$ will be supported in $E_n$.
    \item A sequence $(P_n)_{n\geq 1}$ of partitions (up to boundaries) of $\varphi$ into well-aligned $2^n$-cells, see \Cref{subsec:cells}. Each $E_n$ will be a $2^n$-cell from $P_n$. 
\end{itemize}
Here $(E_n)_{n\geq 1}$ and $(P_n)_{n\geq 1}$ are auxiliary elements, but it is convenient to define them before choosing $(p_n)_{n\geq 1}$. 

We choose $(E_n)_{n\geq 1}$ and $(P_n)_{n\geq 1}$ inductively, and simultaneously. We start by choosing $P_1$ as an arbitrary partition of $\varphi$ into well-aligned $2$-cells (see \Cref{subsec:cells}). Inductively, let $n\geq 1$, and suppose that we have already chosen the partition $P_n$. We will simultaneously choose $E_n$ and $P_{n+1}$. 
    \begin{itemize}
        \item If $n=1$, then we let $E_n$ be the $2$-cell from $P_1$ whose central vertex is labeled by the minimal word (in the shortlex order).
        \item If $n>1$ then we let $E_n$ be the unique $2^n$-cell in $P_n$ containing $E_{n-1}$. 
    \end{itemize} 
We will now invoke \Cref{prop:sequence-of-partitions}, which says that there are exactly four ways to choose a partition (up to boundaries) $P_{n+1}$ of $\varphi$ into well-aligned $2^{n+1}$-cells, whose decomposition into $2^n$-cells yields $P_n$. Using this result, we choose $P_{n+1}$ depending on $d_n$. 
    \begin{enumerate}
        \item Case $d_{n}=\searrow$. We choose $P_{n+1}$ to ensure that $E_n$ lives in the upper left half of a $2^{n+1}$-cell from $P_{n+1}$.
        \item Case $d_n=\swarrow$.
        We choose $P_{n+1}$ to ensure that $E_n$ lives in the upper right half of a $2^{n+1}$-cell from $P_{n+1}$.
        \item Case $d_n=\nearrow$. Let $Q_n$ be the unique $2^{n}$-cell in $P_n$ sitting above $E_n$. Then we choose $P_{n+1}$ by the condition that $Q_n$ lives in the upper left half of a $2^{n+1}$-cell from $P_{n+1}$. 
        \item Case $d_n=\nwarrow$. Let $Q_n$ be the unique $2^{n}$-cell in $P_n$ sitting above $E_n$. Then we choose $P_{n+1}$ by the condition that $Q_n$ lives in the upper right half of a $2^{n+1}$-cell from $P_{n+1}$.
    \end{enumerate}
Thus we have defined $(E_n)_{n\geq 1}$ and $(P_n)_{n\geq 1}$. We remark that the supports $(E_n)_{n\geq 1}$ are nested, in the sense that when we decompose $E_{n+1}$ into $2^n$-cells, one of these $2^n$-cells is $E_n$ ($n\geq 1$). 

For each $n\geq 1$ we will define a decorated $2^n$-block $p_n$ supported in $E_n$. We will define them under the constraint that $p_n$ is one of the decorated $2^n$-blocks forming $p_{n+1}$. This constraint already determines most decorations in the central cross for $p_n$, for all $n\geq 1$. Specifically, if $E_n$ belongs to the upper half of $E_{n+1}$, then the central cross of $p_n$ is determined, with the exception of the arrow it emits upwards. If $E_n$ belongs to the lower half of $E_{n+1}$, then most of the central cross of $p_n$ is determined, including the color of its downwards arrow (gray/black), but its dot decoration remains to be determined. For every $n\geq 1$ we have already defined the \textit{direction} of all arrows in $p_n$, even while some decorations for the arrows remain undetermined. Given $n\geq 1$, we can find $m$ large enough so that, looking at $p_n$ within the partially defined pattern $p_{n+m}$, we can determine the direction of all arrows in the boundaries of $p_n$. By \Cref{prop:boundary-determination}, this information determines the central cross of $p_n$. Applying \Cref{prop:boundary-determination} to the smaller blocks within $p_n$, we can determine the central crosses of all the smaller blocks composing it, and in particular its upper characteristic. 

Thus we have defined the sequence $(p_{n})_{n\geq 1}$ of decorated $2^{n}$-blocks. It is nested, in the sense that $p_{n+1}(w)=p_n(w)$ whenever both $p_n$ and $p_{n+1}$ are defined at $w\in\supp(\varphi)$.  Furthermore, for every $w\in\supp(\varphi)$ we have that $p_n(w)$ is defined for sufficiently large $n$. This follows from the assumption that every symbol from $\{\nearrow, \searrow, \swarrow, \nwarrow\}$  appears infinitely often in $(d_n)_{n\geq 1}$. Thus we can define a configuration $x$ by 
\begin{equation}\label{eq:definition-limiting-configuration}
x(w)=p_n(w) \text{ for $n$ large enough, $w\in\supp(\varphi)$.}
\end{equation}
By construction, the configuration $x$ has the property in the statement.
\end{proof}
\begin{proposition}[Computable \Cref{prop:prescribe-directions}]\label{prop:m(X_0)-is-zero}  Let $\varphi\in \mathcal M(\mathcal H_3)$ be a computable model (for instance, the one from \Cref{ex:maingraph}), and let $(d_n)_{n\geq 1}$ be a computable sequence of elements in $\{\nearrow, \searrow, \swarrow, \nwarrow\}$ such that every symbol appears infinitely often. Then there exists a computable $(\varphi, x)\in \mathfrak X_0$ whose sequence of directions is exactly $(d_n)_{n\geq 1}$.  In particular, $\mathfrak X_0$ has zero Medvedev degree.
\end{proposition}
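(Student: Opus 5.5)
The plan is to revisit the proof of \Cref{prop:prescribe-directions} and check that every choice made there is effective, uniformly in $n$, whenever $\varphi$ and $(d_n)_{n\geq 1}$ are computable. Then $x$ is defined by the limit in \eqref{eq:definition-limiting-configuration}, and we only need an algorithm which, on input $w\in\supp(\varphi)$, finds some $n$ with $w\in E_n$ and evaluates $p_n(w)$.

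\textbf{Step 1: the cells and partitions.} First I will make the sequences $(P_n)$ and $(E_n)$ computable. By \Cref{lem:WP_decidable_Hk} the word problem of $\mathcal H_3$ is decidable, and $\varphi$ is computable. Hence the finite subgraph of $\mathcal G(\mathcal H_3,\varphi)$ induced by any finite ball is computable, and so are the normal forms of the vertices. The partition $P_1$ into well-aligned $2$-cells is then determined locally by a finite choice of parity offsets, and by the remark after \Cref{prop:sequence-of-partitions} it is computable. Given $P_n$ and $d_n$, the proof of \Cref{prop:sequence-of-partitions} describes $P_{n+1}$ by two parity choices. One choice says which rows of $2^n$-cells become upper halves. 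The other fixes the left/right offset in one row, and it propagates up and down through vertical alignment. Both choices are fixed by the rule depending on $d_n$, so the membership question ``does $u$ lie in $E_n$, and in which position?'' is decidable uniformly in $n$ and $u$. I would compute $E_n$ explicitly as a finite set of words. Its corner word is of the form $wb^ia^j$, reached from the previous corner by a computable shift. Since each direction occurs infinitely often, for each $w$ we can search over $n$ until $w\in E_n$ with $w$ not on the boundary of $E_n$. This search terminates because the cells grow in all four directions.

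\textbf{Step 2: the block $p_n$.} Next I will make the decorated block $p_n$ computable. Its central cross is determined up to the undetermined decorations by the position of $E_n$ inside $E_{n+1}$, which Step 1 gives. To pin down the remaining decorations (the upward arrow or the dot decoration), \Cref{prop:boundary-determination} requires the direction of the horizontal arrows in the relevant boundary of $p_n$. I will show this is read off from $E_{n+m}$ for the least $m$ such that $E_n$'s relevant boundary lies strictly inside $E_{n+m}$. At that point the boundary edge is on the cross arm of some intermediate block whose direction is already known. Finding this $m$ is again a terminating search, because every direction recurs. Having the central cross of $p_n$, I recurse: apply \Cref{prop:boundary-determination} to each of the finitely many sub-blocks of $p_n$, down to $2$-blocks. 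This yields $p_n$ as a finite pattern by a finite computation. Nestedness ensures that the output $x(w)=p_n(w)$ does not depend on which admissible $n$ is found.

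\textbf{Expected obstacle and conclusion.} The main obstacle is that the proof of \Cref{prop:prescribe-directions} determines decorations using ``$m$ large enough'' nonconstructively. I must argue that the sufficient $m$ is found by an effective search whose halting condition is decidable, namely that all boundary edges of $p_n$ are interior to $E_{n+m}$ with known arrow directions. Once $x$ is computable, take any computable $(d_n)$ containing each symbol infinitely often, for instance the periodic sequence $\searrow,\swarrow,\nearrow,\nwarrow,\dots$, together with $\varphi_\circ$ from \Cref{ex:maingraph}. This gives a computable element of $\mathfrak X_0$, so $m(\mathfrak X_0)=0_{\mathfrak M}$.
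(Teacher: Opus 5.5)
Your proposal is correct and follows essentially the same route as the paper: you compute $(P_n)$ and $(E_n)$ by iterating \Cref{prop:sequence-of-partitions} according to $d_n$, and then obtain each $p_n$ deterministically by applying \Cref{prop:boundary-determination} recursively down to $2$-blocks. You actually give more detail than the paper, which only sketches this, notably by turning the ``$m$ large enough'' step into an explicit terminating search. That search must wait until all boundary edges of $E_n$ are interior to $E_{n+m}$, not only the relevant one, since the sub-blocks of $p_n$ also need the other horizontal side; your final paragraph correctly states the halting condition this way.
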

\begin{proof}
    We claim that for computable $\varphi$ and $(d_n)_{n\geq 1}$, the configuration $x$ defined in \Cref{prop:prescribe-directions} is computable. By definition of $x$, it is sufficient to argue that $(p_n)_{n\geq 1}$ is a computable sequence, see \Cref{eq:definition-limiting-configuration}. The key step in the proof of \Cref{prop:prescribe-directions} whose computability is nontrivial is the choice of the sequence $(P_n)_{n\geq 1}$. That this can be done computably follows by iterating \Cref{prop:sequence-of-partitions}. The remaining choices in the algorithm given in the proof of \Cref{prop:prescribe-directions}  are deterministic and computable, given by repeatedly using the definition of decorated $2^n$-block and \Cref{prop:boundary-determination}, and by inspecting the finite list of arms and crosses.
\end{proof}
\subsection{Proof of \Cref{thm:X_1-sofic}}
We are now ready to derive \Cref{thm:X_1-sofic} from the previous results. The extra technical property that we need will follow from our analysis in \Cref{subsec:directions}. Recall that $A_0$ and $A_1$ denote the alphabets for $\mathfrak X_0$ and $\mathfrak X_1$, respectively. 
\begin{definition}
    Let $\pi\colon A_0 \to A_1$ be the function which replaces crosses and arms by the diagrams defined in \Cref{fig:alfabetito_1}, in such a way that all double arrows (with any sort of decoration) are replaced by the corresponding thick lines, and the rest of the arrows are replaced by simple lines. 
\end{definition}
\begin{proof}[Proof of~\Cref{thm:X_1-sofic}]
Comparing \Cref{fig:alfabetito_1} with Figures \ref{fig:decorated-crosses-left}, \ref{fig:decorated-crosses-right}, \ref{fig:decorated-vertical-arms}, \ref{fig:decorated-horizontal-arms-left}, \ref{fig:decorated-horizontal-arms-right}, \ref{fig:decorated-horizontal-arms-degree-3-left}, \ref{fig:decorated-horizontal-arms-degree-3-right}, and \ref{fig:decorated-special-arms}, we see that $\pi$ is surjective. Furthermore, comparing \Cref{fig:alfabetito_1} with \Cref{fig:decorated-crosses-left} and  \Cref{fig:decorated-crosses-right}, we see that $\pi$ sends decorated crosses in $A_0$ to crosses in $A_1$, and that every cross in $A_1$ is the image of a cross in $A_0$ by $\pi$.

For $(\varphi,x)\in\mathfrak X_0$, we define $\overline x$ by 
\[\overline x(w)=\pi(x(w)), \ w\in\supp(\varphi).\]
It follows from our construction of $\mathfrak X_0$ that $(\varphi,\overline x)\in \mathfrak X_1$. Thus we have a well-defined map \[\Phi\colon \mathfrak X_0\to\mathfrak X_1, \
 (\varphi,x)\mapsto(\varphi,\overline x).\]
 We argue that $\Phi$ is surjective. Fix an arbitrary arbitrary model $\varphi\in\mathcal M(\mathcal H_3)$ and consider a configuration $(\varphi,x)\in\mathfrak X_0$. Let $n\geq 1$. By \Cref{thm:hierarchical-structure-X_0}, $x$ can be seen as an infinite arrangement of decorated $2^{n+1}$-blocks. Decomposing these into decorated $2^{n}$-blocks, we can see that for every cross from \Cref{fig:decorated-crosses-left} and \Cref{fig:decorated-crosses-right} is the central cross of some decorated $2^n$-block in $x$. Applying $\Phi$, it follows that the six kinds of $2^n$-blocks from $\mathfrak X_1$ appear in $\Phi(\varphi,x)$. Since this is true for all $n\geq1$, then it follows by a standard compactness argument that $\Phi$ is surjective.

Now, let $\varphi$ be the computable model from \Cref{ex:maingraph}. By \Cref{prop:m(X_0)-is-zero}, there exists a computable $x$ with $(\varphi,x)\in \mathfrak X_0$, and such that its sequence $(d_n)_{n\geq 1}$ of directions equals the periodic sequence
\[
(\nearrow \ \swarrow \ \nwarrow \ \searrow \ )^{\infty}=\nearrow \ \swarrow \ \nwarrow \ \searrow \ \nearrow \ \swarrow \ \nwarrow \ \searrow \ \nearrow \ \swarrow \ \nwarrow \ \searrow \ \dots
\]
With this choice we have that $\Phi(\varphi,x)$ satisfies the required condition in \Cref{thm:X_1-sofic}. That is, every $w\in\supp(\varphi)$ is inside a the $4^n$-border of a $2^{2n+1}$-block, for $n$ large enough (this terminology was introduced in \Cref{sec:sofic}).
\end{proof}

\Addresses
\bibliographystyle{abbrv}
\bibliography{alexandria}

\end{document}